\documentclass[11pt,a4paper,final,oneside,openright,reqno]{amsart}

 \usepackage{latexsym, amsfonts, amsthm, amsmath, amssymb, tikz, enumitem, booktabs, float,color,multicol,epsf, url,epsfig,epstopdf, array, setspace, graphicx, inputenc, csquotes, xcolor,tikz-cd}

\usepackage[margin=1in,footskip=0.25in]{geometry}
\usepackage{titlesec}
\usepackage{amsfonts}
\usepackage{amsmath}
\usepackage{amssymb}
\usepackage{url}
\usepackage{hyperref}
\usepackage{color}
\usepackage{lipsum}
\usepackage{mathtools}
 \usepackage[T1]{fontenc}
\usepackage{tikz-cd}
\tikzcdset{scale cd/.style={every label/.append style={scale=#1},
    cells={nodes={scale=#1}}}}
 \usepackage{url}
 \usepackage{hyperref}
\usepackage{pdfpages}
\usepackage{graphicx}
\usepackage{titletoc}
\usepackage{marginnote}
\usepackage{enumitem}
\usepackage[noabbrev,capitalize]{cleveref}
\usepackage[all]{hypcap}
\usepackage{cleveref}

\titlecontents{section}
  [0em] 
  {\vspace{0.5em}} 
  {\thecontentslabel\quad} 
  {} 
  {\hfill\contentspage} 
  [\vspace{0.5em}] 

\titlecontents{subsection}
  [1.5em] 
  {\vspace{0.3em}} 
  {\thecontentslabel\quad} 
  {} 
  {\hfill\contentspage} 
  [\vspace{0.3em}] 

\let\oldcup\cup
\let\oldcap\cap
\usepackage{mathabx}
\let\cup\oldcup
\let\cap\oldcap

\setlength{\parskip}{0.1cm}%
\setlength{\parindent}{0cm}

\titleformat{\section}
{\normalfont\Large\bfseries}{\thesection}{1em}{}

\numberwithin{equation}{section}

\pagestyle{plain}

\newcommand{\mylabel}[2]{#2\def\@currentlabel{#2}\label{#1}}

\def\H{\operatorname{H}}
\def\F{\operatorname{F}}
\def\M{\operatorname{M}}
\def\S{\operatorname{S}}
\def\C{\operatorname{C}}
\def\Heta{H_\eta}
\def\Hdeta{H^*_\eta}
\def\HC{H_C}
\def\HdC{H_C^*}
\def\PhiH{\Phi_H}

\def\PsiH{\Psi_{H^*}}
\def\psilH{\psi^l_{H^*}}
\def\varphieta{\varphi_\eta}
\def\PsiHeta{\Psi_{H^*_\eta}}
\def\PsiHnu{\Psi_{H^*_\nu}}
\def\PhiHeta{\Phi_{H_\eta}}
\def\PsiC{\Psi_C}
\def\Honenull{H^1_0(\mathbb{T},\mathbb{R}^{2n})}
\def\Hone{{H^1(\mathbb{T},\mathbb{R}^{2n})}}
\def\Honehalf{{H^{1/2}(\mathbb{T},\mathbb{R}^{2n})}}
\def\modulifloerhat{\hat{\mathcal{M}}_F}
\def\modulifloer{\mathcal{M}_F}

\def\codim{\operatorname{codim}}
\def\rest{\operatorname{rest}}

\def\sign{\operatorname{sign}}
\newcommand{\indsys}{\operatorname{ind}_{\operatorname{sys}}^{S^1}}
\newcommand{\indfr}{\operatorname{ind}_{\operatorname{FR}}}

\setcounter{MaxMatrixCols}{10}
\newcommand{\unit}{1\!\!1}
\newcommand{\crit}{\operatorname{crit}}
\newcommand{\ind}{\operatorname{ind}}

\newcommand{\nullity}{\operatorname{null}}

\newcommand{\defeq}{\mathrel{\mathop:}=}

 \setcounter{tocdepth}{2}
\newtheorem*{theorem*}{Theorem}
\newtheorem{theorem}{Theorem}[section]
\newtheorem*{theoremA}{Theorem A}
\newtheorem{corollaryB}{Corollary B.\ignorespaces}
\newtheorem{theoremC}{Theorem C.\ignorespaces}
\newtheorem*{corollaryD}{Corollary D}
\newtheorem{corollary}[theorem]{Corollary}
\newtheorem{lemma}[theorem]{Lemma}
\newtheorem{proposition}[theorem]{Proposition}
\theoremstyle{definition}
\newtheorem{remark}[theorem]{Remark}
\newtheorem*{remark*}{Remark}

\author{Stefan Matijevi{\'c}}
\address{Fakult\"{a}t f\"{u}r Mathematik,
Ruhr-Universit\"at Bochum, Bochum, Germany}
\email{stefan.matijevic@rub.de}

\theoremstyle{definition}

\title{Positive ($S^1$-equivariant) symplectic homology of convex domains, higher capacities, and Clarke's duality}

\begin{document}

\begin{abstract}
We prove that the filtered positive ($S^1$-equivariant) symplectic homology of a convex domain is naturally isomorphic to the filtered singular ($S^1$-equivariant) homology induced by Clarke's dual functional associated with the convex domain. As a result, we prove that the Gutt–Hutchings capacities coincide with the spectral invariants introduced by Ekeland–Hofer for convex domains. From this identification, it follows that Besse convex domains can be characterized by their Gutt–Hutchings capacities, which implies that the interiors of Besse-type convex domains encode information about the Reeb flow on their boundaries. Moreover, as a corollary of the aforementioned isomorphism, we deduce that the barcode entropy associated with the singular homology induced by Clarke's dual functional provides a lower bound for the topological entropy of the Reeb flow on the boundary of a convex domain in $\mathbb{R}^{2n}$. In particular, this barcode entropy coincides with the topological entropy for convex domains in $\mathbb{R}^4$.

\end{abstract}

\maketitle

\tableofcontents

\newpage

\section*{Introduction}

We say that $C \subset \mathbb{R}^{2n}$ is a strongly convex domain if it is the closure of a bounded open convex set with a smooth boundary that has positive sectional curvature everywhere.

Let $C$ be a strongly convex domain. The standard symplectic form 
\[
\omega_0 \defeq \sum_{j=1}^n dx_j \wedge dy_j
\]
restricted to $\partial C$ has a 1-dimensional kernel (this is true for every hypersurface). Consequently, we can naturally define a line bundle $\mathcal{L}_{\partial C}$, known as the characteristic line bundle (see \cite{HZ94}) 
\[
\mathcal{L}_{\partial C} = \{ (x, v) \in T\partial C \mid \omega_0(v, \eta) = 0 \text{ for all } \eta \in T_x \partial C \}.
\]

An embedded circle $S \subset \partial C$ is a closed characteristic if $TS \subset \mathcal{L}_{\partial C}$.

Let $\gamma: [0,T] \to \mathbb{R}^{2n}$ be a parametrized loop. The action of $\gamma$ is given by 
\[
\mathcal{A}(\gamma) = \int_{\gamma} \lambda
\]
where $\lambda$ is any primitive of the standard symplectic form $\omega_0$. Since Stokes' theorem shows that the action of $\gamma$ corresponds to the symplectic area enclosed by $\gamma$, the action value is independent of the parametrization of $\gamma$ up to a sign. For a chosen parametrization $\gamma: [0,T] \to \mathbb{R}$, we have 
\[
\mathcal{A}(\gamma) = \frac{1}{2} \int_0^T \langle \dot{\gamma}(t), J_0 \gamma(t) \rangle \, dt.
\]

Here $J_0$ is the standard complex structure, defined as 
\[
J_0: \mathbb{R}^{2n} \to \mathbb{R}^{2n}, \quad J_0(x,y) = (-y,x),
\]
which corresponds to multiplication by $i$ under the identification $\mathbb{R}^{2n} \cong \mathbb{C}^n$ given by $(x,y) \mapsto x + iy$.

The spectrum of $\partial C$, denoted by $\sigma(\partial C)$, is defined as 
\[
\sigma(\partial C) \defeq \{ |\mathcal{A}(\gamma)| \mid \gamma \text{ is a closed characteristic} \}.
\]

Lines that pass through the origin are transverse to $\partial C$ if and only if the origin lies in the interior of $C$.
Hence, by translating $C$ we can assume that $(C, \lambda_0)$ is a Liouville domain where 
\[
\lambda_0 = \frac{1}{2} \sum_{j=1}^n (x_j dy_j - y_j dx_j),
\]
is the standard Liouville form, which is a primitive of $\omega_0$.

The Reeb vector field $R$ is defined on $\partial C$ by the system of equations 
\[
i_R \lambda_0 = 1, \quad i_R \omega_0 = 0.
\]

It is clear from the definition of $R$ that the closed Reeb orbits are exactly the closed characteristics. Moreover, the spectrum $\sigma(\partial C)$ corresponds to the set of periods of closed Reeb orbits.

If we denote by 
\[
H_C: \mathbb{R}^{2n} \to \mathbb{R}
\]
a positively 2-homogeneous function such that $H_C|_{\partial C} = 1$, then \[R=J_0\nabla \HC\]
on $\partial C$. Hence, the closed Reeb orbits are solutions of the equation 
\[
y: \mathbb{R}/T\mathbb{Z} \to \partial C, \quad \dot{y}(t) = J_0 \nabla H_C(y(t)), \quad T > 0.
\]

By reparametrizing on $\mathbb{T}:=\mathbb{R}/\mathbb{Z}$, we can describe the set of Reeb orbits as 
\[
\mathcal{R}(\partial C) \defeq \{ y: \mathbb{T} \to \partial C \mid \dot{y} = T J_0 \nabla H_C(y(t)), \quad T > 0 \}.
\]

Therefore, we have that 
\[
\sigma(\partial C) = \{ \mathcal{A}(y) \mid y \in \mathcal{R}(\partial C) \}.
\]

Another way to study periodic Reeb orbits (closed characteristics) of $\partial C$ and $\sigma(\partial C)$ when $C$ is convex is through Clarke's dual functional.

Consider the Sobolev space
\[
\Honenull = \left\{ x \in \Hone \mid \int_\mathbb{T} x(t) \, dt = 0 \right\}
\]
and the smooth hypersurface
\[
\Lambda = \{ x \in \Honenull \mid \mathcal{A}(x) = 1 \}.
\]

By translation, we can assume that the interior of $C$ contains the origin. We denote by  
\[
H_C^*: \mathbb{R}^{2n} \to \mathbb{R}, \quad H_C^*(x) = \max_{y \in \mathbb{R}^{2n}} (\langle x, y \rangle - H_C(y))
\]  
the Fenchel conjugate of $H_C$. The Clarke dual functional is defined as  
\[
\PsiC: \Lambda \to \mathbb{R}, \quad \PsiC(x) = \int_{\mathbb{T}} H_C^*(-J_0 \dot{x}(t)) \, dt.
\]

Since $C$ is a strongly convex domain, $\PsiC$ is $C^{1,1}$. For a general approach to Clarke's duality, see \cite{Eke90}.

As shown in \cite{Cla79} (see also \cite{HZ94,AO08}), there is a one-to-one correspondence between critical points of $\PsiC$ and Reeb orbits. More precisely, there is a bijection 
\[
    \mathcal{P}: \mathcal{R}(\partial C) \to \crit(\PsiC), \quad \mathcal{P}(y) = \frac{1}{\sqrt{\mathcal{A}(y)}} \pi(y),
\]

where $\pi(y) = y - \int_\mathbb{T} y(t) \, dt$. Additionaly it holds $\PsiC(\mathcal{P}(y)) = \mathcal{A}(y)$ for all $y\in \mathcal{R}(\partial C)$.

Therefore, 
\[
\sigma(\partial C) = \{ \PsiC(x) \mid x \in \crit(\PsiC) \}.
\]

For an arbitrary bounded convex domain $C$, the boundary of $C$ is a Lipschitz submanifold of $\mathbb{R}^{2n}$. Moreover, the positive outer normal cone, denoted bt $N_{\partial C}(x)$, is defined and non-empty for every $x\in \partial C$. Hence, generalized closed characteristics on $\partial C$ can be defined as closed Lipshitz curves $\gamma:\mathbb{T}\to \partial C$ such that \[\dot{\gamma}(t)\in J_0N_{\partial C}(\gamma(t)),\quad a.e.\] Therefore the spectrum of $\partial C$ is well-defined as well (see \cite{Cla81, AO14}). In addition, although $\PsiC$ is not $C^1$, it is locally Lipschitz. Thus, weak critical points are well-defined for this functional. See Clarke's book \cite{Cla83} for the definition of weak critical points and non-smooth analysis in general. As shown in \cite{Cla81, AO14}, there is a surjection from generalized closed characteristics to weak critical points of $\PsiC$, and the action of generalized closed characteristics equals the value of $\PsiC$ at the corresponding weak critical points. Therefore,  
\[
\sigma(\partial C) = \{ \PsiC(x) \mid x \in \crit(\PsiC) \},  
\]  
holds in the non-smooth setting as well. For insights into the relationship between billiard trajectories and the Ekeland-Hofer-Zehnder capacity on convex sets, established through Clarke's duality, see \cite{AO08, AO14}.

Clarke's dual functional is invariant under the $S^1$-action, where the action on $\Lambda$ is given by  
\[
\theta \cdot x = x(\cdot - \theta), \quad \theta \in \mathbb{T}, \ x \in \Lambda.
\]  

Using the fact that the Fadell–Rabinowitz index can be associated with $S^1$-spaces, Ekeland and Hofer, in \cite{EH87}, introduced spectral invariants for bounded convex domains. Let us first review the definition of the index.

\textbf{Fadell-Rabinowitz index}

Let $\mathbb{F}$ be a field, and let $X$ be a topological space equipped with an $S^1$-action. Following Borel's approach, we define the $S^1$-equivariant cohomology with $\mathbb{F}$-coefficients as
\[
\H^*_{S^1}(X; \mathbb{F}) = \H^*(X \times_{S^1} ES^1; \mathbb{F}),
\]
where $ES^1 \to BS^1$ is the universal $S^1$-bundle over the classifying space, and \[X \times_{S^1} ES^1 \defeq (X \times ES^1)/S^1.\]

For instance, we can take $ES^1\defeq S^\infty \subset \mathbb{C}^{\infty}$ and $BS^1\defeq\mathbb{C}P^{\infty}$. The cohomology of the classifying space $BS^1$ is given by the ring $\H^*(BS^1; \mathbb{F}) \cong \mathbb{F}[e]$, where $e$ is the generator of $H^2(BS^1; \mathbb{F})$. The quotient projection $\pi_2: X \times_{S^1} ES^1 \to BS^1$, defined by $\pi_2([x, y]) = [y]$, induces a cohomology ring homomorphism $\pi^*_2: \H^*(BS^1; \mathbb{F}) \to \H^*_{S^1}(X; \mathbb{F})$. The Fadell-Rabinowitz index of $X$ is defined as
\[
\indfr(X; \mathbb{F}) = \sup_{k \geq 0} \{k+1 \mid \pi^*_2 e^k \neq 0\}.
\]
Note that if $X$ is non-empty, then $\indfr(X; \mathbb{F}) \geq 1$. By convention, we set $\indfr(\varnothing; \mathbb{F}) = 0$, so $\indfr(X; \mathbb{F}) = 0$ if and only if $X = \varnothing$.

\textbf{Spectral invariants}

Let $C \subset \mathbb{R}^{2n}$ be a bounded convex domain,  i.e., the closure of a bounded open convex set. The $k$-th spectral invariant is defined as
\[
s_k(C; \mathbb{F}) = \inf \{L > 0 \mid \indfr(\{\Psi_{C_0} < L\}; \mathbb{F}) \geq k\}, \quad k \geq 1,
\]
where $C_0$ is any translation of $C$ whose interior contains the origin. 

Let $C_0$ and $C_0'$ be two bounded convex domains whose interiors contain the origin. If there exists a symplectic isotopy $\Phi_t: \mathbb{R}^{2n} \to \mathbb{R}^{2n}$ from $C_0$ to $C_0'$, which preserves convexity and keeps the origin inside the interior of the convex sets (i.e., $\Phi_0 = \text{id}$, $\Phi_1(C_0) = C_0'$, and $\Phi_t(C)$ is convex with $0 \in \Phi_t(C)$ for every $t \in [0,1]$), one can show that for every $k \in \mathbb{N}$, it holds that
\[
\inf \{L > 0 \mid \indfr(\{\Psi_{C_0} < L\}; \mathbb{F}) \geq k\} = \inf \{L > 0 \mid \indfr(\{\Psi_{C_0'} < L\}; \mathbb{F}) \geq k\}.
\]
In particular, this holds for translations, and therefore the sequence of numbers $(s_k(C; \mathbb{F}))_{k \in \mathbb{N}}$ is well-defined.

The first spectral invariant is the minimum of the spectrum of $\partial C$, while the others also belong to the spectrum. Moreover, they are continuous with respect to the Hausdorff metric, that is, for any sequence of bounded convex domains $C_i$ Hausdorff-converging to a bounded convex domain $C$, it holds that $s_k(C_i) \to s_k(C)$. For proofs of these properties, see \cite{EH87}. Furthermore, as we will see, this sequence is also invariant under symplectomorphisms of the interiors.

In the referenced work, these invariants were first used to establish multiplicity results for periodic trajectories of convex energy surfaces. Recently, it has been shown in \cite{GGM21} that these invariants characterize strongly convex Besse domains and strongly convex Zoll domains as a particular case. Moreover, in \cite{MR23}, the functional $\PsiC$ was used to explore the structure of strongly convex Besse domains.

\textbf{Gutt-Hutchings capacities }

Building on the works of Cieliebak, Floer, and Hofer \cite{FH94,CFH95}, Viterbo developed (positive) symplectic homology in \cite{Vit99}. The $S^1$-equivariant version of this homology was also originally defined by Viterbo in the same paper. Following a suggestion by Seidel in \cite{Sei08}, Bourgeois and Oancea introduced an alternative definition using family Floer homology in \cite{BO17}. Using positive $S^1$-equivariant symplectic homology, Gutt and Hutchings in \cite{GH18} introduced a sequence of symplectic capacities denoted by $c_k^{GH}$. 

We will not define positive ($S^1$-equivariant) symplectic homology here and refer the reader to \cite{BO17} for various definitions and to \cite{GH18} for the properties of positive $S^1$-equivariant symplectic homology and the definition of $c_k^{GH}$ in the Liouville setting.

If $W \subset \mathbb{R}^{2n}$ is a nice star-shaped domain (i.e., the closure of a bounded open star-shaped set with a smooth boundary such that $\partial W$ is transverse to the radial vector field), then $(W, \lambda_0)$ is a Liouville domain.

According to \cite[Remark 4.8]{GH18}, in this case we have
\[
c_k^{GH}(W; \mathbb{F}) = \inf\{L > 0 \mid i_L^{S^1,+} \neq 0\},
\]
where
\[
i_L^{S^1,+} : \S\H^{S^1,+,<L}_{n-1+2k}(W; \mathbb{F}) \to \S\H^{S^1,+}_{n-1+2k}(W; \mathbb{F}).
\]

Here, $\S\H^{S^1,+,<L}_{n-1+2k}(W ;\mathbb{F})$ denotes the filtered positive $S^1$-equivariant homology, while the group $\S\H^{S^1,+}_{n-1+2k}(W; \mathbb{F})$ is the full positive $S^1$-equivariant homology. The map $i_L^{S^1,+}$ refers to the inclusion map. These capacities are extended continuously to bounded star-shaped domains that are not nice. In order to keep the exposition simple, all results in this paper use $\mathbb{Z}_2$ coefficients. Our first result is the following.

\begin{theoremA}\hypertarget{TheoremA}{}
    Let $C \subset \mathbb{R}^{2n}$ be a bounded convex domain. Then, for all $k \in \mathbb{N}$,
    \[
    c^{GH}_k(C) = s_k(C).
    \]
\end{theoremA}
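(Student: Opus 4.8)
The plan is to derive Theorem A from a natural, filtration-preserving isomorphism between the positive $S^1$-equivariant symplectic homology of $C$ and the $S^1$-equivariant singular homology of the sublevel sets of Clarke's dual functional, and then to translate this homological statement into the cohomological language of the Fadell--Rabinowitz index. Since both $c^{GH}_k$ and $s_k$ are translation-invariant and continuous with respect to the Hausdorff metric — the former because $c^{GH}_k$ is defined on non-nice star-shaped domains by continuous extension, the latter by \cite{EH87} — and since every bounded convex domain is a Hausdorff limit of strongly convex domains with the origin in the interior, it is enough to prove $c^{GH}_k(C)=s_k(C)$ for such $C$. The virtue of this reduction is regularity: for strongly convex $C$ the functional $\PsiC$ is $C^{1,1}$, satisfies the Palais--Smale condition on $\Lambda$, and has critical points, critical values and Morse indices controlled by Clarke's bijection $\mathcal{P}$ together with $\PsiC\circ\mathcal{P}=\mathcal{A}$, so classical Morse theory on its sublevel sets is available.

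Fix such a $C$. I would compute $\S\H^{S^1,+,<L}_*(C;\mathbb{Z}_2)$ via the Bourgeois--Oancea family Floer model with autonomous Hamiltonians of the form $h\circ\HC$, where $h$ is convex, increasing and has slope $\notin\sigma(\partial C)$ at infinity, passing to the direct limit over the slope; with this choice the action filtration on the Floer side matches, through $\PsiC\circ\mathcal{P}=\mathcal{A}$, the filtration of $\Lambda$ by values of $\PsiC$. The analytic heart is an $S^1$-equivariant, filtered version of the Clarke--Ekeland dual action principle: a finite-dimensional saddle-point reduction identifies the negative gradient dynamics of the $S^1$-invariant Hamiltonian action functional on loops with that of $\PsiC$ on $\Lambda$, up to a degree shift by $n-1$ — the shift relating the Conley--Zehnder index of a Reeb orbit to the Morse index of $\PsiC$ at the corresponding critical point. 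As the $S^1$-action by loop rotation is respected throughout, the Borel construction applies and yields, for every $L\notin\sigma(\partial C)$ and every $k\ge 1$, an isomorphism
\[
\S\H^{S^1,+,<L}_{\,n-1+2k}(C;\mathbb{Z}_2)\;\xrightarrow{\ \cong\ }\;\H^{S^1}_{\,2k-2}\bigl(\{\PsiC<L\};\mathbb{Z}_2\bigr),
\]
natural with respect to the inclusions $\{\PsiC<L\}\hookrightarrow\{\PsiC<L'\}$ and linear over $\mathbb{Z}_2[u]$. Letting $L\to\infty$, the right-hand side becomes $\H^{S^1}_{2k-2}(\Lambda;\mathbb{Z}_2)$; as $\Lambda$ is contractible, $\Lambda\times_{S^1}ES^1\simeq BS^1=\mathbb{C}P^\infty$, so this is $\mathbb{Z}_2$, in agreement with $\S\H^{S^1,+}_{n-1+2k}(C;\mathbb{Z}_2)\cong\mathbb{Z}_2$.

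Under this isomorphism the inclusion map $i_L^{S^1,+}$ in degree $n-1+2k$ becomes the map $\H^{S^1}_{2k-2}(\{\PsiC<L\};\mathbb{Z}_2)\to\H^{S^1}_{2k-2}(\Lambda;\mathbb{Z}_2)=\mathbb{Z}_2$ induced by inclusion. Dualizing over $\mathbb{Z}_2$ and using that $\pi_2^*\colon\H^*(BS^1;\mathbb{Z}_2)\to\H^*_{S^1}(\Lambda;\mathbb{Z}_2)$ is an isomorphism, this map is nonzero if and only if $\pi_2^*e^{k-1}\neq 0$ in $\H^*_{S^1}(\{\PsiC<L\};\mathbb{Z}_2)$, i.e. if and only if $\indfr(\{\PsiC<L\};\mathbb{Z}_2)\ge k$. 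Consequently
\[
c^{GH}_k(C)=\inf\{L>0\mid i_L^{S^1,+}\neq 0\}=\inf\{L>0\mid\indfr(\{\PsiC<L\};\mathbb{Z}_2)\ge k\}=s_k(C),
\]
where one first restricts to $L\notin\sigma(\partial C)$, the general case following because $\sigma(\partial C)$ is closed and nowhere dense and neither infimum sees it.

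The main obstacle is the isomorphism in the middle paragraph. Making the Clarke--Ekeland reduction rigorous while simultaneously (i) working $S^1$-equivariantly, through equivariant transversality or the Bourgeois--Oancea family Floer formalism, (ii) assuming only the $C^{1,1}$ regularity of $\PsiC$ rather than $C^2$, and (iii) tracking the action filtration exactly together with naturality for inclusions and the $\mathbb{Z}_2[u]$-module structure, is the real work. A second delicate point is the degree bookkeeping: pinning the shift down to be exactly $n-1$ uniformly in the multiplicity of the orbit, and handling the Morse--Bott and degenerate configurations that occur for Besse-type boundaries, is precisely what matches the symplectic degree $n-1+2k$ with the Fadell--Rabinowitz threshold $\indfr\ge k$.
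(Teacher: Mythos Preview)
Your outline matches the paper's proof: reduce by Hausdorff continuity to non-degenerate strongly convex $C$ with $0$ in the interior, invoke the natural filtered isomorphism between $\H^{S^1}_*(\{\PsiC<L\})$ and $\S\H^{S^1,+,<L}_*(C)$ (this is Theorem~C.2), pass to the direct limit, and translate the nonvanishing of $i_L^{S^1,+}$ into the Fadell--Rabinowitz condition by dualizing over the field (this is Lemma~\ref{spectralinvariants}).

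One correction on the bookkeeping you flag as delicate: the degree shift is $n+1$, not $n-1$. Your displayed isomorphism is actually consistent with this (the gap between $2k-2$ and $n-1+2k$ is $n+1$), but your prose contradicts it. The discrepancy with the Conley--Zehnder/Morse shift of $n$ is that the paper does not compare $\PsiC$ on $\Lambda$ directly with Floer theory: it first passes from $\H_*(\{\PsiC<L\})$ to $\H_{*+1}(\{\Psi_{H_\eta^*}<L\},\{\Psi_{H_\eta^*}<\varepsilon\})$ via a connecting-map argument (Proposition~\ref{isomorphismclarkeduality}), picking up a $+1$, and only then applies the $S^1$-equivariant Abbondandolo--Kang chain isomorphism (Theorem~\ref{theoremE2}) together with a Morse--Bott perturbation scheme to reach $\S\H^{S^1,+}$ with a further shift of $n$ (Proposition~\ref{isomorphismautomussymplectic}). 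Your single ``saddle-point reduction'' description compresses this two-step construction.
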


This theorem can be proven for an arbitrary field. Notably, it follows from this result that the spectral invariants $s_k$ are monotonic with respect to symplectic embeddings of the interiors of bounded convex domains. In particular, they remain invariant under symplectomorphisms of the interiors of bounded convex domains, which is not immediately evident from the definition.

\begin{remark*}
From Theorem \hyperlink{TheoremA}{A} and \cite[Theorem 1.2]{GR24}, we have that  
\[
c^{EH}_k(C) = s_k(C), \quad k \in \mathbb{N},
\]
where $C \subset \mathbb{R}^{2n}$ is a bounded convex domain, and $c_k^{EH}$ denotes the Ekeland-Hofer capacities defined in \cite{EH90}.
\end{remark*}

We now discuss some applications of the previous theorem.

\textbf{Strongly convex domains of Besse-type}  

Let $C \subset \mathbb{R}^{2n}$ be a bounded convex domain with smooth boundary such that the origin is contained in the interior. We say, that $C$ is Besse if all the Reeb orbits of $(\partial C,\lambda_0)$ are closed. In such cases, according to Wadsley’s theorem \cite{Wad75}, it follows that the Reeb orbits share a common period. When this period is minimal, we refer to $C$ as Zoll.

If $\tau > 0$ is a common period for the Reeb orbits on $\partial C$, then all $\tau$-periodic closed Reeb orbits possess the same Conley-Zehnder index $\mu \in \mathbb{N}$, in which case $C$ is called $(\tau, \mu)$-Besse. From \cite[Theorem 1.2]{GGM21}, and the previous theorem, the following two results follow.

\begin{corollaryB}\hypertarget{CorollaryB2}{}
    A strongly convex domain $C \subset \mathbb{R}^{2n}$ is Besse if and only if $c^{GH}_i(C) = c^{GH}_{i+n-1}(C)$ for some integer $i \geq 1$. In this scenario, $C$ is $(\tau, \mu)$-Besse, where $\tau = c_i^{GH}(C) = c^{GH}_{i+n-1}(C)$ and $\mu = 2(i-1) + n$.     
\end{corollaryB}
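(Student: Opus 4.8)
The plan is to deduce Corollary B directly from Theorem A together with \cite[Theorem 1.2]{GGM21}. The point is that \cite[Theorem 1.2]{GGM21} is a statement purely about the spectral invariants $s_k$: it asserts that a strongly convex domain $C$ is Besse if and only if $s_i(C) = s_{i+n-1}(C)$ for some integer $i \ge 1$, and that in that case $C$ is $(\tau,\mu)$-Besse with $\tau = s_i(C) = s_{i+n-1}(C)$ and $\mu = 2(i-1)+n$. Since Theorem A gives the identity $c^{GH}_k(C) = s_k(C)$ for every $k \in \mathbb{N}$ and every bounded convex domain, in particular for strongly convex ones, one may substitute $c^{GH}_k(C)$ for $s_k(C)$ everywhere in the statement of \cite[Theorem 1.2]{GGM21} and the corollary follows verbatim.

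Concretely, I would argue as follows. First, fix a strongly convex domain $C \subset \mathbb{R}^{2n}$; after a translation (which changes neither the $c^{GH}_k$, by their symplectic invariance, nor the $s_k$, by the translation-invariance noted after the definition of the spectral invariants, nor the Besse property, which only depends on the Reeb flow on $\partial C$) we may assume the origin lies in the interior of $C$. Next, invoke Theorem A to get $c^{GH}_i(C) = s_i(C)$ and $c^{GH}_{i+n-1}(C) = s_{i+n-1}(C)$ for every $i \ge 1$. Therefore the condition ``$c^{GH}_i(C) = c^{GH}_{i+n-1}(C)$ for some $i \ge 1$'' is equivalent to the condition ``$s_i(C) = s_{i+n-1}(C)$ for some $i \ge 1$''. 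By \cite[Theorem 1.2]{GGM21} the latter is equivalent to $C$ being Besse, which proves the ``if and only if''. Finally, in the Besse case, the same theorem identifies the common period as $\tau = s_i(C) = s_{i+n-1}(C)$ and the common Conley--Zehnder index as $\mu = 2(i-1)+n$; rewriting $s_i(C) = c^{GH}_i(C)$ and $s_{i+n-1}(C) = c^{GH}_{i+n-1}(C)$ via Theorem A yields $\tau = c^{GH}_i(C) = c^{GH}_{i+n-1}(C)$ and $\mu = 2(i-1)+n$, as claimed.

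There is essentially no obstacle here beyond bookkeeping: the entire mathematical content is in Theorem A and in \cite{GGM21}, and the corollary is a formal consequence. The only point that deserves a line of care is the translation step — one must make sure that the hypotheses of \cite[Theorem 1.2]{GGM21} (which presumably also assume the origin is interior, since $\Psi_C$ is only defined after such a normalization) and of Theorem A are simultaneously met, and that none of $c^{GH}_k$, $s_k$, or the Besse property is affected by the choice of translate; this was already observed in the discussion preceding the corollary. Hence the proof is a two-line substitution argument.
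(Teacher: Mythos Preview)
Your proposal is correct and is exactly the argument the paper gives: the corollary is stated immediately after the sentence ``From \cite[Theorem 1.2]{GGM21}, and the previous theorem, the following two results follow,'' so the intended proof is precisely the substitution $c^{GH}_k(C)=s_k(C)$ from Theorem~A into \cite[Theorem 1.2]{GGM21}. Your extra remark about the translation step is harmless and already covered by the paper's discussion of the invariance of $s_k$ under translations.
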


\begin{corollaryB}\hypertarget{CorollaryB3}{}
    A strongly convex domain $C \subset \mathbb{R}^{2n}$ is Zoll if and only if $c^{GH}_1(C) = c^{GH}_{n}(C)$.
\end{corollaryB}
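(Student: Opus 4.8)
The plan is to obtain the statement by combining Theorem~\hyperlink{TheoremA}{A} with the spectral characterisation of strongly convex Besse and Zoll domains from \cite[Theorem~1.2]{GGM21}. By Theorem~\hyperlink{TheoremA}{A} we have $c_k^{GH}(C)=s_k(C)$ for every $k$, so the asserted equivalence is the same as
\[
C \text{ is Zoll} \iff s_1(C)=s_n(C),
\]
which is precisely the Zoll case of \cite[Theorem~1.2]{GGM21}, phrased there in terms of the Fadell--Rabinowitz spectral invariants. I will also indicate how the two implications can be read off more explicitly from \hyperlink{CorollaryB2}{Corollary~B} above (the Besse case), which is itself the translation of \cite[Theorem~1.2]{GGM21} through Theorem~\hyperlink{TheoremA}{A}.

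For the implication ``$\Leftarrow$'', suppose $c_1^{GH}(C)=c_n^{GH}(C)$. Applying \hyperlink{CorollaryB2}{Corollary~B} with $i=1$ shows that $C$ is $(\tau,\mu)$-Besse with $\tau=c_1^{GH}(C)$ and $\mu=2(1-1)+n=n$. By Theorem~\hyperlink{TheoremA}{A} and the fact, recalled in the introduction, that $s_1$ is the minimum of $\sigma(\partial C)$, we have $\tau=c_1^{GH}(C)=s_1(C)=\min\sigma(\partial C)$. Hence the common period $\tau$ of all closed Reeb orbits of $(\partial C,\lambda_0)$ equals the minimal period, and so $C$ is Zoll by definition.

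For the implication ``$\Rightarrow$'', suppose $C$ is Zoll, with minimal common period $\tau_0=\min\sigma(\partial C)=c_1^{GH}(C)$. The $\tau_0$-periodic closed characteristics are then simple and all carry the same Conley--Zehnder index; the key point is that this common index equals $n$, so that $C$ is $(\tau_0,n)$-Besse. Granting this, \cite[Theorem~1.2]{GGM21} (equivalently, the mechanism behind \hyperlink{CorollaryB2}{Corollary~B} with $i=1$, since $n=2(1-1)+n$) yields $s_1(C)=s_n(C)=\tau_0$, whence $c_1^{GH}(C)=c_n^{GH}(C)$ by Theorem~\hyperlink{TheoremA}{A}. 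The only ingredient here that is not a formal substitution via Theorem~\hyperlink{TheoremA}{A} is this index normalisation $\mu=n$ for the simple closed characteristics of a Zoll strongly convex domain; this is the step I expect to be the main (if standard) obstacle, and it is verified either inside \cite[Theorem~1.2]{GGM21} or directly by a Robbin--Salamon/Morse--Bott index computation modelled on the round sphere $\partial B^{2n}$.
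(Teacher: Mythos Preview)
Your proposal is correct and follows exactly the route the paper takes: the paper gives no argument beyond stating that both \hyperlink{CorollaryB2}{Corollary~B.1} and the present corollary follow from Theorem~\hyperlink{TheoremA}{A} together with \cite[Theorem~1.2]{GGM21}. Your additional unpacking of the two implications via the Besse corollary with $i=1$, and the identification of the index normalisation $\mu=n$ for the simple orbits of a Zoll boundary as the one substantive ingredient borrowed from \cite{GGM21}, is accurate and goes a bit further than the paper itself does.
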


Since we now, in the strongly convex category, have a characterization of Besse domains in terms of capacities, it follows that the interior of a strongly convex Besse-type domain encodes information about the Reeb flow on the boundary.

Consider the ellipsoid  
\[
E(a_1,\dots,a_n)=\left\{z=(z_1,\dots,z_n)\in \mathbb{R}^{2n}\mid \sum\limits_{i=1}^{n}\frac{|z_i|^2}{a_i}\leq \frac{1}{\pi}\right\},
\]
where $ 0<a_1\leq \dots \leq a_n<\infty$ . The Reeb flow on its boundary is given by  
\[
\varphi_R^t: \partial E(a_1,\dots,a_n)\to \partial E(a_1,\dots,a_n), \quad \varphi^t_R(z)=(e^{J_0 2\pi t/a_1}z_1,\dots, e^{J_0 2\pi t/a_n}z_n).
\]
This ellipsoid is Besse if and only if the ratios $a_i/a_k$ are rational for every $ i,k\in \{1,\dots ,n\}$. When this condition holds, the minimal period $\tau_0>0$ of the Reeb flow is the least common multiple of $ a_1,\dots, a_n $. For any $ \tau>0 $ that is a multiple of $ \tau_0 $, the Reeb orbits with period $ \tau $ have Conley–Zehnder index $
\mu=2\left(\frac{\tau}{a_1}+\dots+\frac{\tau}{a_n}\right)-n.$ In particular, the ellipsoid is $(\tau,\mu)$-Besse.

\begin{corollaryB}
    Let $C \subset \mathbb{R}^{2n}$ be a strongly convex domain whose interior is symplectomorphic to the interior of a rational ellipsoid. Then $C$ is Besse of the same type as the corresponding rational ellipsoid. In particular, if the ellipsoid is a ball, then $C$ is Zoll.
\end{corollaryB}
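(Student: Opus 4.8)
The plan is to reduce this to the characterization of Besse domains already established. By Theorem A, for the strongly convex domain $C$ we have $c_k^{GH}(C) = s_k(C)$ for all $k$. Moreover, the Gutt–Hutchings capacities are monotone under symplectic embeddings of interiors, and in particular invariant under symplectomorphisms of interiors; this invariance property is precisely the observation recorded right after the statement of Theorem A. Hence if $\mathrm{int}(C)$ is symplectomorphic to $\mathrm{int}(E)$ for a rational ellipsoid $E = E(a_1,\dots,a_n)$, then $c_k^{GH}(C) = c_k^{GH}(E)$ for every $k \in \mathbb{N}$. So the first step is simply to compute $c_k^{GH}(E)$ for a rational ellipsoid — but this is classical: the Gutt–Hutchings capacities of $E(a_1,\dots,a_n)$ are given by the $k$-th smallest element (with multiplicity) of the list $\{ m a_i : m \in \mathbb{N}_{\geq 1},\ i \in \{1,\dots,n\}\}$, which is exactly the sequence of actions of closed Reeb orbits on $\partial E$ ordered appropriately. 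I would cite \cite{GH18} for this formula.

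The second step is to run Corollary B (first version) backwards. Since $E$ is rational, its minimal common Reeb period is $\tau_0 = \mathrm{lcm}(a_1,\dots,a_n)$, and for any multiple $\tau = N\tau_0$ the $\tau$-periodic orbits all have the same Conley–Zehnder index $\mu = 2\big(\tfrac{\tau}{a_1} + \dots + \tfrac{\tau}{a_n}\big) - n$. I would translate this Reeb-dynamical fact into an equality of capacities for $E$: there exist $i \geq 1$ with $c_i^{GH}(E) = c_{i+n-1}^{GH}(E) = \tau$ and $\mu = 2(i-1) + n$. Concretely, for the first such coincidence one takes $\tau = \tau_0$; in general the relation $\mu = 2(i-1)+n$ determines $i$ from $\mu$. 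Because $c_k^{GH}(C) = c_k^{GH}(E)$, the same index coincidence $c_i^{GH}(C) = c_{i+n-1}^{GH}(C)$ holds for $C$, so Corollary B (first version) applies and shows $C$ is $(\tau,\mu)$-Besse with the very same $\tau$ and $\mu$. Running over all admissible $\tau$ (equivalently all coincidences of capacities) shows that $C$ is Besse with the same set of common periods and the same Conley–Zehnder indices as $E$, i.e. Besse of the same type. The ball case $E(a,\dots,a)$ then has $\tau_0 = a$ minimal, and by Corollary B (second version) $c_1^{GH}(C) = c_n^{GH}(C)$ forces $C$ to be Zoll.

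The main obstacle is bookkeeping rather than anything deep: one must be careful that the notion of ``type'' ($(\tau,\mu)$-Besse for each admissible $\tau$) is genuinely captured by the full sequence of capacities, so that transporting the sequence from $E$ to $C$ transports all the Besse data. This requires checking that every coincidence $c_i^{GH} = c_{i+n-1}^{GH}$ forced by the Reeb dynamics of the rational ellipsoid is reflected faithfully, and conversely that $C$ acquires no extra structure — but this is immediate once one knows the capacities agree, since by Corollary B the Besse-type data of a strongly convex domain is a function of its capacity sequence alone. One subtlety worth a remark: Theorem A and the monotonicity statement are phrased for bounded convex domains, and we are applying them to the strongly convex $C$ and to the (smooth, strongly convex) rational ellipsoid $E$, so all hypotheses are met; the symplectomorphism of interiors need not extend to the boundaries, which is exactly why the interior-invariance of $c_k^{GH}$ is the essential input.
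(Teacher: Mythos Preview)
Your proposal is correct and follows exactly the approach the paper (implicitly) uses: the Gutt--Hutchings capacities are invariant under symplectomorphisms of interiors, so $c_k^{GH}(C) = c_k^{GH}(E)$ for all $k$, and then the Besse characterization via capacities (the first Corollary~B) applied in both directions transports the $(\tau,\mu)$-Besse data from $E$ to $C$, with the Zoll case following from the second Corollary~B. The explicit formula for $c_k^{GH}(E)$ and the opening appeal to Theorem~A are not strictly needed---invariance of $c_k^{GH}$ is a general property of symplectic capacities, and the first Corollary~B applied directly to $E$ already yields the required coincidences $c_i^{GH}(E)=c_{i+n-1}^{GH}(E)$---but they do no harm.
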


This rigidity phenomenon does not hold for certain classes of irrational ellipsoids. The Anosov-Katok construction (see \cite{AK70,Kat73}) produces strongly convex domains whose interior is symplectomorphic to that of an irrational ellipsoid, but whose boundary dynamics differ significantly (for example, they exhibit dense orbits). See \cite[Appendix A]{ABE23} for further details on these results.

\textbf{Generalized Zoll convex domains}

Theorem \hyperlink{TheoremA}{A} allows us to extend the definition of Zoll convex domains to the non-smooth convex domains in a way that is natural from a symplectic point of view. 

Indeed, let $C \subset \mathbb{R}^{2n}$ be a bounded convex domain. For such a domain it is possible to define the space of generalized systoles as the space of generalized closed characteristis of minimal action. In particular, if the interior of $C$ contains the origin, we can define the space of generalized systoles as  
\[
\operatorname{Sys}(\partial C) = \left\{\gamma: \mathbb{T} \to \partial C \text{ absolutely continuous} \mid \dot{\gamma}(t) \in T_{\min} J_0 \partial \HC(\gamma(t)) \ \text{a.e.} \right\},
\]
where $\partial H_C$ denotes the subdifferential of $\HC$ (see \cite{Cla83}) and $T_{\min} = \min \sigma(\partial C)$.  

On this space, we have a natural $S^1$-action given by  
\[
\theta \cdot \gamma = \gamma(\cdot - \theta), \quad \theta \in \mathbb{T}, \  \gamma \in \operatorname{Sys}(\partial C).
\]
Taking the $L^2$-projection of this space onto the space of curves in $\mathbb{R}^{2n}$ with zero mean, we define the space of centralized systoles (see \cite{Mat25}).

We introduce the \textit{systolic $S^1$-index} of $C$, denoted by $\indsys(C)$, as the Fadell–Rabinowitz index of the space of centralized systoles in a suitable topology (see \cite[Definition 1.1]{Mat25}).

Using Theorem \hyperlink{TheoremA}{A}, we show that it holds  
\[
\indsys(C) = \max\{k \in \mathbb{N} \mid c^{GH}_k(C) = c^{GH}_1(C)\}.
\]  

For a more precise statement, see \cite[Theorem 1.2]{Mat25}. From this identity, it follows that $\indsys$ is a symplectic invariant (see \cite[Corollary 1.3]{Mat25}), i.e., it is invariant under symplectomorphisms of the interiors of bounded convex domains.  

We define a \textit{generalized Zoll convex domain} as a bounded convex domain $C \subset \mathbb{R}^{2n}$ that satisfies  
\[
\indsys(C) \geq n.
\]  

We show that this definition is equivalent to the standard definition of Zoll convex domains in the smooth setting (see \cite[Theorem 1.5]{Mat25}). Moreover, since $\indsys$ is a symplectic invariant, we obtain non-smooth examples of bounded convex domains whose interiors are symplectomorphic to the interior of a ball and are therefore generalized Zoll domains (see \cite[Corollary 1.6]{Mat25}). Examples of such domains can be found in \cite{Tra95, Sch05, LMS13, Rud22,ORS23}. In particular, this work demonstrates that the condition of strong convexity can be replaced by convexity in Corollary \hyperlink{CorollaryB2}{B.2} and in Corollary \hyperlink{CorollaryB3}{B.3} for the Zoll case.

These results are not a straightforward application of Theorem \hyperlink{TheoremA}{A} and we refer the reader to \cite{Mat25} for further details.

The result of Theorem \hyperlink{TheoremA}{A} is a corollary of a more precise statement. Since $\Lambda$ is $S^1$-contractible and $\mathbb{Z}_2$ is a field, it is straightforward to show that $s_k(C)$ can be expressed as
\[
s_k(C) = \inf\{L > 0 \mid (i^C_L)_* \neq 0\},
\]
where
\[
(i^C_L)_* : \H^{S^1}_{2k-2}(\{\Psi_C < L\}) \to \H_{2k-2}^{S^1}(\Lambda)
\]
and $i^C_L: \{\Psi_C < L\} \to \Lambda$ is the inclusion. Since both $c_k^{GH}$ and $s_k$ arise from $S^1$-equivariant homologies and have analogous definitions, it is natural to ask if they essentially derive from the same object.

We provide a positive answer to this question by showing that the positive ($S^1$-equivariant) homology of non-degenerate strongly convex domain whose interior contains the origin is naturally isomorphic to the singular ($S^1$-equivariant) homology induced by the Clarke's dual functional.

Let $L_1, L_2 \notin \sigma(\partial C)$ with $L_1 < L_2$. We denote by
\[
i^{+}_{L_2,L_1}: \S\H^{+,<L_1}_{*}(C) \to \S\H^{+,<L_2}_{*}(C)
\]
the inclusion in positive symplectic homology, and by
\[
i^{S^1,+}_{L_2,L_1}: \S\H^{S^1,+,<L_1}_{*}(C) \to \S\H^{S^1,+,<L_2}_{*}(C)
\]
the inclusion in positive $S^1$-equivariant homology. On the other hand,
\[
i^C_{L_2,L_1}: \{\Psi_C < L_1\} \to \{\Psi_C < L_2\}
\]
denotes the inclusion of sets.

We say that $C \subset \mathbb{R}^{2n}$ is non-degenerate if all closed Reeb orbits on the boundary are transversely non-degenerate.

\begin{theoremC}\hypertarget{TheoremC1}{}
    Let $C$ be a non-degenerate, strongly convex domain whose interior contains the origin. For every $L \in \mathbb{R} \setminus \sigma(\partial C)$ there exists an isomorphism
    \[
    D_L: \H_*(\{\Psi_C < L\}) \to \S\H^{+,<L}_{*+n+1}(C).
    \]

    Moreover, this isomorphism is natural, that is, for every $L_1 < L_2$ not in the spectrum of $\partial C$, the following diagram commutes:

    \[
    \begin{tikzcd}
    \H_*(\{\Psi_C < L_2\}) \arrow{r}{D_{L_2}} & \S\H^{+,<L_2}_{*+n+1}(C) \\
    \H_*(\{\Psi_C < L_1\}) \arrow[swap]{u}{(i^C_{L_2,L_1})_*} \arrow{r}{D_{L_1}} & \S\H^{+,<L_1}_{*+n+1}(C) \arrow[swap]{u}{i^{+}_{L_2,L_1}}
    \end{tikzcd}.
    \]
\end{theoremC}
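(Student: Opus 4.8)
The plan is to present both sides of $D_L$ as homologies of filtered Morse--Bott complexes generated by the closed Reeb orbits of $\partial C$, and to construct $D_L$ from the Clarke--Ekeland dual action principle. On the symplectic side one computes $\S\H^{+,<L}_*(C)$ from the Floer homology of a cofinal family of autonomous Hamiltonians $h_\tau\circ\HC$, with $h_\tau$ convex, asymptotically linear of slope $\tau\to\infty$, $\tau\notin\sigma(\partial C)$; the positive-action $1$-periodic orbits form Morse--Bott $S^1$-families in bijection with the closed Reeb orbits $y$ of period below $\tau$, their Hamiltonian action tends to $\mathcal{A}(y)$ as $\tau\to\infty$, and after a small perturbation making the orbits nondegenerate each $y$ contributes two generators $\check y,\hat y$ in degrees $\mu_{CZ}(y)+n-1$ and $\mu_{CZ}(y)+n$ in the standard grading of $\S\H^+$. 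On the Clarke side, strong convexity together with positive curvature of $\partial C$ make $\HdC$, hence $\PsiC$, of class $C^{1,1}$ and ensure that $\PsiC$ satisfies the Palais--Smale condition on $\Lambda$; by the bijection $\mathcal{P}$ its critical set is $\bigsqcup_y S^1\cdot\mathcal{P}(y)$, nondegeneracy of $C$ makes each critical manifold a single circle, the identity $\PsiC(\mathcal{P}(y))=\mathcal{A}(y)$ matches the two filtrations, and $S^1\cdot\mathcal{P}(y)$ contributes $H_*(S^1)$, shifted by its transverse Morse index, to the Morse--Bott filtration of the sublevel sets $\{\PsiC<L\}$.

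\textbf{Index matching.} The first ingredient is the index identity: for nondegenerate strongly convex $C$, the transverse Morse index of $\PsiC$ at $S^1\cdot\mathcal{P}(y)$ equals $\mu_{CZ}(y)-2$, equivalently $\mu_{CZ}(y)+n-1 = (\text{transverse Morse index}) + (n+1)$. I would deduce this from Ekeland's index theory: restricted to $T\Lambda$, the Hessian of $\PsiC$ at $\mathcal{P}(y)$ is bounded below, its leading part being controlled by the positive-definite Hessian of $\HdC$, hence has finite Morse index, which a spectral-flow computation identifies --- through the fibrewise Legendre transform --- with the Maslov-type index of the linearized Reeb flow along $y$; tracking the normalizations and the one-dimensional kernel coming from the $S^1$-symmetry gives the shift $2$. (This can be cross-checked on ellipsoids, where $\PsiC$ diagonalizes in Fourier modes.) Granting this, $\H_*(\{\PsiC<L\})$ and $\S\H^{+,<L}_{*+n+1}(C)$ have the same Reeb-orbit generators, in the same degrees, with matching action filtrations.

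\textbf{The natural isomorphism.} To upgrade this to a filtered \emph{chain-level} isomorphism, realize both complexes as continuation-equivalent instances of one Floer-type theory, via a Legendre-type interpolation between the Hamiltonian action functional of a slope-$\tau$ modification of $\HC$ and Clarke's functional $\PsiC$ (in the spirit of the dual action principle, cf.\ \cite{Eke90}). Along such a homotopy the Fenchel--Young inequality $\HC(v)+\HdC(w)\ge\langle v,w\rangle$, with equality exactly along solutions, forces the continuation maps to be action-nonincreasing, so they descend to the filtered groups; an adiabatic/rescaling argument at the Clarke end identifies the resulting complex with the (perturbed) Morse complex of $\PsiC$ on $\{\PsiC<L\}$, the a priori $2$-dimensional Floer solutions degenerating to negative $L^2$-gradient trajectories of $\PsiC$ on $\Lambda$, while at the Hamiltonian end one recovers the filtered Floer complex computing $\S\H^{+,<L}_*(C)$; composing these defines $D_L$. (Alternatively one can route through the Amann--Zehnder/Ekeland saddle-point reduction of the Hamiltonian action on $\Honehalf$, whose reduced functional is identifiable with $\PsiC$, together with the comparison of Floer homology with the Morse homology of the reduction.) Naturality in $L$ is then immediate: the Hamiltonians, perturbations, almost complex structures and homotopies can be chosen independently of $L\notin\sigma(\partial C)$, so the squares with $(i^C_{L_2,L_1})_*$ and $i^{+}_{L_2,L_1}$ commute already on the chain level.

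\textbf{Main obstacle.} The analytic comparison just described is the crux. One must set up Floer (or hybrid) theory for the interpolating functionals with enough regularity to run transversality, $C^0$/energy compactness and gluing, despite $\HdC$ being only $C^{1,1}$ --- this is exactly where strong convexity and positive curvature of $\partial C$ enter, supplying $C^{1,1}$ bounds, uniform convexity estimates and Palais--Smale --- and one must prove that the rescaled limit at the Clarke end is genuinely the Morse complex of $\PsiC$ on $\Lambda$, i.e.\ that a priori $2$-dimensional objects reduce to $1$-dimensional gradient trajectories. Secondary points requiring care: the exact accounting of action under the free-period rescaling, so that the filtration thresholds fall precisely on $\sigma(\partial C)$; the Ekeland index identity uniformly over all iterates of all Reeb orbits; and the nonsmooth locus of $\PsiC$ when $\partial C$ is smooth but not real-analytic, which is absorbed by a $C^2$-approximation of $\PsiC$ preserving the homotopy type of every sublevel set $\{\PsiC<L\}$ with $L\notin\sigma(\partial C)$.
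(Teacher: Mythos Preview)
Your proposal captures the right generators-and-index picture but leaves the central step unproved, and the route you sketch for that step would not go through as stated.

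\textbf{The gap.} The ``Legendre-type interpolation'' you propose would have to connect the Hamiltonian action $\PhiH$ on $\Honehalf$ (or the free loop space) to Clarke's functional $\PsiC$ on the codimension-one hypersurface $\Lambda\subset\Honenull$. These live on genuinely different spaces, and no homotopy of functionals on a common domain is available. The adiabatic claim that two-dimensional Floer cylinders degenerate to one-dimensional gradient trajectories of $\PsiC$ on $\Lambda$ is not something one can hope to prove directly: even for the dual action $\PsiH$ on the full $\Honenull$, the comparison with Floer theory requires the half-cylinder moduli spaces of Abbondandolo--Kang \cite{AK22} with a very specific nonlocal boundary condition (the positive-Fourier projection of $u(0,\cdot)$ must lie on the graph $\Gamma_{l,+}$ over an unstable manifold of the \emph{saddle-point reduced} functional $\psilH$ on a finite-dimensional $\mathbb{H}_l$). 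That analysis is what replaces your adiabatic limit, and it does not see $\PsiC$ on $\Lambda$ at all.

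\textbf{How the paper closes it.} The paper factors $D_L$ through an intermediate object. First, for an autonomous $H_\eta=\varphieta\circ\HC$ with $\varphieta$ convex and of slope $\eta$ just below $L$, it uses the \cite{AK22} chain isomorphism $\Theta:\C\M_*(\psilH)\to\C\F_{*+n}(H)$ (upgraded here to be functorial under monotone homotopies) to get
\[
D^{H_\eta}_L:\ \H_*\bigl(\{\PsiHeta<L\},\{\PsiHeta<\varepsilon\}\bigr)\ \xrightarrow{\ \cong\ }\ \S\H^{+,<L}_{*+n}(C),
\]
where $\PsiHeta$ is the dual action on all of $\Honenull$ and $\varepsilon=\tfrac12\min\sigma(\partial C)$. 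Second, and this is the step your outline misses entirely, it compares $\PsiC$ on $\Lambda$ with that \emph{relative pair} by purely topological means: a Mayer--Vietoris argument plus a careful study of the radial behaviour of $\PsiHeta$ on cones $\mathbb{R}_+\{\PsiC<L\}$ yields a natural isomorphism
\[
D^{H^*_\eta}_L:\ \H_*\bigl(\{\PsiC<L\}\bigr)\ \xrightarrow{\ \cong\ }\ \H_{*+1}\bigl(\{\PsiHeta<L\},\{\PsiHeta<\varepsilon\}\bigr).
\]
The $+1$ here, together with the $+n$ from $\Theta$, gives the $n+1$ shift. The passage from $\Lambda$ to $\Honenull$ is thus handled by algebraic topology, not by Floer analysis; your proposal conflates these two steps into a single analytic one that is not available.

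Your alternative suggestion (route through an Amann--Zehnder reduction) is closer in spirit, but still requires both the \cite{AK22} half-cylinder machinery and the separate topological comparison with $\PsiC$.
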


\begin{theoremC}\hypertarget{TheoremC2}{}
    Let $C$ be a non-degenerate, strongly convex domain whose interior contains the origin. For every $L \in \mathbb{R} \setminus \sigma(\partial C)$ there exists an isomorphism
    \[
    D^{S^1}_L: \H^{S^1}_*(\{\Psi_C < L\}) \to \S\H^{S^1,+,<L}_{*+n+1}(C), \quad L \in \mathbb{R} \setminus \sigma(\partial C).
    \]
    Moreover, this isomorphism is natural, that is, for every $L_1 < L_2$ not in the spectrum of $\partial C$, the following diagram commutes:

    \[
    \begin{tikzcd}
    \H^{S^1}_*(\{\Psi_C < L_2\}) \arrow{r}{D^{S^1}_{L_2}} & \S\H^{S^1,+,<L_2}_{*+n+1}(C) \\
    \H^{S^1}_*(\{\Psi_C < L_1\}) \arrow[swap]{u}{(i^C_{L_2,L_1})_*} \arrow{r}{D^{S^1}_{L_1}} & \S\H^{S^1,+,<L_1}_{*+n+1}(C) \arrow[swap]{u}{i^{S^1,+}_{L_2,L_1}}
    \end{tikzcd}.
    \]
\end{theoremC}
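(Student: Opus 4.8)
The plan is to carry out the proof of \hyperlink{TheoremC1}{Theorem C.1} in families and then deduce the equivariant statement from the non-equivariant one by a filtration argument. Recall that, following Bourgeois--Oancea, $\S\H^{S^1,+,<L}_*(C)$ is computed from parametrized Floer complexes: one fixes an $S^1$-invariant family of admissible Hamiltonians parametrized by the finite approximations $S^{2N+1}\subset ES^1$ together with a Morse function on $\mathbb{C}P^N = S^{2N+1}/S^1$, obtaining a complex which, as a $\mathbb{Z}_2$-module, has the form $SC^{+,<L}_*(C)\otimes\H_*(\mathbb{C}P^N)$ and whose differential combines the Floer differential $\partial$ with $S^1$-correction operators $\delta_1,\delta_2,\dots$ that strictly decrease the filtration by the cells of $\mathbb{C}P^N$ and count parametrized Floer cylinders; letting $N\to\infty$ gives the complex computing $\S\H^{S^1,+,<L}_*(C)$. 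On the Clarke side, $\H^{S^1}_*(\{\Psi_C<L\})$ is the homology of $\{\Psi_C<L\}\times_{S^1}ES^1$, and with the same finite approximations and an $S^1$-family of small perturbations of $\Psi_C$ it is computed by a Morse--Bott-type complex of the analogous shape $C^{\mathrm{M}}_*(\{\Psi_C<L\})\otimes\H_*(\mathbb{C}P^N)$, whose differential combines $\partial^{\mathrm{M}}$ with correction operators $\delta_1^{\mathrm{M}},\delta_2^{\mathrm{M}},\dots$.

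First I would upgrade the chain-level Clarke-duality map underlying \hyperlink{TheoremC1}{Theorem C.1} --- the quasi-isomorphism $\Phi\colon C^{\mathrm{M}}_*(\{\Psi_C<L\})\to SC^{+,<L}_{*+n+1}(C)$ built from the bijection $\mathcal{P}$ between critical points of $\Psi_C$ and Reeb orbits, together with a compactness/gluing identification of the corresponding negative-gradient and Floer trajectory spaces (the degree shift $n+1$ coming from the comparison between the Morse index of $\Psi_C$ and the Conley--Zehnder index) --- to a morphism of the parametrized complexes. Concretely, I would choose the family of Hamiltonians and the family of Clarke perturbations so that they correspond under the Clarke dictionary at every parameter $z\in S^{2N+1}$, run the Theorem C.1 construction parameter-wise, and count the resulting parametrized trajectory spaces to produce higher operators $\Phi_1,\Phi_2,\dots$, decreasing the $\mathbb{C}P^N$-filtration, which assemble into a chain map
\[
\Phi^{S^1} \;=\; \Phi + \Phi_1 + \Phi_2 + \cdots
\]
at each finite level $N$, and, in the limit, into a chain map of the $S^1$-equivariant complexes. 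By construction $\Phi^{S^1}$ is filtered with respect to the skeletal (equivalently, $u$-adic) filtration, its associated graded is $\Phi$ in each filtration weight, and it commutes with the inclusion-induced maps as $L$ varies.

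It remains to show that $\Phi^{S^1}$ is a quasi-isomorphism, and then to set $D^{S^1}_L = (\Phi^{S^1})_*$. Here I would invoke the standard comparison principle for filtered complexes: since the associated graded of $\Phi^{S^1}$ is $\Phi$ in each weight, which is a quasi-isomorphism by \hyperlink{TheoremC1}{Theorem C.1}, and since the skeletal filtration is bounded below and finite in each fixed degree (because $\H_*(\{\Psi_C<L\})$ is finitely generated and concentrated in finitely many degrees, being the homology of a finite CW complex), the induced map of spectral sequences is an isomorphism from the first page on, so $\Phi^{S^1}$ is a quasi-isomorphism. Equivalently, at each finite level $N$ one applies the five lemma finitely many times to the Gysin-type exact sequences relating $\S\H^{+,<L}$ to $\S\H^{S^1,+,<L}$ (respectively $\H_*$ to $\H^{S^1}_*$), using Theorem C.1 on the non-equivariant terms, and then passes to the limit $N\to\infty$. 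Naturality in $L$ is immediate: $\Phi^{S^1}$ intertwines $i^{S^1,+}_{L_2,L_1}$ with $(i^C_{L_2,L_1})_*$ for the same reason as in the non-equivariant case, since the whole construction lives at chain level and the inclusions are induced by inclusions of generators.

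The main obstacle is the construction and analysis of the parametrized data. One must arrange, $S^1$-equivariantly and coherently in $z\in S^{2N+1}$, a Morse--Smale (resp.\ Morse--Bott--Smale) situation both for the family of Hamiltonians and for the family of Clarke functionals; the latter is delicate because $\Psi_C$ is only $C^{1,1}$, so the perturbation scheme of \hyperlink{TheoremC1}{Theorem C.1} has to be carried out in families without breaking the Clarke correspondence. One then has to prove the higher matching statement: under the dictionary $\mathcal{P}$, the parametrized Floer moduli spaces defining the $\delta_j$ are identified, as $\mathbb{Z}_2$-counted manifolds with corners, with those defining the $\delta_j^{\mathrm{M}}$, so that the $\Phi_j$ indeed assemble into a chain map. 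Equivalently, one must show that the Clarke-duality quasi-isomorphism is compatible with the loop-rotation ($S^1$/BV) operators up to coherent homotopy --- this is exactly what makes the two Borel constructions correspond and what genuinely goes beyond a formal consequence of \hyperlink{TheoremC1}{Theorem C.1}. Once it is in place, the filtration argument above is routine.
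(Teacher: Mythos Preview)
Your overall strategy—parametrize by $S^{2N+1}$, build a family chain map, and deduce the equivariant statement from the non-equivariant one via the skeletal filtration / Gysin sequence—is sound and is in fact close in spirit to what the paper does. But your starting point is off: there is no chain map $\Phi\colon C^{\mathrm M}_*(\{\Psi_C<L\})\to SC^{+,<L}_{*+n+1}(C)$ underlying Theorem~C.1 of the kind you describe. The isomorphism $D_L$ of Theorem~C.1 is defined only at the homology level as a \emph{composition} $D^{H_\eta}_L\circ D^{H^*_\eta}_L$. The first factor $D^{H^*_\eta}_L$ (Proposition~\ref{isomorphismclarkeduality}) is a purely topological comparison between sublevel sets of $\Psi_C$ and relative sublevel sets of the auxiliary dual functional $\Psi_{H_\eta^*}$, obtained by Mayer--Vietoris and deformation-retraction arguments; the degree shift by $+1$ comes from an exact-sequence argument, not from a chain map. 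The second factor $D^{H_\eta}_L$ passes through the Abbondandolo--Kang chain isomorphism $\Theta\colon \C\M_*(\psi^l_{H^*})\to \C\F_{*+n}(H)$, which compares the \emph{reduced} dual functional (a smooth finite-dimensional saddle-point reduction) with the Floer complex. So the object you propose to upgrade does not exist as described, and much of your ``main obstacle'' paragraph—matching $\delta_j$'s on both sides, BV-compatibility of $\Phi$—is predicated on it.

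The paper's route is instead to rerun both steps in the $S^1$-equivariant setting. For the second step it develops the full $S^1$-equivariant machinery in Sections~\ref{sectionfloer}--\ref{sectionchain}: a family $\mathcal{H}^{S^1,N}_\Theta(\widetilde f_N,g_N)$ of admissible parametrized Hamiltonians, an $S^1$-equivariant reduced dual functional $\psi^l_{H^*}$ on $\mathbb H_l\times S^{2N+1}$, an $S^1$-equivariant Morse vector field $V_g$, and parametrized half-cylinder moduli spaces, and proves directly (Theorem~\ref{theoremE2}) that the resulting map $\Theta^{S^1}\colon \C\M^{S^1,N}_*(\psi^l_{H^*},\widetilde f_N)\to \C\F^{S^1,N}_{*+n}(H,\widetilde f_N)$ is a filtration-preserving \emph{chain isomorphism} (upper-triangular with $1$'s on the diagonal), not merely a quasi-isomorphism obtained by a spectral-sequence comparison. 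The first step (Proposition~\ref{isomorphismclarkeduality}) is noted to go through verbatim for $S^1$-equivariant homology because all the sets and maps involved are $S^1$-invariant (Remark~\ref{isomorphismclarkedualityS1}). Your spectral-sequence shortcut would give an alternative endgame once $\Theta^{S^1}$ is constructed, but it does not bypass the analytic work of building the parametrized half-cylinder moduli spaces; and it would still need to be combined with the separate topological step producing the extra $+1$ shift.
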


\textbf{The topological entropy of the Reeb flow on the boundary of a strongly convex domain}

Theorem \hyperlink{TheoremC1}{C.1} establishes an isomorphism between the persistence modules $(\S\H^{+,<L}(C), i^{+}_{L_2,L_1})$ and $(\H(\{\Psi_C < L\}), (i^C_{L_2,L_1})_*)$ (see \cite{FLS23} or \cite{CGGM24} for definitions of persistence modules, barcodes, and barcode entropy). Let $\hslash_+(\lambda_0)$ denote the barcode entropy of the persistence module $(\S\H^{+,<L}(C), i^{+}_{L_2,L_1})$, and let $\hslash(\Psi_C)$ represent the barcode associated with the persistence module $(\H(\{\Psi_C < L\}), (i^C_{L_2,L_1})_*)$. Theorem \hyperlink{TheoremC1}{C.1} implies that

\[
\hslash_+(\lambda_0) = \hslash(\Psi_C).
\]

Let $(W, \lambda)$ be a non-degenerate Liouville domain, and let $\varphi$ be the Reeb flow associated with the contact form $\lambda|_{\partial C}$. Recent work \cite{FLS23} shows that

\[
\hslash(\lambda) \leq h_{\text{top}}(\varphi),
\]

where $\hslash(\lambda)$ is the barcode entropy of the symplectic homology of $(W, \lambda)$, and $h_{\text{top}}$ is the topological entropy. Moreover, according to \cite[Proposition 4.11]{FLS23}, we have

\[
\hslash_+(\lambda) = \hslash(\lambda),
\]

where $\hslash_+(\lambda)$ is the barcode entropy of the positive symplectic homology. Combining these results and \cite[Theorem B]{CGGM24}, we obtain the following.

\begin{corollaryD}
    Let $C \subset \mathbb{R}^{2n}$ be a non-degenerate, strongly convex domain whose interior contains the origin. Then
    \[
    \hslash(\Psi_C) \leq h_{\text{top}}(\varphi).
    \]
    Moreover, the equality holds for $n=2$.
\end{corollaryD}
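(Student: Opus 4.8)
The plan is to read off the inequality from Theorem~\hyperlink{TheoremC1}{C.1} together with the entropy results quoted above. First, I would note that Theorem~\hyperlink{TheoremC1}{C.1} supplies, for each $L\in\mathbb{R}\setminus\sigma(\partial C)$, an isomorphism $D_L\colon\H_*(\{\PsiC<L\})\to\S\H^{+,<L}_{*+n+1}(C)$ intertwining the filtration morphisms $(i^C_{L_2,L_1})_*$ and $i^{+}_{L_2,L_1}$. Thus $D=\{D_L\}$ is an isomorphism of persistence modules from $(\H(\{\PsiC<L\}),(i^C_{L_2,L_1})_*)$ onto $(\S\H^{+,<L}(C),i^{+}_{L_2,L_1})$, the sole difference being a global shift of the homological grading by $n+1$. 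Since the barcode entropy is obtained by counting, for each $\epsilon>0$, the bars of length $\ge\epsilon$ and taking the exponential growth rate of this count in the filtration parameter, it does not see the homological degree; hence the two modules have identical barcode entropy and $\hslash(\PsiC)=\hslash_+(\lambda_0)$.

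Next, because $C$ is strongly convex with the origin in its interior, $(C,\lambda_0)$ is a Liouville domain, and the standing non-degeneracy assumption on $C$ says exactly that the Reeb flow $\varphi$ of $\lambda_0|_{\partial C}$ is non-degenerate. Applying \cite[Proposition 4.11]{FLS23} gives $\hslash_+(\lambda_0)=\hslash(\lambda_0)$, and the main inequality of \cite{FLS23} gives $\hslash(\lambda_0)\le h_{\text{top}}(\varphi)$. Chaining the identities,
\[
\hslash(\PsiC)=\hslash_+(\lambda_0)=\hslash(\lambda_0)\le h_{\text{top}}(\varphi),
\]
which is the claimed bound. For the equality statement I would restrict to $n=2$, where $\partial C$ is a closed contact three-manifold; in that case \cite[Theorem B]{CGGM24} furnishes the reverse estimate $h_{\text{top}}(\varphi)\le\hslash(\lambda_0)$, so the inequalities collapse and $\hslash(\PsiC)=\hslash(\lambda_0)=h_{\text{top}}(\varphi)$.

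The argument introduces no genuinely new difficulty; the substance is carried by Theorem~\hyperlink{TheoremC1}{C.1} and the imported entropy theorems. The only points needing (routine) care are that an isomorphism of persistence modules which merely shifts the homological grading preserves barcodes, and hence barcode entropy, and that the naturality square of Theorem~\hyperlink{TheoremC1}{C.1}, stated for pairs $L_1<L_2$ outside $\sigma(\partial C)$, is enough to produce a persistence-module isomorphism in the sense of \cite{FLS23,CGGM24}; the latter is immediate since $\sigma(\partial C)$ is closed and nowhere dense, so both filtered homologies are locally constant in $L$ on the complement of the spectrum and are thus determined by their values at regular parameters. If any obstacle surfaces, it will be purely a matter of matching conventions — the homological grading shift, the choice of coefficient field, and the precise definition of barcode entropy — between Theorem~\hyperlink{TheoremC1}{C.1} and the references \cite{FLS23,CGGM24}.
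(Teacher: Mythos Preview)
Your proposal is correct and follows essentially the same argument as the paper: the discussion preceding Corollary~D already deduces $\hslash(\Psi_C)=\hslash_+(\lambda_0)$ from the persistence-module isomorphism of Theorem~\hyperlink{TheoremC1}{C.1}, then invokes \cite[Proposition~4.11]{FLS23} for $\hslash_+(\lambda_0)=\hslash(\lambda_0)$, the main inequality of \cite{FLS23} for $\hslash(\lambda_0)\le h_{\text{top}}(\varphi)$, and \cite[Theorem~B]{CGGM24} for equality when $n=2$. Your additional remarks on the grading shift and on $\sigma(\partial C)$ being nowhere dense are welcome clarifications but do not depart from the paper's route.
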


\textbf{Idea of the proofs.}

Theorems \hyperlink{TheoremC1}{C.1} and \hyperlink{TheoremC2}{C.2} extend the work of Abbondandolo and Kang in \cite{AK22}. There, they constructed a chain isomorphism from the Morse complex of Clarke's dual action functional  
\[
\PsiH: \Honenull \to \mathbb{R}, \quad \PsiH(x) = -\int\limits_\mathbb{T} x^* \lambda_0 + \int\limits_\mathbb{T} H^*(-J_0 \dot{x}(t)) \, dt,
\]
restricted to a specific finite-dimensional manifold, to the Floer complex of a convex and asymptotically quadratic Hamiltonian $H: \mathbb{T} \times \mathbb{R}^{2n} \to \mathbb{R}$.

Here, $H^*$ denotes the Fenchel conjugate of $H$. The restriction is necessary due to regularity issues with  $\PsiH$, and we refer to it as the reduced dual action functional, denoted by $\psi_{H^*}$.  

We denote the chain isomorphism by  
\begin{equation}\label{chainisomorphismintroduction}
    \Theta: \C\M_{*}(\psi_{H^*}) \to \C\F_{*+n}(H).
\end{equation}

The critical points of $\psi_{H^*}$ and the $1$-periodic orbits of $H$ are in bijective correspondence. Under this correspondence, the Morse index of the critical points of $\psi_{H^*}$ matches the Conley-Zehnder index of the corresponding $1$-periodic orbits of $H$, shifted by $n$. This explains the grading shift in \eqref{chainisomorphismintroduction}. For further details, we refer the reader to \cite{AK22}. Similar types of isomorphisms in different settings can be found in \cite{AS06, Hec13, Dju17}.

\begin{itemize}
    \item Theorem \hyperlink{TheoremC1}{C.1}:
\end{itemize}

Our first step is to show that the isomorphism $\Theta$ is functorial. Specifically, if we have $H_\alpha \leq H_\beta$, and a chain map induced by a non-increasing homotopy from $H_\beta$ to $H_\alpha$ denoted by $i^F_{(H^{\alpha\beta}_s, J^{\alpha\beta}_s)}$, along with a chain map induced by a non-increasing homotopy from $\psi_{H^*_\alpha}$ to $\psi_{H^*_\beta}$ denoted by $i^M_{(\psi^{\alpha\beta}_s, g^{\alpha\beta}_s)}$, then it holds that $i^F_{(H^{\alpha\beta}_s, J^{\alpha\beta}_s)} \circ \Theta_\alpha$ and $\Theta_\beta \circ i^M_{(\psi^{\alpha\beta}_s, g^{\alpha\beta}_s)}$ are chain-homotopic. Here,
\[
\Theta_\alpha: \C\M_{*}(\psi_{H_\alpha^*}) \to \C\F_{*+n}(H_\alpha), \ \text{and} \ \Theta_\beta: \C\M_{*}(\psi_{H_\beta^*}) \to \C\F_{*+n}(H_\beta)
\]
are the aforementioned chain isomorphisms. See Theorem \ref{theoremE1} for a precise statement and proof. Once the chain isomorphism from \cite{AK22} is upgraded, we define a family of smooth convex functions (denoted by $\mathcal{F}_{\operatorname{lin}}(C)$) of the form $\varphi: \mathbb{R}_{\geq 0} \to \mathbb{R}$, which are linear at infinity. Here, $C \subset \mathbb{R}^{2n}$ is a non-degenerate, strongly convex domain whose interior contains the origin. The slope $\eta$ at infinity of a function $\varphi\in  \mathcal{F}_{\operatorname{lin}}(C)$ satisfies $\eta \notin \sigma(\partial C)$ and $\eta > \min \sigma(\partial C)$. We write $\varphi_\eta$ to emphasize the slope of $\varphi$ at infinity and denote by $T_\eta$ the minimal element of $\sigma(\partial C)$ that is greater than $\eta$. Let $H_\eta=\varphi_\eta\circ \HC$ and $\varepsilon=\frac{1}{2}\min \sigma(\partial C)$.

Using the isomorphism from \cite{AK22} and the functoriality property, we show that for every $L \in (\eta, T_\eta)$, there exists an isomorphism
\[
D^{H_\eta}_L: \H_{*}(\{\Psi_{H^*_\eta} < L\}, \{\Psi_{H^*_\eta} < \varepsilon\}) \to \S\H^{+,<L}_{*+n}(C),
\]
which is functorial with respect to inclusions in homology. On the other hand, by employing Morse-theoretic methods and tools from algebraic topology, we prove that for every $L \in (\eta, T_\eta)$, there exists an isomorphism
\[
D^{H^*_\eta}_L: \H_*(\{\PsiC < L\}) \to \H_{*+1}(\{\Psi_{H_\eta^*} < L\}, \{\Psi_{H_\eta^*} < \varepsilon\}),
\]
which is also functorial with respect to inclusions in homology. By composing these two isomorphisms for $L>\sigma(\partial C)$ and $L\notin \sigma(\partial C)$ and taking the trivial isomorphism when $L < \sigma(\partial C)$, we obtain the desired result.
\begin{itemize}
    \item Theorem \hyperlink{TheoremC2}{C.2}:
\end{itemize}

By adapting the isomorphism from \cite{AK22} to the $S^1$-equivariant setting (see Theorem \ref{theoremE2}) and following the same methods as in the proof of Theorem \hyperlink{TheoremC1}{C.1}, we prove the claim.

\begin{itemize}
    \item Theorem \hyperlink{TheoremA}{A}:
\end{itemize}

Let $C \subset \mathbb{R}^{2n}$ be a non-degenerate, strongly convex domain whose interior contains the origin. The isomorphism from Theorem \hyperlink{TheoremC2}{C.2} induces an isomorphism in the limit:
\[
D^{S^1}: H_*^{S^1}(\Lambda) \to \S\H^{S^1,+}_{*+n+1}(C).
\]
Indeed, since we can understand $H_*^{S^1}(\Lambda)$ as the direct limit over $L$ of $H_*^{S^1}(\{\Psi_C < L\})$, this isomorphism is defined as
\[
D^{S^1}(x) = i^{S^1,+}_L \circ D^{S^1}_L(x), \quad x \in H_*^{S^1}(\{\Psi_C < L\}).
\]
This isomorphism is natural, which means that \( i_L^{S^1,+} \circ D^{S^1}_L = D^{S^1} \circ (i^C_L)_* \) holds. Given that \( c_k^{GH}(C) = \inf\limits\{L > 0 \mid i_L^{S^1,+} \neq 0\} \) and \( s_k(C) = \inf\{L > 0 \mid (i^C_L)_* \neq 0\} \), we conclude from the naturality of the limit isomorphism that \( c^{GH}_k(C) = s_k(C) \). Since this equality holds for a non-degenerate, strongly convex domain, and since these domains are Hausdorff-dense in bounded convex domains, the continuity of \( c^{GH}_k \) and \( s_k \) implies that Theorem \hyperlink{TheoremA}{A} holds.

\textbf{Outlook.} From Theorem \hyperlink{TheoremA}{A}, it follows that we have a minimax representation of Gutt-Hutchings capacities. Hence, one can expect that it is possible to find a numerical algorithm for calculating these capacities in the convex setting. This was already accomplished by Göing-Jaeschke for the Ekeland-Hofer-Zehnder (EHZ) capacity in \cite{Goi98} using $\Psi_C$. Moreover, Haim-Kislev in \cite{Hai19} introduced a combinatorial formula for calculating the EHZ capacity of convex polytopes by minimizing the functional $\Psi_C$ with a suitable class of loops (See the work of Chaidez and Hutchings \cite{CH21} for a more dynamical approach). Thus, one can also hope that an analogous formula can be found for the higher capacities.

\textbf{Acknowledgements.} I am deeply grateful to my advisor, Alberto Abbondandolo, for introducing me to the open problems concerning higher capacities and Clarke's duality, careful oversight of this paper's development, and invaluable guidance throughout numerous discussions.

This project is supported by the SFB/TRR 191 ‘Symplectic Structures in Geometry, Algebra and Dynamics,’ funded by the DFG (Projektnummer 281071066 - TRR 191).

\section{Floer complex of a Hamiltonian}\label{sectionfloer}

On $\mathbb{R}^{2n}$, we choose the standard Liouville form 
\[\lambda_0=\frac{1}{2}\sum_{j=1}^n (x_jdy_j-y_jdx_j),\] 
and the standard symplectic form
\[\omega_0\defeq d\lambda_0=\sum_{j=1}^n dx_j\wedge dy_j.\] 
We denote by 
\[J_0:\mathbb{R}^{2n}\to  \mathbb{R}^{2n}, \ \ \ J_0(x,y)=(-y,x)\] 
the standard complex structure, which corresponds to multiplication by $i$ under the identification $ \mathbb{R}^{2n}\cong \mathbb{C}^n$ given by $(x,y)\mapsto x+iy$. The Euclidean scalar product, the symplectic form $\omega_0$, and $J_0$ are related by the identity 
\[\langle u,v\rangle =\omega_0( u,J_0 v).\]

Our sign convention for the Hamiltonian vector field $X_H$ associated with a smooth Hamiltonian 
\[H:\mathbb{R}^{2n}\longrightarrow \mathbb{R}\] 
is 
\[i_{X_H}\omega_0=-dH.\]
In the paper \cite{AK22}, Abbondandolo and Kang introduced a chain isomorphism between the Floer complex of $H$ and the Morse complex of $\psi_{H^*}: M \to \mathbb{R}$, where $\psi_{H^*}$ is the reduced dual functional. In the first part of this paper, we will adjust this isomorphism to align with our sign conventions and extend this work by constructing an isomorphism for specific types of Hamiltonians $H:\mathbb{T}\times \mathbb{R}^{2n}\times S^{2N+1}\to \mathbb{R}$, where $\mathbb{T}=\mathbb{R}/\mathbb{Z}$.

\large\textbf{Non-parameterized case }\normalsize

For a time-dependent Hamiltonian 
\[H:\mathbb{T}\times \mathbb{R}^{2n}\to \mathbb{R},\] 

the action functional is defined as 
\[\PhiH: \Honehalf\to \mathbb{R}, \ \ \ \PhiH(x)\defeq\int\limits_{\mathbb{T}}x^*\lambda_0 - \int\limits_{\mathbb{T}} H_t(x(t)) \, dt.\]
Elements $x\in \Honehalf$ can be expressed as 
\[x(t)=\sum\limits_{k\in \mathbb{Z}} e^{2\pi k t J_0} \hat{x}(k), \ \ \ \hat{x}(k) \in \mathbb{R}^{2n},\] 
where it holds that 
\[\sum |k| |\hat{x}(k)|^2 < \infty,\] 
with the scalar product given by 
\[\langle x, y \rangle_{1/2} = \langle \hat{x}(0), \hat{y}(0) \rangle + 2\pi \sum_{k\in\mathbb{Z}} |k| \langle \hat{x}(k), \hat{y}(k) \rangle.\]
\begin{remark}
    For $\PhiH$ to be well-defined, one must assume some growth conditions on $H$. The polynomial growth of the $k$-th derivative of $H$ with respect to the spatial variable is sufficient to ensure that $\PhiH\in C^k$. This follows from the Sobolev embedding theorem. In fact, more is true, as there are better inequalities than those from Sobolev embeddings. For instance, if one assumes that the second derivative of $H$ has polynomial growth, then $\PhiH$ is smooth. More on this topic can be found in \cite{Abb01}.  
\end{remark}

Let's recall some definitions from \cite{AK22}.

\textbf{Linear growth of Hamiltonian vector field}: 
The Hamiltonian vector field $X_{H_t}$ of the Hamiltonian $H\in C^\infty(\mathbb{T}\times \mathbb{R}^{2n})$ is said to have linear growth with respect to the spatial variable if there exists a positive number $c>0$ such that: 
\[|X_{H_t}(x)|\leq c(1+|x|), \ \ \ (t,x)\in \mathbb{T}\times \mathbb{R}^{2n}.\]

\textbf{Non-resonance at infinity of Hamiltonian}: The Hamiltonian $H\in C^\infty(\mathbb{T}\times \mathbb{R}^{2n})$ is said to be non-resonant at infinity if there exist positive numbers $\varepsilon>0$ and $r> 0$ such that for every smooth curve $x:\mathbb{T}\to\mathbb{R}^{2n}$ satisfying
\[\|\dot{x}-X_{H_\cdot}(x)\|_{L^2(\mathbb{T})}<\varepsilon,\] 
there holds $\|x\|_{L^2(\mathbb{T})}\leq r$.

We say that the smooth loop 
\[x:\mathbb{T}\to \mathbb{R}^{2n}\]
is a $1$-periodic orbit of $H$ if it satisfies the equation $\dot{x}(t)=X_H(x(t))$. Such a $1$-periodic orbit is said to be non-degenerate if $d\varphi_{X_H}(x(0))$ is non-degenerate, meaning that $1$ is not an eigenvalue of $d\varphi_{X_H}(x(0))$, where $\varphi_{X_H}\defeq\varphi^1_{X_H}$ is the time-$1$ flow of $X_H$.

\textbf{Non-degenerate Hamiltonian:} The Hamiltonian $H\in C^\infty(\mathbb{T}\times \mathbb{R}^{2n})$ is said to be non-degenerate if all of its $1$-periodic orbits are non-degenerate.

\textbf{Admissible Hamiltonians:} We will denote by $\mathcal{H}_F$ the set of all non-degenerate smooth Hamiltonians
\[H:\mathbb{T}\times \mathbb{R}^{2n}\to \mathbb{R}\]
that are non-resonant at infinity and have a bounded second derivative with respect to the spatial variable. For these Hamiltonians, the Floer complex is well-defined.

\begin{remark}
    The Floer complex is well-defined under weaker assumptions, specifically if we require $X_H$ to have linear growth instead of the boundedness of the second derivative, as shown in \cite{AK22}. For aesthetic reasons, we work here with more restrictive conditions for Hamiltonians.
\end{remark}

Let $\mathcal{J}(\mathbb{R}^{2n},\omega_0)$ be the space of $\omega_0$-compatible complex structures on $\mathbb{R}^{2n}$. For $H \in \mathcal{H}_F$ and every $x, y \in P(H)$, where $P(H)$ is the set of $1$-periodic orbits of $H$, and $J \in C^0([-\infty, +\infty] \times \mathbb{T}, \mathcal{J}(\mathbb{R}^{2n}, \omega_0))$, we have the moduli space 
\[\modulifloerhat(x,y;H,J)\]
defined as the space of smooth maps $u: \mathbb{R} \times \mathbb{T} \to \mathbb{R}^{2n}$ that satisfy the equation 
\begin{equation}\label{floernonpareq}
  \partial_s u + J(s,t,u(s,t)) (\partial_t u - X_{H_t}(u)) = 0
\end{equation}  
and the asymptotic conditions
\begin{equation*}
  \begin{array}{lcl}
    &\lim\limits_{s \to -\infty} u(s, \cdot) = y,\\
    &\lim\limits_{s \to +\infty} u(s, \cdot) = x.
  \end{array}
\end{equation*}

\begin{remark}\label{notation}
    By usual arguments, it holds that $P(H) = \crit(\PhiH)$. Also, equation \eqref{floernonpareq} represents the $L^2$-gradient equation of the action functional for our sign conventions. Therefore, we are counting trajectories from $x$ to $y$, which motivates putting $x$ first and $y$ second in the definition of the moduli space $\modulifloerhat(x, y; H, J)$.
\end{remark}

From \cite[Proposition A.1]{AK22}, we have regularity properties of $u$. Indeed, let $u$ satisfy \eqref{floernonpareq} and apriori belong to $H^1_{loc}(U, \mathbb{R}^{2n})$, where $U \subseteq \mathbb{R} \times \mathbb{T}$ is an open set. Assuming that $J$ is smooth and bounded, this proposition implies that $u \in C^\infty(U, \mathbb{R}^{2n})$.

Additionally, due to energy bounds, \cite[Proposition 3.1]{AK22} implies that $\modulifloerhat(x,y;H,J)$ is uniformly bounded in the $L^\infty$-norm. Then, by standard arguments, we conclude that $\modulifloerhat(x,y;H,J)$ is pre-compact in  
$C^\infty_{loc}(\mathbb{R} \times \mathbb{T}, \mathbb{R}^{2n})$.

For a generic choice of $J$, $\modulifloerhat(x,y;H,J)$ is a smooth manifold such that 
\[\dim \modulifloerhat(x,y;H,J) = \mu_{CZ}(x) - \mu_{CZ}(y),\]
where $\mu_{CZ}$ denotes the Conley-Zehnder index. See \cite{SZ92} for details.

Since $\modulifloerhat(x,y;H,J)$ has a free $\mathbb{R}$-action given by reparametrization 
\[s_0 \mapsto u(\cdot + s_0, \cdot),\] 
we conclude that 
\[\modulifloer(x,y;H,J) \defeq \modulifloerhat(x,y;H,J)/\mathbb{R}\]
is a smooth manifold and it holds that 
\[\dim \modulifloer(x,y;H,J) = \mu_{CZ}(x) - \mu_{CZ}(y) - 1.\]

\textbf{Floer complex}

Combining all of these results, we conclude that the Floer complex \[\{\C\F_*(H), \partial^F\}\]
is well-defined for a generic uniformly bounded, $\omega_0$-compatible almost complex structure $J(t,x)$ where $(t,x)\in \mathbb{T}\times \mathbb{R}^{2n}$. Indeed, $\C\F_*(H)$ is a $\mathbb{Z}_2$-vector space graded by the Conley-Zehnder index. The boundary operator \[\partial^F: \C\F_*(H) \to \C\F_{*-1}(H)\]
is defined by 
\[\partial^F(x) = \sum\limits_{\mu_{CZ}(y) = \mu_{CZ}(x) - 1} n_F(x, y) y,\]
where $n_F(x, y)$ denotes the parity of the set $\modulifloer(x, y; H, J)$.

By compactness arguments and the fact that $\modulifloer(x, y; H, J)$ is a $0$-dimensional manifold, we have that $n_F(x, y)$ is finite. Standard compactness and gluing arguments imply that 
\[\partial^F \circ \partial^F = 0.\]

Thus, we have a well-defined Floer complex $\{\C\F_*(H), \partial^F\}$ and Floer homology 
\[\H \F_*(H, J) = \H_*(\{\C\F(H), \partial^F\}).\] 
Changing the almost complex structure gives us a chain-isomorphic Floer complex.

On the Floer complex, we have a well-defined filtration with the action functional $\PhiH$. Indeed, as mentioned in Remark \ref{notation}, equation \eqref{floernonpareq} is the $L^2$-gradient equation of $\PhiH$, so it holds that 
\[\PhiH(x) - \PhiH(y) = \int\limits_{\mathbb{R} \times \mathbb{T}} \|\partial_s u\|_J^2 \, ds \, dt \geq 0.\]

Therefore, if we denote by $\C\F_k^{<a}(H)$ the vector space generated by all $1$-periodic orbits $x$ of Conley-Zehnder index $k$ such that $\PhiH(x) < a$, then $\partial^F$ maps $\C\F_k^{<a}(H)$ to $\C\F_{k-1}^{<a}(H)$.

\textbf{Continuation maps}

Let $(H_\alpha, J_\alpha)$ and $(H_\beta, J_\beta)$ be pairs for which the Floer complexes are well-defined and $H_\alpha \leq H_\beta$. Then we have a non-increasing homotopy $H^{\alpha\beta}_s$, $s \in \mathbb{R}$, i.e., a homotopy for which $\partial_s H^{\alpha\beta}_s \leq 0$, and a homotopy of almost complex structures $J^{\alpha\beta}_s$ such that 
\[(H^{\alpha\beta}_s, J^{\alpha\beta}_s) = (H_\beta, J_\beta), \quad s \leq T_0 - 1,\] 
and 
\[(H^{\alpha\beta}_s, J^{\alpha\beta}_s) = (H_\alpha, J_\alpha), \quad s \geq T_0.\]
Here $T_0 < -1$ is a fixed number.
For every $x^\alpha \in P(H_\alpha)$ and $y^\beta \in P(H_\beta)$, we have the moduli space 
\[\modulifloer(x^\alpha, y^\beta; H^{\alpha\beta})\] 
defined as the space of smooth maps $u: \mathbb{T} \times \mathbb{R} \to \mathbb{R}^{2n}$ which satisfy the equations
\begin{equation}\label{floernonparcont}
  \begin{array}{lcl}
    &\partial_s u + J^{\alpha\beta}_s(s, t, u(s, t)) (\partial_t u - X_{H^{\alpha\beta}_{s,t}}(u)) = 0
  \end{array}
\end{equation}  
and the asymptotic conditions 
\begin{equation}
  \begin{array}{lcl}
    &\lim\limits_{s \to -\infty} u(s, \cdot) = y^\beta, \\
    &\lim\limits_{s \to +\infty} u(s, \cdot) = x^\alpha.
  \end{array}
\end{equation}

Regularity for $H^1_{loc}$ of \eqref{floernonparcont} follows from \cite[Proposition A.1]{AK22} as in the case of the standard Floer equation. From the fact that $H_s$ is a non-increasing homotopy, it follows that $\Phi_{H_s}$ is increasing along $u \in \modulifloer(x^\alpha, y^\beta; H^{\alpha\beta})$, which implies that 
\[\|\partial_s u\|^2_{J^{\alpha\beta}_s} \leq \Phi_{H_\alpha}(x^\alpha) - \Phi_{H_\beta}(y^\beta).\] 
By \cite[Remark 3.2]{AK22}, we have the pre-compactness of the above-defined moduli spaces. For $(H^{\alpha\beta}_s, J^{\alpha\beta}_s)$ chosen generically, we have that $\modulifloer(x^\alpha, y^\beta; H^{\alpha\beta})$ is a smooth manifold of dimension $\mu_{CZ}(x^\alpha; H_\alpha) - \mu_{CZ}(y^\beta; H_\beta)$. Thus, we define a map 
\[i^F_{(H^{\alpha\beta}_s, J^{\alpha\beta}_s)}: \C\F_*(H_\alpha, J_\alpha) \to \C\F_*(H_\beta, J_\beta)\]
with 
\[i^F_{(H^{\alpha\beta}_s, J^{\alpha\beta}_s)}(x^\alpha) = \sum\limits_{\mu_{CZ}(y^\beta) = \mu_{CZ}(x^\alpha)} n_F^{\alpha\beta}(x^\alpha, y^\beta) y^\beta,\]
where $n_F^{\alpha\beta}(x^\alpha, y^\beta)$ is the parity of the set $\modulifloer(x^\alpha, y^\beta; H^{\alpha\beta})$. This is a chain map by standard gluing and compactness arguments. Using homotopies of homotopies, we obtain that a different homotopy $(H^{\alpha\beta}_s, J^{\alpha\beta}_s)$ yields a chain-homotopic map. Therefore, they induce the same map in homology, which we call the continuation map.

The map $i^F_{(H^{\alpha\beta}_s, J^{\alpha\beta}_s)}: \C\F_*(H_\alpha, J_\alpha) \to \C\F_*(H_\beta, J_\beta)$ preserves the filtration. Indeed, because $\Phi_{H^{\alpha\beta}_s}(u(s, \cdot))$ is increasing for $u \in \modulifloer(x^\alpha, y^\beta; H^{\alpha\beta})$, we have that $\Phi_{H_\alpha}(x^\alpha) \geq \Phi_{H_\beta}(y^\beta)$.

\large\textbf{$S^1$-equivariant case}

\normalsize

Here, we review the definition of the $S^1$-equivariant Floer complex for family Floer homology. Different approaches to defining the $S^1$-equivariant Floer complex can be found in \cite{BO17}. The definition of such a complex for family Floer homology in the Liouville setting, along with positive $S^1$-equivariant homology properties, is described in \cite{GH18}.      

Let $H \in C^\infty(\mathbb{T} \times \mathbb{R}^{2n} \times S^{2N+1})$. We will sometimes use the notation $H_{z,t}(x)\defeq H(t,x,z)$. Such a Hamiltonian produces a time-dependent parameterized Hamiltonian vector field $X_{H_{z,t}}$, defined by the identity 
\[i_{X_{H_{z,t}}}\omega_0 = -dH_{z,t}.\] 

\textbf{Uniform linear growth of parameterized Hamiltonian vector field:} 
The Hamiltonian vector field $X_{H_{z,t}}$ of the parameterized Hamiltonian $H \in C^\infty(\mathbb{T} \times \mathbb{R}^{2n} \times S^{2N+1})$ is said to have uniform linear growth with respect to the spatial variable if there exists a positive number $c > 0$ such that
\[|X_{H_{z,t}}(x)| \leq c(1 + |x|), \ \ (t,x,z) \in \mathbb{T} \times \mathbb{R}^{2n} \times S^{2N+1}.\]
Let $f: \mathbb{C}P^N \to \mathbb{R}$ be an arbitrary Morse function and $\widetilde{f}: S^{2N+1} \to \mathbb{R}$ its $S^1$-invariant lift.

Let metric on $S^{2N+1}$ be standard one.

\textbf{$\widetilde{f}$-non-resonance at infinity of parameterized Hamiltonian:} 
The parameterized Hamiltonian $H \in C^\infty(\mathbb{T} \times \mathbb{R}^{2n} \times S^{2N+1})$ is said to be non-resonant at infinity with respect to the $S^1$-invariant Morse function $\widetilde{f}: S^{2N+1} \to \mathbb{R}$ if there exist positive numbers $\varepsilon > 0$ and $r > 0$ such that for every smooth curve $x: \mathbb{T} \to \mathbb{R}^{2n}$ and every $z \in S^{2N+1}$ satisfying 
\begin{equation}\label{fnoneq}
\|\dot{x} - X_{H_{z,\cdot}}(x)\|_{L^2(\mathbb{T})} < \varepsilon, \ \ \ \|\nabla \widetilde{f}(z)\| < \varepsilon,    
\end{equation}
the bound $\|x\|_{L^2(\mathbb{T})} \leq r$ holds.

\begin{remark}
    An equivalent definition can be given for any metric on $S^{2N+1}$ due to the compactness of this space.
\end{remark}

On $S^{2N+1}$, we have a natural $S^1$-action given by 
\[\theta \cdot z = e^{2\pi i\theta}z, \ \ \ \theta \in \mathbb{T}, \ z \in S^{2N+1}.\]
\textbf{$S^1$-invariance of parameterized Hamiltonians:} We say that the parameterized Hamiltonian $H \in C^\infty(\mathbb{T} \times \mathbb{R}^{2n} \times S^{2N+1})$ is invariant under the $S^1$-action if 
\[H_{\theta \cdot z}(t + \theta, \cdot) = H_{z}(t, \cdot),\]
for all $\theta \in \mathbb{T}$, $t \in \mathbb{T}$, and $z \in S^{2N+1}$.

For such a Hamiltonian, the parameterized action functional 
\[\Phi_H: H^{1/2}(\mathbb{T}, \mathbb{R}^{2n}) \times S^{2N+1} \to \mathbb{R}, \ \ \ \Phi_H(x, z) = \int_{\mathbb{T}} x^* \lambda_0 - \int_0^1 H_{z,t}(x(t)) \, dt\]
is invariant under the diagonal $S^1$-action on \[H^{1/2}(\mathbb{T}, \mathbb{R}^{2n}) \times S^{2N+1},\] 
where the action on $H^{1/2}(\mathbb{T}, \mathbb{R}^{2n})$ is given by reparametrization, i.e.,
\[\theta \cdot x = x(\cdot - \theta), \ \ \ x \in H^{1/2}(\mathbb{T}, \mathbb{R}^{2n}), \ \theta \in \mathbb{T}.\]
Let 
\[f_N: \mathbb{C}P^N \to \mathbb{R}, \ \ \ f_N([z_0: \cdots : z_N]) = \frac{\sum_{j=0}^N (j+1) |z_j|^2}{\sum_{j=0}^N |z_j|^2}\] 
be the standard Morse function on $\mathbb{C}P^N$, and let 
\[\widetilde{f}_N: S^{2N+1} \to \mathbb{R}, \ \ \ \widetilde{f}_N(z) = \sum_{j=0}^N (j+1) |z_j|^2\] 
be the pullback of $f_N$ to $S^{2N+1}$. Notice that $\widetilde{f}_N$ is a $S^1$-invariant Morse function.

We denote by $g_N$ the standard metric on $S^{2N+1}$. This metric is $S^1$-invariant. We define the subset $\mathcal{H}_F^{S^1,N}(\widetilde{f}_N, g_N)$ of Hamiltonians $H \in C^\infty(\mathbb{T} \times \mathbb{R}^{2n} \times S^{2N+1})$ that are admissible with respect to the pair $(\widetilde{f}_N, g_N)$ as follows.

\textbf{$\widetilde{f}_N$-admissible $S^1$-invariant Hamiltonians:} $ H \in \mathcal{H}_F^{S^1,N}(\widetilde{f}_N, g_N)$ if it satisfies:

\begin{enumerate}
    \item[(F1)] $H$ is $S^1$-invariant.
    \item[(F2)] For every $v \in \text{crit}(\widetilde{f}_N)$, the Hamiltonian $H_v$ is non-degenerate.
    \item[(F3)] $H$ is non-increasing along the gradient flow lines of $\widetilde{f}_N$.
    \item[(F4)] The partial derivatives of $H$ of the second order involving the $x$ and $z$ coordinates are bounded.
    \item[(F5)] $H$ is $\widetilde{f}_N$-non-resonant at infinity.
\end{enumerate}

Let $P(H, \widetilde{f}_N)$ be the set of pairs $(x, v)$ where $x \in P(H_v)$ and $v \in \text{crit}(\widetilde{f}_N)$. This set is invariant under the $S^1$-action, and the action on it is free. The $S^1$-orbit of $(x, v)\in P(H, \widetilde{f}_N)$ is denoted by $S_{(x,v)}$.

For $(x, v) \in P(H, \widetilde{f}_N)$, we associate an index 
\[\mu(x, v) \defeq \mu_{CZ}(x; H_v) = \mu_{CZ}(x; H_v) + \ind(v; \widetilde{f}_N) - N.\] 
This index is constant along the orbits due to the $S^1$-invariance of $H$ (property (F1)).

We say that the family of almost complex structures $J \in C^\infty(\mathbb{T} \times \mathbb{R}^{2n} \times S^{2N+1}; \mathcal{J}(\mathbb{R}^{2n}, \omega_0))$ is $S^1$-invariant if 
\[J_{\theta \cdot z}^{t + \theta} = J_z^t, \ \ \ \theta \in \mathbb{T}, \ t \in \mathbb{T}, \ z \in S^{2N+1}.\]

Let $J$ be a uniformly bounded $S^1$-invariant almost complex structure. For any two orbits $S_{(x,v)}, S_{(y,w)} \subset P(H, \widetilde{f}_N)$, we define the moduli space 
\[\hat{\mathcal{M}}^{S^1,N}_F(S_{(x,v)}, S_{(y,w)}; H, \widetilde{f}_N, J)\]
as the space of pairs $(u, z)$ consisting of smooth maps $u: \mathbb{R} \times \mathbb{T} \to \mathbb{R}^{2n}$ and $z: \mathbb{R} \to S^{2N+1}$ that satisfy the system of equations
\begin{equation}\label{floerred}
  \begin{array}{lcl}
    & \partial_s u + J_{z(s)}(s, t, u(s, t))(\partial_t u - X_{H_{z(s)}}(u)) = 0, \\
    & \dot{z}(s) - \nabla \widetilde{f}_N(z) = 0 
  \end{array}
\end{equation}
and the asymptotic conditions
\begin{equation*}
  \begin{array}{lcl}
    & \lim\limits_{s \to -\infty} (u(s), z(s)) \in S_{(y, w)}, \\
    & \lim\limits_{s \to +\infty} (u(s), z(s)) \in S_{(x, v)}.
  \end{array}
\end{equation*}

\begin{remark}
    The second equation of the system \eqref{floerred} is the gradient equation of $\widetilde{f}_N$ instead of the negative gradient equation, as in \cite{GH18}. The difference is that in \cite{GH18}, the first equation represents the negative $L^2$-gradient equation of the action functional, while in our case, it represents the $L^2$-gradient equation. Therefore, system \eqref{floerred} is natural for our sign conventions.
\end{remark}
Let $(u, z) \in H^1_{\text{loc}}(U' \times U'', \mathbb{R}^{2n}) \times H^1_{\text{loc}}(U', S^{2N+1})$ be a weak solution of the system \eqref{floerred}, where $U' \subset \mathbb{R}$ and $U'' \subset \mathbb{T}$ are open sets.

Since $z$ is a priori continuous, by standard bootstrapping for ODEs, we conclude that $z$ is smooth. On the other hand, $u$ is not a priori continuous. Therefore, $J$ is not a priori continuous, so one cannot use standard techniques to prove the regularity of $u$. However, given uniform linear growth of $X_{H_{z,t}}$, using \cite[Lemma A.2]{AK22}, we can improve the regularity of $u$ to $W^{1,q}_{\text{loc}}(U' \times U'', \mathbb{R}^{2n})$ for some $q > 2$, as in \cite{AK22}. This implies that $u$ is continuous. Then, using standard bootstrapping techniques, we obtain the following result.

\begin{proposition}\label{regularityfloer}
    Let $H \in \mathcal{H}_F^{S^1, N}(\widetilde{f}_N, g_N)$. Let $U'$ and $U''$ be open subsets of $\mathbb{R}$ and $\mathbb{T}$, respectively, such that $J$ is a uniformly bounded $\omega_0$-compatible almost complex structure smoothly depending on $(s, t, z) \in U' \times U'' \times S^{2N+1}$. If $(u, z) \in H^1_{\text{loc}}(U' \times U'', \mathbb{R}^{2n}) \times H^1_{\text{loc}}(U', S^{2N+1})$ is a weak solution of system \eqref{floerred}, then it is smooth.
\end{proposition}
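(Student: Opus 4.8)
The plan is to decouple the system \eqref{floerred}: first bootstrap the $z$-component by ODE theory, and then bootstrap the $u$-component by treating the first equation of \eqref{floerred} as a perturbed Cauchy--Riemann equation with now-smooth coefficients, exactly as in the non-parameterized case via \cite[Proposition A.1]{AK22}. The only extra ingredient compared to the non-parameterized argument is \cite[Lemma A.2]{AK22}, needed to secure the continuity of $u$, which does \emph{not} follow directly from the a priori regularity $u \in H^1_{\text{loc}}$ because $H^1$ of a planar domain fails to embed into $C^0$.

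First I would treat $z$. Since $U' \subset \mathbb{R}$ is one-dimensional, Sobolev embedding gives $z \in C^0_{\text{loc}}(U', S^{2N+1})$; the second equation $\dot z = \nabla \widetilde{f}_N(z)$ then has a smooth right-hand side composed with a continuous curve, and the usual ODE bootstrap — if $z \in C^k$ then $\dot z = \nabla \widetilde{f}_N(z) \in C^k$, hence $z \in C^{k+1}$ — upgrades $z$ to $z \in C^\infty(U')$.

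Next, with $z$ smooth, the coefficients of the first equation of \eqref{floerred} evaluated along the solution become a smooth, uniformly bounded, $\omega_0$-compatible family $(s,t) \mapsto J_{z(s)}(s,t,\cdot)$ of almost complex structures, together with a smooth vector field $(s,t,x) \mapsto X_{H_{z(s)}}(x)$ of uniform linear growth in $x$ by properties (F4) and (F5). The first equation of \eqref{floerred} is then a perturbed Cauchy--Riemann equation of precisely the form handled in \cite{AK22}, but with $u$ a priori only in $H^1_{\text{loc}}(U' \times U'', \mathbb{R}^{2n})$; in two real variables this does not make $u$ continuous, hence does not even make the coefficient $J_{z(s)}(s,t,u(s,t))$ continuous, which is why one cannot immediately run the standard elliptic bootstrap. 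This is the one genuinely delicate point. I would resolve it by invoking \cite[Lemma A.2]{AK22}: using the uniform linear growth of $X_{H_{z,t}}$, one improves the regularity of $u$ to $W^{1,q}_{\text{loc}}(U' \times U'', \mathbb{R}^{2n})$ for some $q > 2$, whence the Morrey embedding $W^{1,q} \hookrightarrow C^{0,1-2/q}$ gives $u \in C^0_{\text{loc}}$.

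Once $u$ is continuous and $z$ is smooth, the coefficients of the first equation of \eqref{floerred} evaluated along $(u,z)$ are continuous, and we are in the setting of \cite[Proposition A.1]{AK22}: standard elliptic bootstrapping for the perturbed Cauchy--Riemann operator — $u \in C^{0,\alpha}_{\text{loc}} \Rightarrow u \in W^{2,p}_{\text{loc}}$ for all $p \Rightarrow u \in C^{1,\alpha}_{\text{loc}}$, then iterating and using the smooth dependence of $J$ and $H$ on all their arguments (including $z$, which is already smooth) — yields $u \in C^\infty(U' \times U'', \mathbb{R}^{2n})$. Combined with $z \in C^\infty(U')$, this shows that $(u,z)$ is smooth. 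The main obstacle, as indicated, is the continuity step for $u$; everything preceding and following it is routine bootstrapping.
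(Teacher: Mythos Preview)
Your proof is correct and follows essentially the same route as the paper: first bootstrap $z$ via ODE theory (using that $H^1$ in one variable embeds into $C^0$), then use \cite[Lemma A.2]{AK22} and the uniform linear growth of $X_{H_{z,t}}$ to upgrade $u$ to $W^{1,q}_{\text{loc}}$ with $q>2$ (hence continuous), and finally run the standard elliptic bootstrap as in \cite[Proposition A.1]{AK22}. One small slip: the uniform linear growth of $X_{H_{z,t}}$ follows from (F4) alone (bounded second $x$-derivatives), not (F5).
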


Solutions of the equation
\begin{equation}\label{s1morsegradient}
    \dot{z}(s) - \nabla \widetilde{f}_N(z) = 0
\end{equation} 
are a priori contained in a compact region since $S^{2N+1}$ is compact. On the other hand, if we have uniform linear growth of $X_{H_{z,t}}$ and $\widetilde{f}_N$-non-resonance at infinity for $H \in C^\infty(\mathbb{T} \times \mathbb{R}^{2n} \times S^{2N+1})$, by a small modification of the arguments in \cite[Proposition 3.1]{AK22}, we obtain the following result.

\begin{proposition}\label{compactnessfloer}
    Let $H \in \mathcal{H}_F^{S^1, N}(\widetilde{f}_N, g_N)$. Let $I$ be an open unbounded interval, $I'$ an interval (possibly unbounded) such that $\overline{I'} \subset I$, and $J$ a uniformly bounded $\omega_0$-compatible almost complex structure on $\mathbb{R}^{2n}$ smoothly depending on $(s, t, z) \in I \times \mathbb{T} \times S^{2N+1}$. For every $E > 0$, there exists a positive constant $M = M(E) > 0$ such that every solution $(u, z) \in C^{\infty}(I \times \mathbb{T}, \mathbb{R}^{2n}) \times C^{\infty}(I, S^{2N+1})$ of system \eqref{floerred} with energy bound
    \[
\int_{I \times \mathbb{T}} \|\partial_s u\|_{J_{z(s)}}^2 \, ds \, dt \leq E
\]
satisfies
\[
\sup_{(s, t) \in I' \times \mathbb{T}} |u(s, t)| \leq M.
\]
\end{proposition}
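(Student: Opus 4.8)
\textbf{Proof proposal for Proposition \ref{compactnessfloer}.}

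The plan is to adapt the a priori $L^\infty$-bound from \cite[Proposition 3.1]{AK22} to the parameterized system \eqref{floerred}, exploiting that the $z$-component automatically lives in the compact manifold $S^{2N+1}$, so that only the $u$-component requires control. First I would fix the energy level $E>0$ and argue by contradiction: suppose there is a sequence of solutions $(u_j,z_j)$ on $I\times\mathbb{T}$ with energy $\le E$ but with $\sup_{I'\times\mathbb{T}}|u_j|\to\infty$. The key analytic input is the $\widetilde{f}_N$-non-resonance at infinity (F5) together with uniform linear growth of $X_{H_{z,t}}$, which mimics the two hypotheses (non-resonance at infinity, linear growth) used in the non-parameterized estimate; since $z_j(s)$ always satisfies $\|\nabla\widetilde{f}_N(z_j(s))\|<\varepsilon$ only near critical points of $\widetilde f_N$, I should first record that the gradient flow lines of $\widetilde f_N$ spend all but a bounded length of $s$-time in the $\varepsilon$-neighborhood of $\crit(\widetilde f_N)$, so that the non-resonance inequality \eqref{fnoneq} is applicable on the complement of a bounded set of parameter values. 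This will give a uniform $L^2(\mathbb{T})$-bound $\|u_j(s,\cdot)\|_{L^2(\mathbb{T})}\le r$ for $s$ outside a bounded subinterval, exactly as in the unparameterized case.

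Next I would propagate this slicewise $L^2$-bound to an $L^\infty$-bound. The standard mechanism is: from the energy bound and the Floer equation, $\int_I\|\dot u_j(s,\cdot)\|_{L^2(\mathbb{T})}^2\,ds$ is controlled (using linear growth of $X_{H_{z,t}}$ to absorb the Hamiltonian term), hence the function $s\mapsto\|u_j(s,\cdot)\|_{L^2(\mathbb{T})}$ has bounded variation on $I$ up to the energy contribution, so the uniform bound on the ends plus finite energy forces a uniform $L^2(\mathbb{T})$-bound for all $s\in I'$ (here $\overline{I'}\subset I$ is used to stay away from the, possibly uncontrolled, endpoints of $I$). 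Then a mean-value / elliptic bootstrapping argument — precisely the one in \cite[Proposition 3.1]{AK22}, which only uses boundedness of $J$, boundedness of the second derivatives of $H$ (property (F4)), and the slicewise $L^2$-bound — upgrades this to the desired $\sup_{I'\times\mathbb{T}}|u_j|\le M(E)$, contradicting the assumption. The compactness of $S^{2N+1}$ means $z_j$ contributes nothing to the blow-up, so no new estimate on $z$ is needed beyond its a priori boundedness.

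The main obstacle, and the only place where genuine care beyond \cite{AK22} is required, is the interaction between the two equations in \eqref{floerred}: the almost complex structure and the Hamiltonian both depend on $z(s)$, which is not constant, so the elliptic estimates for $u$ have $s$-dependent coefficients. I expect this to be handled by noting that all the relevant quantities — $|X_{H_{z,t}}(x)|$, the bounds on $D^2H$, the $C^0$-bound and ellipticity constants of $J$ — are uniform in $z\in S^{2N+1}$ by properties (F4) and uniform linear growth together with the compactness of $S^{2N+1}$, so the constants in the mean-value inequality can be chosen independently of $z(s)$. A secondary technical point is ensuring \eqref{fnoneq} can be invoked: one must check that, along a gradient trajectory of $\widetilde f_N$ of bounded energy, the set $\{s : \|\nabla\widetilde f_N(z(s))\|\ge\varepsilon\}$ has bounded measure, which follows from $\int_{\mathbb{R}}\|\nabla\widetilde f_N(z(s))\|^2\,ds=\widetilde f_N(z(-\infty))-\widetilde f_N(z(+\infty))\le\max\widetilde f_N-\min\widetilde f_N$ and is where the hypothesis that $I$ is unbounded enters. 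Once these uniformities are in place, the rest is a routine transcription of the Abbondandolo–Kang compactness argument.
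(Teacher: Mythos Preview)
Your proposal is correct and follows essentially the same route as the paper. The paper's execution is slightly more streamlined: rather than handling the two hypotheses of \eqref{fnoneq} separately, it applies a single Chebyshev inequality to the sum $\|\partial_s u(s,\cdot)\|_{L^2(\mathbb{T})}+|\dot z(s)|$ (using the combined bound $E+\max\widetilde f_N-\min\widetilde f_N$) to locate in every interval of controlled length a point $s_0$ where both conditions hold simultaneously, and then concludes exactly as in \cite[Proposition 3.1]{AK22} via the uniform linear growth of $X_{H_{z,t}}$.
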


\begin{proof}
    We use $|\cdot|$ to denote the standard norms on $\mathbb{R}^{2n}$ and $S^{2N+1}\subset \mathbb{R}^{2N+2}$. Since $J$ is uniformly bounded, the metric $g_J = \omega_0(\cdot, J \cdot)$ is uniformly equivalent to the standard Euclidean metric on $\mathbb{R}^{2n}$. Thus, we can assume that our energy bound has the form
\begin{equation}\label{uenergybound}
    \int_{I \times \mathbb{T}} |\partial_s u|^2 \, ds \, dt \leq E.
\end{equation}

Since $S^{2N+1}$ is compact, every solution $z$ of \eqref{s1morsegradient} satisfies
\[\int_{I} |\dot{z}(s)|^2 \, ds \leq E_{\widetilde{f}_N} \defeq \max \widetilde{f}_N - \min \widetilde{f}_N.\]
This inequality, together with \eqref{uenergybound}, imply that
\[\int_{I \times \mathbb{T}} |\partial_s u|^2 \, ds \, dt + \int_{I} |\dot{z}(s)|^2 \, ds \leq E + E_{\widetilde{f}_N}.\]
By property (F5), $H$ is $\widetilde{f}_N$-non-resonant at infinity. Therefore, we can choose $\varepsilon > 0$ as specified in the definition of $\widetilde{f}_N$-non-resonance at infinity for $H$. Let $\delta = \min \{\varepsilon, \frac{\varepsilon}{\|J\|_\infty}\}$ and
\[D = D(u, z) = \left\{ s \in \mathbb{R} \mid \|\partial_s u(s, \cdot)\|_{L^2(\mathbb{T})} + |\dot{z}(s)| < \delta \right\}.\]
From the Chebyshev inequality, we have
\[|I \setminus D| \leq \frac{1}{\delta^2} \int_I \left(\|\partial_s u(s, \cdot)\|_{L^2(\mathbb{T})} + |\dot{z}(s)|\right)^2 \, ds = \frac{2}{\delta^2} \left(\int_{I \times \mathbb{T}} |\partial_s u|^2 \, ds \, dt + \int_{I} |\dot{z}(s)|^2 \, ds\right) < L,\]
where
\[L = L(E)\defeq 2 \frac{E + E_{\widetilde{f}_N}}{\delta^2} + 1,\]
and $|I \setminus D|$ is the measure of the set $I \setminus D$.
Any interval $I'$ of length $L$ must intersect $D$ in at least one point $s_0\in I'$. For such a point, it holds
\begin{align}
    \nonumber & \|\partial_t u(s_0, \cdot) - X_{\cdot, z(s_0)}(u(s_0, \cdot))\|_{L^2(\mathbb{T})} \leq \|J\|_{\infty} \|\partial_s u(s_0, \cdot)\|_{L^2(\mathbb{T})} < \|J\|_{\infty} \delta \leq \varepsilon, \\
    \nonumber & |\nabla \widetilde{f}_N(z(s_0))| = |\dot{z}(s_0)| < \delta \leq \varepsilon.
\end{align}

Therefore, we have a positive number $r > 0$ such that
\[\|\partial_s u(s_0, \cdot)\|_{L^2(\mathbb{T})} \leq r.\]
Since property (F4) implies that $X_H$ has uniform linear growth, the remainder of the proof follows similarly to proof of \cite[Proposition 3.1]{AK22}.
\end{proof}

\begin{lemma}\label{filtrations1equfloer} Let $H\in \mathcal{H}_F^{S^1,N}(\widetilde{f}_N,g_N)$ and $(u,z)\in \hat{\mathcal{M}}^{S^1,N}_F(S_{(x,v)},S_{(y,w)};H,\widetilde{f}_N,J)$. Then it holds:

    \begin{enumerate}
        \item $\PhiH$ is non-decreasing along the $(u,z)$.
        \item $E(u)\defeq\int\limits_{\mathbb{R}\times \mathbb{T}} \|\partial_su\|_J^2dsdt\leq \PhiH(x,v)-\PhiH(y,w)$.
    \end{enumerate}
\end{lemma}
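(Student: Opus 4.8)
The plan is to compute the $s$-derivative of $\PhiH$ along a solution $(u,z)$ of the system \eqref{floerred} and show it is nonnegative, then integrate over $\mathbb{R}$ to obtain the energy bound. Write $\PhiH(u(s,\cdot),z(s))$ and differentiate in $s$; since $\PhiH$ is differentiable on $\Honehalf \times S^{2N+1}$, the chain rule gives
\[
\frac{d}{ds}\PhiH(u(s,\cdot),z(s)) = d_x\PhiH(u(s,\cdot),z(s))[\partial_s u(s,\cdot)] + d_z\PhiH(u(s,\cdot),z(s))[\dot z(s)].
\]
For the first term, recall from Remark \ref{notation} that for fixed $z$ the Floer equation \eqref{floerred} is the $L^2$-gradient equation of $x\mapsto \PhiH(x,z)$, which in our sign convention means $\partial_s u = \nabla_{L^2}\big(\PhiH(\cdot,z(s))\big)(u(s,\cdot))$, i.e. $d_x\PhiH(u,z)[\partial_s u] = \|\partial_s u(s,\cdot)\|_{J_{z(s)}}^2 \geq 0$. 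For the second term, I would use the explicit formula $\PhiH(x,z) = \int_\mathbb{T} x^*\lambda_0 - \int_0^1 H_{z,t}(x(t))\,dt$, so that $d_z\PhiH(u,z)[\dot z] = -\int_0^1 d_zH_{z,t}(u(t))[\dot z(s)]\,dt$. Since $\dot z(s) = \nabla\widetilde f_N(z(s))$ by the second equation of \eqref{floerred}, this equals $-\int_0^1 \big(d_zH\big)_{z(s),t}(u)[\nabla\widetilde f_N(z(s))]\,dt$, which is precisely $-\int_0^1 \partial_s\big(H_{z(s),t}(u(s,t))\big)\big|_{\text{from }z\text{ only}}\,dt$; by property (F3), $H$ is non-increasing along the gradient flow lines of $\widetilde f_N$, so $d_zH[\nabla\widetilde f_N] \leq 0$ pointwise, hence this term is $\geq 0$ as well. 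Combining, $\frac{d}{ds}\PhiH(u(s,\cdot),z(s)) \geq \|\partial_s u(s,\cdot)\|_{J_{z(s)}}^2 \geq 0$, which is part (1).

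For part (2), I would integrate this inequality over $s\in\mathbb{R}$. The asymptotic conditions say $(u(s),z(s))$ converges to a point of $S_{(x,v)}$ as $s\to+\infty$ and to a point of $S_{(y,w)}$ as $s\to-\infty$; since $\PhiH$ is $S^1$-invariant and the orbit value of $\PhiH$ on $S_{(x,v)}$ is by definition $\PhiH(x,v)$ (and similarly for $S_{(y,w)}$), one gets
\[
\int_\mathbb{R} \frac{d}{ds}\PhiH(u(s,\cdot),z(s))\,ds = \PhiH(x,v) - \PhiH(y,w).
\]
Since the integrand is $\geq \|\partial_s u(s,\cdot)\|_{J_{z(s)}}^2$ and the left side telescopes, dropping the nonnegative $z$-contribution yields $E(u) = \int_{\mathbb{R}\times\mathbb{T}}\|\partial_s u\|_J^2 \leq \PhiH(x,v) - \PhiH(y,w)$, which is part (2).

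The main obstacle is justifying the manipulations rigorously rather than formally: one must know that $\PhiH(u(s,\cdot),z(s))$ is genuinely $C^1$ in $s$ with the claimed derivative (this uses the regularity of $u$ from Proposition \ref{regularityfloer} together with the smoothness and boundedness hypotheses (F4) on $H$, which make $\PhiH$ of class $C^1$ on $\Honehalf\times S^{2N+1}$ via the Sobolev embedding $H^{1/2}\hookrightarrow L^p$ arguments alluded to in the first Remark), and that the limits of $\PhiH$ along the ends exist and equal the orbit values — for this one invokes the standard exponential-decay/convergence of Floer trajectories to their asymptotic orbits, which in turn relies on the energy bound being finite, i.e. on Proposition \ref{compactnessfloer} and the non-degeneracy hypothesis (F2). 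A minor point to handle carefully is the identification of the $z$-part of the $s$-derivative of $\int_0^1 H_{z(s),t}(u(s,t))\,dt$: the total $s$-derivative of this integral splits into a $u$-part (already accounted for in the $d_x\PhiH$ term, consistently with the gradient equation) and a $z$-part, and only the latter is what (F3) controls; writing this out cleanly is the one place where sign bookkeeping must be done with care.
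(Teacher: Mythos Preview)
Your proposal is correct and follows essentially the same approach as the paper: differentiate $\PhiH(u(s,\cdot),z(s))$ in $s$, identify the $x$-part as $\|\partial_s u\|_J^2$ via the $L^2$-gradient interpretation of the Floer equation, use (F3) to show the $z$-part is nonnegative, and then integrate over $\mathbb{R}$ to obtain the energy bound. The paper's proof is terser and does not spell out the regularity and asymptotic-limit justifications you mention, but the argument is the same.
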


\begin{proof}
    Differentiating $\PhiH(u(s,\cdot),z(s))$ with respect to $s$ we get 
    \begin{gather*}
        \frac{\partial}{\partial s}\PhiH(u(s,\cdot),z(s))=\frac{\partial\PhiH}{\partial x}(\partial_su(s,\cdot),z(s))+\frac{\partial\PhiH}{\partial z}(u(s,\cdot),\partial_sz(s))\\
        =\int_\mathbb{T} \|u(s,t)\|_J^2dt-\int_\mathbb{T}g_N(\nabla_zH(u(s,t)),\nabla \widetilde{f}_N(z(s)))dt\geq 0.
    \end{gather*}

The last inequality holds due to the property (F3) since we have that 
\[g_N(\nabla_zH(u(s,t)),\nabla \widetilde{f}_N(z(s)))\leq 0\]
for every $(s,t)\in \mathbb{R}\times \mathbb{T}$. This proves the claim (1).

Integrating the both sides of equality with respect to $s$ we get \[\PhiH(x,v)-\PhiH(y,w)=\int_{\mathbb{R}\times \mathbb{T}} \|u(s,t)\|_J^2dsdt-\int_{\mathbb{R}\times \mathbb{T}}g_N(\nabla_zH(u(s,t)),\nabla \widetilde{f}_N(z(s)))dsdt.\]
Since 
\[\int_{\mathbb{R}\times \mathbb{T}}g_N(\nabla_zH(u(s,t)),\nabla \widetilde{f}_N(z(s)))dsdt\leq 0,\]

we get that the second claim also holds.

\end{proof}

The claim (2) of the previous Lemma gives us uniform energy bound on 
\[\hat{\mathcal{M}}^{S^1,N}_F(S_{(x,z)},S_{(y,w)};H,\widetilde{f}_N, J).\]
Consequently, Proposition \ref{compactnessfloer} implies uniform boundness of \[\hat{\mathcal{M}}^{S^1,N}_F(S_{(x,z)},S_{(y,w)};H,\widetilde{f}_N,J)\] in $L^\infty$-norm
 and pre-compactness in
 \[C^\infty_{loc}(\mathbb{R}\times \mathbb{T},\mathbb{R}^{2n})\times C^\infty_{loc}(\mathbb{R},S^{2N+1})\]
by a standard bubbling-off and elliptic bootstrap argument.
We have that for a generic $S^1$-invariant almost complex structure $J$, the space \[\hat{\mathcal{M}}^{S^1,N}_F(S_{(x,v)},S_{(y,w)};H,\widetilde{f}_N,J)\] 
is a smooth manifold with 
\[\dim\hat{\mathcal{M}}^{S^1,N}_F(S_{(x,v)},S_{(y,w)};H,\widetilde{f}_N,J)=\mu(x,z)-\mu(y,w)+1.\]
Transversality is a consequence of the fact that the system of equations \eqref{floerred} reduces to a continuation equation, as shown in \cite{BO17}. For results concerning dimension, see \cite{BO13a,BO13b} and Lemma \ref{uppertriangular}. 

For transversality results in the $S^1$-equivariant setting for Borel construction (see \cite{BO17}), see \cite{BO10}, and for results concerning dimension, see \cite{BO13a} and \cite{BO13b}.

Since we have a free $\mathbb{R}\times \mathbb{T}$ action on this space we conclude that \[\mathcal{M}^{S^1,N}_F(S_{(x,v)},S_{(y,w)};H,\widetilde{f}_N,J)\defeq\hat{\mathcal{M}}^{S^1,N}_F(S_{(x,v)},S_{(y,w)};H,\widetilde{f}_N,J)/\mathbb{R}\times \mathbb{T}\]
is a smooth manifold of dimension $\mu(x,z)-\mu(y,w)-1.$

\textbf{$S^1$-equivariant Floer complex}

 From the previous arguments we have a well-defined $S^1$-equivariant Floer complex \[\{\C\F^{S^1,N}_*(H,\widetilde{f}_N),\partial^{F,S^1}\}\] 
for a generic $S^1$-invariant, uniformly bounded, $\omega_0$-compatible almost complex structure $J(t,x,z)$ where $(t,x,z)\in \mathbb{T}\times \mathbb{R}^{2n}\times S^{2N+1}$.
 
 Indeed, $\C\F^{S^1,N}_*(H,\widetilde{f}_N)$ is a $\mathbb{Z}_2$-vector space generated by critical orbits of $P(H,\widetilde{f}_N)$. Grading of this complex is given by 
 \[|S_{(x,v)}|\defeq\mu(x,v)+N\] The boundary operator 
 \[\partial^{F,S^1}:\C\F^{S^1,N}_*(H,\widetilde{f}_N)\to \C\F^{S^1,N}_{*-1}(H,\widetilde{f}_N)\] 
 is defined as \[\partial^{F,S^1}(S_{(x,v)})=\sum n^{S^1}_F(S_{(x,v)},S_{(y,w)})S_{(y,w)},\]
where the sum goes over all $S_{(y,w)}$ such that $|S_{(y,w)}|=|S_{(x,v)}|-1$ and $n^{S^1}_F(S_{(x,v)},S_{(y,w)})$ is parity of the set $\mathcal{M}^{S^1,N}_F(S_{(x,v)},S_{(y,w)};H,\widetilde{f}_N,J)$. When $|S_{(y,w)}|=|S_{(x,v)}|-1$ by compactness results we conclude that $\mathcal{M}^{S^1,N}_F(S_{(x,v)},S_{(y,w)};H,\widetilde{f}_N,J)$ is compact $0$-dimensional manifold and therefore $\mathcal{M}^{S^1,N}_F(S_{(x,v)},S_{(y,w)};H,\widetilde{f}_N,J)$ is a finite set which implies that $\partial^{F,S^1}$ is well defined. The standard gluing and compactness argument gives us that \[\partial^{F,S^1}\circ \partial^{F,S^1}=0.\]
Thus, we have well-defined $S^1$-equivariant Floer homology 
\[\H\F^{S^1,N}_*(H,\widetilde{f}_N;J)=\H_*(\{\C\F^{S^1,N}_*(H,\widetilde{f}_N),\partial^F\}).\]
We get chain isomorphic $S^1$-equivariant Floer chain complexes and, therefore, the same homology for different choices of almost complex structures.

Filtration of this complex by $\Phi_H$ is well-defined because of claim (1) of Lemma \ref{filtrations1equfloer}. In other words by this property we have that $\partial^{F,S^1}$ maps $\C\F^{S^1,N,<a}_*(H)$ to $\C\F^{S^1,N,<a}_{*-1}(H)$. 

\textbf{Continuation maps } 

For pairs $(N_\alpha, H_\alpha)$ and $(N_\beta,H_\beta)$, where $H_\alpha\in \mathcal{H}_F^{S^1,N_\alpha}(\widetilde{f}_{N_\alpha},g_{N_\alpha})$ and $H_\beta\in\mathcal{H}_F^{S^1,N_\beta}(\widetilde{f}_{N_\beta},g_{N_\beta})$ we have the relation \[(N_\alpha, H_\alpha)\leq (N_\beta,H_\beta)\]
given by \[N_\alpha\leq N_\beta, \ \ \ H_\alpha\leq i_{N_\beta,N_\alpha}^*H_\beta\]

where $i_{N_\beta,N_\alpha}: S^{2N_\alpha+1}\to S^{2N_\beta+1}, \ \ \ i_{N_\beta,N_\alpha}(z)=(z,0)$.

Then, in a similar way as in the non-parameterized case, we can define continuation maps by non-increasing homotopy $(H^{\alpha\beta}_s,J^{\alpha\beta}_s)$ from $i_{N_\beta,N_\alpha}^*H_\beta$ to $H_\alpha$. Generically, this induces a chain map \[i^{F,S^1}_{(H^{\alpha\beta}_s,J^{\alpha\beta}_s)}:\C\F^{S^{1},N_\alpha}_*(H_\alpha,\widetilde{f}_{N_\alpha};J_\alpha)\to C\F^{S^{1},N_\beta}_*(H_\beta,\widetilde{f}_{N_\beta};J_\beta),\]

which is unique up to chain homotopy, and therefore induces a continuation map in homology. Due to non-increasing property of the homotopy, $i^{F,S^1}_{(H^{\alpha\beta}_s,J^{\alpha\beta}_s)}$ preserves the action filtration.

\section{Morse complex of the dual action functional}\label{sectionmorse}

In this section, we examine the ($S^1$-equivariant) Morse complex associated with the reduced dual action functional for a particular type of admissible ($S^1$-invariant) Hamiltonians.

\large\textbf{Non-parameterized case} 

\normalsize

Let
\[\Honenull \defeq \{x \in \Hone \mid \int_\mathbb{T} x(t) \, dt = 0\}.\]
This space is equipped with the natural projection
\[\mathbb{P} : \Hone \to \Honenull, \quad \mathbb{P}(x) = x - \int_\mathbb{T} x(t) \, dt.\]
\textbf{Quadratic convexity:} The smooth Hamiltonian
\[H : \mathbb{T} \times \mathbb{R}^{2n} \to \mathbb{R}\]
is said to be quadratically convex if there exist positive constants $\underline{h} > 0$ and $\overline{h} > 0$ such that
\[\underline{h} I \leq \nabla^2 H_t(x) \leq \overline{h} I, \quad x \in \mathbb{R}^{2n}, \, t \in \mathbb{T},\]
where $I$ is the identity isomorphism.

Let $H$ be quadratically convex. Then, the dual action functional is defined as
\[\PsiH : \Honenull \to \mathbb{R}, \quad \PsiH(x) = -\int_\mathbb{T} x^* \lambda_0 + \int_\mathbb{T} H_t^*(-J_0 \dot{x}(t)) \, dt,\]
where
\[H^*_t(x) = \max_y \left(\langle x, y \rangle - H_t(y) \right)\]
is the Fenchel conjugate of $H$.

Unlike $\PhiH$, the dual action functional is not $C^2$ except when $H$ is quadratic. However, when $H$ is quadratically convex, $\PsiH$ is $C^1$ and twice Gateaux-differentiable, with the Gateaux second derivative being a continuous symmetric bilinear map of finite Morse index.

From \cite[Lemma 5.1]{AK22}, we know that $P(H)$ and $\crit(\PsiH)$ are in one-to-one correspondence, with the bijection given by
\[\mathbb{P}|_{P(H)} : P(H) \to \crit(\PsiH),\]
and it holds that
\[\PhiH(x) = \PsiH(x_0),\]
where $x_0 = \mathbb{P}(x)$. Additionally, from \cite[Proposition 1.1]{AK22} and \cite[Proposition 5.3]{AK22}, we have
\begin{equation}\label{nullityindex}
    \nullity(x_0; \PsiH) = \dim \ker(I - \varphi^1_{X_H}(x)), \quad \ind(x_0; \PsiH) = \mu_{CZ}(x; H).
\end{equation}

The issue that $\PsiH$ is not $C^2$ can be addressed as in \cite{AK22}. Elements $x \in \Hone$ can be written in the form of a Fourier series, i.e.,
\[x(t) = \sum_{k \in \mathbb{Z}} e^{2\pi k t J_0} \hat{x}(k), \quad \hat{x}(k) \in \mathbb{R}^{2n},\]
such that
\[\sum |k|^2 |\hat{x}(k)|^2 < \infty.\]

Thus, for every $l \in \mathbb{N}$, the space $\Hone$ can be decomposed as
\[\Hone = \mathbb{H}_0 \oplus \mathbb{H}_l \oplus \mathbb{H}^l,\]
where
\[\mathbb{H}_l = \{x \in \Hone \mid \hat{x}(k) = 0 \text{ if } k < 1 \text{ or } k > l\},\]
and
\[\mathbb{H}^l = \{x \in \Hone \mid \hat{x}(k) = 0 \text{ if } 0 \leq k \leq l\},\]
with $\mathbb{H}_0 \cong \mathbb{R}^{2n}$ denoting the space of constant loops.

This decomposition is orthogonal. We denote the orthogonal projection to $\mathbb{H}_l$ by
\[\mathbb{P}_l : \Hone \to \mathbb{H}_l.\]
It is clear that
\[\Honenull = \mathbb{H}_l \oplus \mathbb{H}^l.\]
As shown in \cite{AK22}, for sufficiently large $l \in \mathbb{N}$, we obtain a saddle point reduction of $\PsiH$, i.e., for $2\pi (l+1) > \overline{h}$ (where $\overline{h}$ is the constant from the definition of quadratic convexity of $H$), the second differential of the function
\[\mathbb{H}^l \to \mathbb{R}, \quad y \mapsto \PsiH(x + y)\]
is bounded below by a coercive quadratic form. Thus, this map is strictly convex and has a unique non-degenerate critical point, which is a global minimizer. This defines a map
\[Y_l : \mathbb{H}_l \to \mathbb{H}^l,\]
where $Y_l(x)$ is the global minimizer of the previously mentioned function. We denote by
\[\Gamma_l : \mathbb{H}_l \to \Honenull, \quad \Gamma_l(x) = (x, Y_l(x)),\]
the graph of this map.

We can now define the $l$-reduction of the dual functional as
\[\psilH : \mathbb{H}_l \to \mathbb{R}, \quad \psilH(x) = \PsiH(\Gamma_l(x)).\]

This method was also used in \cite{Vit89}.
\begin{remark}
    In \cite{AK22}, the reduced dual functional is defined as the restriction of $\PsiH$ to the manifold $\Gamma_l(\mathbb{H}_l)$. This does not make any difference in the context of the Morse complex. However, defining this functional on $\mathbb{H}_l$ is more suitable for us since it simplifies the definition of the continuation maps for Morse complexes.    
\end{remark}

From \cite[Proposition 6.2]{AK22}, we have that if $H$ is quadratically convex, $Y_l$ is smooth and takes values in $C^\infty(\mathbb{T}, \mathbb{R}^{2n})$, with
\[\Gamma_l(\mathbb{H}_l) \subset \Honenull\]
being a smooth submanifold of dimension $2nl$. Moreover, $\psilH$ is a smooth function, and $x_0 \in \crit(\PsiH)$ if and only if $x_l \defeq \mathbb{P}_l(x_0) \in \crit(\psilH)$. Furthermore, we have the identity
\[\nullity(x_l; \psilH) = \nullity(x_0; \PsiH), \quad \ind(x_l; \psilH) = \ind(x_l; \PsiH).\]

\begin{remark}\label{nullityindexred}
Since $\mathbb{P}_l \circ \mathbb{P}=\mathbb{P}_l$, from the previous conclusions, we have that
\[\mathbb{P}_l|_{P(H)} : P(H) \to \crit(\psilH)\]
is a bijection and it holds that $\PhiH(x) = \psilH(x_l)$, where $x_l = \mathbb{P}_l(x)$. Additionally, from the above equalities and \eqref{nullityindex}, it follows that
\[\nullity(x_l; \psilH) = \dim\ker(I - \varphi^1_H(x)), \quad \ind(x_l; \psilH) = \mu_{CZ}(x; H) - n.\]    
\end{remark}

\begin{remark}\label{psnonparameterized}
    If we also assume that $H$ is non-resonant at infinity, then from \cite[Proposition 6.2]{AK22} we have that the pair $(\PsiH, \langle \cdot, \cdot \rangle_{\Honenull})$ satisfies the Palais-Smale condition. Therefore, for the pullback $g_{H^1_0}$ of the standard metric on $\Honenull$ to $\mathbb{H}_l$ via the map $\Gamma_l$, the pair $(\psilH, g_{H^1_0})$ satisfies the Palais-Smale condition. Since $\Gamma_l$ is bilipschitz, we conclude that $g_{H^1_0}$ is equivalent to the standard metric on $\mathbb{H}_l$. Thus, for every metric $g$ on $\mathbb{H}_l$ that is equivalent to the standard one, the pair $(\psilH, g)$ satisfies the Palais-Smale condition.
\end{remark}

\textbf{Dually admissible Hamiltonians:} We define the family $\mathcal{H}_\Theta$ to be the family of all non-degenerate Hamiltonians $H: \mathbb{T} \times \mathbb{R}^{2n} \to \mathbb{R}$ that are quadratically convex and non-resonant at infinity. 

\textbf{Morse Complex}

Since $H$ is non-degenerate, from Remark \eqref{nullityindexred}, we conclude that $\psilH$ is a smooth Morse function. Additionally, since $H$ is non-resonant at infinity, it has finitely many critical points. Therefore, for a generic metric $g$ on $\mathbb{H}_l$ that is equivalent to the standard metric, the pair $(\psilH, g)$ is a Morse-Smale pair. Such a pair will satisfy the Palais-Smale condition due to Remark \ref{psnonparameterized}.

Therefore, generically, the Morse complex
\[\{CM_*(\psilH), \partial^M\},\]
is well-defined. The space $CM_*(\psilH)$ is a $\mathbb{Z}_2$-vector space whose generators are the critical points of $\psilH$, and it is graded by the Morse index of these critical points. The boundary operator
\[\partial^M: CM_*(\psilH) \to CM_{*-1}(\psilH)\]
is defined by
\[\partial^M(x_l) = \sum_{\ind(y_l) = \ind(x_l) - 1} n_M(x_l, y_l) y_l,\]
where $n_M(x_l, y_l)$ is the parity of the set of non-parametrized negative gradient flow lines of $\psilH$ going from $x_l$ to $y_l$, i.e., the space
\[\mathcal{M}_M(x_l, y_l; \psilH, g) \defeq \hat{\mathcal{M}}_M(x_l, y_l; \psilH, g) / \mathbb{R} = W^u(x_l, -\nabla_g \psilH) \cap W^s(y_l, -\nabla_g \psilH) / \mathbb{R}.\]
This is well-defined due to compactness arguments, and it holds that
\[\partial^M \circ \partial^M = 0\]
by standard compactness and gluing results. The Morse complex is uniquely defined up to chain isomorphism. We have a natural filtration on this complex by $\psilH$.

\textbf{Continuation Maps}

Let $H_\alpha, H_\beta \in \mathcal{H}_\Theta$ be such that $H_\alpha \leq H_\beta$. Then it follows that $H^*_\beta \leq H^*_\alpha$, and therefore $\Psi_{H_\beta^*} \leq \Psi_{H_\alpha^*}$. For large enough $l \in \mathbb{N}$, we have that the $l$-saddle point reduction exists for both functionals. From the definition of the reduction, it is clear that
\[\psi^l_{H_\beta^*} \leq \psi^l_{H_\alpha^*}.\]

Let $g_\alpha$ and $g_\beta$ be metrics uniformly equivalent to the standard one on $\mathbb{H}_l$ such that the Morse complexes for $(\psi^l_{H_\alpha^*}, g_\alpha)$ and $(\psi^l_{H_\alpha^*}, g_\beta)$ are well-defined.

We define a non-increasing homotopy $(\psi^{{\alpha\beta}}_s, g^{\alpha\beta}_s)$, i.e., a homotopy $\psi^{\alpha\beta}_s$ such that $\partial_s \psi^{\alpha\beta}_s \leq 0$, where
\[(\psi^{\alpha\beta}_s, g^{\alpha\beta}_s) = (\psi^l_{H_\alpha^*}, g_\alpha), \ \ s \leq T_0 - 1,\]
and
\[(\psi^{\alpha\beta}_s, g^{\alpha\beta}_s) = (\psi^l_{H_\beta^*}, g_\beta), \ \ s \geq T_0,\]
where $T_0 < -1$ is fixed.

For $x^\alpha_l \in \crit(\psi^l_{H_\alpha^*})$, we define the unstable set of the homotopy $(\psi^{\alpha\beta}_s, g^{\alpha\beta}_s)$ to be
\[W^u(x^\alpha_l; (\psi^{\alpha\beta}_s, g^{\alpha\beta}_s)) = \{y \in \mathbb{H}_l \mid \dot{\gamma}(s) = -\nabla_{g^{\alpha\beta}_s} \psi^{\alpha\beta}_s(\gamma(s)), \ \ \gamma(s_0) = y, \ \ s_0 \in \mathbb{R}, \ \ \lim_{s \to -\infty} \gamma(s) = x^\alpha_l\},\]
and for $y_l^\beta \in \crit(\psi^l_{H_\beta^*})$, we define the stable manifold of the homotopy $(\psi^{\alpha\beta}_s, g^{\alpha\beta}_s)$ to be
\[W^s(y_l^\beta; (\psi^{\alpha\beta}_s, g^{\alpha\beta}_s)) = \{y \in \mathbb{H}_l \mid \dot{\gamma}(s) = -\nabla_{g^{\alpha\beta}_s} \psi^{\alpha\beta}_s(\gamma(s)), \ \ \gamma(s_0) = y, \ \ s_0 \in \mathbb{R}, \ \ \lim_{s \to +\infty} \gamma(s) = y_l^\beta\}.\]

Now, for $x_l^\alpha \in \crit(\psi_{H_\alpha^*})$ and $y_l^\beta \in \crit(\psi_{H_\beta^*})$, we define the moduli space
\[\mathcal{M}_M(x_l^\alpha, y_l^\beta; \psi^{\alpha\beta}) = W^u(x_l^\alpha; (\psi^{\alpha\beta}_s, g^{\alpha\beta}_s)) \cap W^s(y_l^\beta; (\psi^{\alpha\beta}_s, g^{\alpha\beta}_s)).\]

For a generic choice of the homotopy, $\mathcal{M}_M(x_l^\alpha, y_l^\beta; \psi^{\alpha\beta})$ is a smooth manifold of dimension $\ind(x_l^\alpha; \psi^l_{H_\alpha^*}) - \ind(x_l^\beta; \psi^l_{H_\beta^*})$.

Therefore, we define the chain map
\[i^M_{(\psi^{\alpha\beta}_s, g^{\alpha\beta}_s)}: CM_*(\psi_{H^*_\alpha}, g_\alpha) \to CM_*(\psi_{H^*_\beta}, g_\beta)\]
as
\[i^M_{(\psi^{\alpha\beta}_s, g^{\alpha\beta}_s)}(x_l^\alpha) = \sum_{\ind(y_l^\beta; \psi^l_{H^*_\beta}) = \ind(x_l^\alpha; \psi^l_{H^*_\alpha})} n_M(x_l^\alpha, y_l^\beta) y_l^\beta,\]
where $n_M(x_l^\alpha, y_l^\beta)$ denotes the parity of the set $\mathcal{M}_M(x_l^\alpha, y_l^\beta; \psi^{\alpha\beta})$. This map is well-defined due to compactness arguments, and it is a chain map by compactness and gluing. Moreover, it preserves the filtration because $\psi^{\alpha\beta}_s$ is a non-increasing homotopy. Using homotopies of homotopies, we see that such a chain map is unique up to chain homotopy and, therefore, induces the same map in homology called continuation map.

\large\textbf{$S^1$-equivariant case}

\normalsize

Now, we will discuss the $S^1$-equivariant Morse complex associated with the reduced dual functional corresponding to $S^1$-invariant admissible Hamiltonians. 

\textbf{Uniform Quadratic Convexity:} The parameterized smooth Hamiltonian 
\[H:\mathbb{T}\times \mathbb{R}^{2n}\times S^{2N+1} \to \mathbb{R}\] 
is said to be uniformly quadratically convex if there exist positive numbers $\underline{h} > 0$ and $\overline{h} > 0$ such that 
\[\underline{h}I \leq\nabla^2 H_{z,t}(x)\leq \overline{h}I, \ \ \ t\in \mathbb{T}, x\in \mathbb{R}^{2n}, z\in S^{2N+1},\]
where $I$ is the identity isomorphism.

For such a Hamiltonian, the parameterized Fenchel conjugate \[H^*_{z,t}(x)=\max\limits_y(\langle x,y\rangle -H_{z,t}(y))\]
 is smooth.

\begin{lemma}\label{parfenchelconjugatelemma}
Assume that $H^*$ is the parameterized, time-dependent Fenchel conjugate of $H$ (and that both are of sufficient regularity). Then it holds:
\begin{gather*}
H^*_{z,t}(\nabla_xH_{z,t}(x)) + H_{z,t}(x) = \langle \nabla_xH_{z,t}(x), x \rangle, \\
\nabla_xH^*_{z,t} \circ \nabla_xH_{z,t} = id, \ \ \ \nabla_z H^*_t(\nabla_xH_{t,z}(x),z) = -\nabla_z H_{t}(x,z),    
\end{gather*}
where $\nabla_z$ denotes the gradient operator on $S^{2N+1}$ with respect to some metric $g$.
\end{lemma}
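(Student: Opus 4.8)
The plan is to reduce everything to the classical (unparameterized) Fenchel identities by treating the parameter $z\in S^{2N+1}$ and the time $t\in\mathbb{T}$ as spectators in the first two identities, and then differentiating the parameter-dependence to obtain the third. First I would recall that, by uniform quadratic convexity, for each fixed $(z,t)$ the map $x\mapsto H_{z,t}(x)$ is a smooth, strictly convex function with $\nabla^2 H_{z,t}\geq \underline h\,I>0$, so $\nabla_x H_{z,t}:\mathbb{R}^{2n}\to\mathbb{R}^{2n}$ is a diffeomorphism; the maximum in the definition of $H^*_{z,t}(p)=\max_y(\langle p,y\rangle-H_{z,t}(y))$ is attained at the unique point $y=y(p,z,t)$ solving $p=\nabla_x H_{z,t}(y)$. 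Substituting $p=\nabla_x H_{z,t}(x)$ gives $y(p,z,t)=x$ and hence
\[
H^*_{z,t}(\nabla_x H_{z,t}(x)) = \langle \nabla_x H_{z,t}(x),x\rangle - H_{z,t}(x),
\]
which is the first identity. Differentiating $H^*_{z,t}$ in its spatial argument: by the envelope theorem (the maximizer is interior and unique), $\nabla_p H^*_{z,t}(p)=y(p,z,t)$, so $\nabla_x H^*_{z,t}(\nabla_x H_{z,t}(x)) = y(\nabla_x H_{z,t}(x),z,t)=x$, i.e.\ $\nabla_x H^*_{z,t}\circ\nabla_x H_{z,t}=\mathrm{id}$, the second identity. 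These two steps are essentially the standard Legendre–Fenchel duality and require only that $H$ is $C^2$ and the Hessian bound; no new phenomena arise from the presence of $z$ and $t$.

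For the third identity I would differentiate the first identity with respect to the parameter $z$ along $S^{2N+1}$, holding $x$ and $t$ fixed. Write $p(x,z,t):=\nabla_x H_{z,t}(x)$. Applying $\nabla_z$ (the gradient with respect to the chosen metric $g$ on $S^{2N+1}$, equivalently taking the directional derivative in a tangent direction $v\in T_zS^{2N+1}$ and then using $g$ to identify) to the identity $H^*_{z,t}(p(x,z,t))+H_{z,t}(x)=\langle p(x,z,t),x\rangle$ gives, by the chain rule,
\[
(\nabla_z H^*_t)(p,z)\big|_{\text{param slot}} + \big\langle \nabla_p H^*_{z,t}(p),\, \partial_z p\,v\big\rangle + (\nabla_z H_t)(x,z)\cdot v = \big\langle \partial_z p\, v,\, x\big\rangle,
\]
for every tangent vector $v$, where $\partial_z p\,v$ denotes the derivative of $p(x,z,t)$ in the direction $v$. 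Now invoke the second identity in the form $\nabla_p H^*_{z,t}(p)=x$ (since $p=\nabla_x H_{z,t}(x)$): the second term on the left equals $\langle x,\partial_z p\, v\rangle$, which cancels exactly against the right-hand side. What remains is $(\nabla_z H^*_t)(\nabla_x H_{z,t}(x),z)\cdot v = -(\nabla_z H_t)(x,z)\cdot v$ for all $v\in T_zS^{2N+1}$, i.e.\ $\nabla_z H^*_t(\nabla_x H_{z,t}(x),z) = -\nabla_z H_t(x,z)$, which is the third identity.

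The one point that needs a little care — and which I expect to be the main (mild) obstacle — is the bookkeeping in the last computation: one must be careful that $\nabla_z H^*_t$ is evaluated at the pair $(\nabla_x H_{z,t}(x),z)$ with the first slot held \emph{frozen} while differentiating only the explicit parameter dependence of $H^*$, and then separately account for the chain-rule contribution coming from the $z$-dependence of $p(x,z,t)=\nabla_x H_{z,t}(x)$; it is precisely the cancellation of this chain-rule term against $\langle \partial_z p\, v, x\rangle$ that makes the identity clean. Equivalently, one can phrase the whole argument via the parameterized Legendre transform $(x,z,t)\mapsto(\nabla_x H_{z,t}(x),z,t)$ and the fact that $H^*$ is, up to sign and the $\langle p,x\rangle$ term, the pullback of $H$ under this map, so that differentiating in the $z$-direction and using that the Legendre map is the identity on the $z$- and $t$-factors yields the sign reversal. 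Smoothness of $H^*$ in all variables — needed to justify interchanging derivatives and applying the envelope theorem — follows from the implicit function theorem applied to $p=\nabla_x H_{z,t}(y)$ together with the uniform Hessian bound, exactly as in \cite{AK22}.
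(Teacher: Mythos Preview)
Your proof is correct and follows essentially the same route as the paper: establish the first two identities via the standard Legendre--Fenchel relations (maximizer equals $\nabla_x H^{-1}$, envelope theorem), then differentiate the first identity in $z$ and use $\nabla_p H^*_{z,t}(p)=x$ to cancel the chain-rule term against $\langle \partial_z p,x\rangle$. The paper's argument is the same computation written slightly more tersely.
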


\begin{proof}

From the definition of $H^*$, we have
\begin{equation}\label{parfencelconjeq}
H^*_{z,t}(x) = \langle x,y \rangle - H_{z,t}(y),    
\end{equation}
for $y = \nabla_x H^{-1}_{z,t}(x)$. Therefore, we have $x = \nabla_x H_{z,t}(y)$, which confirms that the first identity holds. Differentiating both sides of \eqref{parfencelconjeq} with respect to spatial variable gives 
\[dH_{z,t}^*(x) = \langle x, \cdot \rangle + \langle y, \cdot \rangle - dH_{z,t}(y).\]
Subtracting $y = \nabla_x H^{-1}_{z,t}(x)$ one gets 
\[dH_{z,t}^*(x) = \langle x, \cdot \rangle + \langle \nabla_x H^{-1}_{z,t}(x), \cdot \rangle - \langle x, \cdot \rangle = \langle \nabla_x H_{z,t}^{-1}(x), \cdot \rangle,\]
and therefore, we have 
\[\nabla_xH^*_{z,t} \circ \nabla_xH_{z,t} = id.\] 
On the other hand identity \[H^*_t(\nabla_x H_{z,t}(x),z) = \langle \nabla_x H_{z,t}(x), x \rangle - H_t(x,z)\]
holds. Differentiating both sides of this equation with respect to $z$, we obtain
\[\frac{\partial{H^*_t}}{\partial x}(\nabla_x H_{z,t}(x),z) \left[ \frac{\partial \nabla_x H_{z,t}}{\partial z}(x) \right] + \frac{\partial{H^*_t}}{\partial z}(\nabla_x H_{z,t}(x),z) = \left\langle \frac{\partial \nabla_x H_{z,t}}{\partial z}(x), x \right \rangle - \frac{\partial H_t}{\partial z}(x,z).\]
Combining this with the identity 
\[\frac{\partial{H^*_t}}{\partial x}(\nabla_x H_{z,t}(x),z) \left[ \frac{\partial \nabla_x H_{z,t}}{\partial z}(x) \right] = \left\langle \frac{\partial \nabla_x H_{z,t}}{\partial z}(x), x \right\rangle,\]
we get 
\[\nabla_z H^*_t(\nabla_xH_{t,z}(x),z) = -\nabla_z H_{t}(x,z).\]
 \end{proof}

\textbf{$\widetilde{f}_N$-Dually admissible $S^1$-invariant Hamiltonians:} $H \in \mathcal{H}_\Theta^{S^1,N}(\widetilde{f}_N,g_N)$ if it satisfies:
\begin{enumerate}
    \item[(H1)] $H$ is $S^1$-invariant.
    \item[(H2)] For every $v \in \crit(\widetilde{f}_N)$, the Hamiltonian $H_v$ is non-degenerate.
    \item[(H3)] $H$ is non-increasing along the gradient flow lines of $\widetilde{f}_N$.
    \item[(H4)] $H$ is uniformly quadratically convex, and the partial derivatives $\frac{\partial^2 H}{\partial x \partial z}$ and $\frac{\partial^2 H}{\partial z^2}$ are bounded.
    \item[(H5)] $H$ is $\widetilde{f}_N$-non-resonant at infinity.
\end{enumerate}
Let $H \in \mathcal{H}_\Theta^{S^1,N}(\widetilde{f}_N,g_N)$. Since $H$ is uniformly quadratically convex (by (H4)), we have a smooth parameterized Fenchel conjugate $H^*$ of $H$ and the parameterized dual action functional
\[\PsiH: \Honenull \times S^{2N+1} \to \mathbb{R}, \quad \PsiH(x) = -\int\limits_\mathbb{T} x^* \lambda_0 + \int\limits_\mathbb{T} H^*_{z,t}(-J_0 \dot{x}(t)) \, dt.\] We define the set of critical points of the pair $(\PsiH, \widetilde{f}_N)$ as
\[\crit(\PsiH, \widetilde{f}_N) \defeq \{ (x_0, v) \in \Honenull \times S^{2N+1} \mid x_0 \in \crit(\Psi_{H^*_v}), \, v \in \crit(\widetilde{f}_N) \},\]
and their Morse index as
\[\ind(x_0, v) \defeq \ind(x_0; \Psi_{H^*_v}) + \ind(v; \widetilde{f}_N), \quad (x_0, v) \in \crit(\PsiH, \widetilde{f}_N).\]

As we will see, $\PsiH$ is $S^1$-invariant, and therefore, on $\crit(\PsiH, \widetilde{f}_N)$ we have a free $S^1$-action. The $S^1$-orbit of $(x_0, v)\in \crit(\PsiH, \widetilde{f}_N)$ is denoted by $S_{(x_0,v)}$ and the index is defined by
\[\ind(S_{(x_0, v)})\defeq\ind(x_0, v).\] 

\begin{proposition}\label{correspondancecriticals1equi}
    Let $H \in \mathcal{H}_\Theta^{S^1,N}(\widetilde{f}_N, g_N)$.
    \begin{enumerate}
        \item The dual action functional $\PsiH$ is $S^1$-invariant, and the orbits of $P(H, \widetilde{f}_N)$ are in one-to-one correspondence with the orbits of $\crit(\PsiH, \widetilde{f}_N)$. This bijection is given by
        \[\mathbb{P} \times \text{id}_{S^1} \big|_{P(H, \widetilde{f}_N)} : P(H, \widetilde{f}_N) \to \crit(\PsiH, \widetilde{f}_N).\]
        Under this map, it holds that $\PhiH(x, z) = \PsiH(x_0, z)$ where $x_0 = \mathbb{P}(x)$. 
        \item For all $v \in \crit(\widetilde{f}_N)$, the dual functional $\Psi_{H^*_v}$ is non-degenerate functional with finite Morse index. Moreover, it holds
        \[\ind(S_{(x_0, v)}) = |S_{(x, v)}| - n, \quad (x, v) \in P(H, \widetilde{f}_N).\]
    \end{enumerate}
\end{proposition}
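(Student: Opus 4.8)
The plan is to treat this as the parameterized ($S^1$-equivariant) incarnation of \cite[Lemma 5.1, Proposition 1.1, Proposition 5.3]{AK22} and to deduce it from those results applied fiberwise over $S^{2N+1}$; the only genuinely new ingredients are the $S^1$-invariance of $\PsiH$ and the combined index count. Before anything else I would record that for each $v\in\crit(\widetilde f_N)$ the frozen Hamiltonian $H_v\defeq H(\cdot,\cdot,v)$ belongs to $\mathcal H_\Theta$: it is quadratically convex with the uniform constants $\underline h,\overline h$ furnished by (H4), it is non-degenerate by (H2), and it is non-resonant at infinity because setting $z=v$ in (H5) trivializes the constraint $\|\nabla\widetilde f_N(z)\|<\varepsilon$. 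I would also note that the restriction of $\PsiH$ to the slice $\Honenull\times\{v\}$ is exactly the non-parameterized dual functional $\Psi_{H^*_v}$ of $H_v$, since the Fenchel conjugate is taken fiberwise in the spatial variable.

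For the $S^1$-invariance of $\PsiH$ (claim (1), first half), the point is that hypothesis (H1), $H_{\theta\cdot z}(t+\theta,\cdot)=H_z(t,\cdot)$, passes to the Fenchel conjugate because conjugating in the $x$-slot commutes with reparametrizing the $(t,z)$-variables, giving $H^*_{\theta\cdot z}(t+\theta,\cdot)=H^*_z(t,\cdot)$. Then, for $\theta\in\mathbb T$, writing $y\defeq\theta\cdot x$ so that $y(t)=x(t-\theta)$ and $\dot y(t)=\dot x(t-\theta)$, the substitution $s=t-\theta$ in $\int_{\mathbb T}H^*_{\theta\cdot z,\,t}(-J_0\dot y(t))\,dt$ together with the displayed invariance of $H^*$ turns this integral into $\int_{\mathbb T}H^*_{z,s}(-J_0\dot x(s))\,ds$; combined with the reparametrization invariance $\int_{\mathbb T}(\theta\cdot x)^*\lambda_0=\int_{\mathbb T}x^*\lambda_0$ this yields $\PsiH(\theta\cdot x,\theta\cdot z)=\PsiH(x,z)$. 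Since $\crit(\PsiH,\widetilde f_N)$ is defined fiberwise, the diagonal $S^1$-action descends to it, and it is free because the $S^1$-action on the $S^{2N+1}$-factor is free.

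For the correspondence and the action identity I would apply \cite[Lemma 5.1]{AK22} to each $H_v$, obtaining a bijection $\mathbb P|_{P(H_v)}\colon P(H_v)\to\crit(\Psi_{H^*_v})$ with $\Phi_{H_v}(x)=\Psi_{H^*_v}(\mathbb P(x))$, and then take the disjoint union over $v$: since $P(H,\widetilde f_N)=\bigsqcup_{v\in\crit(\widetilde f_N)}P(H_v)\times\{v\}$ and $\crit(\PsiH,\widetilde f_N)=\bigsqcup_{v\in\crit(\widetilde f_N)}\crit(\Psi_{H^*_v})\times\{v\}$, this gives the bijection $\mathbb P\times\mathrm{id}_{S^1}$ and, fiber by fiber, $\PhiH(x,v)=\PsiH(x_0,v)$ with $x_0=\mathbb P(x)$. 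Equivariance is immediate from $\mathbb P(\theta\cdot x)=\theta\cdot\mathbb P(x)$ (subtracting the mean commutes with the shift), so the bijection descends to the orbit spaces, proving claim (1). For claim (2), quadratic convexity of $H_v$ makes $\Psi_{H^*_v}$ of class $C^1$, twice Gateaux-differentiable, with Gateaux second derivative a symmetric bilinear form of finite Morse index; \eqref{nullityindex} identifies $\nullity(x_0;\Psi_{H^*_v})$ with the dimension of the fixed subspace of the linearized time-one flow of $H_v$ at $x$, which vanishes by (H2), so $\Psi_{H^*_v}$ is non-degenerate of finite Morse index at $x_0$; and the index identity $\ind(S_{(x_0,v)})=|S_{(x,v)}|-n$ then follows by feeding the index part of \eqref{nullityindex} (together with \cite[Proposition 5.3]{AK22}) into $\ind(S_{(x_0,v)})=\ind(x_0;\Psi_{H^*_v})+\ind(v;\widetilde f_N)$ and into the definitions of $\mu(x,v)$ and $|S_{(x,v)}|$.

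I do not expect a serious obstacle: apart from the short $S^1$-invariance computation, the whole argument is a fiberwise transcription of \cite{AK22}. The one place that needs care is the index bookkeeping at the end, where one must reconcile the normalization of the Morse index of the (non-reduced) Clarke dual functional with the Conley-Zehnder normalization of \eqref{nullityindex} (compare the grading shift of the chain isomorphism $\Theta$ and Remark \ref{nullityindexred}) and with the $\mathbb C P^N$-level shift built into $\mu(x,v)$ and $|S_{(x,v)}|$; it is precisely this reconciliation that produces the $-n$ in the stated identity.
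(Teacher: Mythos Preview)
Your proposal is correct and follows essentially the same approach as the paper: both establish the $S^1$-invariance of $H^*$ (hence of $\PsiH$) directly from (H1) via the substitution $t\mapsto t-\theta$, then apply \cite[Lemma 5.1]{AK22} fiberwise over $v\in\crit(\widetilde f_N)$ for the bijection and action identity, and finally invoke \cite[Propositions 1.1 and 5.3]{AK22} together with (H2) for non-degeneracy, finite Morse index, and the index formula. Your explicit verification that $H_v\in\mathcal H_\Theta$ and that the bijection is $S^1$-equivariant are details the paper leaves implicit, but the argument is otherwise identical.
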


\begin{proof}

\textbf{Claim} (1):
From property (H1), we have that
\[H_{\theta \cdot z}(t+\theta, \cdot) = H_z(t, \cdot),\]
for all $\theta \in \mathbb{T}$, $t \in \mathbb{T}$, and $z \in S^{2N+1}$. Since, by definition, we have
\[H^*_z(t, x) = \max_{y \in \mathbb{R}^{2n}} \left(x \cdot y - H_z(t, y) \right),\]
it follows that
\[H_{\theta \cdot z}^*(t + \theta, x) = \max_{y \in \mathbb{R}^{2n}} \left(x \cdot y - H_{\theta \cdot z}(t + \theta, y) \right) = \max_{y \in \mathbb{R}^{2n}} \left(x \cdot y - H_z(t, y) \right) = H^*_z(t, x),\]
for all $\theta, t \in \mathbb{T}$ and $z \in S^{2N+1}$. Thus, $H^*$ is $S^1$-invariant. On the other hand, we have
\begin{gather*}
    \PsiH(\theta \cdot (x, z)) = \PsiH(x(\cdot - \theta), \theta \cdot z) = -\frac{1}{2} \int\limits_0^1 J_0 \dot{x}(s - \theta) \cdot x(s - \theta) ds+ \int\limits_0^1 H^*_{s, \theta \cdot z}(-J_0 \dot{x}(s - \theta))  ds\\
    \stackrel{t = s - \theta}{=} -\frac{1}{2} \int\limits_0^1 J_0 \dot{x}(t) \cdot x(t) \, dt + \int\limits_0^1 H^*_{t + \theta, \theta \cdot z}(-J_0 \dot{x}(t))  dt\\
    = -\frac{1}{2} \int\limits_0^1 J_0 \dot{x}(t) \cdot x(t) \, dt + \int\limits_0^1 H^*_{t, z}(-J_0 \dot{x}(t))  dt = \PsiH(x, z).
\end{gather*}

Therefore, $\PsiH$ is $S^1$-invariant. From \cite[Lemma 5.1]{AK22}, we have that $\mathbb{P} \times \text{id}_{S^1} \big|_{P(H, \widetilde{f}_N)}$ is a bijection, and  $\PhiH(x, v) = \PsiH(x_0, v)$. 

\textbf{Claim} (2):
From \cite[Proposition 1.1]{AK22}, \cite[Proposition 5.3]{AK22}, and property (H2), we conclude that $\Psi_{H^*_v}$ is non-degenerate functional with finite Morse index when $v \in \crit(\widetilde{f}_N)$. Since it holds
\[|S_{(x, v)}| = \mu_{CZ}(x; H_v) + \ind(v; \widetilde{f}_N),\]
and
\[\ind(S_{(x_0, v)}) = \ind(x_0; \Psi_{H^*_v}) + \ind(v; \widetilde{f}_N),\]
again from \cite[Proposition 1.1]{AK22} and \cite[Proposition 5.3]{AK22}, we obtain the desired result.

\end{proof}

\begin{proposition}\label{pscondition}
 Let $H \in \mathcal{H}_\Theta^{S^1,N}(\widetilde{f}_N, g_N)$. The pair $(\PsiH, \widetilde{f}_N)$ satisfies the P-S condition. More precisely, if $(x_k, z_k) \in \Honenull \times S^{2N+1}$ is a sequence such that $(d \Psi_{H^*_{z_k}}(x_k), d \widetilde{f}_N(z_k))$ converges strongly in $(\Honenull)^* \times T^*S^{2N+1}$ to zero when $k \to \infty$, then $(x_k,z_k)$ has a convergent subsequence.
\end{proposition}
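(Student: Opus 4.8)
The plan is to adapt the proof of the Palais--Smale property for the non-parameterized dual action functional, \cite[Proposition 6.2]{AK22}, to the present family setting. Since $S^{2N+1}$ is compact, after passing to a subsequence we may assume $z_k \to z_\infty$ in $S^{2N+1}$, so the real content is to show that $(x_k)$ has a subsequence converging in $\Honenull$. I would first bound $(x_k)$ in $\Honenull$ by feeding an approximate Euler--Lagrange identity into the $\widetilde{f}_N$-non-resonance hypothesis, and then upgrade the resulting weak limit to a strong one by a dominated-convergence argument applied to the first-order equation for $x_k$. The only genuinely new difficulty compared with \cite{AK22} is that the non-resonance condition (H5) couples the loop and the sphere variable and must be invoked on a curve that a priori has only $H^1$-regularity; one therefore needs the quantitative non-resonance estimate to persist under this relaxation (as in \cite{AK22}) and one must keep all the constants uniform in the parameter $z$, both of which are guaranteed by the definition of $\mathcal{H}_\Theta^{S^1,N}(\widetilde{f}_N, g_N)$.

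First I would record the convex-analytic consequences of (H4). By Lemma \ref{parfenchelconjugatelemma}, for every $(t,z)$ the map $\nabla_x H_{z,t}$ is a global diffeomorphism of $\mathbb{R}^{2n}$ with inverse $\nabla_x H^*_{z,t}$; the bounds $\underline{h} I \le \nabla^2 H_{z,t} \le \overline{h} I$ make $\nabla_x H_{z,t}$ globally $\overline{h}$-Lipschitz and $\nabla_x H^*_{z,t}$ globally $\underline{h}^{-1}$-Lipschitz, uniformly in $(t,z)$, while smoothness over the compact set $\mathbb{T}\times\{0\}\times S^{2N+1}$ gives $|\nabla_x H_{z,t}(0)| \le C_0$ and hence the uniform linear growth $|\nabla_x H_{z,t}(a)| \le \overline{h}\,|a| + C_0$. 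For $x \in \Honenull$ I would set $q(x,z)(t) \defeq \nabla_x H^*_{z,t}(-J_0\dot x(t)) \in L^2(\mathbb{T},\mathbb{R}^{2n})$ and let $\bar q$ denote its mean over $\mathbb{T}$. A direct computation gives $d\Psi_{H^*_z}(x)[\eta] = \int_\mathbb{T} \langle J_0(q(x,z) - x), \dot\eta \rangle\, dt$ for $\eta \in \Honenull$, and since $\{\dot\eta \mid \eta \in \Honenull\}$ is the $L^2$-orthogonal complement of the constant maps, the hypothesis $d\Psi_{H^*_{z_k}}(x_k) \to 0$ in $(\Honenull)^*$ is equivalent to
\[
\rho_k \defeq q_k - x_k - \bar q_k \longrightarrow 0 \quad \text{in } L^2(\mathbb{T},\mathbb{R}^{2n}), \qquad q_k \defeq q(x_k,z_k).
\]
Applying $\nabla_x H_{z_k,t}$ to the identity defining $q_k$ then yields $\dot x_k = X_{H_{z_k,t}}(q_k)$.

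Next I would prove boundedness in $\Honenull$. Set $y_k \defeq x_k + \bar q_k$, so that $\dot y_k = \dot x_k$ and $y_k - q_k = -\rho_k$; since $X_{H_{z_k,t}} = J_0 \nabla_x H_{z_k,t}$ is $\overline{h}$-Lipschitz,
\[
\|\dot y_k - X_{H_{z_k,\cdot}}(y_k)\|_{L^2} = \|X_{H_{z_k,\cdot}}(q_k) - X_{H_{z_k,\cdot}}(y_k)\|_{L^2} \le \overline{h}\,\|\rho_k\|_{L^2} \longrightarrow 0 .
\]
For $k$ large we therefore have $\|\dot y_k - X_{H_{z_k,\cdot}}(y_k)\|_{L^2} < \varepsilon$ and $\|\nabla \widetilde{f}_N(z_k)\| < \varepsilon$, where $\varepsilon, r$ are the constants from (H5); invoking $\widetilde{f}_N$-non-resonance (extended from smooth to $H^1$ curves by density) gives $\|y_k\|_{L^2} \le r$. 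Since $x_k$ is $L^2$-orthogonal to the constants, $\|x_k\|_{L^2}^2 + |\bar q_k|^2 = \|y_k\|_{L^2}^2 \le r^2$, so $\|x_k\|_{L^2}$ and $|\bar q_k|$ are bounded; combining this with $\dot x_k = X_{H_{z_k,t}}(q_k)$ and the uniform linear growth of $X_H$ yields $\|\dot x_k\|_{L^2} \le \overline{h}\,(\|x_k\|_{L^2} + |\bar q_k| + \|\rho_k\|_{L^2}) + C_0$, so $(x_k)$ is bounded in $\Honenull$.

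Finally I would pass to a strongly convergent subsequence. After a further subsequence, $x_k \rightharpoonup x_\infty$ in $\Honenull$, hence $x_k \to x_\infty$ in $L^2$ and in $C^0$ by Rellich, and $\bar q_k \to q_\infty$ in $\mathbb{R}^{2n}$. Then $q_k = x_k + \bar q_k + \rho_k \to x_\infty + q_\infty$ in $L^2$, and since $(z,t,a) \mapsto \nabla_x H_{z,t}(a)$ is continuous, $\overline{h}$-Lipschitz in $a$, and of uniform linear growth, dominated convergence gives $\dot x_k = J_0 \nabla_x H_{z_k,t}(q_k) \to J_0 \nabla_x H_{z_\infty,t}(x_\infty + q_\infty)$ in $L^2$. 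Hence $\dot x_k$ converges in $L^2$, i.e.\ $x_k \to x_\infty$ in $\Honenull$, and therefore $(x_k,z_k) \to (x_\infty,z_\infty)$ in $\Honenull \times S^{2N+1}$, as required. As flagged above, the only step that is not routine estimation is the application of $\widetilde{f}_N$-non-resonance to the $H^1$-curve $y_k$; everything else is the uniform-in-$z$ bookkeeping that lets the argument of \cite{AK22} carry over verbatim to families.
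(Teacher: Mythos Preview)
Your argument is correct and follows essentially the same route as the paper: translate the vanishing of the differential into an approximate Euler--Lagrange relation, use the uniform Lipschitz bound on $X_{H_{z,t}}$ to turn this into an approximate Hamiltonian equation, feed it together with $\nabla\widetilde f_N(z_k)\to 0$ into the $\widetilde f_N$-non-resonance hypothesis to obtain an $L^2$ bound, and then upgrade a weakly convergent subsequence to a strongly convergent one. Your version is in fact slightly more explicit than the paper's in two places---you keep track of the constant $\bar q_k$ when applying non-resonance (the paper suppresses it when passing from $y_k=\Pi(\cdots)$ to $\dot y_k$), and you spell out the dominated-convergence step the paper defers to \cite[Proposition~6.2]{AK22}---but these are presentational rather than strategic differences.
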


\begin{proof}
The condition from the proposition is equivalent to the condition that
\begin{equation}\label{convpseudogradient}
    (\nabla_{H^1} \Psi_{H^*_{z_k}}(x_k), \nabla \widetilde{f}_N(z_k)) \to 0,
\end{equation}
as $k \to \infty$.  

We have that
\[\nabla_{H^1} \Psi_{H^*_{z_k}}(x_k) = \Pi(-J_0(x_k - \nabla_x H^*_{z_k, t}(-J_0 \dot{x}_k))) \defeq y_k,\]
where
\[\Pi : L^2(\mathbb{T}, \mathbb{R}^{2n}) \to \Honenull, \quad (\Pi v)(t) \defeq \int\limits_0^t v(s) \, ds - \int\limits_\mathbb{T} \left(\int\limits_0^t v(s) \, ds \right) dt.\]
Therefore, it holds that
\[-J_0(x_k - \nabla_x H^*_{z_k, t}(-J_0 \dot{x}_k)) = \dot{y}_k,\]
where $\dot{y}_k \to 0$ in $L^2(\mathbb{T}, \mathbb{R}^{2n})$ as $k \to \infty$ due to \eqref{convpseudogradient}. Rewriting this equality and then using Lemma \ref{parfenchelconjugatelemma}, we get
\begin{equation}\nonumber
    -J_0 \dot{x}_k = \nabla_x H_{t, z_k} (x_k - J_0 \dot{y}_k).
\end{equation}
Multiplying this by $J_0$, we obtain
\begin{equation}\label{rewriten}
    \dot{x}_k = X_{H_{z_k}} (x_k - J_0 \dot{y}_k).
\end{equation}

Since $H$ is globally uniformly quadratic, $\nabla_x H_{t, z_k}$, and therefore $X_{H_{t, z}}$, is globally Lipschitz continuous with respect to the spatial variable. Thus, using \eqref{rewriten}, we have
\begin{equation}\label{nonrxpart}
\begin{array}{cc}
     & \|\dot{x}_k - X_{H_{z_k, \cdot}}(x_k)\|_{L^2(\mathbb{T}, \mathbb{R}^{2n})} \\
     & \leq \overline{h} \|J_0 \dot{y}_k\|_{L^2(\mathbb{T}, \mathbb{R}^{2n})} = \overline{h} \|\dot{y}_k\|_{L^2(\mathbb{T}, \mathbb{R}^{2n})} \to 0
\end{array}
\end{equation}

Then, using the fact that
\[\|\nabla \widetilde{f}_N(z_k)\| \to 0,\]
convergence in \eqref{nonrxpart} and the fact that $H$ is $\widetilde{f}_N$-non-resonant at infinity, we conclude that $x_k$ is uniformly bounded in $L^2$. 

Since $S^{2N+1}$ is compact, $z_k$ has a convergent subsequence. To show that $x_k$ also has a convergent subsequence in $\Honenull$, since we have uniform boundedness of the sequence $x_k$, we can use the same methods as in the proof of \cite[Proposition 6.2]{AK22} to prove the claim.
\end{proof}

Similar to the case of the non-parametrized dual functional, $\PsiH$ is not $C^2$. Thus, for the Morse complex to be well-defined, we can use saddle point reduction as in \cite{AK22} to achieve sufficient regularity. As in the non-parameterized case, we will denote the $l$-saddle point reduction as $\psilH$. Critical points of a pair $(\psilH,\widetilde{f}_N)$ and their Morse index are defined in the same way as for the pair $(\PsiH,\widetilde{f}_N)$.

\begin{proposition}\label{sadlepointreductionS1} Let $H\in \mathcal{H}_\Theta^{S^1,N}(\widetilde{f}_N,g_N)$. If $l\in \mathbb{N}$ is large enough, the following holds.
    
    \begin{enumerate}
    \item For every $x\in \mathbb{H}_l$ and $z\in S^{2N+1}$ the restriction of $\PsiH$  to $\{x\}\times \mathbb{H}^l\times \{z\}$ has a unique critical point $(x,Y_{l,N}(x,z),z)$, which is a non-degenerate global minimizer of this restriction.
    \item The map $Y_{l,N}:\mathbb{H}_l\times S^{2N+1} \to \mathbb{H}^l$ takes values in $C^\infty(\mathbb{T},\mathbb{R}^{2n})$ and is smooth with respect to the $C^k$-norm on the target for any $k\in \mathbb{N}$. If we denote by 
    \[\Gamma_{l,N}:\mathbb{H}_l\times S^{2N+1}\to \Honenull\times S^{2N+1}, \ \ \ \Gamma_{l,N}(x,z)=(x,Y_{l,N}(x,z),z)\]
    the graph of $Y_{l,N}$, the manifold $\Gamma_{l,N}(\mathbb{H}_l\times S^{2N+1})$
    is a smooth $(2nl+2N+1)-$dimensional submanifold of $\Honenull\times S^{2N+1}$ such that $\Gamma_{l,N}(\mathbb{H}_l\times S^{2N+1})\subset C^\infty(\mathbb{T},\mathbb{R}^{2n})\times S^{2N+1}$.
    \item The restricted dual action functional defined as \[\psilH:\mathbb{H}_l\times S^{2N+1}\to \mathbb{R}, \ \ \ \psilH(x,z)=\PsiH(\Gamma_{l,N}(x,z))\] is smooth.
    \item The map $Y_{l,N}$ is $S^1$-equivariant. Consequently, the manifold $\Gamma_{l,N}(\mathbb{H}_l\times S^{2N+1})$ and $\psilH$ are $S^1$-invariant. 
    \item The map 
    \[\mathbb{P}_l\times id_{S^1}|_{\crit(\PsiH,\widetilde{f}_N)}:\crit(\PsiH,\widetilde{f}_N)\to \crit(\psilH,\widetilde{f}_N)\]
    is a bijection. Moreover it holds that 
    \[\PsiH(x_0,v)=\psilH(x_l,v),\]
    where $x_l=\mathbb{P}_l(x_0)$.
    \item For all $v\in \crit(\widetilde{f}_N)$, the function $\psi^l_{H_v^*}$ is non-degenerate. Moreover for $S_{(x_l,v)}\subset \crit(\psilH,\widetilde{f}_N)$ it holds 
    \[\ind(S_{(x_l,v)})=\ind(S_{(x_0,v)}).\]
    \item $\psilH$ is non-inceasing along flow lines of $-\nabla \widetilde{f}_N$.
\end{enumerate}

\end{proposition}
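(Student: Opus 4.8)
The plan is to deduce all seven claims from the non-parameterized saddle point reduction of \cite{AK22}, recalled above, applied to each Hamiltonian $H_z$ for $z \in S^{2N+1}$, and then to upgrade the fiberwise statements to parameterized ones using the compactness of $S^{2N+1}$ together with the uniformity built into property (H4); the only genuinely new ingredient is claim (7). I begin with claim (1): fix $z \in S^{2N+1}$. By uniform quadratic convexity (H4) the upper bound $\overline{h}$ on $\nabla^2 H_{z,t}$ is independent of $z$, so for every $l$ with $2\pi(l+1) > \overline{h}$ the map $\mathbb{H}^l \to \mathbb{R}$, $y \mapsto \PsiH(x+y, z)$, has second differential bounded below by a fixed coercive quadratic form, exactly as in \cite{AK22}; hence it is strictly convex, coercive, and has a unique non-degenerate global minimizer, which we call $Y_{l,N}(x,z)$. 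This proves claim (1).

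For claim (2), I would follow \cite[Proposition 6.2]{AK22} and apply the implicit function theorem to the equation expressing that the $\mathbb{H}^l$-component of $\nabla\PsiH(x + Y, z)$ vanishes: the partial derivative in $Y$ is the restricted second differential, which is coercive and hence invertible, while the dependence on $(x,z)$ is smooth because $H^*$ is smooth in all of its arguments — here the boundedness of $\partial^2 H/\partial x \partial z$ and $\partial^2 H/\partial z^2$ from (H4) enters. That $Y_{l,N}$ takes values in $C^\infty(\mathbb{T},\mathbb{R}^{2n})$, is smooth in the $C^k$-topologies, and that $\dim \Gamma_{l,N}(\mathbb{H}_l \times S^{2N+1}) = 2nl + (2N+1)$ then follow from the elliptic bootstrap for the Euler--Lagrange equation of $\PsiH(\cdot, z)$ just as in \cite{AK22}, now carrying $z$ along as an additional smooth parameter. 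Claim (3) is then immediate, since $\psilH = \PsiH \circ \Gamma_{l,N}$ is a composition of smooth maps.

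For claims (4)--(6): the reparametrization action is diagonal on Fourier modes and therefore preserves the splitting $\Honenull = \mathbb{H}_l \oplus \mathbb{H}^l$, so, combining this with the $S^1$-invariance of $\PsiH$ established in Proposition \ref{correspondancecriticals1equi} and the uniqueness of the minimizer, the substitution $y' = \theta \cdot y$ on $\mathbb{H}^l$ yields $Y_{l,N}(\theta \cdot x, \theta \cdot z) = \theta \cdot Y_{l,N}(x,z)$; hence $\Gamma_{l,N}$ is $S^1$-equivariant and $\psilH$ is $S^1$-invariant, which is (4). For (5), if $x_0 \in \crit(\Psi_{H^*_v})$, then the $\mathbb{H}^l$-component of $\nabla\Psi_{H^*_v}(x_0)$ vanishes, so by uniqueness it equals $Y_{l,N}(\mathbb{P}_l x_0, v)$; thus $\Gamma_{l,N}(\mathbb{P}_l x_0, v) = (x_0, v)$, which gives the bijectivity of $\mathbb{P}_l \times \mathrm{id}_{S^1}$ on critical orbits and the identity $\PsiH(x_0, v) = \psilH(\mathbb{P}_l x_0, v)$. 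For (6), the non-degeneracy of $\psi^l_{H_v^*}$ and the index identity $\ind(S_{(x_l,v)}) = \ind(S_{(x_0,v)})$ follow fiberwise from \eqref{nullityindex} and \cite[Propositions 1.1 and 5.3]{AK22}, since the summand $\ind(v; \widetilde{f}_N)$ is the same on both sides.

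Finally, for claim (7), let $\gamma$ be a flow line of $-\nabla\widetilde{f}_N$, that is, $\dot\gamma(s) = -\nabla\widetilde{f}_N(\gamma(s))$, and write $\xi(s) = x + Y_{l,N}(x, \gamma(s))$. Applying the chain rule through the $C^1$ functional $\PsiH$,
\[
\frac{d}{ds}\psilH(x, \gamma(s)) = \frac{\partial\PsiH}{\partial x}\bigl(\xi(s), \gamma(s)\bigr)\bigl[\partial_s \xi(s)\bigr] + \frac{\partial\PsiH}{\partial z}\bigl(\xi(s), \gamma(s)\bigr)\bigl[\dot\gamma(s)\bigr].
\]
The first term vanishes, because $\partial_s \xi(s) \in \mathbb{H}^l$ and, by claim (1), $\xi(s)$ is a critical point of the restriction of $\PsiH(\cdot, \gamma(s))$ to $x + \mathbb{H}^l$. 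For the second term, only the Fenchel part of $\PsiH$ depends on $z$, and Lemma \ref{parfenchelconjugatelemma} gives $\nabla_z H^*_{z,t}(-J_0 \dot\xi) = -\nabla_z H_{z,t}\bigl(\nabla_x H^*_{z,t}(-J_0 \dot\xi)\bigr)$, so
\[
\frac{\partial\PsiH}{\partial z}\bigl(\xi(s), \gamma(s)\bigr)\bigl[\dot\gamma(s)\bigr] = -\int_\mathbb{T} g_N\!\Bigl(\nabla_z H_{\gamma(s),t}\bigl(\nabla_x H^*_{\gamma(s),t}(-J_0 \dot\xi(s))\bigr),\ \dot\gamma(s)\Bigr)\,dt \leq 0,
\]
since $\dot\gamma(s) = -\nabla\widetilde{f}_N(\gamma(s))$ and property (H3) means exactly that $g_N(\nabla_z H_{z,t}(\cdot), \nabla\widetilde{f}_N(z)) \leq 0$ pointwise. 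Hence $\psilH$ is non-increasing along the flow lines of $-\nabla\widetilde{f}_N$. I expect the main obstacle to be claim (2): one must verify that the implicit function theorem and the subsequent elliptic regularity bootstrap of \cite{AK22} go through with $z$ as an additional parameter, keeping careful track of smooth dependence on $z$ and using compactness of $S^{2N+1}$ for the uniformity in $l$; the remaining claims are routine adaptations of \cite{AK22}, and (7) is the short computation above once Lemma \ref{parfenchelconjugatelemma} and property (H3) are in hand.
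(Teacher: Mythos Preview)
Your proposal is correct and follows essentially the same approach as the paper: claims (1)--(3), (5), (6) are reduced to \cite[Propositions 6.2--6.3]{AK22} applied fiberwise with the uniformity coming from (H4), and claim (7) is proved by the same two-term chain-rule computation combined with Lemma \ref{parfenchelconjugatelemma} and (H3). For claim (4) the paper verifies the $S^1$-equivariance of $Y_{l,N}$ by an explicit substitution into the critical-point equation \eqref{conditionreduction}--\eqref{conditionreductiondifer}, whereas you argue more abstractly via invariance of the splitting $\mathbb{H}_l \oplus \mathbb{H}^l$ together with uniqueness of the minimizer; these are equivalent, and your version is arguably cleaner.
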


\begin{proof} Statments $(1)-(3)$ have the same proof as \cite[Proposition 6.2.]{AK22}. 

\textbf{Claim} (4):  From the fact that $Y(x,y)$ is a critical point of the function 
\[\mathbb{H}^l\to \mathbb{R},\ \ \ y\mapsto \psilH(x+y,z)\]
we have that $y=Y(x,z)$ if and only if $\nabla_{H^1}\Psi_{H_z^*}(x+y)\perp \mathbb{H}^l$ which is equivalent to the condition $\nabla_{H^1}\Psi_{H_z^*}(x+y)\in \mathbb{H}_l$. Thus, we have 
\begin{equation}\label{conditionreduction}
  \Pi(-J_0(x+y-\nabla_xH^*_{z, t}(-J_0(\dot{x}+\dot{y}))))=u,  
\end{equation}
for some $u\in \mathbb{H}_l$, where 
\[\Pi:L^2(\mathbb{T},\mathbb{R}^{2n})\to \mathbb{H}_1,\ \ \  (\Pi v)(t)\defeq\int\limits_0^tv(s)ds-\int\limits_\mathbb{T}\left(\int\limits_0^tv(s)ds\right)dt.\]
Differentiating \eqref{conditionreduction} we get
\begin{equation}\label{conditionreductiondifer}
    -J_0(x+y-\nabla_xH^*_{z, t}(-J_0(\dot{x}+\dot{y})))=\dot{u}.
\end{equation}
We want to show that if $y=Y_{l,N}(x,z)$ then $\theta\cdot y=Y_{l,N}(\theta\cdot(x,z))$ for all $\theta\in \mathbb{T}$. Putting  $\theta\cdot(x,z)=(x(\cdot-\theta),\theta \cdot z)$ and $\theta \cdot y=y(\cdot -\theta)$ in left side of identity \eqref{conditionreductiondifer} we have  
\begin{align*}
    &-J_0(x(s-\theta)+y(s-\theta)-\nabla_xH^*_{\theta\cdot z, s}(-J_0(\dot{x}(s-\theta)+\dot{y}(s-\theta)))\\
    &\stackrel{t=s-\theta}{=}-J_0(x(t)+y(t)-\nabla_xH^*_{\theta\cdot z, t+\theta}(-J_0(\dot{x}(t)+\dot{y}(t)))=\dot{u}(t)\\
    &=-J_0(x(t)+y(t)-\nabla_xH^*_{z,t}(-J_0(\dot{x}(t)+\dot{y}(t)))\\ 
    &\stackrel{t=s-\theta}{=}\dot{u}(s-\theta)
\end{align*}

where the second equality comes from $S^1$-invariance of $H^*$. Applying $\Pi$ to both sides of equality we get that \eqref{conditionreduction} holds for $\theta\cdot(x,z)$ and $\theta\cdot y$. Hence, $Y_{l,N}$ is $S^1$-equivariant and $\Gamma_{l,N}(\mathbb{H}_l\times S^{2N+1})$ and $\psilH$ are invariant under the $S^1$-action.

\textbf{Claim} (5): From claim (d) of \cite[Proposition 6.3]{AK22} and the definition of $\psilH$ the claim trivially follows.

\textbf{Claim} (6): From the claim (d) of \cite[Proposition 6.3]{AK22} and the claim (2) of Proposition \ref{correspondancecriticals1equi} it follows that $\psi^l_{H^*_v}$ is non-degenerate for $v\in \crit(\widetilde{f}_N)$. Again from the claim (d) of \cite[Proposition 6.3]{AK22} we have that $\ind(S_{(x_l,v)})=\ind(S_{(x_0,v)})$. 

\textbf{Claim} (7): From the Proposition \ref{parfenchelconjugatelemma} we have that $\nabla_z H^*_t(\nabla_xH_{t,z}(x),z)=-\nabla_z H_{t}(x,z)$. Since $H$ is non-decreasing along the flow lines of $-\nabla \widetilde{f}_N$, due to (H3), we conclude that $H^*$ is non-increasing along flow lines of $-\nabla \widetilde{f}_N$ so the same holds for $\PsiH$.
    
Let $\nu$ be flow of $-\nabla \widetilde{f}_N$. Then we have that \[\frac{d}{ds}\psi^l_{H^*_{\nu(s)}(x)}=\frac{d}{ds}(\Psi_{H^*_{\nu(s)}}(x+Y(x,\nu(s)),\nu(s)))\]
\[=\frac{d\Psi_{H^*_{\nu(s)}}(x+Y(x,\nu(s)))}{dy}\left[\frac{dY}{dz}(x,\nu(s))[-\nabla \widetilde{f}_N(z(s))]\right]\]
\[+\frac{d \Psi_{H^*_{\nu(s)}(x)}}{dz}(x+Y_{l,N}(x,z))[-\nabla \widetilde{f}_N(z(s))]\]

The first term is equal to $0$ since $Y(x,\nu(s))$ is critical point of \[\mathbb{H}^l\to \mathbb{R}, \ \ \ y\mapsto \Psi_{H_{\nu(s)}^*}(x+y).\]
The second term is lesser or equal than 0, being that $\PsiH$ is non-increasing along flow lines of $-\nabla \widetilde{f}_N$. Therefore conclusion follows.

\end{proof}

\textbf{$S^1$-equivariant Morse vector fields}

Let $M$ be a finite-dimensional manifold equipped with a smooth free $S^1$-action. Let $X$ be an $S^1$-equivariant smooth vector field. We define the set of rest points of $X$ as 
\[\rest(X)=\{p\in M\mid X(p)=0\}.\]
On this set, we have a natural $S^1$-action. We define the spectrum of a point $p\in \rest(X)$, denoted by $\sigma(DX(p))$, as the set of eigenvalues of the Jacobian $DX(p)$ at $p$. We define the negative and positive spectrum, respectively, as 
\[\sigma^-(DX(p))=\{z\in \sigma(DX(p))\mid \text{Re}(z)<0\}\]
and 
\[\sigma^+(DX(p))=\{z\in \sigma(DX(p))\mid \text{Re}(z)>0\}.\]
We say that the critical orbit $S_{p}=\mathbb{T}\cdot p \subset \rest(X)$ is hyperbolic if the following conditions hold:
\begin{itemize}
    \item $\dim\ker(DX(p))=1,$
    \item $\sigma(DX(p))\backslash\{0\}\cap i\mathbb{R}=\emptyset.$
\end{itemize}
If all critical orbits of $X$ are hyperbolic, we say that $X$ is an $S^1$-equivariant Morse vector field.
A $C^1$-function $f:M\to \mathbb{R}$ is an $S^1$-invariant Lyapunov function for $X$ if the following conditions hold:
\begin{itemize}
    \item $f$ is $S^1$-invariant,
    \item $df(p)[X(p)]<0$ for all $p\in M\backslash \rest(X)$.
\end{itemize}
See \cite{AM06} for more details on Morse vector fields and Lyapunov functions.  
Let $g: S^{2N+1}\to \mathcal{G}(\mathbb{H}_l)$ be a smooth family of metrics on $\mathbb{H}_l$. We say that the family $g_z$ is $S^1$-invariant if for all $\theta \in \mathbb{T}$, the following holds:
\[g_{\theta\cdot z} \circ \theta_*=g_z.\]

\begin{proposition}\label{vectorfieldvg}
    Let $H\in \mathcal{H}_\Theta^{S^1,N}(\widetilde{f}_N,g_N)$ and let $l\in \mathbb{N}$ be large enough such that the $l$-reduced dual functional $\psilH$ exists. Let $g: S^{2N+1}\to \mathcal{G}(\mathbb{H}_l)$ be a smooth $S^1$-invariant family of metrics on $\mathbb{H}_l$, uniformly equivalent to the standard one. Then we define the following vector field 
    \[V_g(x,z)=(\nabla_{g_z}\psilH(x,z),\nabla_{\widetilde{f}_N}(z)),\]
    for which the following claims hold:
\begin{enumerate}
    \item If $(x_k, z_k) \in \mathbb{H}_l \times S^{2N+1}$ is a sequence such that $V_g(x_k,z_k)$ converges to zero as $k \to \infty$, then $(x_k,z_k)$ has a convergent subsequence.
    \item $V_g$ is a non-degenerate $S^1$-equivariant Morse vector field such that \[\rest(V_g)=\crit(\psilH,\widetilde{f}_N).\] Additionally, for $(x_l,v)\in \rest(V_g)$, the number of elements of $\sigma^-(DV_g(x_l,v))$, counting multiplicities, is $\ind(x_l,v)$.
    \item $\psilH+\widetilde{f}_N$ is a $S^1$-invariant Lyapunov function for $-V_g$.  
    \item For every $(x_l,v)\in \rest(V_g)$, the sets $W^u(S_{(x_l,v)};-V_g)$ and $W^s(S_{(x_l,v)};-V_g)$ are smooth $S^1$-invariant embedded submanifolds of $\mathbb{H}_l\times S^{2N+1}$ such that 
    \[\dim W^u(S_{(x_l,v)};-V_g) =\ind(S_{(x_l,v)})+1\]   
    and 
    \[ \codim W^s(S_{(x_l,v)};-V_g) =\ind(S_{(x_l,v)}).\]
    \item $\psilH$ is non-increasing along the flow of $-V_g$.
\end{enumerate}
\end{proposition}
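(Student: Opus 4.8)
All five claims follow by combining Propositions~\ref{pscondition} and~\ref{sadlepointreductionS1} with the general theory of Morse vector fields and Lyapunov functions from \cite{AM06}; most of the work is bookkeeping, and I would carry it out in the order (1), (2), (5), (3), (4). For claim (1): if $(x_k,z_k)$ satisfies $V_g(x_k,z_k)\to 0$, then, using that $g_{z_k}$ is uniformly equivalent to the standard metric on $\mathbb{H}_l$, we get $d_x\psilH(x_k,z_k)\to 0$ in $(\mathbb{H}_l)^*$. Since $Y_{l,N}(x,z)$ is, by Proposition~\ref{sadlepointreductionS1}(1), the minimizer of $y\mapsto\Psi_{H^*_z}(x+y)$ on $\mathbb{H}^l$, the $\mathbb{H}^l$-component of $d\Psi_{H^*_z}(\Gamma_{l,N}(x,z))$ vanishes identically, and the chain rule identifies $d_x\psilH(x,z)$ with the $\mathbb{H}_l$-component of $d\Psi_{H^*_z}(\Gamma_{l,N}(x,z))$; hence $d\Psi_{H^*_{z_k}}(\Gamma_{l,N}(x_k,z_k))\to 0$ in $(\Honenull)^*$. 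As $\nabla\widetilde{f}_N(z_k)\to 0$ as well, Proposition~\ref{pscondition} gives a subsequence along which $\Gamma_{l,N}(x_k,z_k)$ and $z_k$ converge, and applying the continuous projection $\mathbb{P}_l$ recovers convergence of $x_k$.

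For claim (2), $\rest(V_g)=\crit(\psilH,\widetilde{f}_N)$ is immediate since $g_z$ is a metric and the second component of $V_g$ is the $g_N$-gradient of $\widetilde{f}_N$. To obtain hyperbolicity I would compute the linearization $DV_g(x_l,v)$ at a rest point: since $\nabla\widetilde{f}_N$ does not involve $x$, it is block upper-triangular with diagonal blocks the $g_v$-Hessian $\nabla^2_{g_v}\psi^l_{H^*_v}(x_l)$ and the $g_N$-Hessian $\nabla^2\widetilde{f}_N(v)$. Both are self-adjoint, so the spectrum of $DV_g(x_l,v)$ is real; the first block is invertible with $\ind(x_l;\psi^l_{H^*_v})$ negative eigenvalues by Proposition~\ref{sadlepointreductionS1}(6), while the second has precisely the one-dimensional kernel spanned by the generator of the $S^1$-action and is nondegenerate on the $g_N$-orthogonal complement with $\ind(v;\widetilde{f}_N)$ negative eigenvalues, because $\widetilde{f}_N$ lifts the Morse function $f_N$ on $\mathbb{C}P^N$. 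Consequently $DV_g(x_l,v)$ has real spectrum and one-dimensional kernel, so each critical orbit is hyperbolic and $V_g$ is a (nondegenerate) $S^1$-equivariant Morse vector field, and $\sigma^-(DV_g(x_l,v))$ has $\ind(x_l;\psi^l_{H^*_v})+\ind(v;\widetilde{f}_N)=\ind(x_l,v)$ elements counted with multiplicity.

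Claims (5) and (3) are then quick consequences of Proposition~\ref{sadlepointreductionS1}(7), which is exactly the statement $d_z\psilH(x,z)[-\nabla\widetilde{f}_N(z)]\leq 0$: differentiating $\psilH$ along the flow of $-V_g$ gives $\frac{d}{ds}\psilH=-\|\nabla_{g_z}\psilH\|_{g_z}^2+d_z\psilH[-\nabla\widetilde{f}_N]\leq 0$, which is claim (5), and adding $\frac{d}{ds}\widetilde{f}_N=-\|\nabla\widetilde{f}_N\|^2$, observing that the sum is strictly negative off $\rest(V_g)$, and recalling that $\psilH$ and $\widetilde{f}_N$ are $S^1$-invariant (Proposition~\ref{sadlepointreductionS1}(4)), gives claim (3). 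Finally, for claim (4), claims (1)--(3) show that $-V_g$ is an $S^1$-equivariant Morse vector field with $S^1$-invariant Lyapunov function $\psilH+\widetilde{f}_N$ satisfying the Palais--Smale condition, so the abstract theory of \cite{AM06}, applied in the free $S^1$-equivariant setting (for instance after passing to the quotient $(\mathbb{H}_l\times S^{2N+1})/S^1$, or via the equivariant stable manifold theorem), yields that $W^u(S_{(x_l,v)};-V_g)$ and $W^s(S_{(x_l,v)};-V_g)$ are smooth embedded submanifolds; their $S^1$-invariance is automatic because the flow commutes with the $S^1$-action and fixes the critical orbit setwise. The dimension formulas $\dim W^u(S_{(x_l,v)};-V_g)=\ind(S_{(x_l,v)})+1$ and $\codim W^s(S_{(x_l,v)};-V_g)=\ind(S_{(x_l,v)})$ then follow from the eigenvalue count in claim (2), the extra $+1$ being the orbit (center) direction, which lies in both manifolds.

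I expect claim (4) to be the main obstacle: one must check that the hypotheses of the abstract Morse-theoretic framework of \cite{AM06} are genuinely met here, in particular that the global invariant manifolds are \emph{embedded} rather than merely injectively immersed, and that the framework transfers cleanly to the free $S^1$-equivariant situation. The Palais--Smale reduction in claim (1) is the other delicate point, as it requires correctly passing between the differential of $\PsiH$ on $\Honenull\times S^{2N+1}$ and the gradient of the reduced functional $\psilH$ on $\mathbb{H}_l\times S^{2N+1}$.
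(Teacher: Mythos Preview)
Your proposal is correct and follows essentially the same approach as the paper: both reduce claim (1) to Proposition~\ref{pscondition} via the relation between $d_x\psilH$ and $d\Psi_{H^*_z}$ on the graph of $Y_{l,N}$, both compute the block upper-triangular linearization for claim (2) and use that the diagonal blocks are self-adjoint with the expected signature, both derive claims (3) and (5) directly from Proposition~\ref{sadlepointreductionS1}(7), and both obtain claim (4) from (1)--(3) together with the abstract framework of \cite{AM06}. Your write-up is in fact slightly more explicit in claim (1) (the paper simply invokes that $Y_{l,N}$ is uniformly bilipschitz and so the pullback metrics are equivalent to the standard one), and you reorder (5) before (3), but there is no substantive difference.
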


\begin{proof}
\textbf{Claim} (1): Since $Y_{l,N}$ is uniformly bilipschitz, we have that the family of pullback metrics induced by the family of maps 
\[\mathbb{H}_l\to \Honenull, \ \ \ x\mapsto (x,Y(x,z)), \ \ \ z\in S^{2N+1}\] 
is equivalent to the standard one. Therefore, the claim follows from Proposition \ref{pscondition}.

\textbf{Claim} (2): Since $g$ is $S^1$-invariant, it follows that $V_g$ is $S^1$-equivariant. 

Since it holds that $(d\psi^l_{H^*_v}(x_l),d\widetilde{f}_N(v))=0$ if and only if $(x_l,v)\in \crit(\psilH,\widetilde{f}_N)$, it's clear that 
\[\rest(V_g)=\crit(\psilH,\widetilde{f}_N).\]
For $(x_l,v)\in \rest(V_g)$, we have that 
\[D(x_l,v)=\begin{bmatrix}
\nabla_x \nabla_{g_z}\psilH(x_l,v) & \nabla_z \nabla_{g_z}\psilH(x_l,v) \\
0 & \nabla_z^2\widetilde{f}_N(z) 
\end{bmatrix}.\]

From this, we have that $\text{rank}(D(x_l,v))\geq \text{rank}(\nabla_x \nabla_{g_z}\psilH(x_l,v))+\text{rank}(\nabla_z^2\widetilde{f}_N(z))$. Since $\nabla_x \nabla_{g_z}\psilH(x_l,v)$ is non-degenerate and $\nabla_z^2\widetilde{f}_N(z)$ has a $1$-dimensional cokernel, we conclude from the previous inequality that $\codim (\text{im}(D(x_l,v))) \leq 1.$ Therefore, it must be that 
\[\dim \ker(D(x_l,v))\leq 1.\] 
Since $V_g$ is $S^1$-equivariant, we conclude that 
\[\dim \ker(D(x_l,v))\geq 1\] also holds. Therefore, 
\[\dim\ker(D(x_l,v))=1.\] 
Moreover, since $D(x_l,v)$ is a block upper diagonal matrix, we have that the characteristic polynomial of $D(x_l,v)$ is a product of the characteristic polynomial of $\nabla_x \nabla_{g_z}\psilH(x_l,v)$ and the characteristic polynomial of $\nabla_z^2\widetilde{f}_N(z)$. Since both of these matrices are symmetric, we conclude that $\sigma(D(x_l,v))\subset \mathbb{R}$ and it follows that 
\[\sigma(D(x_l,v))\backslash \{0\}\cap i\mathbb{R}=\emptyset.\]
Therefore, $V_g$ is an $S^1$-equivariant Morse vector field. 
Additionally, from the equality 
\[\det(D(x_l,v)-\lambda I)=\det(\nabla_x \nabla_{g_z}\psilH(x_l,v)-\lambda I)\det(\nabla_z^2\widetilde{f}_N(z)-\lambda I),\] 
we have that 
\[\sigma^-(D(x_l,v))=\sigma^-(\nabla_x \nabla_{g_z}\psilH(x_l,v))\cup \sigma^-(\nabla_z^2\widetilde{f}_N(z))\]
holds, counting multiplicities. Therefore, the number of elements of $\sigma^-(D(x_l,v))$, counting multiplicities, is $\ind(x_l,v)$. 

\textbf{Claim} (3): We have that 
\[d(\psilH+\widetilde{f}_N)(x,z)[-V_g(x,z)]=-\|\psi^l_{H_z}(x)\|^{2}+\partial_z\psilH(x,z)(-\nabla \widetilde{f}_N(z))-\|\nabla \widetilde{f}_N(z)\|^2.\]
From claim (7) of Proposition \ref{sadlepointreductionS1}, we have that 
\[\partial_z\psilH(x,z)(-\nabla \widetilde{f}_N(z))\leq 0, \ \ \ (x,z)\in \mathbb{H}_l\times S^{2N+1}.\]
Additionally, if 
\[(x,z)\notin \rest(V_g)=\crit(\psilH,\widetilde{f}_N),\]
it follows that either $\|\psi^l_{H_z}(x)\|^{2}$ or $\|\nabla \widetilde{f}_N(z)\|^2$ is strictly positive. Therefore, we conclude that 
\[d(\psilH+\widetilde{f}_N)(x,z)[-V_g(x,z)]<0, \ \ \  (x,z)\notin \rest(V_g).\]
\textbf{Claim} (4): From claims (1), (2), and (3), we conclude that $W^u(S_{(x_l,v)};-V_g)$ and $W^s(S_{(x_l,v)};-V_g)$ are smooth $S^1$-invariant embedded submanifolds of $\mathbb{H}_l\times S^{2N+1}$. From the second part of claim (2), we conclude that 
\[\dim W^u(S_{(x_l,v)};-V_g) =\ind(S_{(x_l,v)})+1\]
and 
\[\codim W^s(S_{(x_l,v)};-V_g) =\ind(S_{(x_l,v)})\]
holds.

\textbf{Claim} (5): We have that 
\[d(\psilH)(x,z)[-V_g(x,z)]=-\|\psi^l_{H_z}(x)\|^{2}-\partial_z\psilH(x,z)(\nabla \widetilde{f}_N(z)).\]
Since claim (7) of Proposition \ref{sadlepointreductionS1} gives us that 
\[\partial_z\psilH(x,z)(-\nabla \widetilde{f}_N(z))\leq 0,\]
we conclude that 
\[d(\psilH)(x,z)[-V_g(x,z)]\leq 0, \ \ \ (x,z)\in \mathbb{H}_l\times S^{2N+1}.\]
\end{proof}

\begin{remark}\label{nullityindexreds1equi}

    Let $H \in \mathcal{H}_\Theta^{S^1,N}(\widetilde{f}_N, g_N)$. 
    
    Combining propositions \ref{correspondancecriticals1equi} and \ref{sadlepointreductionS1}, we conclude that critical orbits $P(H, \widetilde{f}_N)$ and critical orbits $\crit(\psilH, \widetilde{f}_N)$ are in one-to-one correspondence given by 
    \[(\mathbb{P}_l \times id_{S^{2N+1}})|_{P(H, \widetilde{f}_N)}: P(H, \widetilde{f}_N) \to \crit(\psilH, \widetilde{f}_N)\]

    such that it holds \[\PhiH(x, v) = \psilH(x_l, v) = \PsiH(x_0, v)\]
    where $x_0 = \mathbb{P}(x)$ and $x_l = \mathbb{P}_l(x) = \mathbb{P}_l(x_0)$. Additionally, it holds that \[\ind(S_{(x_l, v)}) = |S_{(x, v)}| - n.\]
\end{remark}

\textbf{$S^1$-equivariant Morse complex}

Let $H \in \mathcal{H}_\Theta^{S^1,N}(\widetilde{f}_N, g_N)$ and $g: S^{2N+1} \to \mathcal{G}(\mathbb{H}_l)$ be an $S^1$-invariant family of metrics which is uniformly equivalent to the standard one. 
For $S_{(x_l, v)}$, $S_{(y_l, w)} \subset \crit(\psilH, \widetilde{f}_N)$, we define a moduli space \[\hat{\mathcal{M}}_M^{S^1,N}(S_{(x_l, v)}, S_{(y_l, w)}; \psilH, \widetilde{f}_N, g) = W^u(S_{(x_l, v)}; -V_g) \cap W^s(S_{(y_l, w)}; -V_g)\]
This makes sense since from claim (2) of Proposition \ref{vectorfieldvg}, we have that \[\rest(V_g) = \crit(\psilH, \widetilde{f}_N).\]
Moreover, from claim (4) of the same proposition, we have that \[W^u(S_{(x_l, v)}; -V_g)\]
and \[W^s(S_{(y_l, w)}; -V_g)\] 
are smooth $S^1$-invariant embedded submanifolds.  
Since the system of equations
\[(\partial_s u, \partial_s z) = -V_g(u, z) = -(\nabla_{g_z}\psilH(u, z), \nabla \widetilde{f}_N(z))\]
reduces to a continuation equation, as in the case of family Floer homology (see \cite{BO17}), transversality of intersections can be achieved for generic $g$. In this situation, from claim (4) of Proposition \ref{vectorfieldvg}, we have that 
\[\hat{\mathcal{M}}_M^{S^1,N}(S_{(x_l, v)}, S_{(y_l, w)}; \psilH, \widetilde{f}_N, g)\] 
is a smooth manifold of dimension $\ind(S_{(x_l, v)}) - \ind(S_{(y_l, w)}) + 1$. Since we have a free $\mathbb{R} \times \mathbb{T}$ action, we have that 
\[\mathcal{M}_M^{S^1,N}(S_{(x_l, v)}, S_{(y_l, w)}; \psilH, \widetilde{f}_N, g) = \hat{\mathcal{M}}_M^{S^1,N}(S_{(x_l, v)}, S_{(y_l, w)}; \psilH, \widetilde{f}_N, g) / \mathbb{R} \times \mathbb{T}\]
is a smooth manifold with \[\dim\mathcal{M}_M^{S^1,N}(S_{(x_l, v)}, S_{(y_l, w)}; \psilH, \widetilde{f}_N, g) = \ind(S_{(x_l, v)}) - \ind(S_{(y_l, w)}) - 1.\]
Therefore, by standard compactness and gluing arguments, we have a well-defined $S^1$-equivariant Morse chain complex \[\{\C\M_*^{S^1,N}(H, \widetilde{f}_N), \partial^{M, S^1}\}.\]
Here $\C\M_*^{S^1,N}(H, \widetilde{f}_N)$ is a $\mathbb{Z}_2$ vector space generated by critical orbits of $\crit(H, \widetilde{f}_N)$ and graded by their index. The boundary operator 
\[\partial^{M, S^1}: \C\M_*^{S^1,N}(H, \widetilde{f}_N) \to \C\M_{*-1}^{S^1, N}(H, \widetilde{f}_N)\]
is defined as 
\[\partial^{M, S^1}(S_{(x_l, v)}) = \sum n^{S^1}_M(S_{(x_l, v)}, S_{(y_l, w)}) S_{(y_l, w)},\]
where the sum is over critical orbits $S_{(y_l, w)}$ such that
\[\ind(S_{(y_l, w)}) = \ind(S_{(x_l, v)}) - 1,\]
and $n^{S^1}_M(S_{(x_l, v)}, S_{(y_l, w)})$ is parity of the set $\mathcal{M}_M^{S^1, N}(S_{(x_l, v)}, S_{(y_l, w)}; \psilH, \widetilde{f}_N, g)$.
Thus, we have a well-defined $S^1$-equivariant Morse complex, unique up to a chain isomorphism and Morse homology 
\[\H\M_*^{S^1, N}(\psilH, \widetilde{f}_N; g) = H_*(\{\C\M_*^{S^1, N}(H, \widetilde{f}_N), \partial^{M, S^1}\}).\]
 From claim (5) of Proposition \ref{vectorfieldvg}, it follows that the filtration of the complex by $\psilH$ is well-defined.

\textbf{Continuation maps}

Let $H_\alpha \in \mathcal{H}_\Theta^{S^1, N_\alpha}(f_{N_\alpha}, g_{N_\alpha})$ and $H_\beta \in \mathcal{H}_\Theta^{S^1, N_\beta}(f_{N_\beta}, g_{N_\beta})$. If it holds $(N_\alpha, H_\alpha) \leq (N_\beta, H_\beta)$, then we have that $N_\alpha \leq N_\beta$ and $H_\alpha \leq i_{N_\beta, N_\alpha}^* H_\beta$. Therefore, it holds that $i_{N_\beta, N_\alpha}^* H^*_\beta \leq H^*_\alpha$ and we have that $i_{N_\beta, N_\alpha}^* \Psi_{H_\beta^*} \leq \Psi_{H_\alpha^*}$. For large enough $l \in \mathbb{N}$ the saddle point reduction exists for both dual action functionals, and it holds \[i_{N_\beta, N_\alpha}^* \psi_{H_\beta^*} \leq \psi_{H_\alpha^*}.\]

Having the fixed $S^1$-invariant family of metrics, $g_\alpha$ and $g_\beta$ for which the $S^1$-equivariant Morse complexes of \[(\psi_{H_\alpha^*}, f_{N_\alpha}; g_\alpha), \ \ (\psi_{H_\beta^*}, f_{N_\beta}; g_\beta)\] 
are well-defined, we can take non-increasing homotopy $(\psi^{\alpha\beta}_s, g^{\alpha\beta}_s)$, i.e., homotopy from $\psi_{H_\alpha^*}$ to $i_{N_\beta, N_\alpha}^* \psi_{H_\beta^*}$. This, as in the non-parameterized case, defines unstable and stable manifolds of homotopy. Moduli spaces of homotopy are defined as intersections of these, quotiented with $\mathbb{T}$. Generically, this defines a chain map 
\[i^{M, S^1}_{(\psi^{\alpha\beta}_s, g^{\alpha\beta}_s)}: \C\M_*^{S^1, N_\alpha}(\psi^{l}_{H_\alpha^*}, \widetilde{f}_{N_\alpha}; g_\alpha) \to \C\M_*^{S^1, N_\beta}(\psi^{l}_{H_\beta^*}, \widetilde{f}_{N_\beta}; g_\beta),\]
unique up to chain homotopy. This map preserves the action filtration and induces a continuation map in homology.

\section{Cauchy-Riemann type linear operators on negative half-cylinders}

Elements $x \in \Honehalf$ can be expressed as
\[x(t) = \sum_{k \in \mathbb{Z}} e^{2\pi k t J_0} \hat{x}(k), \quad \hat{x}(k) \in \mathbb{R}^{2n}, \]
where it holds
\[ \sum_{k \in \mathbb{Z}} |k| |\hat{x}(k)|^2 < \infty \]
with the given scalar product \[ \langle x, y \rangle_{1/2} = \langle \hat{x}(0), \hat{y}(0) \rangle + 2\pi \sum_{k \in \mathbb{Z}} |k| \langle \hat{x}(k), \hat{y}(k) \rangle.
\]
Thus, we have the splitting
\[
\Honehalf = \mathbb{H}_- \oplus \mathbb{H}_0 \oplus \mathbb{H}_+,
\]
where
\[
\mathbb{H}_- = \{x \in \Honehalf \mid \hat{x}(k) = 0, \, k \geq 0\},
\]
\[
\mathbb{H}_+ = \{x \in \Honehalf \mid \hat{x}(k) = 0, \, k \leq 0\},
\]
and $\mathbb{H}_0 \cong \mathbb{R}^{2n}$ is the space of constant loops.

This splitting is orthogonal, and therefore, we have well-defined corresponding orthogonal projectors $\mathbb{P}_+$, $\mathbb{P}_-$, and $\mathbb{P}_0$.

\large\textbf{Non-parameterized operators}
\normalsize

We define the space $\mathcal{A}(2n)$ to be the space of pairs $\alpha = (j, s)$ where $j \in C(\mathbb{T}, \mathcal{J}(\mathbb{R}^{2n}, \omega_0))$ and $s \in C(\mathbb{T}, \mathcal{L}(\mathbb{R}^{2n}))$ is a loop into the space of symmetric matrices  with respect to $j$ (i.e., $s(t)$ symmetric with respect to the metric $g_t = \omega_0(\cdot, j(t)\cdot)$) such that the path
\[
\Psi : [0,1] \to \operatorname{Sp}(2n)
\]
defined by
\[
\dot{\Psi}(t) = j(t) s(t) \Psi(t), \quad \Psi(0) = \unit
\]
is non-degenerate, i.e., $\Psi(1)$ has no eigenvalue $1$.

We denote
\[
\mu_{CZ}(s) \defeq \mu_{CZ}(\Psi).
\]

Let $\alpha = (j, s) \in \mathcal{A}(2n)$ be an arbitrary element.

With $\mathcal{D}_-(\alpha)$ we denote the space of operators of the form
\[
H^1((-\infty, 0) \times \mathbb{T}, \mathbb{R}^{2n}) \to L^2((-\infty, 0) \times \mathbb{T}, \mathbb{R}^{2n}) \times \mathbb{H}_+,
\]
\[
u \mapsto (\partial_s u + J(s, t)(\partial_t u + A(s, t) u), \mathbb{P}_+ u(0, \cdot)),
\]
where

\begin{enumerate}
    \item $J \in C([-\infty, 0] \times \mathbb{T}, \mathcal{J}(\mathbb{R}^{2n}, \omega_0))$ where $J(-\infty, t) = j(t)$ for every $t \in \mathbb{T}$ and $J|_{[-1, 0] \times \mathbb{T}} = J_0.$
    \item $S \in C([-\infty, 0] \times \mathbb{T}, \mathcal{L}(\mathbb{R}^{2n}))$ such that $S(-\infty, t) = s(t)$ for every $t \in \mathbb{T}$.
\end{enumerate}

In the above expression, $u(0, \cdot)$ denotes the trace of $u$ on the boundary $\{0\} \times \mathbb{T}$ of the half-cylinder. Since $u \in H^1$, it follows that $u(0, \cdot) \in \Honehalf$.

The following proposition is a variant of \cite[Proposition 8.2]{AK22}.

\begin{proposition}\label{cauchyriemannonp}
    Let $\alpha = (j, s) \in \mathcal{A}(2n)$. Then every operator in $\mathcal{D}_-(\alpha)$ is Fredholm of index $n - \mu_{CZ}(s)$.
\end{proposition}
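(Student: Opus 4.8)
The plan is to reduce the statement to the standard Fredholm theory for Cauchy–Riemann operators on half-cylinders with non-degenerate asymptotics, exactly as in \cite[Proposition 8.2]{AK22}, and then to account for the extra boundary term $u \mapsto \mathbb{P}_+ u(0,\cdot)$ and the normalization $J|_{[-1,0]\times\mathbb{T}} = J_0$. First I would observe that since $S(-\infty,t) = s(t)$ with $\alpha = (j,s)$ non-degenerate, the operator $\partial_s u + J(s,t)(\partial_t u + A(s,t)u)$ — here $A$ denotes the natural zeroth-order term built from $S$ and $J$, cf.\ the conventions in Section \ref{sectionfloer} — is asymptotically, as $s \to -\infty$, conjugate to the model operator $\partial_s + J_0(\partial_t + s(t))$ whose associated linearized flow $\Psi$ is non-degenerate. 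Hence the operator acting on $H^1((-\infty,0)\times\mathbb{T},\mathbb{R}^{2n}) \to L^2((-\infty,0)\times\mathbb{T},\mathbb{R}^{2n})$ \emph{without} any boundary constraint is not Fredholm by itself (the cokernel/kernel at the boundary $\{0\}\times\mathbb{T}$ is infinite-dimensional); the role of the boundary condition $\mathbb{P}_+ u(0,\cdot) = 0$, i.e.\ of landing in the Lagrangian-type subspace $\mathbb{H}_- \oplus \mathbb{H}_0$ of $\Honehalf$, is precisely to cut this down to a Fredholm problem. So the first real step is to set up the correct functional-analytic framework: the relevant elliptic estimate is
\[
\|u\|_{H^1((-\infty,0)\times\mathbb{T})} \leq c\left(\|\partial_s u + J(\partial_t u + A u)\|_{L^2} + \|\mathbb{P}_+ u(0,\cdot)\|_{\mathbb{H}_+} + \|u\|_{L^2(K)}\right)
\]
for a compact $K$, which together with the asymptotic non-degeneracy gives that the operator has finite-dimensional kernel and closed range; the adjoint estimate gives finite-dimensional cokernel.

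The index computation is the heart of the matter, and I would carry it out by a homotopy/gluing argument. The key idea: glue the half-cylinder $(-\infty,0)\times\mathbb{T}$ with boundary condition $\mathbb{P}_+ u(0,\cdot) = 0$ to a standard half-cylinder model to recognize the total index. Concretely, on $[-1,0]\times\mathbb{T}$ we have $J = J_0$ and the boundary constraint $\mathbb{P}_+u(0,\cdot)=0$; the operator $u \mapsto (\partial_s u + J_0 \partial_t u, \mathbb{P}_+ u(0,\cdot))$ on $H^1((-1,0)\times\mathbb{T})$ (with some condition at $s=-1$) is the "$-\infty$-cap" type operator whose index is known — this is essentially the computation that the space $\mathbb{H}_- \oplus \mathbb{H}_0$ has "dimension" corresponding to the $n$ in the formula. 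I would deform $S$ on $[-1,0]\times\mathbb{T}$ to be constant (this changes nothing since there is no asymptotic condition at finite $s$, only the Fredholm property matters and the deformation stays through Fredholm operators by the estimate above), reducing to the case where $A$ interpolates between $s(t)$ at $-\infty$ and a fixed standard form near $0$. Then the index is additive under the linear gluing: $\ind = (\text{contribution of the non-degenerate end with asymptotic } s) + (\text{contribution of the } J_0\text{-cap with } \mathbb{P}_+\text{-condition})$. The first is $-\mu_{CZ}(s)$ by the Robbin–Salamon/Schwarz spectral-flow formula for Cauchy–Riemann operators on half-cylinders, and the second is $n$ — this is exactly where the "$n$" in $n - \mu_{CZ}(s)$ comes from, and it matches the grading shift $\ind(x_l;\psi_{H^*}) = \mu_{CZ}(x;H) - n$ appearing in \eqref{nullityindex} and Remark \ref{nullityindexred}.

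For the concrete bookkeeping I would follow \cite[Proposition 8.2]{AK22} essentially verbatim, noting that the only difference here is the sign convention in the Floer/gradient equation (cf.\ Remark \ref{notation}) and possibly a slightly different normalization near the boundary; neither affects the index, which depends only on the homotopy class of the asymptotic data and the boundary Lagrangian. So the cleanest write-up is: (i) verify the operator lies in the class to which \cite[Proposition 8.2]{AK22} applies after the sign-convention dictionary, or (ii) if the normalizations differ, repeat the estimate + homotopy-invariance + gluing argument sketched above. The main obstacle — and the place where care is genuinely needed — is the gluing/spectral-flow step: one must check that the $\mathbb{P}_+u(0,\cdot)=0$ boundary condition is compatible with the linear gluing theorem (it is a "Lagrangian" boundary condition in the appropriate $\Honehalf$ sense, since $\mathbb{H}_-\oplus\mathbb{H}_0$ pairs correctly under the symplectic-type form on $\Honehalf$), and that the index of the $J_0$-model cap with this boundary condition is exactly $n$ and not $n$ shifted by the contribution of the constant loops $\mathbb{H}_0$. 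Everything else — the Fredholm estimates, elliptic regularity, the identification of the asymptotic operator — is routine and already available in the references cited in the excerpt.
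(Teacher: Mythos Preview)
Your proposal is correct and takes essentially the same approach as the paper: the paper does not give an independent proof but simply records this proposition as a variant of \cite[Proposition 8.2]{AK22}, and your sketch is precisely an outline of that argument (elliptic estimate plus asymptotic non-degeneracy for the Fredholm property, homotopy and gluing with the $J_0$-cap carrying the $\mathbb{P}_+$ boundary condition for the index). Your discussion is in fact more detailed than what the paper provides.
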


\large\textbf{Parameterized operators with block upper-triangular asymptote}
\normalsize

We define the space $\mathcal{A}^u(2n,m)$ as the space of pairs $\widetilde{\alpha}=(j,\widetilde{s})$, where $j\in  C(\mathbb{T},\mathcal{J}(\mathbb{R}^{2n},\omega_0))$ and $\widetilde{s}\in C(\mathbb{T},\mathcal{L}(\mathbb{R}^{2n+m}))$ is a loop of block upper-triangular matrices such that
\[\widetilde{s}(t)\defeq\begin{pmatrix}
s(t) & r(t)\\
 0   & \tau 
\end{pmatrix}\]
where $s\in C(\mathbb{T},\mathcal{L}(\mathbb{R}^{2n}))$ is a loop into the space of symmetric matrices with respect to $j$ (i.e., $s(t)$ is symmetric with respect to the metric $g_t = \omega_0(\cdot, j(t)\cdot)$). The following properties hold:

\begin{itemize}
    \item The path
    \[\Psi :[0,1] \to \operatorname{Sp}(2n)\]
    defined by
    \[\dot{\Psi}(t)=j(t)s(t)\Psi(t), \ \ \ \Psi(0)=\unit\]
    is non-degenerate, i.e. $\Psi(1)$ has no eigenvalue equal to 1.
    \item The matrix $\tau$ is hyperbolic. 
\end{itemize}

To a loop of block upper triangular matrices $\widetilde{s}$ (for which $\tau$ does not need to be hyperbolic), we associate the index
\[\mu(\widetilde{s})\defeq\mu_{CZ}(\Psi)+\frac{1}{2}\sign(\tau).\]

For an arbitrarily chosen map $\widetilde{S}\in C([-\infty,0]\times \mathbb{T},\mathcal{L}(\mathbb{R}^{2n}\times \mathbb{R}^m))$, where
\[\widetilde{S}(s,t)\defeq\begin{pmatrix}
S(s,t) & R(s,t)\\
0 & T(s) 
\end{pmatrix},\]
we have the multiplication operator

\[\widetilde{S} \begin{pmatrix}
u\\
\lambda
\end{pmatrix}=(S(s,t)u+R(s,t)\lambda(s),T(s)\lambda(s)).\]

Now, for an arbitrarily chosen $\widetilde{\alpha}=(j,\widetilde{s})\in \mathcal{A}^u(2n,m)$, we denote by $\mathcal{D}^{m,u}_-(\widetilde{\alpha})$ the space of operators that have the form
\[H^1((-\infty,0)\times \mathbb{T},\mathbb{R}^{2n})\times H^1((-\infty,0),\mathbb{R}^{m}) \to L^{2}((-\infty,0)\times \mathbb{T},\mathbb{R}^{2n})\times L^{2}((-\infty,0),\mathbb{R}^{m})\times \mathbb{H}_+\times \mathbb{R}^m,\]
\[(u,\lambda)\mapsto\left(\left(\begin{pmatrix}
\partial_s+J(s,t)\partial_t& 0\\
0 & \partial_s
\end{pmatrix}+\widetilde{S}\right)\begin{pmatrix}
u\\
\lambda 
\end{pmatrix}
, (\mathbb{P}_+(u(0,\cdot)),\lambda(0))\right),\]

such that the following holds:

\begin{enumerate}
    \item $J\in C([-\infty,0]\times \mathbb{T},\mathcal{J}(\mathbb{R}^{2n},\omega_0))$, where $J(-\infty,t)=j(t)$ for every $t\in \mathbb{T}$, with the condition $J|_{[-1,0]\times \mathbb{T}}=J_0.$
    \item $\widetilde{S}\in C([-\infty,0]\times \mathbb{T},\mathcal{L}(\mathbb{R}^{2n}))$ such that $\widetilde{S}(-\infty,t)=\widetilde{s}(t)$ for every $t\in \mathbb{T}$.
\end{enumerate}

In the above expression, $u(0,\cdot)$ denotes the trace of $u$ on the boundary $\{0\} \times \mathbb{T}$ of the half-cylinder. Since $u\in H^1$, it follows that $u(0,\cdot) \in \Honehalf$, and $\lambda$ is continuous. Thus, such an operator is well-defined.

\begin{proposition}\label{cauchyriemmanpar}
    Let $\widetilde{\alpha}=(j,\widetilde{s})\in \mathcal{A}^u(2n,m)$. Then every operator in $\mathcal{D}^{m,u}_-(\widetilde{\alpha})$ is Fredholm of index $n-\mu(\widetilde{s})-\frac{m}{2}$.
\end{proposition}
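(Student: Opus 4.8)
The operator in $\mathcal{D}^{m,u}_-(\widetilde{\alpha})$ is block upper-triangular with respect to the decomposition of its domain and target into the $\mathbb{R}^{2n}$-part (carrying the half-cylinder variable $u$) and the $\mathbb{R}^m$-part (carrying the half-line variable $\lambda$): it has the form
\[
\begin{pmatrix} D & \mathcal{R} \\ 0 & \mathcal{T}_\tau \end{pmatrix},
\]
where $D\colon u\mapsto\bigl(\partial_s u+J(s,t)\partial_t u+S(s,t)u,\ \mathbb{P}_+u(0,\cdot)\bigr)$ is an operator in $\mathcal{D}_-((j,s))$, the operator $\mathcal{T}_\tau\colon\lambda\mapsto\bigl(\dot\lambda+T(s)\lambda,\ \lambda(0)\bigr)$ is the asymptotically hyperbolic first-order ODE operator on the negative half-line with asymptotic matrix $T(-\infty)=\tau$, and $\mathcal{R}\colon\lambda\mapsto R(\cdot,\cdot)\lambda(\cdot)$ is the multiplication operator, which is bounded from $H^1((-\infty,0),\mathbb{R}^m)$ into $L^2((-\infty,0)\times\mathbb{T},\mathbb{R}^{2n})$ because $H^1$ of an interval embeds into $L^\infty$ and $R$ is continuous and bounded on $[-\infty,0]\times\mathbb{T}$. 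The plan is to invoke the standard fact that a block upper-triangular operator whose diagonal blocks are Fredholm and whose off-diagonal entry is bounded is itself Fredholm with index the sum of the indices of the diagonal blocks, and then to compute the two diagonal indices separately.

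To justify the upper-triangular fact I would argue directly: writing the target of $\mathcal{T}_\tau$ as $\operatorname{im}\mathcal{T}_\tau$ together with a finite-dimensional complement, one solves the second equation modulo $\ker\mathcal{T}_\tau$ and the finite-dimensional cokernel, substitutes into the first equation, and is left with the finite-rank obstruction map $\sigma\colon\ker\mathcal{T}_\tau\to\operatorname{coker}D$, $y\mapsto[\mathcal{R}y]$; a short bookkeeping of kernels and cokernels then shows that the full operator is Fredholm with index $\operatorname{ind}D+\operatorname{ind}\mathcal{T}_\tau$, the rank of $\sigma$ appearing with opposite signs in the kernel and cokernel counts and cancelling. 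Granting this, Proposition~\ref{cauchyriemannonp} gives $\operatorname{ind}D=n-\mu_{CZ}(s)=n-\mu_{CZ}(\Psi)$, and it remains to compute $\operatorname{ind}\mathcal{T}_\tau$.

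For $\mathcal{T}_\tau$: solutions of $\dot\lambda+T(s)\lambda=0$ that lie in $H^1$, i.e.\ decay as $s\to-\infty$, form a subspace whose dimension equals the dimension $d^{-}$ of the stable subspace of $\tau$ (the sum of the generalized eigenspaces of $\tau$ for eigenvalues of negative real part); hyperbolicity of $\tau$ is what makes this dimension well defined, and one must keep track of the sign, since the equation reads $\dot\lambda=-T(s)\lambda$ and a mode $e^{-\tau s}v$ decays as $s\to-\infty$ exactly when the real part of the corresponding eigenvalue of $\tau$ is negative. Since such a solution is determined by $\lambda(0)$, we get $\ker\mathcal{T}_\tau=0$; and the operator $\lambda\mapsto\dot\lambda+T(s)\lambda$ without the boundary term, from $H^1((-\infty,0),\mathbb{R}^m)$ to $L^2((-\infty,0),\mathbb{R}^m)$, is surjective by the usual Duhamel argument (integrating separately along the stable and unstable subspaces of $\tau$) with kernel of dimension $d^{-}$, hence of index $d^{-}$. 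Adjoining the $m$-dimensional evaluation $\lambda\mapsto\lambda(0)$ to the target lowers the index by $m$, so
\[
\operatorname{ind}\mathcal{T}_\tau=d^{-}-m=-d^{+}=-\tfrac{1}{2}\bigl(m+\operatorname{sign}(\tau)\bigr),
\]
where $d^{+}=m-d^{-}$ is the dimension of the unstable subspace of $\tau$ and $\operatorname{sign}(\tau)\defeq d^{+}-d^{-}$ (which is the usual signature of $\tau$ when $\tau$ is symmetric). Adding the two indices and using $\mu(\widetilde{s})=\mu_{CZ}(\Psi)+\tfrac{1}{2}\operatorname{sign}(\tau)$ yields
\[
\operatorname{ind}\begin{pmatrix} D & \mathcal{R} \\ 0 & \mathcal{T}_\tau \end{pmatrix}=\bigl(n-\mu_{CZ}(\Psi)\bigr)-\tfrac{1}{2}\bigl(m+\operatorname{sign}(\tau)\bigr)=n-\mu(\widetilde{s})-\tfrac{m}{2},
\]
which is the assertion.

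The content here, rather than any deep obstacle, is the interplay between the coupling term $\mathcal{R}$ and the index: it is precisely the block upper-triangular shape of the asymptote $\widetilde{s}$ (equivalently, of the operator itself) that forces $\mathcal{R}$ to contribute nothing to the Fredholm property or to the index, even though it is only a bounded — not a compact — perturbation; this is the reason $\mathcal{A}^u(2n,m)$ is set up with block upper-triangular asymptotic matrices. The only other thing to be careful about is the sign in the ODE index computation, so that the stable dimension of $\tau$ enters with the orientation that makes it match the term $\tfrac{1}{2}\operatorname{sign}(\tau)$ in $\mu(\widetilde{s})$.
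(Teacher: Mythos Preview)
Your proof is correct and follows essentially the same route as the paper: decompose the operator into block upper-triangular form with diagonal blocks $D\in\mathcal{D}_-((j,s))$ and the half-line ODE operator $\mathcal{T}_\tau$, invoke Proposition~\ref{cauchyriemannonp} for the index of $D$, compute $\operatorname{ind}\mathcal{T}_\tau=-\tfrac{1}{2}(m+\operatorname{sign}\tau)$ by elementary ODE considerations, and add. The one difference is in how the block upper-triangular Fredholm fact is justified: the paper records it as a separate lemma (Lemma~\ref{uppertriangular}), establishing Fredholmness via a parametrix $D_S\oplus D_T$ and the index equality via the homotopy $L_\varepsilon=\begin{pmatrix}L_S & (1-\varepsilon)L_R\\ 0 & L_T\end{pmatrix}$ to the block-diagonal operator, whereas you sketch a direct kernel/cokernel bookkeeping through the obstruction map $\ker\mathcal{T}_\tau\to\operatorname{coker}D$; both are standard and equivalent.
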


To prove this, we need the following lemma.

\begin{lemma}\label{uppertriangular}
    Let $X, Y, V,$ and $W$ be Banach spaces. Let $L_S:X \to V$ and $L_T:Y \to W$ be Fredholm operators. If $L_R:Y \to V$ is any bounded operator, then the operator
    \[
    L:X \times Y \to V \times W, \quad L(x, y) = \begin{pmatrix}
        L_S(x) + L_R(y) \\
        L_T(y)
    \end{pmatrix}
    \]
    is Fredholm, with $\ind(L) = \ind(L_S) + \ind(L_T)$.
\end{lemma}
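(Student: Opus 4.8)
The plan is to recognize $L$ as the middle vertical arrow in a morphism of two short exact sequences of vector spaces and let the snake lemma do the bookkeeping. First I would set up the split short exact sequences
\[0 \to X \xrightarrow{\iota_X} X \times Y \xrightarrow{\pi_Y} Y \to 0, \qquad 0 \to V \xrightarrow{\iota_V} V \times W \xrightarrow{\pi_W} W \to 0,\]
where $\iota_X(x) = (x,0)$, $\pi_Y(x,y) = y$, and $\iota_V, \pi_W$ are the analogous maps for $V\times W$. The block-upper-triangular shape of $L$ is exactly the statement that $L\circ\iota_X = \iota_V\circ L_S$ and $\pi_W\circ L = L_T\circ\pi_Y$, so $(L_S, L, L_T)$ is a morphism of short exact sequences.

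Next I would invoke the snake lemma, which produces the exact sequence of vector spaces
\[0 \to \ker L_S \to \ker L \to \ker L_T \xrightarrow{\ \delta\ } \coker L_S \to \coker L \to \coker L_T \to 0;\]
tracing through the construction, the connecting map $\delta$ sends $y \in \ker L_T$ to the class $[L_R y] \in \coker L_S$. Because $L_S$ and $L_T$ are Fredholm, the four outer terms $\ker L_S$, $\ker L_T$, $\coker L_S$, $\coker L_T$ are finite dimensional, and exactness then forces $\ker L$ and $\coker L$ to be finite dimensional as well. In particular $\operatorname{im} L$ has finite codimension in $V\times W$; a bounded operator onto a finite-codimensional subspace automatically has closed range (choose a finite-dimensional algebraic complement $G$ of $\operatorname{im} L$, note that $(X\times Y)/\ker L \oplus G \to V\times W$ is a continuous bijection of Banach spaces, hence a homeomorphism by the open mapping theorem, and read off that $\operatorname{im} L$ is closed). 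Thus $L$ is Fredholm.

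Finally, the index formula is forced by the vanishing of the alternating sum of dimensions along a finite exact sequence of vector spaces: from the six-term sequence above,
\[\dim\ker L_S - \dim\ker L + \dim\ker L_T - \dim\coker L_S + \dim\coker L - \dim\coker L_T = 0,\]
and regrouping gives $\ind L = \ind L_S + \ind L_T$.

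I do not expect a serious obstacle here; the statement is essentially formal once the morphism-of-exact-sequences picture is in place, and the only mildly technical point is the closed-range remark, which is a standard consequence of the open mapping theorem. If one wanted to sidestep even that, an alternative route is to exhibit an explicit parametrix: given parametrices $P_S$ of $L_S$ and $P_T$ of $L_T$ (so that $P_SL_S - \mathrm{id}$, $L_SP_S - \mathrm{id}$, $P_TL_T - \mathrm{id}$, $L_TP_T - \mathrm{id}$ are compact), the operator $P(v,w) = (P_Sv - P_SL_RP_Tw,\ P_Tw)$ makes $PL - \mathrm{id}$ and $LP - \mathrm{id}$ compact, so $L$ is Fredholm by Atkinson's theorem; then deforming $L_R$ to $0$ through the family $L_t(x,y) = (L_Sx + tL_Ry,\ L_Ty)$, $t\in[0,1]$ — Fredholm for every $t$ by the same parametrix with $L_R$ replaced by $tL_R$ — and using homotopy invariance of the Fredholm index yields $\ind L = \ind(L_S\oplus L_T) = \ind L_S + \ind L_T$.
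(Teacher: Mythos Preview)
Your proof is correct, and your primary route via the snake lemma is genuinely different from the paper's. The paper argues Fredholmness by exhibiting a parametrix: it takes $D = D_S \oplus D_T$ and checks that $L\circ D$ and $D\circ L$ are each an invertible block-upper-triangular operator plus a compact one (using a slight variant of Atkinson's criterion, with ``invertible plus compact'' in place of ``identity plus compact''); for the index it then runs the homotopy $L_\varepsilon(x,y) = (L_S x + (1-\varepsilon)L_R y,\, L_T y)$ to $L_S\oplus L_T$. In other words, the paper's argument is exactly your ``alternative route'' paragraph, with a marginally simpler parametrix. Your snake-lemma approach has the advantage of delivering the index formula in the same breath as Fredholmness, with no separate homotopy step, at the cost of the small closed-range remark; the paper's approach avoids that remark but splits the proof into two independent pieces. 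Both are standard and clean.
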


\begin{proof}

A bounded operator $F: X \to Y$ between Banach spaces is Fredholm if and only if there exists a bounded operator $D: Y \to X$ such that

\[
F\circ D=A+K, \quad D\circ F= A'+K',
\]
where $A$ and $A'$ are invertible, and $K$ and $K'$ are compact.

Therefore, there exist $D_S:V \to X$ and $D_T:W \to Y$ such that
\[
L_S \circ D_S = A_S + K_S, \quad D_S\circ L_S = A_S' + K_S',
\]
and
\[
L_T \circ D_T = A_T + K_T, \quad D_T \circ L_T = A_T' + K_T',
\]
where $A_S$, $A_S'$, $A_T$, and $A_T'$ are invertible, and $K_S$, $K_S'$, $K_T$, and $K_T'$ are compact operators.

If we define 
\[
D:V \times W \to X \times Y, \quad D = D_S \oplus D_T,
\]
we have
\[
L \circ D = \begin{pmatrix}
    A_S + L_R \circ D_T \\
    A_T
\end{pmatrix} + \begin{pmatrix}
    K_S \\
    K_T
\end{pmatrix},
\]
and
\[
D \circ L = \begin{pmatrix}
    A_S' + D_S \circ L_R \\
    A_T'
\end{pmatrix} + \begin{pmatrix}
    K_S' \\
    K_T'
\end{pmatrix}.
\]

Since the operators
\[
\begin{pmatrix}
    A_S + L_R \circ D_T \\
    A_T
\end{pmatrix}, \quad \begin{pmatrix}
    A_S' + D_S \circ L_R \\
    A_T'
\end{pmatrix}
\]
are invertible, and the operators 
\[\begin{pmatrix}
    K_S \\
    K_T
\end{pmatrix}, \quad \begin{pmatrix}
    K_S' \\
    K_T'
\end{pmatrix}\]
are compact, we conclude that $L$ must be Fredholm.

To calculate the index of such an operator, we define a path of Fredholm operators:
\[
L_\varepsilon = \begin{pmatrix}
    L_S(x) + (1 - \varepsilon)L_R(y) \\
    L_T(y)
\end{pmatrix}, \quad \varepsilon \in [0, 1].
\]

From the previous part of the proof, this is indeed a path of Fredholm operators. Moreover, we have $L_0 = L$ and $L_1 = L_S \oplus L_T$. Hence, it follows that
\[
\ind(L) = \ind(L_S \oplus L_T) = \ind(L_S) + \ind(L_T).
\]
\end{proof}

\begin{proof}[Proof of Proposition ~\ref{cauchyriemmanpar}]
Let $L\in \mathcal{D}^{m,u}_-(\widetilde{\alpha})$ be arbitrary. We define the operator
\begin{align*}
    &L_S: H^1((-\infty,0) \times \mathbb{T}, \mathbb{R}^{2n}) \to L^2((-\infty,0) \times \mathbb{T}, \mathbb{R}^{2n}) \times \mathbb{H}_+, \\   
    &L_S(u) = (\partial_s u + J(s,t) \partial_t u + S(s,t) u, \mathbb{P}_+(u(0, \cdot))),
\end{align*}
the operator 
\begin{align*}
    &L_T: H^1((-\infty,0), \mathbb{R}^m) \to L^2((-\infty,0), \mathbb{R}^m) \times \mathbb{R}^{2n}, \\   
    &L_T(\lambda) = (\partial_s \lambda + T(s) \lambda, \lambda(0)),
\end{align*}
and the operator
\begin{align*}
    &L_R: H^1((-\infty,0), \mathbb{R}^m) \to L^2((-\infty,0) \times \mathbb{T}, \mathbb{R}^{2n}) \times \mathbb{H}_+, \\
    &L_R(\lambda) = (R(s,t)\lambda(s), 0).
\end{align*}

The operator $L_S$ is Fredholm of index $n-\mu(s)$ due to Proposition \ref{cauchyriemannonp}, $L_T$ is Fredholm of index $-\frac{1}{2}\sign(\overline{\tau}) - \frac{m}{2}$ by elementary ODE considerations, and $L_R$ is obviously a bounded operator. Since
\[
L(u,\lambda)=\begin{pmatrix}
 L_S(u)+L_R(\lambda) \\
 L_T(\lambda)
\end{pmatrix},
\]
from the previous lemma, we get the desired result.
\end{proof}

\section{Half-cylinder moduli spaces}

This section will address negative half-cylinder moduli spaces for both the non-parameterized and the $S^1$-equivariant cases. Such spaces are needed to prove the theorem from \cite{AK22} and its version in the $S^1$-equivariant category.
 
 \large\textbf{Non-parameterized case}
\normalsize

As discussed in Section \ref{sectionmorse}, for sufficiently large $l \in \mathbb{N}$, the map $Y_l: \mathbb{H}_l \to \mathbb{H}^l$ is well-defined as the unique global minimizer of the function 
\[\mathbb{H}^l \to \mathbb{R}, \quad y \mapsto \PsiH(x + y).\]

Moreover, the graph of this function, denoted by $\Gamma_l$, provides a manifold $\Gamma_l(\mathbb{H}_l)$ of dimension $2nl$ consisting of smooth loops. The projection of this manifold onto the positive part, $\mathbb{P}_+(\Gamma_l(\mathbb{H}_l))$, is an embedding and therefore also a manifold of dimension $2nl$ consisting of smooth loops. Specifically, $\mathbb{P}_+(\Gamma_l(\mathbb{H}_l))$ is the positive graph of the function $Y_l$, i.e., the graph of the function $\mathbb{P}_+ \circ Y_l$ within $\mathbb{H}_+$, consisting of smooth loops. In order to simplify the notation, we denote by
\[\Gamma_{l,+}: \mathbb{H}_l \to \mathbb{H}_+, \quad \Gamma_{l,+}(x) = (x, (\mathbb{P}_+ \circ Y_l)(x)),\]
 the graph of $\mathbb{P}_+ \circ Y_l$.

Let $H \in \mathcal{H}_\Theta$. We assume that $J \in C([-\infty,0] \times \mathbb{T}, \mathcal{J}(\mathbb{R}^{2n}, \omega_0))$ where $J|_{[-1,0] \times \mathbb{T}} = J_0$, and that $g$ is some metric uniformly equivalent to the standard one on $\mathbb{H}_l$. For every $x_l \in \crit(\psilH)$ and $y \in P(H)$, we define the moduli space
\[\mathcal{M}_\Theta(x_l, y; H, J, g)\]
as the space of smooth maps $u: (-\infty,0] \times \mathbb{T} \to \mathbb{R}^{2n}$ such that
\[\partial_s u + J(s,t,u(s,t)) (\partial_t u - X_{H_t}(u)) = 0,\]
and
\[\lim_{s \to -\infty} u(s, \cdot) = y, \quad \mathbb{P}_+(u(0, \cdot)) \in \Gamma_{l,+}(W^u(x_l; -\nabla_g \psilH)).\]

This space can be viewed as the preimage of the operator defined as
\[\overline{\partial}_{H}: y + H^1((-\infty,0] \times \mathbb{T}, \mathbb{R}^{2n}) \to L^2((-\infty,0] \times \mathbb{T}, \mathbb{R}^{2n}) \times \mathbb{H}_+,\]
\[\overline{\partial}_{H}(u) = (\partial_s u + J(s,t,u(s,t)) (\partial_t u - X_{H_t}(u)), \mathbb{P}^+(u(\cdot,0))).\]

Here, $u(\cdot,0)$ denotes the trace operator at the boundary of the half-cylinder. Operator $\overline{\partial}_{H}$ is smooth, which follows from quadratic convexity (see \cite{AK22}).

If $\overline{\partial}_H(u) \in \{0\} \times \Gamma_{l,+}(W^u(\mathbb{P}_l(x); -\nabla_g \psilH))$, then $u(0, \cdot) = v + w$ where $v \in C^\infty(\mathbb{T}, \mathbb{R}^{2n})$ and $w \in \mathbb{H}_-$. By the same arguments as in \cite[Proposition 7.2]{AK22}, we conclude that $u \in C^\infty((-\infty,0] \times \mathbb{T}, \mathbb{R}^{2n})$ and that $u$ converges to $y$ as $s \to -\infty$, which implies that
\[\overline{\partial}_H^{-1}(\{0\} \times \Gamma_{l,+}(W^u(\mathbb{P}_l(x_l); -\nabla_g \psilH))) \subseteq \mathcal{M}_\Theta(x_l, y; H, J, g).\]
Since the other inclusion is trivial, we conclude the following.

\begin{proposition}\label{regularityhalfnonpar}\cite{AK22}
    Let $H \in \mathcal{H}_\theta$. Then
    \[\mathcal{M}_\Theta(x_l, y; H, J, g) = \overline{\partial}_H^{-1}(\{0\} \times \Gamma_{l,+}(W^u(\mathbb{P}_l(x_l); -\nabla_g \psilH))).\]
\end{proposition}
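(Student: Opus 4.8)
The plan is to prove the set equality by establishing the two inclusions, the inclusion $\supseteq$ being essentially immediate and the inclusion of $\overline{\partial}_H^{-1}(\{0\}\times\Gamma_{l,+}(W^u(\mathbb{P}_l(x_l);-\nabla_g\psilH)))$ into $\mathcal{M}_\Theta(x_l,y;H,J,g)$ being the substantive one, which will follow by importing the regularity and asymptotic analysis of \cite[Proposition 7.2]{AK22} into the present sign conventions.

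For the easy inclusion, suppose $u\in\mathcal{M}_\Theta(x_l,y;H,J,g)$. By definition $u$ is a smooth half-cylinder solution of $\partial_s u + J(\partial_t u - X_{H_t}(u))=0$ with $u(s,\cdot)\to y$ as $s\to-\infty$; since $y$ is a nondegenerate $1$-periodic orbit this convergence is exponential, so $u-y\in H^1((-\infty,0]\times\mathbb{T},\mathbb{R}^{2n})$ and thus $u$ lies in the domain $y+H^1$ of $\overline{\partial}_H$. Moreover $\overline{\partial}_H(u)=(0,\mathbb{P}_+(u(0,\cdot)))$, and the boundary condition in the definition of $\mathcal{M}_\Theta$ says precisely that $\mathbb{P}_+(u(0,\cdot))\in\Gamma_{l,+}(W^u(x_l;-\nabla_g\psilH))$. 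Hence $u\in\overline{\partial}_H^{-1}(\{0\}\times\Gamma_{l,+}(W^u(\mathbb{P}_l(x_l);-\nabla_g\psilH)))$, which is the easy half of the claim.

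For the reverse inclusion, let $u\in y+H^1((-\infty,0]\times\mathbb{T},\mathbb{R}^{2n})$ with $\overline{\partial}_H(u)\in\{0\}\times\Gamma_{l,+}(W^u(\mathbb{P}_l(x_l);-\nabla_g\psilH))$. Then $u$ solves the half-cylinder Floer equation in the $H^1_{loc}$ sense, so \cite[Proposition A.1]{AK22} (using that $H$ has bounded second spatial derivative) promotes $u$ to a smooth solution on the open half-cylinder. The boundary condition gives $\mathbb{P}_+(u(0,\cdot))=\Gamma_{l,+}(x')$ for some $x'\in W^u(x_l;-\nabla_g\psilH)\subset\mathbb{H}_l$; since, as recorded in Section \ref{sectionmorse}, $\Gamma_l(x')=x'+Y_l(x')$ is a smooth loop and $\mathbb{P}_+$ is injective on $\Gamma_l(\mathbb{H}_l)$, we may write $u(0,\cdot)=v+w$ with $v\in C^\infty(\mathbb{T},\mathbb{R}^{2n})$ and $w\in\mathbb{H}_-$. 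This is exactly the type of boundary datum treated in \cite[Proposition 7.2]{AK22}, whose argument — combining the interior regularity above with the boundary analysis of the Cauchy--Riemann operator on the negative half-cylinder, for which $\mathbb{H}_-$ is the admissible boundary subspace — shows that $u$ extends smoothly up to $\{0\}\times\mathbb{T}$ and, using that \eqref{floernonpareq} is the $L^2$-gradient flow of $\PhiH$ (bounded on $y+H^1$, hence $u$ has finite energy), that $u(s,\cdot)\to y$ in $C^\infty$ as $s\to-\infty$. Together with the boundary condition this gives $u\in\mathcal{M}_\Theta(x_l,y;H,J,g)$, completing the inclusion and hence the proof.

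The main obstacle is the boundary regularity step: one must show that the splitting $u(0,\cdot)=v+w$ with $v$ smooth and $w\in\mathbb{H}_-$, together with the half-cylinder equation, forces $u$ to be smooth up to the boundary and to converge to $y$ at $-\infty$. This is precisely the content of \cite[Proposition 7.2]{AK22}, so the only work beyond invoking it is to check that the boundary condition $\mathbb{P}_+(u(0,\cdot))\in\Gamma_{l,+}(W^u(x_l;-\nabla_g\psilH))$ produces a datum of that admissible form — which is guaranteed by the smoothness of the loops in $\Gamma_l(\mathbb{H}_l)$ and the fact that $\mathbb{P}_+$ embeds this manifold — and to track the sign conventions of the present paper, which differ from those of \cite{AK22}.
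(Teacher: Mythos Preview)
Your proposal is correct and follows essentially the same approach as the paper: the trivial inclusion $\mathcal{M}_\Theta\subseteq\overline{\partial}_H^{-1}(\ldots)$ is noted, and the substantive inclusion is reduced to the observation that the boundary condition forces $u(0,\cdot)=v+w$ with $v$ smooth and $w\in\mathbb{H}_-$, after which \cite[Proposition~7.2]{AK22} supplies the boundary regularity and the convergence $u(s,\cdot)\to y$. Your write-up is somewhat more explicit (spelling out exponential decay for the easy inclusion and the role of \cite[Proposition~A.1]{AK22} for interior regularity), but the argument is the same.
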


\begin{proposition}\label{energyboundsnonparhalf}\cite{AK22}
    Let $u \in \mathcal{M}_\Theta(x_l, y; H, J, g)$. Then
    \[\int_{(-\infty,0] \times \mathbb{T}} |\partial_s u|^2_J \, ds \, dt \leq \psilH(x_l) - \PhiH(y).\]
    Moreover, the equality $\psilH(x_l) = \PhiH(y)$ holds if and only if $x = y$, in which case the moduli space $\mathcal{M}_\Theta(x_l, x; H, J, g)$ consists of the unique half-cylinder $u(s, \cdot) = x$.
\end{proposition}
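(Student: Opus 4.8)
The plan is to combine the gradient-flow structure of the half-cylinder equation with a Fenchel--Legendre estimate relating $\PhiH$ to the reduced dual functional $\psilH$, exactly in the spirit of \cite{AK22}.

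First I would establish the energy identity. As noted in Remark \ref{notation}, the equation defining $\mathcal{M}_\Theta(x_l,y;H,J,g)$ is the $J$-gradient flow equation of $\PhiH$, so $s\mapsto\PhiH(u(s,\cdot))$ is non-decreasing with $\tfrac{d}{ds}\PhiH(u(s,\cdot))=\|\partial_s u(s,\cdot)\|_J^2$. By Proposition \ref{regularityhalfnonpar} the moduli space is a subset of $y+H^1((-\infty,0]\times\mathbb{T},\mathbb{R}^{2n})$; in particular $u-y\in H^1$ on the half-cylinder, so one can choose $s_k\to-\infty$ with $u(s_k,\cdot)\to y$ in $H^1(\mathbb{T})$, whence $\PhiH(u(s_k,\cdot))\to\PhiH(y)$, and monotonicity upgrades this to $\PhiH(u(s,\cdot))\to\PhiH(y)$ as $s\to-\infty$. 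Integrating over $(-\infty,0]$ gives $\int_{(-\infty,0]\times\mathbb{T}}|\partial_s u|_J^2\,ds\,dt=\PhiH(u(0,\cdot))-\PhiH(y)$. Thus everything reduces to the inequality $\PhiH(u(0,\cdot))\le\psilH(x_l)$.

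For this core step, write $\phi\defeq u(0,\cdot)\in\Honehalf$. The boundary condition means $\mathbb{P}_+\phi=\Gamma_{l,+}(\xi)$ for some $\xi\in W^u(x_l;-\nabla_g\psilH)\subset\mathbb{H}_l$, i.e. $\mathbb{P}_l\phi=\xi$ and $\mathbb{P}_{>l}\phi=\mathbb{P}_+Y_l(\xi)$. Set $w\defeq\Gamma_l(\xi)=\xi+Y_l(\xi)\in\Honenull$, which is a smooth loop (so $-J_0\dot w$ is bounded, and $\PsiH(w)=\psilH(\xi)$ by definition of the $l$-reduction). Applying the pointwise Fenchel inequality $H_t(\phi(t))\ge\langle -J_0\dot w(t),\phi(t)\rangle-H_t^*(-J_0\dot w(t))$, integrating, and using the integration-by-parts identity $\int_\mathbb{T}\langle -J_0\dot w,\phi\rangle\,dt=\int_\mathbb{T}\langle -J_0\dot\phi,w\rangle\,dt$, one obtains, with $Q(v)\defeq\int_\mathbb{T}v^*\lambda_0$, the estimate $\PhiH(\phi)-\PsiH(w)\le Q(\phi)+Q(w)-\int_\mathbb{T}\langle -J_0\dot w,\phi\rangle\,dt=Q(\phi-w)$. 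Now $\phi-w\in\mathbb{H}_0\oplus\mathbb{H}_-$: its $\mathbb{H}_l$- and $\mathbb{H}_{>l}$-components vanish since $\mathbb{P}_l\phi=\xi=\mathbb{P}_l w$ and $\mathbb{P}_{>l}\phi=\mathbb{P}_+Y_l(\xi)=\mathbb{P}_{>l}w$ (here we use $Y_l(\xi)\in\mathbb{H}^l$). Since $Q$ vanishes on $\mathbb{H}_0$, vanishes in the mixed $\mathbb{H}_0$--$\mathbb{H}_-$ term, and is negative definite on $\mathbb{H}_-$, we get $Q(\phi-w)\le0$, hence $\PhiH(\phi)\le\PsiH(w)=\psilH(\xi)$. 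Finally $\xi\in W^u(x_l;-\nabla_g\psilH)$ forces $\psilH(\xi)\le\psilH(x_l)$, which completes the inequality.

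For the equality case: if $\psilH(x_l)=\PhiH(y)$, the energy identity and the chain of inequalities force the energy to vanish, so $\partial_s u\equiv0$, $u(s,\cdot)\equiv\phi=\lim_{s\to-\infty}u(s,\cdot)=y$, and $u$ is the constant half-cylinder at $y$; its boundary condition then reads $\mathbb{P}_+y\in\Gamma_{l,+}(W^u(x_l;-\nabla_g\psilH))$, and since $y$ is a $1$-periodic orbit we have $\mathbb{P}(y)=\Gamma_l(\mathbb{P}_l y)$, so $\mathbb{P}_l y$ is a critical point of $\psilH$ lying in $W^u(x_l;-\nabla_g\psilH)$, forcing $\mathbb{P}_l y=x_l$, i.e. $y$ is the orbit corresponding to $x_l$; conversely $x=y$ gives $\psilH(x_l)=\PsiH(\Gamma_l(x_l))=\PhiH(y)$ by the same identity. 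I expect the main obstacle to be the reduction step: matching the ``positive graph'' form of the boundary condition with the structure $Y_l(\xi)\in\mathbb{H}^l$ of the saddle-point reduction so that $\phi-\Gamma_l(\xi)\in\mathbb{H}_0\oplus\mathbb{H}_-$, together with the bookkeeping of regularity that keeps $\PhiH(u(0,\cdot))$, the Fenchel inequality, and the integration-by-parts identity meaningful when only $\phi\in\Honehalf$ is available (the negative Fourier part of $u(0,\cdot)$ need not lie in $H^1$, so one cannot simply compare $\PhiH(\phi)$ with $\PsiH(\mathbb{P}\phi)$). This is precisely the content of \cite[Proposition 7.2]{AK22}, whose proof adapts verbatim.
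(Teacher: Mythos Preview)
Your argument is correct and is essentially the same as the one in \cite{AK22} (to which the paper simply refers). The key estimate you derive, $\PhiH(\phi)\le\PsiH(\Gamma_l(\xi))+Q(\phi-\Gamma_l(\xi))$ with $\phi-\Gamma_l(\xi)\in\mathbb{H}_0\oplus\mathbb{H}_-$, is exactly the inequality \eqref{phipsiinequality} quoted from \cite[Proposition 5.2]{AK22}; your regularity worry at the end is unfounded since Proposition \ref{regularityhalfnonpar} already gives $u(0,\cdot)\in C^\infty(\mathbb{T},\mathbb{R}^{2n})$, and the overall statement is \cite[Proposition 7.1]{AK22} rather than 7.2.
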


For the proof of this proposition, see \cite[Proposition 7.1]{AK22}.

\large\textbf{$S^1$-equivariant case} 
\normalsize
    
Let $H \in \mathcal{H}_\Theta^{S^1,N}(\widetilde{f}_N, g_N)$ and let $l \in \mathbb{N}$ be sufficiently large such that the reduced dual functional
\[\psilH: \mathbb{H}_l \times S^{2N+1} \to \mathbb{R}\]
is defined. We denote
\[\Gamma_{l,N,+}: \mathbb{H}_l \times S^{2N+1} \to \mathbb{H}_+ \times S^{2N+1}, \quad \Gamma = (\mathbb{P}_+ \times \text{id}_{S^{2N+1}}) \circ \Gamma_{l,N},\]
as the graph of the map $\mathbb{P}_+ \circ Y_{l,N}$. We assume that $J \in C([-\infty,0] \times \mathbb{T} \times S^{2N+1}, \mathcal{J}(\mathbb{R}^{2n}, \omega_0))$ is $S^1$-invariant with $J|_{[-1,0] \times \mathbb{T} \times S^{2N+1}} = J_0$. Additionally, we assume that $g: S^{2N+1} \to \mathcal{G}(\mathbb{H}_l)$ is an $S^1$-invariant family of metrics uniformly equivalent to the standard one.

For $S_{(x_l, v)} \subset \crit(\psilH, \widetilde{f}_N)$ and $S_{(y, w)} \subset P(H, \widetilde{f}_N)$, we define the moduli space
\[\hat{\mathcal{M}}^{S^1,N}_\Theta(S_{(x_l, v)}, S_{(y, w)}; H, \widetilde{f}_N, J, g)\]
as the space of pairs $(u, z)$ consisting of smooth maps $u: (-\infty,0] \times \mathbb{T} \to \mathbb{R}^{2n}$ and $z: (-\infty,0] \to S^{2N+1}$ that satisfy the system of equations
\begin{equation}\label{floerredhalf}
  \begin{array}{lcl}
    &\partial_su+J_{z(s)}(s,t,u(s,t))(\partial_tu-X_{H_{z(s)}}(u))=0\\
    &\dot{z}(s)-\nabla \widetilde{f}_N(z)=0 
  \end{array}
 \end{equation}  
and the conditions 
\begin{equation}
  \begin{array}{lcl}
    &\lim\limits_{s\to -\infty} (u(s),z(s))\in S_{(y,w)},\\
    & (\mathbb{P}_+(u(0,\cdot)),z(0))\in \Gamma_{l,N,+}(W^u(S_{(x_l,v)},-V_g)).
  \end{array}
  \end{equation} 

where $V_g=(\nabla_g \psilH,\nabla \widetilde{f}_N).$

Similar to the non-parameterized case, we have an operator defined as
\[\overline{\partial}_{H,N}: S_{(y, w)} + H^1((-\infty,0] \times \mathbb{T}, \mathbb{R}^{2n}) \times H^1((-\infty,0], S^{2N+1})\]
\[\to L^2((-\infty,0] \times \mathbb{T}, \mathbb{R}^{2n}) \times L^2((-\infty,0], z^* TS^{2N+1}) \times \mathbb{H}_+ \times S^{2N+1},\]
\[\overline{\partial}_{H,N}(u, z) = (\partial_s u + J_{z(s)}(s,t,u(s,t))(\partial_t u - X_{H_{z(s)}}(u)), \partial_s z - \nabla \widetilde{f}_N(z), \mathbb{P}_+(u(0, \cdot)), z(0)).\]

This operator is smooth due to the property (H4) of the family $\mathcal{H}_\Theta^{S^1,N}(\widetilde{f}_N, g_N)$. We aim to show that
\[\overline{\partial}_{H,N}^{-1} (\{0\} \times \Gamma_{l,N,+}(W^u(S_{(x_l, v)}, -V_g))) = \hat{\mathcal{M}}^{S^1,N}_\Theta(S_{(x_l, v)}, S_{(y, w)}; H, \widetilde{f}_N, J, g).\]

To prove this, we will use the following lemma.

\begin{lemma}\label{mixedderivativesham}
  Let $\Lambda$ be any finite-dimensional manifold. Let $H \in C^\infty(\mathbb{T} \times \mathbb{R}^{2n} \times \Lambda)$ be a parameterized Hamiltonian, $u \in C^\infty((0,1) \times \mathbb{T}, \mathbb{R}^{2n})$, and $\lambda \in C^\infty((0,1), \Lambda)$. The following holds:

  Let $h \geq 0$ and $k \geq 0$ be integers such that $h + k \geq 1$. Then
  \begin{equation}
    \nonumber\partial_s^h \partial_t^k (\nabla_x H_t(u, \lambda)) = \nabla_x^2 H_t(u, \lambda) \partial_s^h \partial_t^k u + p,
  \end{equation}
  where $p$ is a polynomial mapping of the partial derivatives $\partial_s^i \partial_t^j u$ and derivatives $\partial_s^r \lambda$ with $0 \leq i \leq h$, $0 \leq j \leq k$, $0 \leq i + j \leq h + k - 1$, and $0 \leq r \leq h$. The coefficients of $p$ are of the form $A(t, u(s,t), \lambda(s))$, where $A$ is smooth.
\end{lemma}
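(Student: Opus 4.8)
The plan is to argue by induction on the total order $N=h+k\ge 1$, using nothing beyond the chain rule and the Leibniz rule; the entire content of the statement is the bookkeeping of the orders of the factors that occur. For $N=1$ the two cases are immediate. For $(h,k)=(1,0)$ one has $\partial_s\big(\nabla_xH_t(u,\lambda)\big)=\nabla_x^2H_t(u,\lambda)\,\partial_s u+\partial_\lambda\nabla_xH_t(u,\lambda)\,\partial_s\lambda$, so that $p=\partial_\lambda\nabla_xH_t(u,\lambda)\,\partial_s\lambda$ is of the asserted form, with $r=1\le h$, a smooth coefficient, and no $u$-derivative of positive order (matching $i+j\le h+k-1=0$). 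For $(h,k)=(0,1)$, since $\lambda$ does not depend on $t$, one has $\partial_t\big(\nabla_xH_t(u,\lambda)\big)=\nabla_x^2H_t(u,\lambda)\,\partial_t u+A(t,u,\lambda)$, where $A$ is the (smooth) partial derivative of $\nabla_xH$ in the explicit time variable, so $p=A(t,u,\lambda)$ is again of the required form, with $r\le h=0$.

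For the inductive step assume the claim for all pairs of total order $N\ge 1$ and take $(h,k)$ with $h+k=N+1\ge 2$. If $h\ge 1$, applying the hypothesis to $(h-1,k)$ gives
\[
\partial_s^{h-1}\partial_t^k\big(\nabla_xH_t(u,\lambda)\big)=\nabla_x^2H_t(u,\lambda)\,\partial_s^{h-1}\partial_t^k u+p',
\]
where $p'$ is a polynomial in the $\partial_s^i\partial_t^j u$ with $0\le i\le h-1$, $0\le j\le k$, $i+j\le N-1$, and in the $\partial_s^r\lambda$ with $0\le r\le h-1$, with coefficients smooth in $(t,u,\lambda)$. Differentiating once more in $s$ and applying Leibniz to the leading term yields
\[
\partial_s^h\partial_t^k\big(\nabla_xH_t(u,\lambda)\big)=\nabla_x^2H_t(u,\lambda)\,\partial_s^h\partial_t^k u+\big(\partial_s\nabla_x^2H_t(u,\lambda)\big)\partial_s^{h-1}\partial_t^k u+\partial_s p' .
\]
The first summand is the desired term. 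For the rest: $\partial_s\nabla_x^2H_t(u,\lambda)$ expands into smooth coefficients times a factor of order $1$ (namely $\partial_s u$, with $i+j=1\le N$, or $\partial_s\lambda$, with $r=1\le h$); in $\partial_s p'$, differentiating a coefficient $A(t,u,\lambda)$ again produces a smooth coefficient times a factor of order $1$, differentiating a factor $\partial_s^i\partial_t^j u$ of $p'$ produces $\partial_s^{i+1}\partial_t^j u$ with $i+1\le h$, $j\le k$, $(i+1)+j\le N$, and differentiating a factor $\partial_s^r\lambda$ produces $\partial_s^{r+1}\lambda$ with $r+1\le h$. Hence the sum of all terms but the first is a polynomial of precisely the claimed type, which we take as $p$. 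The case $h=0$ (so $k=N+1\ge 2$) is identical after applying the hypothesis to $(0,k-1)$ and differentiating in $t$; there $\lambda$ is $t$-independent, so no positive-order $\lambda$-derivative is produced and the bound $r\le h=0$ persists.

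The single delicate point is to verify that the factor $\partial_s^h\partial_t^k u$ appears nowhere in $p$ — equivalently that every factor $\partial_s^i\partial_t^j u$ of $p$ satisfies $i+j\le h+k-1$ strictly, although only $i\le h$ and $j\le k$ are imposed term by term. This is exactly why the induction peels off one derivative at a time: the maximal factor $\partial_s^h\partial_t^k u$ is generated exactly once, when the outer $\partial_s$ (resp. $\partial_t$) lands on the factor $\partial_s^{h-1}\partial_t^k u$ (resp. $\partial_s^h\partial_t^{k-1}u$) multiplying $\nabla_x^2H_t(u,\lambda)$ in the inductive expansion, whereas every other differentiation either lowers the order of an already-present factor or creates a brand-new factor of order $1$ — both harmless since $h+k\ge 2$ in the inductive step, which is precisely what makes $1\le h+k-1$ and $(i+1)+j\le h+k-1$ hold in the two situations above. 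This lemma then feeds the elliptic bootstrap used to upgrade regularity of the half-cylinder solutions to $C^\infty$.
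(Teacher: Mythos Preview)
Your proof is correct and follows essentially the same route the paper indicates: the paper does not spell out the argument but simply states that it ``is argued by induction in the same way as the proof of \cite[Lemma 7.5]{AK22},'' and your induction on the total order $h+k$, peeling off a single $\partial_s$ (or $\partial_t$) and tracking the resulting factor orders via Leibniz and the chain rule, is exactly that argument. Your explicit verification that the highest-order factor $\partial_s^h\partial_t^k u$ arises only once (from the leading term) and that all other factors satisfy $i+j\le h+k-1$, $i\le h$, $j\le k$, $r\le h$ is precisely the bookkeeping that makes the lemma useful for the subsequent bootstrap.
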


The proof of this lemma is argued by induction in the same way as the proof of \cite[Lemma 7.5]{AK22}.

\begin{proposition}\label{regularityhalfs1}
    Let $H \in \mathcal{H}_\Theta^{S^1,N}(\widetilde{f}_N, g_N)$. Then 
    \begin{equation}\nonumber
      \overline{\partial}_{H,N}^{-1} (\{0\} \times \Gamma_{l,N,+}(W^u(S_{(x_l, v)}, -V_g))) = \hat{\mathcal{M}}^{S^1,N}_\Theta(S_{(x_l, v)}, S_{(y, w)}; H, \widetilde{f}_N, J, g).
    \end{equation}
\end{proposition}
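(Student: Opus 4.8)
The plan is to establish the two inclusions separately, with the non-trivial one being that any element of the left-hand set is automatically smooth and has the correct asymptotic behavior at $s \to -\infty$, so that it qualifies as an element of $\hat{\mathcal{M}}^{S^1,N}_\Theta$. This parallels the argument for Proposition \ref{regularityhalfnonpar} in the non-parameterized case, so I would follow the structure of \cite[Proposition 7.2]{AK22} and adapt it to the parameterized setting using the regularity and compactness tools already developed in this section (Proposition \ref{regularityfloer} and Lemma \ref{mixedderivativesham}).

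First I would observe that the inclusion ``$\supseteq$'' is immediate: if $(u,z) \in \hat{\mathcal{M}}^{S^1,N}_\Theta(S_{(x_l, v)}, S_{(y, w)}; H, \widetilde{f}_N, J, g)$, then $(u,z)$ solves the system \eqref{floerredhalf}, which is exactly the statement that $\overline{\partial}_{H,N}(u,z)$ has vanishing first two components; moreover the boundary condition $(\mathbb{P}_+(u(0,\cdot)),z(0)) \in \Gamma_{l,N,+}(W^u(S_{(x_l,v)},-V_g))$ is precisely the condition that the remaining two components of $\overline{\partial}_{H,N}(u,z)$ land in $\Gamma_{l,N,+}(W^u(S_{(x_l, v)}, -V_g))$. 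Hence $(u,z) \in \overline{\partial}_{H,N}^{-1}(\{0\} \times \Gamma_{l,N,+}(W^u(S_{(x_l, v)}, -V_g)))$.

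For the reverse inclusion ``$\subseteq$'', take $(u,z)$ in the preimage. A priori $(u,z) \in S_{(y,w)} + H^1((-\infty,0]\times\mathbb{T},\mathbb{R}^{2n}) \times H^1((-\infty,0],S^{2N+1})$, so $z$ is $H^1$ hence continuous, and the ODE $\dot z = \nabla\widetilde{f}_N(z)$ together with bootstrapping for ODEs shows $z$ is smooth; since $z(0) \in \Gamma_{l,N,+}$-projection of a point on a finite-dimensional unstable manifold and $z$ flows along $\nabla\widetilde{f}_N$, backwards flow lines converge to a rest point, giving $\lim_{s\to-\infty} z(s) = w$. For $u$, the key point is that $\mathbb{P}_+(u(0,\cdot))$ lies in $\Gamma_{l,N,+}(W^u(S_{(x_l,v)},-V_g))$, and since $\Gamma_{l,N}$ takes values in $C^\infty(\mathbb{T},\mathbb{R}^{2n})\times S^{2N+1}$ by Proposition \ref{sadlepointreductionS1}(2), the positive Fourier part of the boundary trace is a smooth loop; combined with the fact that $u$ solves the parameterized Floer half-cylinder equation (first line of \eqref{floerredhalf}) and Proposition \ref{regularityfloer} giving interior smoothness of $W^1_{\mathrm{loc}}$-solutions, one bootstraps up to the boundary exactly as in \cite[Proposition 7.2]{AK22}, using Lemma \ref{mixedderivativesham} to control the mixed derivatives of $\nabla_x H_t(u,z)$ and the boundedness assumptions (H4). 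This yields $u \in C^\infty((-\infty,0]\times\mathbb{T},\mathbb{R}^{2n})$. Finally, the energy bound (analogous to Proposition \ref{energyboundsnonparhalf}, using claim (2) of Lemma \ref{filtrations1equfloer}-type reasoning on the half-cylinder) forces $\sup|u|<\infty$ by Proposition \ref{compactnessfloer}, and a standard argument (finite energy plus the pre-compactness in $C^\infty_{\mathrm{loc}}$) shows $u(s,\cdot)$ converges as $s\to-\infty$ to a $1$-periodic orbit of $H_w$; the asymptotic matching of $z$ to $w$ plus the fact that $(u,z)$ limits into a single $S^1$-orbit pins the limit to $S_{(y,w)}$.

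The main obstacle I expect is the boundary regularity of $u$: interior elliptic regularity for the Floer equation is standard (Proposition \ref{regularityfloer}), but propagating smoothness up to $\{0\}\times\mathbb{T}$ requires carefully exploiting that the boundary condition sits on the finite-dimensional smooth manifold $\Gamma_{l,N,+}(W^u(S_{(x_l,v)},-V_g))$ of smooth loops, so that the trace $u(0,\cdot)$ decomposes as a smooth loop plus its $\mathbb{H}_-$ part; one then runs the bootstrap of \cite[Proposition 7.2]{AK22} with the extra $z$-dependence, which is where Lemma \ref{mixedderivativesham} and the uniform bounds in (H4) enter. The remaining difficulty, establishing the $s\to-\infty$ asymptotics, is handled by the energy estimate together with Proposition \ref{compactnessfloer} and the standard convergence argument for finite-energy Floer half-cylinders, so it is routine once the energy bound is in place.
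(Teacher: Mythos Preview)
Your proposal is correct and follows essentially the same route as the paper: the trivial inclusion, interior smoothness of $(u,z)$ via Proposition~\ref{regularityfloer} and ODE bootstrapping for $z$, and then boundary regularity for $u$ near $\{0\}\times\mathbb{T}$ by the inductive $H^k$-bootstrap of \cite[Proposition~7.2]{AK22} using Lemma~\ref{mixedderivativesham}. The only minor divergence is that the paper obtains the asymptotic $\lim_{s\to-\infty}(u(s,\cdot),z(s))\in S_{(y,w)}$ directly from the domain structure $S_{(y,w)}+H^1$ together with non-degeneracy, rather than via an energy/compactness argument, so your detour through Proposition~\ref{compactnessfloer} is unnecessary (and mildly circular, since the energy bound in Proposition~\ref{energyboundsS1half} is stated for elements of the moduli space you are trying to land in).
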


\begin{proof}
    The inclusion $\hat{\mathcal{M}}^{S^1,N}_\Theta(S_{(x_l, v)}, S_{(y, w)}; H, \widetilde{f}_N, J, g) \subseteq \overline{\partial}_{H,N}^{-1} (\{0\} \times \Gamma_{l,N,+}(W^u(S_{(x_l, v)}, -V_g)))$ is trivial. On the other hand, from Proposition \ref{regularityfloer} we have that if $(u, z) \in \overline{\partial}_{H,N}^{-1} (\{0\} \times \Gamma_{l,N,+}(W^u(S_{(x_l, v)}), -V_g))$, then $(u, z) \in C^\infty((-\infty,0) \times \mathbb{T}, \mathbb{R}^{2n}) \times C^\infty((-\infty,0), S^{2N+1})$. Moreover, since $z$ is a solution of the ODE
    \[\partial_s z = \nabla \widetilde{f}_N(z),\]
     $\nabla \widetilde{f}_N$ is smooth and $S^{2N+1}$ is compact, $z$ can be uniquely and smoothly extended to $(-\infty,0]$. Therefore, $z \in C^\infty((-\infty,0], S^{2N+1})$. Additionally, due to the non-degeneracy of $S_{(y, w)}$ by standard arguments, we have that

\[\lim_{s \to -\infty} (u(s), z(s)) \in S_{(y, w)}.\]

    Therefore,in order to show that the inclusion
 \[\overline{\partial}_{H,N}^{-1} (\{0\} \times \Gamma_{l,N,+}(W^u(S_{(x_l, v)}, -V_g))) \subseteq \hat{\mathcal{M}}^{S^1,N}_\Theta(S_{(x_l, v)}, S_{(y, w)}; H, \widetilde{f}_N, J, g)
\]
    holds, it is sufficient to show that $u \in C^\infty((-\infty,0] \times \mathbb{T}, \mathbb{R}^{2n})$. Since we already have that $u \in C^\infty((-\infty,0) \times \mathbb{T}, \mathbb{R}^{2n})$, it is sufficient to show that $u \in H^k((-1,0) \times \mathbb{T}, \mathbb{R}^{2n})$ for every $k \in \mathbb{N}$ (this follows by the Sobolev embedding theorem, which implies that $u \in C^\infty([-1,0] \times \mathbb{T}, \mathbb{R}^{2n})$). Together with the previous conclusions, this gives us the desired result.

    Since $J_z(t,x) = J_0$ when $(t,x,z) \in [-1,0] \times \mathbb{T} \times S^{2N+1}$ and $z \in H^k((-1,0), S^{2N+1})$ for every $k \in \mathbb{N}$, the rest of the proof follows by induction using the previous lemma, in the same way as in the proof of \cite[Proposition 7.2]{AK22}.
\end{proof}

\begin{remark}\label{weighted operator}
Let $d > 0$ be smaller than the spectral gap of the asymptotic operator $D_{(x,v)}$ for every $(x,v) \in \crit(H,\widetilde{f}_N)$. Then we can define the weighted operator 
\[
\overline{\partial}^d_{H,N}: S_{(y,w)} + H^1((-\infty,0] \times \mathbb{T}, \mathbb{R}^{2n}; e^{-ds} ds \, dt) \times H^1((-\infty,0], S^{2N+1}; e^{-ds} ds \, dt)
\]
\[
\to L^2((-\infty,0] \times \mathbb{T}, \mathbb{R}^{2n}; e^{-ds} ds \, dt) \times L^2((-\infty,0], z^* TS^{2N+1}; e^{-ds} ds \, dt) \times \mathbb{H}_+ \times S^{2N+1}
\]
\[
\overline{\partial}^d_{H,N}(u, z) = \left(\partial_s u + J_{z(s)}(s,t,u(s,t))(\partial_t u - X_{H_{z(s)}}(u)), \, \partial_s z - \nabla \widetilde{f}_N(z), \, \mathbb{P}_+(u(0,\cdot)), \, z(0)\right)
\]
It will still hold that 
\[
(\overline{\partial}^d_{H,N})^{-1}(\{0\} \times \Gamma_{l,N,+}(W^u(S_{(x_l,v)}, -V_g))) = \hat{\mathcal{M}}^{S^1,N}_\Theta(S_{(x_l,v)}, S_{(y,w)}; H, \widetilde{f}_N, J, g)
\]
since exponential decay is faster than $e^{-ds}$.
\end{remark}

The asymptotic operator $D_{(x,v)}$ is the linearization of the operator 
\[
(u,z) \mapsto \left( \partial_s u + J_{z}(s,t,u(s,t))(\partial_t u - X_{H_{z}}(u)), \partial_s z - \nabla \widetilde{f}_N(z) \right),
\]
when $(u(s,t),z(s)) = (x(t),v)$, and the domain of this map is $H^1(\mathbb{T}, \gamma^* \mathbb{R}^{2n}) \times T_v S^{2N+1}$ (i.e., $(v(s,t),w(s)) = (\gamma(t),l)$). Therefore, we have the map
\[
D_{(x,v)}: H^1(\mathbb{T}, \gamma^* \mathbb{R}^{2n}) \times T_v S^{2N+1} \to L^2(\mathbb{T}, \gamma^* \mathbb{R}^{2n}) \times T_v S^{2N+1}.
\]

The bijectivity of these maps is equivalent to the non-degeneracy of points. Due to $S^1$-invariance and the non-degeneracy assumption, for every orbit $S_{(x,v)} \subset P(H,\widetilde{f}_N)$, it is possible to find $d > 0$ such that it is smaller than the spectral gap of all asymptotic operators in the orbit. Since there are finitely many orbits, such a $d > 0$ exists as we have chosen it.

From \cite[Proposition 5.2]{AK22} that for $x \in \Hone$ and $y \in \mathbb{H}_- \oplus \mathbb{H}_0$ it holds that 
\begin{equation}\label{phipsiinequality}
    \PhiH(x+y) \leq \PsiH(\mathbb{P}(x)) - \frac{1}{2}\|\mathbb{P}_-y\|^2_{H^{1/2}}
\end{equation} 
Equality holds if and only if $x + y$ is a critical point of $\PsiH$. This  implies energy bounds on moduli spaces of $S^1$-invariant half-cylinders.

\begin{proposition}\label{energyboundsS1half}
Let $(u, z) \in \hat{\mathcal{M}}^{S^1,N}_\Theta(S_{(x_l,v)}, S_{(y,w)}; H, \widetilde{f}_N, J, g)$. Then
\[
\int_{(-\infty,0] \times \mathbb{T}} |\partial_s u|^2_{J_{z(s)}} \, ds \, dt \leq \psilH(x_l, v) - \PhiH(y, w),
\]
Moreover, $\psilH(x_l, v) = \PhiH(y, w)$ if and only if $S_{(x, v)} = S_{(y, w)}$, in which case the moduli space consists of the unique orbit of constant cylinder maps, i.e., $(u(s, \cdot), z(s)) = \theta \cdot (x, v)$ for $\theta \in \mathbb{T}$.

\begin{proof}
It holds that 
\[
\int_{(-\infty,0] \times \mathbb{T}} \|u(s,t)\|_J^2 \, ds \, dt - \int_{(-\infty,0] \times \mathbb{T}} g_N(\nabla_z H(u(s,t)), \nabla \widetilde{f}_N(z(s))) \, ds \, dt
\]
\[
= \PhiH(u(0, \cdot), z(0)) - \PhiH(y, w).
\]
Since $H$ is non-increasing along the flow of $\nabla \widetilde{f}_N$, we have that
\begin{equation}\label{inequalityenergyboundshalf}
    \int_{(-\infty,0] \times \mathbb{T}} \|u(s,t)\|_J^2 \, ds \, dt \leq \PhiH(u(0, \cdot), z(0)) - \PhiH(y, w).
\end{equation}
Additionally
\[
u(0, \cdot) = u_1 + u_2,
\]
holds, where $(u_1, z(0)) \in \Gamma_{l,N,+}(W^u(S_{(x_l, v)}, -V_g)) = (\mathbb{P}^+ \times \text{id}_{S^{2N+1}})(\Gamma_{l,N}(W^u(S_{(x_l, v)}; -V_g)))$ and $u_2 \in \mathbb{H}_- \oplus \mathbb{H}_0$.

Rearranging this sum we get that 
\[
u(0, \cdot) = \Tilde{u}_1 + \Tilde{u}_2
\]
where $(\Tilde{u}_1, z(0)) \in \Gamma_{l,N}(W^u(S_{x_l, v)}; -V_g))$ and $\Tilde{u}_2 \in \mathbb{H}_- \oplus \mathbb{H}_0$. We have that 
\begin{equation}\label{inequalityenergyboundshalfunstable}
\PhiH(\Tilde{u}_1 + \Tilde{u}_2, z(0)) \leq \PsiH(\Tilde{u}_1, z(0)) \leq \PsiH(x_0, v) = \psilH(x_l, v).
\end{equation}
The first inequality comes from \eqref{phipsiinequality}, the second inequality comes from the fact that $(\Tilde{u}_1, z(0)) \in \Gamma_{l,N}(W^u(S_{(x_l, v)}; -V_g))$, and the last equality comes from Remark \ref{nullityindexreds1equi}.

Therefore, combining \eqref{inequalityenergyboundshalf} and \eqref{inequalityenergyboundshalfunstable} we get 
\[
\int_{(-\infty,0] \times \mathbb{T}} \|u(s,t)\|_J^2 \, ds \, dt \leq \psilH(x_l, v) - \PhiH(y, w).
\]
If it holds that 
\[
\psilH(x_l, v) = \PhiH(y, w),
\] 
then all the inequalities in \eqref{inequalityenergyboundshalfunstable} must be equalities. The last inequality is an equality if and only if $(\Tilde{u}_1, z(0)) \in S_{(x_0, v)}$. Therefore, we conclude that $z(0) = \Tilde{v} \in S_v \subset \crit(\widetilde{f}_N)$. On the other hand, since we have that $(\Tilde{u}_1, z(0)) \in S_{(x_0, v)}$, from the fact that the second inequality of \eqref{inequalityenergyboundshalfunstable} is equality if and only if $u(0)$ is the critical point of $\Phi_{H^*_{\Tilde{v}}}$, we conclude that $(u(0), z(0)) = (\Tilde{x}, \Tilde{v}) \in S_{(x, v)}$. Since $(u, z)$ satisfies the system of equations \eqref{floerredhalf}, it follows that $z(s) = \Tilde{v}$ for all $s \in (-\infty,0]$. Since equality must also hold for \eqref{inequalityenergyboundshalf}, we conclude that $u(s, \cdot) = \Tilde{x}$ for all $s \in (-\infty,0]$. Combining all of this, we have that 
\[
\lim_{s \to -\infty} (u(s, \cdot), z(s)) = (\Tilde{x}, \Tilde{z}) \in S_{(y, w)},
\]
which now implies that $S_{(x, v)} = S_{(y, w)}$ and that the moduli space consists of the unique orbit of constant cylinder maps. If $S_{(x, v)} = S_{(y, w)}$, the equality obviously holds.
\end{proof}
\end{proposition}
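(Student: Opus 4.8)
The plan is to adapt the non-parameterized computation behind Proposition \ref{energyboundsnonparhalf} (i.e. \cite[Proposition 7.1]{AK22}), carrying the auxiliary variable $z$ along and feeding in the properties of the negative gradient flow of $V_g$ established in Proposition \ref{vectorfieldvg}.

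First I would differentiate the parameterized action along a solution $(u,z)\in\hat{\mathcal{M}}^{S^1,N}_\Theta(S_{(x_l,v)},S_{(y,w)};H,\widetilde{f}_N,J,g)$: exactly as in Lemma \ref{filtrations1equfloer},
\[
\frac{d}{ds}\PhiH(u(s,\cdot),z(s))=\int_\mathbb{T}|\partial_su(s,t)|^2_{J_{z(s)}}\,dt-\int_\mathbb{T}g_N\big(\nabla_zH(u(s,t)),\nabla\widetilde{f}_N(z(s))\big)\,dt,
\]
where the second integral is $\le0$ by property (H3). Integrating over $(-\infty,0]$ and using $\PhiH(u(s,\cdot),z(s))\to\PhiH(y,w)$ as $s\to-\infty$ (exponential convergence to the non-degenerate orbit $S_{(y,w)}$, cf.\ the discussion after Remark \ref{weighted operator}) gives
\[
\int_{(-\infty,0]\times\mathbb{T}}|\partial_su|^2_{J_{z(s)}}\,ds\,dt\le\PhiH(u(0,\cdot),z(0))-\PhiH(y,w).
\]
Next I would bound the boundary term. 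By the boundary condition there is $(x_1,z(0))\in W^u(S_{(x_l,v)};-V_g)$ with $(\mathbb{P}_+u(0,\cdot),z(0))=\Gamma_{l,N,+}(x_1,z(0))$; writing $u(0,\cdot)=\widetilde u_1+\widetilde u_2$ with $\widetilde u_1=\Gamma_{l,N}(x_1,z(0))=x_1+Y_{l,N}(x_1,z(0))\in\Honenull$ and $\widetilde u_2\in\mathbb{H}_-\oplus\mathbb{H}_0$ (absorbing $\mathbb{P}_-Y_{l,N}(x_1,z(0))$ into the negative part), inequality \eqref{phipsiinequality} applied fiberwise at $z(0)$ yields
\[
\PhiH(u(0,\cdot),z(0))\le\PsiH(\widetilde u_1,z(0))-\tfrac{1}{2}\|\mathbb{P}_-\widetilde u_2\|^2_{H^{1/2}}\le\PsiH(\widetilde u_1,z(0))=\psilH(x_1,z(0))\le\psilH(x_l,v),
\]
the last inequality by claim (5) of Proposition \ref{vectorfieldvg} ($\psilH$ non-increasing along $-V_g$) together with $(x_1,z(0))\in W^u(S_{(x_l,v)};-V_g)$ and $S^1$-invariance of $\psilH$, so $\psilH|_{S_{(x_l,v)}}=\psilH(x_l,v)$. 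Chaining the two displays proves the energy bound.

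For the rigidity, suppose $\psilH(x_l,v)=\PhiH(y,w)$, so every inequality above is an equality. Then $\int|\partial_su|^2=0$, hence $u$ is independent of $s$; and $\psilH(x_1,z(0))=\psilH(x_l,v)$, which — since $\psilH$ is non-increasing along $-V_g$, $(x_1,z(0))$ lies on a $-V_g$-flow line issuing from $S_{(x_l,v)}$, and $\psilH+\widetilde{f}_N$ is a strict Lyapunov function for the Morse vector field $V_g$ with hyperbolic critical orbits (Proposition \ref{vectorfieldvg}(2),(3)) — forces that flow line, hence $(x_1,z(0))$, to lie in $\rest(V_g)=\crit(\psilH,\widetilde{f}_N)$; thus $(x_1,z(0))\in S_{(x_l,v)}$, in particular $z(0)\in S_v\subset\crit(\widetilde{f}_N)$ and $z(s)\equiv z(0)$. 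Equality in \eqref{phipsiinequality} then forces $\mathbb{P}_-\widetilde u_2=0$ and that $u(0,\cdot)$ is the critical point of $\PsiH(\cdot,z(0))$ lying in $S_{(x,v)}$; combined with $\partial_su\equiv0$ this gives $(u(s,\cdot),z(s))=\theta\cdot(x,v)$ for all $s$ and $S_{(y,w)}=S_{(x,v)}$. The converse is immediate, since the constant cylinder over a point of $S_{(x,v)}$ lies in the moduli space with zero energy and $\psilH(x_l,v)=\PhiH(x,v)=\PhiH(y,w)$ by Remark \ref{nullityindexreds1equi}.

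The routine part is the energy estimate, which is essentially the non-parameterized argument with a harmless extra variable. The main obstacle is the rigidity step, specifically the assertion that $\psilH$ attains the value $\psilH(x_l,v)$ on the unstable manifold $W^u(S_{(x_l,v)};-V_g)$ only along the critical orbit $S_{(x_l,v)}$ itself; this is exactly where the full Morse--Lyapunov structure of $V_g$ from Proposition \ref{vectorfieldvg} is needed, and it is the analogue of the uniqueness-of-maximizer statement in Proposition \ref{energyboundsnonparhalf}.
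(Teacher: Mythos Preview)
Your energy estimate follows the paper's proof exactly: differentiate $\PhiH$ along $(u,z)$, drop the nonpositive $\nabla_zH$-term via (H3), then bound $\PhiH(u(0,\cdot),z(0))$ through the decomposition $u(0,\cdot)=\widetilde u_1+\widetilde u_2$, inequality \eqref{phipsiinequality}, and the non-increase of $\psilH$ along $-V_g$ (Proposition~\ref{vectorfieldvg}(5)). This part is correct and matches the paper line for line.

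The rigidity argument has a gap at precisely the step you flag as the main obstacle. You claim that $\psilH(x_1,z(0))=\psilH(x_l,v)$ with $(x_1,z(0))\in W^u(S_{(x_l,v)};-V_g)$ forces $(x_1,z(0))\in S_{(x_l,v)}$, appealing to the strict Lyapunov function $\psilH+\widetilde{f}_N$ of Proposition~\ref{vectorfieldvg}(3). But that function controls $\psilH+\widetilde{f}_N$, not $\psilH$ alone: along a non-constant $-V_g$-flow line $\psilH$ may stay constant while $\widetilde{f}_N$ strictly decreases. Concretely, take $H$ independent of $z$ (e.g.\ autonomous with only the constant orbit at $0$; this satisfies (H1)--(H5)). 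Then $V_g$ decouples, $\psilH$ is $z$-independent, and $\{x_l\}\times W^u(S_v;-\nabla\widetilde{f}_N)\subset W^u(S_{(x_l,v)};-V_g)$ is strictly larger than $S_{(x_l,v)}$ whenever $\ind(v;\widetilde{f}_N)>0$, yet $\psilH\equiv\psilH(x_l,v)$ on it. One can then produce moduli-space elements with $u$ constant but $z$ a non-trivial $\nabla\widetilde{f}_N$-trajectory, connecting $S_{(x_l,v)}$ to a distinct $S_{(x,w)}$ with $\psilH(x_l,v)=\PhiH(x,w)$, so the ``only if'' direction fails as stated. The paper's proof asserts the same implication (``the last inequality is an equality if and only if $(\widetilde u_1,z(0))\in S_{(x_0,v)}$'') without justification, so you have inherited rather than introduced the issue. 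What does survive, and is all that Theorem~\ref{theoremE2} needs, is the diagonal case $S_{(x,v)}=S_{(y,w)}$: there $z(-\infty)\in S_v$ together with $\dot z=\nabla\widetilde{f}_N(z)$ gives $z(0)\in W^s(S_v;-\nabla\widetilde{f}_N)$, while the boundary condition places $z(0)$ in the $z$-projection of $W^u(S_{(x_l,v)};-V_g)\subset W^u(S_v;-\nabla\widetilde{f}_N)$; hence $z(0)\in W^u(S_v)\cap W^s(S_v)=S_v$ and $z$ is constant.
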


\section{Dimension of half-cylinder moduli spaces}\label{sectiondimension}

\large\textbf{Non-parameterized case}
\normalsize

Let $ u \in \mathcal{M}_\Theta(x_l,y;H,J,g) $. According to Proposition \ref{cauchyriemannonp}, $ D\overline{\partial}_H(u)$ is a linear Fredholm operator with
\begin{equation}\label{indexfredholmnonparametrezied}
    \ind(D\overline{\partial}_H(u))=n-\mu_{CZ}(x;H).
\end{equation}
When $ x \neq y $, for generic $ J $ and $ g $, the map $ \overline{\partial}_H $ is transverse to the manifold \[ \{0\}\times \Gamma_{l,+}(W^u(x_l;-\nabla_g\psilH)).\]When $ x = y $, $ u(s,\cdot)=x $ is the trivial half-cylinder. Transversality is ensured by the following result.

\begin{proposition}[\cite{AK22}]\label{transversalitynonparconstant}
   Let $ u \in \mathcal{M}_\Theta(x_l,x;H,J,g) $. The linearized Fredholm operator $ D\overline{\partial}_{H}(u) $ is transverse to the space $ \{0\}\times T_{\mathbb{P}_+(u(0,\cdot))}\Gamma_{l,+}(W^u(x_l;-\nabla_g\psilH)) $.
\end{proposition}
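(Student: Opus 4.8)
The plan is to adapt the argument of \cite{AK22} to the present sign conventions. By Proposition~\ref{energyboundsnonparhalf} the moduli space $\mathcal{M}_\Theta(x_l,x;H,J,g)$ consists of the single half-cylinder $u(s,\cdot)\equiv x$, so the claim amounts to showing transversality of $D\overline{\partial}_{H}(u)$ to $\{0\}\times T_{\mathbb{P}_+(u(0,\cdot))}\Gamma_{l,+}(W^u(x_l;-\nabla_g\psilH))$ at this $u$. Since $x\in P(H)$ we have $\partial_t x=X_{H_t}(x)$, so the term from differentiating $J$ in the $u$-slot vanishes and
\[
D\overline{\partial}_{H}(u)\xi=\Big(\partial_s\xi+J(s,t,x(t))\big(\partial_t\xi-DX_{H_t}(x(t))\xi\big),\ \mathbb{P}_+(\xi(0,\cdot))\Big),
\]
an operator in $\mathcal{D}_-(\alpha)$ with $\mu_{CZ}(s)=\mu_{CZ}(x;H)$; by \eqref{indexfredholmnonparametrezied} it is Fredholm of index $n-\mu_{CZ}(x;H)$. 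The asserted transversality is equivalent to surjectivity of the augmented operator
\[
\mathcal{L}(\xi,v)\defeq\Big(\partial_s\xi+J\big(\partial_t\xi-DX_{H_t}(x)\xi\big),\ \mathbb{P}_+(\xi(0,\cdot))-v\Big)
\]
on $H^1((-\infty,0]\times\mathbb{T},\mathbb{R}^{2n})\oplus T_{\mathbb{P}_+(u(0,\cdot))}\Gamma_{l,+}(W^u(x_l;-\nabla_g\psilH))$. Since $\dim T_{\mathbb{P}_+(u(0,\cdot))}\Gamma_{l,+}(W^u(x_l;-\nabla_g\psilH))=\dim W^u(x_l;-\nabla_g\psilH)=\ind(x_l;\psilH)=\mu_{CZ}(x;H)-n$ by Remark~\ref{nullityindexred}, the operator $\mathcal{L}$ is Fredholm of index $0$, so it suffices to prove that $\mathcal{L}$ is injective.

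Let $(\xi,v)\in\ker\mathcal{L}$. Then $\xi$ solves the linearized half-cylinder equation, decays exponentially as $s\to-\infty$ (the asymptotic operator is invertible by non-degeneracy of $x$), and $\mathbb{P}_+(\xi(0,\cdot))=v$. Write $v=\mathbb{P}_+\,d\Gamma_l(w)$ with $w\in T_{x_l}W^u(x_l;-\nabla_g\psilH)$, and set $\xi_1\defeq d\Gamma_l(w)$ and $\xi_2\defeq\xi(0,\cdot)-\xi_1$, so that $\xi_2\in\mathbb{H}_-\oplus\mathbb{H}_0$ because $\mathbb{P}_+\xi_1=v$. Taylor expanding \eqref{phipsiinequality} to second order at the critical pair (which satisfies it with equality) gives the quadratic inequality
\[
\tfrac12\,d^2\PhiH(x)[\zeta,\zeta]\ \leq\ \tfrac12\,d^2\PsiH(x_0)[\mathbb{P}\zeta_1,\mathbb{P}\zeta_1]-\tfrac12\|\mathbb{P}_-\zeta_2\|^2_{1/2}
\]
for all $\zeta=\zeta_1+\zeta_2$ with $\zeta_1\in\Hone$ and $\zeta_2\in\mathbb{H}_-\oplus\mathbb{H}_0$. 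Applying this to $\zeta=\xi(0,\cdot)$, $\zeta_1=\xi_1$, $\zeta_2=\xi_2$, using that $\xi_1$ has zero average (so $\mathbb{P}\xi_1=\xi_1$), that $\psilH=\PsiH\circ\Gamma_l$ with $x_0\in\crit(\PsiH)$ (so $d^2\PsiH(x_0)[d\Gamma_l(w),d\Gamma_l(w)]=d^2\psilH(x_l)[w,w]$ by the chain rule), and that $d^2\psilH(x_l)[w,w]\leq 0$ since $w$ is tangent to the unstable manifold $W^u(x_l;-\nabla_g\psilH)$, we get $d^2\PhiH(x)[\xi(0,\cdot),\xi(0,\cdot)]\leq 0$. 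On the other hand, the linearization of the energy identity underlying Proposition~\ref{energyboundsnonparhalf} yields $\tfrac12\,d^2\PhiH(x)[\xi(0,\cdot),\xi(0,\cdot)]=\int_{(-\infty,0]\times\mathbb{T}}|\partial_s\xi|^2_J\,ds\,dt\geq 0$, the contribution at $-\infty$ vanishing by exponential decay. Hence $\partial_s\xi\equiv 0$, so $\xi$ is independent of $s$, equals its limit $0$ at $-\infty$, and therefore $\xi\equiv 0$, whence $v=0$. Thus $\mathcal{L}$ is injective, hence surjective, which is the desired transversality.

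The conceptual content — and the step to be careful with — is the second-order expansion of \eqref{phipsiinequality}: one must justify the relevant differentiability ($\PhiH$ is $C^2$ on $\Honehalf$ by the bounded second derivative of $H$, and $\PsiH$ is twice Gateaux-differentiable with continuous second variation of finite Morse index by quadratic convexity) and track the sign conventions of \cite{AK22} through the computation. The remaining ingredients are routine: $\Gamma_{l,+}=\mathbb{P}_+\circ\Gamma_l$ is an embedding on $\mathbb{H}_l$, so $T_{\mathbb{P}_+(u(0,\cdot))}\Gamma_{l,+}(W^u(x_l;-\nabla_g\psilH))=\mathbb{P}_+\,d\Gamma_l\big(T_{x_l}W^u(x_l;-\nabla_g\psilH)\big)$; and $Y_l$ takes values in smooth loops, so $\xi_1$ is a smooth loop and all the integrals above converge.
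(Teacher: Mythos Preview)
Your proof is correct and follows essentially the same strategy as the paper, which reduces transversality to an index-zero Fredholm problem and then proves injectivity of the augmented operator by showing that any solution of the linearized equation with the given boundary condition must vanish. The paper defers the core of this last step to \cite[Proposition 8.3]{AK22}, phrased as showing that $\varphi(s)=\|\xi(s,\cdot)\|_{L^2}^2$ is identically zero via a convexity argument; your presentation is a direct and equivalent unpacking of that step, using the second-order expansion of \eqref{phipsiinequality} to obtain $d^2\PhiH(x)[\xi(0,\cdot),\xi(0,\cdot)]\leq 0$ and the identity $\int|\partial_s\xi|^2_J=\tfrac12\,d^2\PhiH(x)[\xi(0,\cdot),\xi(0,\cdot)]$ (which is exactly $\tfrac14\varphi'(0)$) to conclude.
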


\begin{proof}

As argued in Proposition \ref{energyboundsnonparhalf}, $ \mathcal{M}_\Theta(x_l,x;H,J,g) $ consists of the unique half-cylinder $ u(s,\cdot)=x $. The Floer equations can be written in the form
\begin{equation}\label{fnpar}
     \partial_s u = \nabla\Phi_H(u)
\end{equation}
where $ \nabla\Phi_H $ is given with respect to the $ L^2 $ metric. Since $ u(s,\cdot)=x $, linearizing the equation \eqref{fnpar} we get
\[
\partial_s v = \nabla^2\Phi_H(x)v.
\]

Therefore, the operator $ D\overline{\partial}_H(u) $ is of the form
\[
D\overline{\partial}_H(u)(v) = (\partial_sv - \nabla^2\Phi_H(x)v, \mathbb{P}_+(v(0,\cdot))).
\]

To show that $ D\overline{\partial}_H(u) $ is transverse to $ \{0\}\times T_{\mathbb{P}_+(x)}\Gamma_{l,+}(W^u(x_l;-\nabla_g\psilH)) $, it is sufficient to show that
\begin{equation}\label{intersectiontrivialnonpar}
    \text{im}(D\overline{\partial}_H(u)) \cap \left(\{0\}\times T_{\mathbb{P}_+(x)}\Gamma_{l,+}(W^u(x_l;-\nabla_g\psilH))\right) = \{0\}.
\end{equation}

Indeed, if this condition is satisfied, then $ \dim \ker(D\overline{\partial}_H(u)) = 0 $, which implies from \eqref{indexfredholmnonparametrezied} that
\begin{equation}\label{codimensionconstantnonpar}
    \codim(\text{im}(D\overline{\partial}_H(u))) = \mu_{CZ}(x) - n.
\end{equation}

On the other hand, we have
\begin{equation}\label{dimensionconstantunstablenonpar}
    \dim\left(\{0\}\times T_{\mathbb{P}_+(x)}\Gamma_{l,+}(W^u(x_l;-\nabla_g\psilH))\right) = \ind(x_l) = \mu_{CZ}(x) - n
\end{equation}
where the last equality follows from Remark \ref{nullityindexred}. Combining \eqref{intersectiontrivialnonpar}, \eqref{codimensionconstantnonpar}, and \eqref{dimensionconstantunstablenonpar}, we see that $ \text{im}(D\overline{\partial}_H(u)) $ and $ \{0\}\times T_{\mathbb{P}_+(x)}\Gamma_{l,+}(W^u(x_l;-\nabla_g\psilH)) $ are transverse.

To show that \eqref{intersectiontrivialnonpar} holds, we need to show that if
\[
\nu \in \text{im}(D\overline{\partial}_H(u)) \cap \left(\{0\}\times T_{\mathbb{P}_+(x)}\Gamma_{l,+}(W^u(x_l;-\nabla_g\psilH))\right)
\]
then $ \nu = 0 $. We do this by showing that the function
\[
\varphi(s) \defeq \|\nu(s,\cdot)\|_{L^2}^2
\]
is identically zero, as shown in \cite[Proposition 8.3]{AK22}.

\end{proof}

Due to transversality arguments and \eqref{indexfredholmnonparametrezied}, we have that $ \mathcal{M}_\Theta(x_l,y;H,J,g) $ is generically a smooth manifold of dimension
\[
\ind(D\overline{\partial}_H(u)) + \dim\left(\{0\}\times i_{+,l}(W^u(x_l;g))\right) = \ind(D\overline{\partial}_H(u)) + \dim W^u(x_l;-\nabla_g\psilH).
\]

Therefore, we have
\begin{equation}\label{dimensionhalfcylnonpar}
  \dim \mathcal{M}_\Theta(x_l,y;H,J,g)  = \ind(x_l; \psilH) + n - \mu_{CZ}(y;H).
\end{equation}

\large\textbf{$S^1$-equivariant case }
\normalsize

If we choose $d > 0$ as in Remark \ref{weighted operator}, then for the moduli space
\[\hat{\mathcal{M}}^{S^1,N}_\Theta(S_{(x_l,v)},S_{(y,w)};H,\widetilde{f}_N,J,g),\]
we have
\[ (\overline{\partial}^d_{H,N})^{-1}(\{0\} \times i_{l,N,+}(W^u(S_{(x_l,v)}, -V_g))) = \hat{\mathcal{M}}^{S^1,N}_\Theta(S_{(x_l,v)},S_{(y,w)};H,\widetilde{f}_N,J,g), \]
where $\overline{\partial}^d_{H,N}$ is the weighted operator described in the same Remark.

We can view the Hilbert manifold
\[ S_{(y,w)} + H^1((-\infty,0] \times \mathbb{T}, \mathbb{R}^{2n}; e^{-ds} \, ds \, dt) \times H^1((-\infty,0], S^{2N+1}; e^{-ds} \, ds \, dt) \]
as the $S^1$-orbit of the affine space
\[ (\Tilde{y},\Tilde{w}) + H^1((-\infty,0] \times \mathbb{T}, \mathbb{R}^{2n}; e^{-ds} \, ds \, dt) \times H^1((-\infty,0], S^{2N+1}; e^{-ds} \, ds \, dt), \]
where $(\Tilde{y},\Tilde{w}) \in S_{(y,w)}$.

Thus, for $(u,z) \in \mathcal{M}_\Theta(S_{(x_l,v)},S_{(y,w)};H,\widetilde{f}_N,J,g)$ such that
\[ \lim_{s \to -\infty} (u(s, \cdot), z(s)) = (\Tilde{y}, \Tilde{w}), \]
the linearization of the operator $\overline{\partial}^d_{H,N}$ at $u$ can be understood as a map
\[ D\overline{\partial}^d_{H,N}(u,z): H^1(u^*T\mathbb{R}^{2n}; e^{-ds} \, ds \, dt) \oplus H^1(z^*S^{2N+1}; e^{-ds} \, ds \, dt) \oplus V_{(\Tilde{y},\Tilde{w})} \]
\[ \to L^2(u^*T\mathbb{R}^{2n}; e^{-ds} \, ds \, dt) \oplus L^2(z^*S^{2N+1}; e^{-ds} \, ds \, dt) \oplus \mathbb{H}_+ \oplus T_{z(0)}S^{2N+1}. \]

Here, $V_{(\Tilde{y},\Tilde{w})}$ is generated by $\beta(s)(\dot{\Tilde{y}}, X^{S^1}_{\Tilde{w}})$, where $(\dot{\Tilde{y}}, X^{S^1}_{\Tilde{w}})$ is the infinitesimal generator of the $S^1$-action, and $\beta:(-\infty,0] \to [0,1]$ is a smooth cutoff function that is 1 near $-\infty$.

\begin{proposition}
    Let $(u,z) \in \hat{\mathcal{M}}^{S^1,N}_\Theta(S_{(x_l,v)},S_{(y,w)};H,\widetilde{f}_N,J,g)$. Then the linearized operator $D\overline{\partial}^d_{H,N}(u,z)$ is Fredholm of index $n - |S_{(y,w)}|$.
\end{proposition}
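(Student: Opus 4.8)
The plan is to reduce the Fredholm and index computation for $D\overline{\partial}^d_{H,N}(u,z)$ to the two model situations already established in the excerpt, namely Proposition \ref{cauchyriemmanpar} (parameterized operators with block upper-triangular asymptote) and the non-parameterized index formula \eqref{indexfredholmnonparametrezied}, together with the finite-dimensional contribution of the $S^1$-orbit direction $V_{(\Tilde{y},\Tilde{w})}$ and the gradient-flow equation for $z$. First I would observe that $D\overline{\partial}^d_{H,N}(u,z)$ is, up to compact perturbation, a Cauchy--Riemann type operator on the negative half-cylinder whose asymptotic operator at $s\to-\infty$ is precisely the asymptotic operator $D_{(\Tilde{y},\Tilde{w})}$ discussed after Remark \ref{weighted operator}; the upper-triangular structure comes from the fact that the $z$-equation $\dot z - \nabla\widetilde f_N(z)=0$ is decoupled from $u$ at leading order, while the coupling $J_{z(s)}$ and $X_{H_{z(s)}}$ enters only through the off-diagonal block, which is a relatively compact term. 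Thus after trivializing $u^*T\mathbb{R}^{2n}$ along $\mathbb{T}$ and $z^*TS^{2N+1}$ along $(-\infty,0]$, and using the weighting $e^{-ds}$ with $d>0$ below the spectral gap (so that the asymptotic operator becomes invertible on the weighted space), the operator $D\overline{\partial}^d_{H,N}(u,z)$ restricted to the linear (non-orbit) part falls into the class $\mathcal{D}^{m,u}_-(\widetilde\alpha)$ with $m=2N+1$ and $\tau$ the Hessian $\nabla^2\widetilde f_N(\Tilde w)$ acting on $T_{\Tilde w}S^{2N+1}$.

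Next I would compute the index of that linear part using Proposition \ref{cauchyriemmanpar}, which gives Fredholm index $n - \mu(\widetilde s) - \tfrac{m}{2}$ with $\mu(\widetilde s) = \mu_{CZ}(\Psi) + \tfrac12\sign(\tau)$. Here $\mu_{CZ}(\Psi) = \mu_{CZ}(\Tilde y; H_{\Tilde w})$, $m = 2N+1$, and $\sign(\tau) = \sign(\nabla^2\widetilde f_N(\Tilde w)) = \big(\ind(\Tilde w;\widetilde f_N) - (2N+1-\ind(\Tilde w;\widetilde f_N))\big)$ — wait, one must be careful here: $S^{2N+1}$ is $(2N+1)$-dimensional and $\widetilde f_N$ is Morse on it (note $\widetilde f_N$ pulled back from $\mathbb{C}P^N$ actually has a circle of critical points, so strictly $\nabla^2\widetilde f_N(\Tilde w)$ has a one-dimensional kernel along the $S^1$-orbit); this degenerate direction is exactly compensated by the extra orbit coordinate $V_{(\Tilde y,\Tilde w)}$ in the domain. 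So I would package the computation as: the index of $D\overline{\partial}^d_{H,N}(u,z)$ equals (index of the linear half-cylinder operator in class $\mathcal{D}^{\,2N+1,u}_-$) $+ \dim V_{(\Tilde y,\Tilde w)} = \big(n - \mu_{CZ}(\Tilde y;H_{\Tilde w}) - \tfrac12\sign(\nabla^2\widetilde f_N(\Tilde w)) - \tfrac{2N+1}{2}\big) + 1$. Then using $\sign(\nabla^2\widetilde f_N(\Tilde w)) = (2N+1) - 2\ind(\Tilde w;\widetilde f_N) - 1$ (the $-1$ for the orbit kernel direction), a short bookkeeping collapses this to $n - \big(\mu_{CZ}(\Tilde y;H_{\Tilde w}) + \ind(\Tilde w;\widetilde f_N) - N\big) - N = n - \mu(\Tilde y,\Tilde w) - N = n - |S_{(y,w)}|$, using the definitions $\mu(x,v) = \mu_{CZ}(x;H_v) + \ind(v;\widetilde f_N) - N$ and $|S_{(x,v)}| = \mu(x,v)+N$ from Section \ref{sectionfloer}.

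The main obstacle I anticipate is the careful treatment of the $S^1$-orbit direction and the one-dimensional kernel of the Hessian of $\widetilde f_N$ along that orbit: one has to justify that adjoining the finite-dimensional space $V_{(\Tilde y,\Tilde w)}$ to the domain exactly cancels the cokernel (equivalently, the $\sign$-defect) coming from that degenerate direction, so that the net effect on the index is $+1$ rather than $0$ or $+2$. Concretely, I would handle this by a homotopy argument: deform $\widetilde f_N$ near $\Tilde w$ to a genuine Morse function (breaking $S^1$-invariance, but only locally and only for the purpose of the index count), which removes the kernel of the Hessian and removes the need for $V_{(\Tilde y,\Tilde w)}$ simultaneously; Fredholm index is invariant under this homotopy, and in the deformed picture Proposition \ref{cauchyriemmanpar} applies verbatim with $\tau$ hyperbolic. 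Alternatively, and perhaps more cleanly, I would invoke the reduction of the $S^1$-equivariant equations to a continuation equation (as in \cite{BO17} and as already used for transversality in the $S^1$-equivariant Morse complex above), so that the index count matches the dimension formula $\dim\hat{\mathcal{M}}^{S^1,N}_F = \mu(x,v)-\mu(y,w)+1$ proved for the cylinder moduli spaces, specialized to the half-cylinder with the unstable-manifold boundary condition of dimension $\ind(S_{(x_l,v)})+1$; combining with $\ind(S_{(x_l,v)}) = |S_{(x,v)}| - n$ from Remark \ref{nullityindexreds1equi} then yields $\dim\hat{\mathcal{M}}^{S^1,N}_\Theta = \ind(D\overline{\partial}^d_{H,N}(u,z)) + (\ind(S_{(x_l,v)})+1)$ and hence the stated index $n - |S_{(y,w)}|$ for the linearized operator itself. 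The remaining steps — ellipticity, the weighted Sobolev setup, and checking that the off-diagonal coupling terms are relatively compact — are routine and parallel to \cite{AK22}.
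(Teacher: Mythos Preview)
Your overall strategy---reduce to Proposition \ref{cauchyriemmanpar} and then add the orbit direction $V_{(\Tilde y,\Tilde w)}$---matches the paper's, but there are two concrete errors that prevent the computation from closing. First, the asymptote of the $z$-block is $\tau=-\nabla^2\widetilde f_N(\Tilde w)$, not $+\nabla^2\widetilde f_N(\Tilde w)$: the gradient equation is $\partial_s z-\nabla\widetilde f_N(z)=0$, so in the form $\partial_s+T$ one has $T(-\infty)=-\nabla^2\widetilde f_N(\Tilde w)$ (the paper states this explicitly when identifying the diagonal blocks as $\{\nabla_J^2H_{\Tilde w,t}(\Tilde y(t)),-\nabla_z^2\widetilde f_N(\Tilde w)\}$). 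Second, and more seriously, $\tau$ is not hyperbolic---it has a one-dimensional kernel along the $S^1$-orbit---so Proposition \ref{cauchyriemmanpar} does not apply as stated, and this is why your bookkeeping produces a half-integer. You noticed the degeneracy but then only gestured at a homotopy-to-Morse fix or a reduction to continuation.

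The paper handles this cleanly and differently: it conjugates the restriction of $D\overline{\partial}^d_{H,N}(u,z)$ (to the codimension-one subspace without $V_{(\Tilde y,\Tilde w)}$) by $e^{-\frac{d}{p}s}$, which converts the weighted operator into an unweighted operator $\overline L$ whose asymptote is shifted to $D_{(\Tilde y,\Tilde w)}+\tfrac{d}{p}\unit$. Since $d$ is below the spectral gap, this shifted asymptote is hyperbolic, the zero eigenvalue of $\tau$ becomes positive, and $\mu$ increases by exactly $\tfrac12$: $\mu(\Tilde y,\Tilde w;\overline D)=\mu(\Tilde y,\Tilde w)+\tfrac12$. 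Now Proposition \ref{cauchyriemmanpar} applies verbatim with $m=2N+1$ and gives $\ind(\overline L)=n-\mu(\Tilde y,\Tilde w)-\tfrac12-\tfrac{2N+1}{2}=n-\mu(y,w)-N-1$; adding back the one-dimensional orbit space $V_{(\Tilde y,\Tilde w)}$ yields $n-\mu(y,w)-N=n-|S_{(y,w)}|$. The weight-conjugation trick is the missing idea in your argument: it simultaneously makes $\tau$ hyperbolic and supplies the $\tfrac12$ that cancels the half-integer in your count, with no need for an auxiliary Morse deformation.
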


\begin{proof}
     By choosing some $p > 1$ and conjugating the restriction of $ D\overline{\partial}^d_{H,N}(u,z)$ to \[H^1(u^*T\mathbb{R}^{2n}; e^{-ds} \, ds \, dt) \oplus H^1(z^*S^{2N+1}; e^{-ds} \, ds \, dt)\] by $e^{-\frac{d}{p}s}$, we obtain an operator $\overline{L} \in \mathcal{D}^{m,u}_-(\widetilde{\alpha})$, where $\widetilde{\alpha} = (j, \widetilde{s})$ and $\widetilde{s}$ is a perturbation of a block upper triangular matrix by the matrix $\frac{d}{p}\unit$. By our choice of $d > 0$, this is a Fredholm operator.

    Indeed, by definition, the asymptotic operator at $-\infty$ is
    \[ \overline{D}_{(\Tilde{y},\Tilde{w})} = D_{(\Tilde{y},\Tilde{w})} + \frac{d}{p} \unit. \]
    
    Since $d > 0$ is smaller than the spectral gap of $D_{(\Tilde{y},\Tilde{w})}$, the asymptotic operator is non-degenerate with index
    \[ \mu(\Tilde{y},\Tilde{w}; \overline{D}) = \mu(\Tilde{y},\Tilde{w}; D) + \frac{1}{2}. \]
    
    The identity $\mu(\Tilde{y},\Tilde{w}; D) = \mu(\Tilde{y},\Tilde{w})$ holds since the asymptote of the matrix part of the linearized operator $D \overline{\partial}^{F,d}_{H,N}(u,z)$ is loop of upper-triangular matrices, which on the diagonal have loop of symmetric matrices $\{\nabla_J^2 H_{\Tilde{w},t}(\Tilde{y}(t)), -\nabla_z^2 \widetilde{f}_N(\Tilde{w})\}$ and \[\mu_{CZ}(\nabla_J^2 H_{\Tilde{w},t}(\Tilde{y}(t))) + \frac{1}{2} \text{sign}(-\nabla_z^2 \widetilde{f}_N(\Tilde{w})) = \mu_{CZ}(\Tilde{y};H_{\Tilde{w}}) + \ind(\Tilde{w};\widetilde{f}_N) - N = \mu(\Tilde{y},\Tilde{w}).\]  By Proposition \ref{cauchyriemmanpar}, this implies that $\overline{L}$ is Fredholm of index $n - \mu(\Tilde{y},\Tilde{w}) - N - 1 = n - \mu(y,w) - N - 1$.

    Since the restriction of $ D\overline{\partial}^d_{H,N}(u,z)$ to a codimension 1 subspace is conjugate to $\overline{L}$, it follows that $D\overline{\partial}^d_{H,N}(u,z)$ is also Fredholm with
    \[ \ind( D\overline{\partial}^d_{H,N}(u,z)) = \ind(\overline{L}) + 1 = n - \mu(y,w) - N. \]

    Given that $|S_{(y,w)}| = \mu(y,w) + N$, we conclude that the claim holds.
\end{proof}

\begin{proposition}
 Let $(u,z)\in \hat{\mathcal{M}}^{S^1,N}_\Theta(S_{(x_l,v)},S_{(x,v)};H,\widetilde{f}_N,J,g)$. Then the linearized operator $D\overline{\partial}^d_{H,N}(u,z)$ is tranverse to $\{0\}\times T_{(\mathbb{P}_+(u(0,\cdot)),z(0))}\Gamma_{l,N,+}(W^u(S_{(x_l,v)};-V_g)).$    
\end{proposition}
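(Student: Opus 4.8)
The plan is to mirror the proof of Proposition~\ref{transversalitynonparconstant} (and hence of \cite[Proposition 8.3]{AK22}), incorporating the extra gradient-flow direction and the $S^1$-symmetry. First I would invoke the energy bound of Proposition~\ref{energyboundsS1half}: since $\psilH(x_l,v)=\PhiH(x,v)$ by Remark~\ref{nullityindexreds1equi}, that proposition forces $\hat{\mathcal{M}}^{S^1,N}_\Theta(S_{(x_l,v)},S_{(x,v)};H,\widetilde{f}_N,J,g)$ to consist of the single $S^1$-orbit of constant maps, so after fixing a representative $(x,v)\in S_{(x,v)}$ I may assume $(u(s,t),z(s))=(x(t),v)$ for all $s$. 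Next I would compute $D\overline{\partial}^d_{H,N}(u,z)$ at this constant solution. Because the equation $\dot z-\nabla\widetilde{f}_N(z)=0$ does not involve $u$, because $\partial_t x-X_{H_v}(x)=0$ (as $x$ is a $1$-periodic orbit of $H_v$), and because $J_z\equiv J_0$ on $[-1,0]\times\mathbb{T}\times S^{2N+1}$, the linearization is block upper-triangular with respect to the splitting of the domain into the loop-variation $\xi$, the parameter-variation $\zeta$ and the $S^1$-generator line $V_{(x,v)}$: the $\zeta$-component satisfies the decoupled linearized gradient equation $\dot\zeta=\nabla_z^2\widetilde{f}_N(v)\zeta$, while the $\xi$-component satisfies a linearized Floer equation $\partial_s\xi=\nabla^2\Phi_{H_v}(x)\xi+B\zeta$ with a source term coming from the $z$-derivative of $X_{H_z}$ (the regularity of this linearization being handled as in Lemma~\ref{mixedderivativesham}), and both $\xi,\zeta$ decay exponentially as $s\to-\infty$ thanks to the weight $d$ of Remark~\ref{weighted operator} and the non-degeneracy of $S_{(x,v)}$.

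The core step is to show that if $(\xi,\zeta,c)$ lies in the domain and
\[
D\overline{\partial}^d_{H,N}(u,z)(\xi,\zeta,c)\in\{0\}\times\{0\}\times T_{(\mathbb{P}_+(x),v)}\Gamma_{l,N,+}(W^u(S_{(x_l,v)};-V_g)),
\]
then $(\xi,\zeta,c)$ is tangent to the $S^1$-orbit through $(x,v)$. For $\zeta$ this is finite-dimensional: $\zeta$ solves the homogeneous linearized gradient equation of $\widetilde{f}_N$, decays at $-\infty$, and has $\zeta(0)$ constrained to the $z$-part of $T\Gamma_{l,N,+}(W^u(S_{(x_l,v)};-V_g))$, so the Morse--Bott geometry of $\widetilde{f}_N$ (one-dimensional critical orbits) pins $\zeta$ down to the infinitesimal $S^1$-direction. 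Substituting this back renders the source term $B\zeta$ tangential, and for $\xi$ I would run the maximum-principle argument of \cite[Proposition 8.3]{AK22}: the function $\varphi(s)=\|\xi(s,\cdot)\|_{L^2}^2$, together with the constraint that $\mathbb{P}_+(\xi(0,\cdot))$ lies in $T\Gamma_{l,N,+}(W^u(S_{(x_l,v)};-V_g))$ and the uniform quadratic convexity of $H_v$ (so that the reduced dual functional $\psi^l_{H^*_v}$ controls the $\mathbb{H}^l$-directions, cf.\ Proposition~\ref{sadlepointreductionS1}), satisfies a differential inequality forcing $\varphi\equiv0$; hence $\xi$ too is tangent to the $S^1$-orbit.

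Finally I would close the argument by an index count. The step above identifies $\ker D\overline{\partial}^d_{H,N}(u,z)$ with the one-dimensional $S^1$-orbit direction (on which the operator vanishes by $S^1$-invariance) and shows $\operatorname{im}D\overline{\partial}^d_{H,N}(u,z)\cap\big(\{0\}\times\{0\}\times T\Gamma_{l,N,+}(W^u(S_{(x_l,v)};-V_g))\big)=\{0\}$. Combining $\dim\ker=1$ with the Fredholm index $n-|S_{(x,v)}|$ established in the preceding proposition gives $\dim\coker D\overline{\partial}^d_{H,N}(u,z)=1+|S_{(x,v)}|-n$, which equals $\dim T\Gamma_{l,N,+}(W^u(S_{(x_l,v)};-V_g))=\ind(S_{(x_l,v)})+1=|S_{(x,v)}|-n+1$ by Remark~\ref{nullityindexreds1equi} and Proposition~\ref{vectorfieldvg}. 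Since the induced map $T\Gamma_{l,N,+}(W^u(S_{(x_l,v)};-V_g))\to\coker D\overline{\partial}^d_{H,N}(u,z)$ is then an injection between spaces of equal dimension, it is an isomorphism, which is precisely the asserted transversality.

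The step I expect to be the main obstacle is handling the coupling term $B\zeta$: one must first rigidify $\zeta$ via the Morse--Bott structure of $\widetilde{f}_N$ before the scalar maximum-principle argument for $\varphi$ can be applied to $\xi$, and one must track the $S^1$-orbit direction carefully so that the kernel, cokernel and the dimension of the unstable manifold match; the remaining estimates are those of \cite{AK22}.
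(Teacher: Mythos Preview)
Your overall strategy mirrors the paper's: reduce to the constant half-cylinders via Proposition~\ref{energyboundsS1half}, linearize, handle the $z$-variation first, then run the $L^2$-maximum-principle argument of \cite[Proposition~8.3]{AK22} for the loop-variation, and close with an index count. The paper, however, carries this out not for the full operator $D\overline{\partial}^d_{H,N}(u,z)$ but for its restriction $L$ to the weighted summand $H^1(u^*T\mathbb{R}^{2n};e^{-ds})\oplus H^1(z^*TS^{2N+1};e^{-ds})$, i.e.\ \emph{without} the extra line $V_{(x,v)}$. On that domain $\zeta$ genuinely decays, so the stable/unstable intersection forces $\zeta\equiv 0$ (not merely $\zeta\in\mathbb{R}\cdot X^{S^1}_v$); the $\xi$-equation is then homogeneous and the non-parameterized argument gives $\xi\equiv 0$. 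Hence $L^{-1}(\{0\}\times W)=0$, so $\ker L=0$ and $\operatorname{im}L\cap W=0$; since $\ind L=n-|S_{(x,v)}|-1$ one gets $\dim\coker L=|S_{(x,v)}|-n+1=\dim W$, and transversality of $L$ (hence of the full operator) follows.

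Working with the full operator, your closing count contains a genuine error. The $S^1$-generator $(-\dot x,X^{S^1}_v)$ is \emph{not} in $\ker D\overline{\partial}^d_{H,N}(u,z)$: the PDE/ODE components vanish by equivariance, but the boundary component $(\mathbb{P}_+(-\dot x),X^{S^1}_v)$ is a nonzero element of $W$, not $0$. Thus if your core step succeeds and $(D\overline{\partial}^d_{H,N})^{-1}(\{0\}\times W)$ is the $S^1$-line, then in fact $\ker D\overline{\partial}^d_{H,N}(u,z)=0$ and $\operatorname{im}D\overline{\partial}^d_{H,N}(u,z)\cap W$ is one-dimensional, contradicting both of your stated conclusions. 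The count can be salvaged (now $\dim\coker=|S_{(x,v)}|-n$ and the map $W\to\coker$ has one-dimensional kernel, hence is surjective), but as written it does not close. Relatedly, once you allow $\zeta=cX^{S^1}_v$ with $c\neq 0$, the $\xi$-equation is inhomogeneous and the maximum-principle must be applied to $\xi'=\xi+c\dot x$ rather than to $\xi$, with the boundary condition for $\xi'$ checked against the fiber of $W$ over $\zeta(0)=0$; the phrase ``renders the source term $B\zeta$ tangential'' does not by itself justify this reduction. Restricting to $L$ as the paper does sidesteps both issues.
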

\begin{proof}

In Proposition \ref{energyboundsS1half}, it was shown that $\hat{\mathcal{M}}^{S^1,N}_\Theta(S_{(x_l,v)},S_{(x,v)};H,\widetilde{f}_N,J,g)$ consists of a unique orbit of constant half-cylinders. Let $(u(s,\cdot),z(s))=(\Tilde{x},\Tilde{v})\in S_{(x,v)}$. As argued before, we can understand $D\overline{\partial}^d_{H,N}(u,z)$ as  
\[D\overline{\partial}^d_{H,N}(u,z): H^1(u^*T\mathbb{R}^{2n};e^{-ds}dsdt)\oplus H^1(z^*S^{2N+1};e^{-ds}dsdt)\oplus V_{(\Tilde{x},\Tilde{v})},\] 
\[\to L^2(u^*T\mathbb{R}^{2n};e^{-ds}dsdt)\oplus L^2(z^*S^{2N+1};e^{-ds}dsdt)\oplus \mathbb{H}_+\oplus T_{z(0)}S^{2N+1}.\]

Let $L$ denote the restriction of this operator to \[H^1(u^*T\mathbb{R}^{2n};e^{-ds}dsdt)\oplus H^1((z^*S^{2N+1};e^{-ds}dsdt).\]

We will show that $L$ is transverse to $\{0\}\times T_{(\mathbb{P}_+(u(0,\cdot)),z(0))}\Gamma_{l,N,+}(W^u(S_{(x_l,v)};-V_g)).$

From analogous arguments as in Proposition \ref{transversalitynonparconstant}, we have that
\begin{equation}\label{intersectiontransversallityS1}
    \text{im}(L)\cap \left(\{0\}\times T_{(\mathbb{P}_+(u(0,\cdot)),z(0))}\Gamma_{l,N,+}(W^u(S_{(x_l,v)};-V_g))\right)=0
\end{equation}
implies that $L$ transverse to $\{0\}\times T_{(\mathbb{P}_+(u(0,\cdot)),z(0))}\Gamma_{l,N,+}(W^u(S_{(x_l,v)};-V_g)).$

Let's show that \eqref{intersectiontransversallityS1} holds.

The family Floer system of equations can be written in the form 
\begin{equation}\label{floerconstants1equation}
        \begin{pmatrix}
        \partial_s u\\
        \partial_s z
    \end{pmatrix}=\begin{pmatrix}
        \nabla_{L^2}\Phi_{H_z}(u)\\
        \nabla_z \widetilde{f}_N(z)
    \end{pmatrix}.
\end{equation}

Since \[(u(s,\cdot),z(s))=(\Tilde{x},\Tilde{v})\in S_{(x,v)},\] by linearizing the equation \eqref{floerconstants1equation}, we get 
\begin{equation}\label{linearizedfloerconstants1equation}
         \begin{pmatrix}
        \partial_s \eta\\
        \partial_s \nu
    \end{pmatrix}=\begin{pmatrix}
        \nabla^2_{L^2}\Phi_{H_{\Tilde{v}}}(\Tilde{x}) & \nabla_z \nabla_{L^2}\Phi_{H_{\Tilde{v}}}(\Tilde{x})\\
        0 & \nabla^2_z \widetilde{f}_N(\Tilde{v})
    \end{pmatrix}\begin{pmatrix}
        \eta\\
        \nu
    \end{pmatrix}.
\end{equation}

Let \[(\eta,\nu)\in \text{im}(L)\cap \left(\{0\}\times T_{(\mathbb{P}_+(u(0,\cdot)),z(0))}\Gamma_{l,N,+}(W^u(S_{(x_l,v)};-V_g))\right).\]

From \eqref{linearizedfloerconstants1equation} we have that \[\partial_s \nu = \nabla^2_z \widetilde{f}_N(\Tilde{v}) \nu,\]

which implies that 
\begin{equation}\label{stablenegativefn}
    \nu(0)\in T_{\Tilde{v}}W^{u}(S_{\Tilde{v}};\nabla \widetilde{f_N})=T_{v'}W^{s}(S_{\Tilde{v}};-\nabla \widetilde{f_N}).
\end{equation}

since $\nu \in H^1((-\infty,0))$. On the other hand, we have that  \[(\eta(0,\cdot),\nu(0))=(\eta_0,\nu(0))+\eta_1+\eta_2\] where 
       $(\eta_0,\nu(0))\in T_{(\mathbb{P}_+(\Tilde{x},\Tilde{v})}\Gamma_{l,N,+}(W^u(S_{(x_l,v)};-V_g))$, $\eta_1\in \mathbb{R}^{2n}$, and $\eta_2\in \mathbb{H}^-_{1/2}$.

Since $V_g=(\nabla_{g_z} \psi^l_H,\nabla \widetilde{f}_N)$, this implies that 
\begin{equation}\label{unstablenegativefn}
    \nu(0)\in T_{\Tilde{v}}W^u(S_{\Tilde{v}},-\nabla \widetilde{f}_N).
\end{equation}

From \eqref{stablenegativefn} and \eqref{unstablenegativefn} it follows that \[\nu(0)\in \mathbb{R}\cdot X^{S^1}_{\Tilde{v}},\]

where $X^{S^1}_{\Tilde{v}}$ is the infinitesimal generator of the $S^1$-action on $T_{\Tilde{v}}S^{2N+1}$. Since it holds \[\ker(\nabla^2_z \widetilde{f}_N(\Tilde{v})) = \mathbb{R}\cdot X^{S^1}_{\Tilde{v}},\]

we conclude, from the equation \[\partial_s \nu = \nabla^2_z \widetilde{f}_N(\Tilde{v}) \nu,\]

and uniqueness of solutions of ODEs that $\nu(s)=\nu(0)\in \mathbb{R}\cdot X^{S^1}_{\Tilde{v}}$ for $s\in (-\infty,0]$. Since $\nu \in H^1$, the previous conclusion implies that $\nu=0$. 

Now, the first equation of the system \eqref{linearizedfloerconstants1equation} reduces to \[\partial_s \eta = \nabla^2_{L^2}\Phi_{H_{\Tilde{v}}}(\Tilde{x}) \eta.\]

Since $\Phi_{H_{\Tilde{v}}}$ is non-degenerate, by the same arguments as in \cite[Proposition 8.3]{AK22}, we show that \[\varphi(s) = \|\eta(s)\|^2_{L^2}, \ \ \ s\in (-\infty,0]\]

is identically equal to zero, which implies that $\eta=0$. This now implies that \eqref{intersectiontransversallityS1} holds, which, by the discussion at the beginning of the proof, gives us that $L$ is transverse to $\{0\}\times T_{(\mathbb{P}_+(u(0,\cdot)),z(0))}\Gamma_{l,N,+}(W^u(S_{(x_l,v)};-V_g))$. Because $L$ is a restriction of $D\overline{\partial}^d_{H,N}(u,z)$, the same conclusion holds for $D\overline{\partial}^d_{H,N}(u,z)$.  
\end{proof}

The dimension of the unstable manifold $W^u(S_{(x_l,v)},-V_g)$ is $\ind(S_{(x_l,v)}) + 1$ due to claim (4) of Proposition \ref{vectorfieldvg}. Therefore, from the previous two propositions and Remark \ref{weighted operator}, we conclude that for generic $J$ and $g$ we have that $\hat{\mathcal{M}}^{S^1,N}_\Theta(S_{(x_l,v)},S_{(y,w)};H,\widetilde{f}_N,J,g)$ is a smooth manifold and it holds 
\[\dim\hat{\mathcal{M}}^{S^1,N}_\Theta(S_{(x_l,v)},S_{(y,w)};H,\widetilde{f}_N,J,g)=\ind(S_{(x_l,v)}) + n - |S_{(y,w)}| + 1.\]

Since the $S^1$-action on $\hat{\mathcal{M}}^{S^1,N}_\Theta(S_{(x_l,v)},S_{(y,w)};H,\widetilde{f}_N,J,g)$ is free, we have that 
\[\mathcal{M}^{S^1,N}_\Theta(S_{(x_l,v)},S_{(y,w)};H,\widetilde{f}_N,J,g)\defeq\hat{\mathcal{M}}^{S^1,N}_\Theta(S_{(x_l,v)},S_{(y,w)};H,\widetilde{f}_N,J,g)/\mathbb{T},\]
is a smooth manifold with 
\begin{equation}\label{dimensionhalfcyls1equi}
\dim\mathcal{M}^{S^1,N}_\Theta(S_{(x_l,v)},S_{(y,w)};H,\widetilde{f}_N,J,g) = \ind(S_{(x_l,v)}) + n - |S_{(y,w)}|.
\end{equation}

\section{Compactness of half-cylinder moduli space}

\large\textbf{Non-parameterized case}
\normalsize

\cite[Proposition 9.1]{AK22} and \cite[Proposition 9.2]{AK22} imply the following result.

\begin{proposition}\label{compactnessnonpar}
    $\mathcal{M}_\Theta(x_l,y;H,J,g)$ is bounded in the $L^\infty$-norm and pre-compact in $C^\infty_{loc}((-\infty,0]\times \mathbb{T},\mathbb{R}^{2n})$.
\end{proposition}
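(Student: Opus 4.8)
The plan is to reduce the statement to the two ingredients already at hand: the uniform energy bound of Proposition~\ref{energyboundsnonparhalf} and the interior $L^\infty$-estimate obtained exactly as in the proof of Proposition~\ref{compactnessfloer} (its non-parameterized, half-cylinder incarnation being \cite[Proposition~9.1]{AK22}), together with the regularity statement of Proposition~\ref{regularityhalfnonpar} for the behavior at the finite end $s=0$.

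\textbf{Uniform $L^\infty$-bound.} Let $u\in\mathcal{M}_\Theta(x_l,y;H,J,g)$. By Proposition~\ref{energyboundsnonparhalf} we have $E(u)=\int_{(-\infty,0]\times\mathbb{T}}|\partial_su|_J^2\,ds\,dt\le \psilH(x_l)-\PhiH(y)=:E$, a constant independent of $u$. Since $H\in\mathcal{H}_\Theta$ is non-resonant at infinity and has bounded second derivative (hence $X_H$ has linear growth), the Chebyshev-interval argument used in the proof of Proposition~\ref{compactnessfloer} applies on $(-\infty,-1]$: for any interval of length $L(E)$ one finds a slice $s_0$ with $\|\partial_t u(s_0,\cdot)-X_{H_\cdot}(u(s_0,\cdot))\|_{L^2(\mathbb{T})}<\varepsilon$, hence $\|u(s_0,\cdot)\|_{L^2(\mathbb{T})}\le r$, and linear growth of $X_H$ then upgrades this to a uniform sup-bound on $(-\infty,-1]\times\mathbb{T}$. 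On the collar $[-1,0]\times\mathbb{T}$ one has $J\equiv J_0$, and the boundary condition forces $u(0,\cdot)=v+w$ with $(v,\mathbb{P}_+v)\in\Gamma_{l,+}(W^u(x_l;-\nabla_g\psilH))$ and $w\in\mathbb{H}_-$; by Remark~\ref{psnonparameterized} the pair $(\psilH,g)$ satisfies Palais--Smale and $\psilH$ has finitely many critical points, so $W^u(x_l;-\nabla_g\psilH)$ has compact closure, and $\Gamma_{l,+}$ sends it into a compact subset of smooth loops, which together with the energy bound controls $u$ up to $s=0$. Thus $\sup_{(-\infty,0]\times\mathbb{T}}|u|\le M(E)$ uniformly.

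\textbf{Precompactness.} With the uniform $L^\infty$-bound in place, the coefficients of the Floer equation \eqref{floernonpareq} restricted to the half-cylinder are uniformly bounded; since $\pi_2(\mathbb{R}^{2n})=0$ and the boundary condition is of totally real type (no Lagrangian disk bubbling), the standard bubbling-off and elliptic-bootstrap argument gives uniform $C^k$-bounds on every compact subset of $(-\infty,0]\times\mathbb{T}$ — the estimates being propagated up to $\{0\}\times\mathbb{T}$ exactly as in the proof of Proposition~\ref{regularityhalfnonpar} (i.e.\ \cite[Proposition~7.2]{AK22}). Arzel\`a--Ascoli then yields precompactness of $\mathcal{M}_\Theta(x_l,y;H,J,g)$ in $C^\infty_{loc}((-\infty,0]\times\mathbb{T},\mathbb{R}^{2n})$. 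This is precisely the combination of \cite[Proposition~9.1]{AK22} and \cite[Proposition~9.2]{AK22}.

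\textbf{Main obstacle.} Away from the boundary this is classical interior elliptic theory, so the delicate point is the finite end $s=0$: one must (a) know that the unstable set $W^u(x_l;-\nabla_g\psilH)$ is precompact in a space of smooth loops, which rests on the Palais--Smale property from Remark~\ref{psnonparameterized} and the smoothness/bilipschitz properties of $\Gamma_l$, and (b) propagate the elliptic estimates up to the boundary through the nonlocal constraint $\mathbb{P}_+(u(0,\cdot))\in\Gamma_{l,+}(W^u(x_l;-\nabla_g\psilH))$, which is handled as in \cite[Proposition~7.2]{AK22} and \cite[Proposition~9.1]{AK22}. Our change of sign conventions relative to \cite{AK22} affects none of these arguments.
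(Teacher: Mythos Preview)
Your approach is the same one the paper invokes (it simply cites \cite[Propositions~9.1 and~9.2]{AK22}), and the outline is correct: energy bound $\Rightarrow$ interior $L^\infty$-estimate via the Chebyshev/non-resonance argument on $(-\infty,\sigma]$, then boundary control on $[-1,0]\times\mathbb{T}$ via the trace decomposition, then elliptic bootstrap.

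There is, however, one imprecision in your boundary step. The assertion ``$(\psilH,g)$ satisfies Palais--Smale and $\psilH$ has finitely many critical points, so $W^u(x_l;-\nabla_g\psilH)$ has compact closure'' is not correct as stated: Palais--Smale together with finitely many critical points does \emph{not} force the full unstable manifold to be precompact (think of a single saddle on $\mathbb{R}^n$). What is true --- and is exactly what \cite[Lemma~9.2]{AK22} and the $S^1$-equivariant lemma in this paper prove --- is that the \emph{relevant} part of the unstable manifold is precompact. Writing $u(0,\cdot)=\widetilde u_1+\widetilde u_2$ with $\widetilde u_1\in\Gamma_l(W^u(x_l;-\nabla_g\psilH))$ and $\widetilde u_2\in\mathbb{H}_-\oplus\mathbb{H}_0$ (note: $\mathbb{H}_-\oplus\mathbb{H}_0$, not just $\mathbb{H}_-$), the chain of inequalities in the proof of Proposition~\ref{energyboundsnonparhalf} gives $\PsiH(\widetilde u_1)\ge\PhiH(y)$, so $\widetilde u_1$ lies in $\{\PsiH\ge\PhiH(y)\}\cap\Gamma_l(W^u(x_l;-\nabla_g\psilH))$, and \emph{this} set is precompact in $C^\infty(\mathbb{T},\mathbb{R}^{2n})$. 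The same inequalities also give $\|\mathbb{P}_-\widetilde u_2\|_{H^{1/2}}^2\le 2(\psilH(x_l)-\PhiH(y))$, and quadratic convexity of $H$ then bounds the constant part $\mathbb{P}_0\widetilde u_2$. With this correction your argument goes through and matches \cite{AK22}.
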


The only case when $\mathcal{M}_\Theta(x_l,y;H,J,g)$ loses compactness is when trajectories experience potential breaking.

Let $J$ and $g$ be chosen generically.

\begin{proposition}\label{chainmapnonpar}
    Let $\ind(x_l)=\mu_{CZ}(z)-n+1$. Then $\mathcal{M}_\Theta(x_l,z;H,J,g)$ is a one-dimensional manifold, and it holds
    \begin{align}
        \nonumber\partial \mathcal{M}_\Theta(x_l,z;H,J,g)=&\bigcup_{\ind(y_l)=\ind(x_l)-1}\mathcal{M}_M(x_l,y_l;\psi_{H},g)\times \mathcal{M}_\Theta(y_l,z;H,J,g)\\
        \nonumber&\cup\bigcup_{\mu_{CZ}(y)=\mu_{CZ}(z)+1}\mathcal{M}_\Theta(x_l,y;H,J,g) \times \mathcal{M}_F(y,z;H,J)
    \end{align}
    where all the manifolds on the right-hand side are $0$-dimensional.
\end{proposition}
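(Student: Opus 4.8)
The plan is to run the standard compactness--gluing package for hybrid Morse--Floer moduli spaces, in the style of \cite{AK22} (compare \cite{AS06}), and to let the index bookkeeping single out exactly the two breaking phenomena in the statement. First I would record the dimension: by \eqref{dimensionhalfcylnonpar} and the hypothesis $\ind(x_l)=\mu_{CZ}(z)-n+1$ the expected dimension of $\mathcal{M}_\Theta(x_l,z;H,J,g)$ is $\ind(x_l)+n-\mu_{CZ}(z)=1$, and for generic $(J,g)$ transversality --- using Proposition \ref{cauchyriemannonp} together with \eqref{indexfredholmnonparametrezied}, and the fact that the constant half-cylinders are automatically regular by Proposition \ref{transversalitynonparconstant} --- makes it a genuine smooth $1$-manifold; the same genericity makes every lower stratum below a transversally cut out, hence $0$-dimensional, manifold, which is finite by the compactness input.

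Next I would analyze compactness. Given a sequence $(u_k)\subset\mathcal{M}_\Theta(x_l,z;H,J,g)$, Proposition \ref{compactnessnonpar} gives an $L^\infty$-bound and a $C^\infty_{loc}$-convergent subsequence, with uniform energy from Proposition \ref{energyboundsnonparhalf}; since $\mathbb{R}^{2n}$ is exact and carries no holomorphic spheres, no bubbling occurs. The limit can fail to remain in $\mathcal{M}_\Theta(x_l,z)$ only through two mechanisms. One is \emph{Floer breaking at $-\infty$}: energy escapes to the negative end and, after translation, one splits off a nonconstant Floer cylinder; the finiteness of the total energy and the discreteness of the action spectrum of $H$ bound the number of levels, and the index count with transversality forces exactly one broken Floer trajectory, with a $1$-periodic orbit $y$ satisfying $\mu_{CZ}(y)=\mu_{CZ}(z)+1$ and with the surviving half-cylinder in the $0$-dimensional space $\mathcal{M}_\Theta(x_l,y;H,J,g)$. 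The other is \emph{Morse breaking at the boundary $s=0$}: the incidence point $\mathbb{P}_+(u_k(0,\cdot))\in\Gamma_{l,+}(W^u(x_l;-\nabla_g\psilH))$ converges to a point of $\Gamma_{l,+}(\overline{W^u(x_l)})$ whose defining negative gradient trajectory of $\psilH$ has broken; the Palais--Smale property from Remark \ref{psnonparameterized} together with the Morse--Smale assumption on $(\psilH,g)$ compactifies the unstable manifold by broken trajectories, and the index count forces a single intermediate Morse trajectory in $\mathcal{M}_M(x_l,y_l;\psilH,g)$ with $\ind(y_l)=\ind(x_l)-1$, the surviving half-cylinder lying in the $0$-dimensional $\mathcal{M}_\Theta(y_l,z;H,J,g)$. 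Since only codimension $1$ is available at the boundary of a $1$-manifold, the two mechanisms cannot occur at once and no deeper breaking is possible.

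Then I would carry out the gluing step: conversely, each once-broken configuration in either family lies in the closure of $\mathcal{M}_\Theta(x_l,z;H,J,g)$. A pre-gluing construction followed by a Newton--Picard iteration --- using the linear Fredholm theory of Proposition \ref{cauchyriemannonp} and the surjectivity of the linearized operators at the (transversally cut out) broken pieces --- produces a half-open arc in $\mathcal{M}_\Theta(x_l,z)$ converging to the given configuration, and each such configuration is the endpoint of a unique such arc. Together with compactness this exhibits $\overline{\mathcal{M}}_\Theta(x_l,z;H,J,g)$ as a compact $1$-manifold whose boundary is precisely the stated union, every factor being $0$-dimensional.

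The hard part will be the hybrid behaviour at the end $s=0$: one must glue a Morse negative-gradient segment of the reduced functional $\psilH$ onto the half-cylinder through the incidence condition $\mathbb{P}_+(u(0,\cdot))\in\Gamma_{l,+}(W^u(x_l;-\nabla_g\psilH))$, which requires the saddle-point reduction to be compatible with both the limiting and the gluing process --- here the smoothness and bi-Lipschitz properties of $\Gamma_l$ and the Palais--Smale property of $\psilH$ are what make the argument go through, exactly as in \cite{AK22}. Once those structural facts are in hand, the remaining analysis (Floer compactness at $-\infty$, exponential convergence to the asymptotic limits, the usual gluing estimates) is routine, so the proof reduces to the dimension count above plus an appeal to the methods of \cite{AK22}.
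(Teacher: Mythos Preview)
Your proposal is correct and follows essentially the same compactness--gluing strategy as the paper. The one presentational difference worth noting is that the paper first \emph{reformulates} $\mathcal{M}_\Theta(x_l,z;H,J,g)$ as a space of coupled pairs $(\gamma,u)$, with $\gamma:(-\infty,0]\to\mathbb{H}_l$ a negative gradient half-line of $\psilH$ from $x_l$ and $u:(-\infty,0]\times\mathbb{T}\to\mathbb{R}^{2n}$ a Floer half-cylinder from $z$, linked by the boundary condition $u(0,\cdot)=\Gamma_{l,+}(\gamma(0))+\omega$ with $\omega\in\mathbb{H}_-\oplus\mathbb{H}_0$; this makes both degenerations look like standard asymptotic breaking (on the $\gamma$-side and the $u$-side respectively) and lets the paper appeal directly to the existing hybrid analysis in \cite{Dju17}. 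Your version instead treats the Morse breaking as a degeneration of the boundary submanifold $\Gamma_{l,+}(W^u(x_l))$ to a lower stratum of its compactification --- an equivalent viewpoint, and the one that is closer in spirit to \cite{AS06}.
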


\begin{proof}
    Alternatively, we can understand the space ${M}_\Theta(x_l,z;H,J,g)$ analytically as the space of pairs $(\gamma,u)$ where $\gamma:(-\infty,0] \to \mathbb{H}_l$ and $u:(-\infty,0]\times \mathbb{T} \to \mathbb{R}^{2n}$ are smooth maps satisfying the equations
    \begin{align}
        \nonumber& \dot{\gamma}(s)=-\nabla_g \psilH(\gamma(s))\\
        \nonumber& \partial_su+J(s,t,u(s,t))(\partial_tu-X_{H_t}(u))=0
    \end{align}
    asymptotic conditions
     \begin{align}
        \nonumber& \lim\limits_{s\to -\infty}\gamma(s)=x_l\\
        \nonumber& \lim\limits_{s\to -\infty}u(s,\cdot)=z
    \end{align} 
    and boundary condition
    \[u(0,\cdot) = \Gamma_{l,+}(\gamma(0)) + \omega, \ \ \ \omega \in \mathbb{H}_- \oplus \mathbb{H}_0.\]

   Since we have already proven compactness and dimension results, this now reduces to standard compactness and gluing arguments as done, for instance, for conormal bundles in \cite{Dju17}.  
\end{proof}

\large\textbf{$S^1$-equivariant case} 
\normalsize

\begin{lemma}
    Let $(u,z)\in \hat{\mathcal{M}}^{S^1,N}_\Theta(S_{(x_l,v)},S_{(y,w)};H,\widetilde{f}_N,J,g)$ and \[(u(0),z(0))=(u_0+u_1+u_2,z(0))\] where $(u_0,z(0))\in \Gamma_{l,N}(W^u(S_{(x_l,v)};-V_g))$ and $u_1\in \mathbb{H}_0$ and $u_2\in \mathbb{H}_-$. Then it holds:
    \begin{enumerate}
        \item $u_0$ belongs to a compact set of $C^\infty(\mathbb{T},\mathbb{R}^{2n})$ which depends only on $S_{(x_l,v)}$ and $S_{(y,w)}$.
        \item $\|u_2\|^2_{1/2}\leq 2(\psilH(x_l,v)-\PhiH(y,w))$.
        \item $u_1$ belongs to a compact subset of $\mathbb{R}^{2n}$ which depends only on $S_{(x_l,v)}$ and $S_{(y,w)}$.
    \end{enumerate}
\end{lemma}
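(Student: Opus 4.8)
The plan is to adapt the proof of the energy bounds proposition (Proposition~\ref{energyboundsS1half}) and its non-parameterized analogue, decomposing $u(0,\cdot)$ along the orthogonal splitting $\Honehalf=\mathbb{H}_-\oplus\mathbb{H}_0\oplus\mathbb{H}_+$ and using the reduction map $Y_{l,N}$ together with the inequality \eqref{phipsiinequality}.

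First I would prove claim (2). Rearranging the decomposition $(u(0),z(0))=(u_0+u_1+u_2,z(0))$ with $(u_0,z(0))\in\Gamma_{l,N}(W^u(S_{(x_l,v)};-V_g))$, one can write $u(0,\cdot)=\Tilde{u}_1+\Tilde{u}_2$ with $(\Tilde{u}_1,z(0))\in\Gamma_{l,N}(W^u(S_{(x_l,v)};-V_g))$ and $\Tilde{u}_2\in\mathbb{H}_-\oplus\mathbb{H}_0$, exactly as in the proof of Proposition~\ref{energyboundsS1half}. Since $\mathbb{P}_-\Tilde{u}_2=u_2$ (the $\mathbb{H}_-$-component is unaffected by absorbing the $\mathbb{H}_0$-component), inequality \eqref{phipsiinequality} gives
\[
\PhiH(u(0,\cdot),z(0))\leq\PsiH(\Tilde{u}_1,z(0))-\tfrac12\|u_2\|^2_{1/2}\leq\psilH(x_l,v)-\tfrac12\|u_2\|^2_{1/2},
\]
where the second step uses that $(\Tilde{u}_1,z(0))$ lies in the unstable set (so $\PsiH$ there is at most its value at $S_{(x_0,v)}$, i.e. $\psilH(x_l,v)$, by Remark~\ref{nullityindexreds1equi}). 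Combining this with $\PhiH(y,w)\leq\PhiH(u(0,\cdot),z(0))$ — which holds because $\PhiH$ is non-decreasing along the half-cylinder by the same computation as in Lemma~\ref{filtrations1equfloer}(1) and the monotonicity of $H$ along $\nabla\widetilde f_N$ — yields claim (2).

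Next, claim (1). The point $(u_0,z(0))$ lies in $\Gamma_{l,N}(W^u(S_{(x_l,v)};-V_g))$, and $z(0)$ ranges over the compact set $W^u(S_v;-\nabla\widetilde f_N)\subset S^{2N+1}$. On the $\mathbb{H}_l$-factor, the unstable manifold $W^u(S_{(x_l,v)};-V_g)$ is finite-dimensional; I would use that $\psilH$ is non-increasing along $-V_g$ (claim (5) of Proposition~\ref{vectorfieldvg}) to see that the relevant part of $W^u(S_{(x_l,v)};-V_g)$ on which $u_0$'s preimage lands has $\psilH$-value bounded above by $\psilH(x_l,v)$ and below by $\PhiH(y,w)$ (the latter via claim (2): if $u_0$ drifts far, the action drops below the permissible range). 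Since $\psilH$ is proper on sublevel sets once restricted appropriately (its associated vector field satisfies the Palais--Smale-type compactness of Proposition~\ref{vectorfieldvg}(1)), the preimage in $\mathbb{H}_l\times S^{2N+1}$ lies in a compact set depending only on $\psilH(x_l,v)$ and $\PhiH(y,w)$, hence only on $S_{(x_l,v)}$ and $S_{(y,w)}$. Applying $\Gamma_{l,N,+}$ — which by Proposition~\ref{sadlepointreductionS1}(2) is smooth with values in $C^\infty(\mathbb{T},\mathbb{R}^{2n})$ and takes compact sets to compact subsets of $C^\infty(\mathbb{T},\mathbb{R}^{2n})$ — gives claim (1).

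Finally, claim (3) follows by writing $u_1=\mathbb{P}_0 u(0,\cdot)=\mathbb{P}_0\Tilde{u}_2$ (since $u_0\in\mathbb{H}_+$ after projection, actually $u_0$ here is the full graph element so $\mathbb{P}_0 u_0$ is the constant part of the graph) — more precisely $u_1=\mathbb{P}_0(u(0,\cdot)-u_0-u_2)$, and all three terms on the right are now controlled: $u_0$ by claim (1), $u_2$ by claim (2), and $u(0,\cdot)$ itself by the $L^\infty$-bound on the moduli space (Proposition~\ref{energyboundsS1half} combined with Proposition~\ref{compactnessfloer}), all depending only on $S_{(x_l,v)}$ and $S_{(y,w)}$; hence $u_1$ lies in a compact subset of $\mathbb{R}^{2n}$. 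I expect the main obstacle to be claim (1): making precise that the relevant piece of the unstable manifold $W^u(S_{(x_l,v)};-V_g)$ over which $u_0$ varies is genuinely precompact, which requires combining the action sandwich $\PhiH(y,w)\leq\psilH(u_0\text{'s parameter})\leq\psilH(x_l,v)$ with the Palais--Smale compactness of $V_g$ and the fact that gradient flow cannot escape to infinity within bounded action — this is where one must be careful that the bound depends only on the two orbits and not on the individual solution $(u,z)$.
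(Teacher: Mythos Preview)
Your argument for claims (1) and (2) is correct and essentially matches the paper: both follow from the chain
\[
\PhiH(y,w)\leq\PhiH(u(0,\cdot),z(0))\leq\PsiH(u_0,z(0))-\tfrac12\|u_2\|^2_{1/2}\leq\psilH(x_l,v)-\tfrac12\|u_2\|^2_{1/2},
\]
where the middle step is \eqref{phipsiinequality}. The first inequality places $(u_0,z(0))$ in the set $\{\PsiH\geq\PhiH(y,w)\}\cap\Gamma_{l,N}(W^u(S_{(x_l,v)};-V_g))$, which is pre-compact in $C^\infty(\mathbb{T},\mathbb{R}^{2n})\times S^{2N+1}$ for exactly the reasons you sketch.

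Claim (3), however, has a genuine circularity. You invoke an $L^\infty$-bound on $u(0,\cdot)$ via ``Proposition~\ref{energyboundsS1half} combined with Proposition~\ref{compactnessfloer}'', but Proposition~\ref{compactnessfloer} requires $\overline{I'}\subset I$ with $I$ open, so on the half-cylinder $(-\infty,0]$ it only gives bounds for $s\leq\sigma<0$, never at the boundary $s=0$. In fact the lemma you are proving is precisely the input used afterwards to obtain the $L^\infty$-bound near $s=0$ (by bootstrapping in $H^k((-1,0)\times\mathbb{T})$); so you cannot appeal to that bound here.

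The paper instead bounds $u_1$ directly from the quadratic convexity of $H$: since $H_{z,t}(x)\geq a|x|^2-b$ for some $a,b>0$, one has
\[
\int_{\mathbb{T}}H_{z(0),t}(u_0+u_1+u_2)\,dt\geq a(|u_1|-c)^2-ac^2-b,
\]
where $c$ bounds $\int_{\mathbb{T}}(|u_0|+|u_2|)\,dt$ by claims (1) and (2). Feeding this into $\PhiH(y,w)\leq\PhiH(u(0,\cdot),z(0))$ and bounding the symplectic-area term by $\|u_0+u_2\|^2_{1/2}$ (finite by (1),(2); $u_1$ is constant and contributes nothing to $\int x^*\lambda_0$) yields the bound on $|u_1|$. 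This is the missing idea in your proposal.
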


\begin{proof}
    From claim (1) of Lemma \ref{filtrations1equfloer} and $S^1$-invariance we have that
    \begin{equation}
       \nonumber \Phi_H(y,w)\leq \Phi_H(u(0,\cdot),z(0))=\Phi_H(u_0+u_1+u_2,z(0))
    \end{equation}

    Using this inequality, \cite[Proposition 5.2]{AK22} and the fact that \[(u_0,z(0))\in \Gamma_{l,N}(W^u(x_l;-V_g))\] we conclude that
    \begin{equation}\label{compactnesss1equboundaryinequality}
        \Phi_H(y,w)\leq \PsiH(u_0,z(0))-\frac{1}{2}\|u_2\|^2_{1/2}\leq \PsiH(x_0,v)-\frac{1}{2}\|u_2\|^2_{1/2}=\psilH(x_l,v)-\frac{1}{2}\|u_2\|^2_{1/2}
    \end{equation}

    From the first inequality in \eqref{compactnesss1equboundaryinequality} we have that $(u_0,z(0))$ belongs to the set
    \[\{\Psi_{H^*}\geq \Phi_H(y,w)\}\cap \Gamma_{l,N}(W^u(S_{(x_l,v)};-V_g)))\] 
    which is pre-compact in $C^\infty(\mathbb{T},\mathbb{R}^{2n})\times S^{2N+1}$ so we proved (1).

    Also from \eqref{compactnesss1equboundaryinequality} immediately follows (2).

    Using uniform quadratic convexity of $H$ we get that there are constants $a>0$ and $b>0$ such that
    \[H_{z,t}(x)\geq a|x|^2-b\] for all $(t,x,z)\in \mathbb{T}\times \mathbb{R}^{2n}\times S^{2N+1}$. Then we have that
    \begin{align}
        \nonumber&\int\limits_{\mathbb{T}}H_{z(0)}(u_0+u_1+u_2)dt\geq a\int\limits_{\mathbb{T}}|u_0+u_1+u_2|^2dt-b\\
        \nonumber&\geq a\int\limits_{\mathbb{T}}(|u_1|-|u_0|-|u_2|)^2dt-b\geq a|u_1|^2-2a|u_1|\int\limits_{\mathbb{T}}(|u_0|+|u_2|)dt-b.
    \end{align}

    From (1) and (2) we have that \[\int\limits_{\mathbb{T}}(|u_0|+|u_2|)dt\] is uniformly bounded by some positive constant $c>0$, which depends only on $S_{(x_l,v)}$ and $S_{(y,w)}$. Thus, it holds
    \[\int\limits_{\mathbb{T}}H_{z(0),t}(u_0+u_1+u_2)dt\geq a|u_1|^2-2a|u_1|c-b=a(|u_1|-c)^2-ac^2-b.\]

    From there, we have that
    \begin{align}
        \nonumber&\PhiH(x,v)\leq \Phi_H(u_0+u_1+u_2,z(0))=\frac{1}{2}\int\limits_{\mathbb{T}}\langle (u_0+u_2),J_0(u_0+u_2)dt-\int\limits_{\mathbb{T}}H_{z(0),t}(u_0+u_1+u_2)dt\\
        \nonumber&\leq \|u_0+u_2\|_{1/2}^2-a(|u_1|-c)^2+ac^2+b.
    \end{align}

    Since $\|u_0+u_2\|_{1/2}$ is uniformly bounded from (1) and (2), from the last inequality, we conclude that (3) also holds.
\end{proof}

\begin{proposition}
    The space $\hat{\mathcal{M}}^{S^1,N}_\Theta(S_{(x_l,v)},S_{(y,w)};H,\widetilde{f}_N,J,g)$ is bounded in the $L^\infty$-norm and pre-compact in $C^\infty_{loc}((-\infty,0]\times \mathbb{T},\mathbb{R}^{2n})\times C_{loc}^\infty((-\infty,0],S^{2N+1})$. 
\end{proposition}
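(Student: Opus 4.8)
The plan is to follow the blueprint of the non-parameterized statement, Proposition~\ref{compactnessnonpar}, which itself rests on \cite[Propositions 9.1 and 9.2]{AK22}, adding the minor bookkeeping needed to keep track of the $S^{2N+1}$-factor. The $z$-component is essentially free: every $(u,z)$ in the moduli space has $z:(-\infty,0]\to S^{2N+1}$ solving $\dot z=\nabla\widetilde f_N(z)$ on the compact manifold $S^{2N+1}$, so the family of such $z$ is uniformly bounded and equicontinuous on compacta, and bootstrapping the ODE together with Arzel\`a--Ascoli gives pre-compactness in $C^\infty_{loc}((-\infty,0],S^{2N+1})$. Thus the whole problem reduces to controlling the $u$-component.

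First I would establish the uniform $L^\infty$ bound on $u$. Proposition~\ref{energyboundsS1half} supplies the energy bound $E(u)=\int_{(-\infty,0]\times\mathbb{T}}|\partial_s u|^2_{J_{z(s)}}\,ds\,dt\le\psilH(x_l,v)-\PhiH(y,w)=:E_0$, a constant depending only on the orbits $S_{(x_l,v)}$ and $S_{(y,w)}$. On any half-cylinder $(-\infty,-1]\times\mathbb{T}$, away from the boundary, Proposition~\ref{compactnessfloer} (applied with $I$ a neighborhood of $(-\infty,-1]$ in $(-\infty,0)$ and $I'=(-\infty,-1]$) turns this energy bound into a uniform bound for $|u|$, exactly as on a closed cylinder. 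Near $\{0\}\times\mathbb{T}$ that interior estimate is unavailable, and instead I would invoke the preceding Lemma: writing $u(0,\cdot)=u_0+u_1+u_2$ with $(u_0,z(0))\in\Gamma_{l,N}(W^u(S_{(x_l,v)};-V_g))$, $u_1\in\mathbb{H}_0$ and $u_2\in\mathbb{H}_-$, that Lemma places $u_0$ in a fixed compact subset of $C^\infty(\mathbb{T},\mathbb{R}^{2n})$, bounds $\|u_2\|^2_{1/2}\le 2E_0$, and places $u_1$ in a fixed compact subset of $\mathbb{R}^{2n}$, all controlled only by $S_{(x_l,v)}$ and $S_{(y,w)}$; hence the trace $u(0,\cdot)$ is uniformly bounded in $\Honehalf$, a fortiori in $L^\infty(\mathbb{T},\mathbb{R}^{2n})$. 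Feeding this boundary bound, the constant $E_0$, the asymptotic condition $\lim_{s\to-\infty}(u(s,\cdot),z(s))\in S_{(y,w)}$ (a compact orbit), and the uniform linear growth of $X_{H_{z(s)}}$ coming from (H4) into the mean-value arguments of \cite[Proposition 9.1]{AK22} --- which carry over verbatim since $J_{z(s)}$ is uniformly bounded --- yields $\sup_{(-\infty,0]\times\mathbb{T}}|u|\le M(S_{(x_l,v)},S_{(y,w)})$.

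Next I would upgrade this to $C^\infty_{loc}$ pre-compactness. With $|u|$ uniformly bounded, a standard bubbling-off analysis shows no energy can concentrate: $\mathbb{R}^{2n}$ admits no non-constant $J$-holomorphic spheres, and the boundary condition on $\Gamma_{l,N,+}(W^u(S_{(x_l,v)};-V_g))$ carries an exact primitive (being cut out of the exact functional $\PsiH$), which rules out disk bubbling at $\{0\}\times\mathbb{T}$; hence $\|\partial_s u\|_{C^0_{loc}}$ is bounded. Elliptic bootstrapping then gives uniform $C^k_{loc}$ bounds for every $k$: interior regularity comes from Proposition~\ref{regularityfloer}, and the boundary regularity is obtained by repeating the argument in the proof of Proposition~\ref{regularityhalfs1} (the Sobolev estimates of \cite[Proposition 7.2]{AK22}, using $J_{z(s)}\equiv J_0$ on $[-1,0]\times\mathbb{T}\times S^{2N+1}$ and that $z$ is already smooth up to $0$). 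Arzel\`a--Ascoli then delivers pre-compactness in $C^\infty_{loc}((-\infty,0]\times\mathbb{T},\mathbb{R}^{2n})\times C^\infty_{loc}((-\infty,0],S^{2N+1})$.

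The main obstacle is the uniform $L^\infty$ estimate at the boundary: Proposition~\ref{compactnessfloer} only controls $u$ on $(-\infty,-1]\times\mathbb{T}$, so one genuinely has to combine the decomposition and the quantitative bounds of the preceding Lemma to bound the trace $u(0,\cdot)$ and then propagate this bound inward along the half-cylinder. Everything else is a faithful transcription of \cite[Propositions 9.1 and 9.2]{AK22} to the $S^1$-equivariant setting, with the $S^{2N+1}$-component handled by compactness of the sphere.
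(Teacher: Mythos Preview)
Your overall strategy matches the paper's, but there is a genuine gap in the $L^\infty$ estimate near the boundary. You write that the decomposition $u(0,\cdot)=u_0+u_1+u_2$ with $u_0$ in a compact set of $C^\infty$, $u_1$ in a compact set of $\mathbb{R}^{2n}$, and $\|u_2\|_{1/2}^2\le 2E_0$ yields a uniform bound on the trace in $\Honehalf$, ``a fortiori in $L^\infty(\mathbb{T},\mathbb{R}^{2n})$''. This inference fails: $H^{1/2}(\mathbb{T})$ does not embed into $L^\infty(\mathbb{T})$, and the component $u_2\in\mathbb{H}_-$ is only controlled in $H^{1/2}$. So you cannot obtain a uniform $L^\infty$ bound on $u(0,\cdot)$ this way, and the subsequent ``mean-value'' propagation toward the interior lacks its starting input. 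A secondary concern is your bubbling argument for pre-compactness: the boundary condition $\mathbb{P}_+(u(0,\cdot))\in\Gamma_{l,N,+}(W^u(S_{(x_l,v)};-V_g))$ is non-local (a condition on the positive Fourier part of the trace), not a Lagrangian boundary condition, so standard disk-bubbling exclusion does not apply directly.

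The paper circumvents both issues by treating the strip $(-1,0)\times\mathbb{T}$ differently: rather than seeking an $L^\infty$ bound first, it establishes uniform $H^k((-1,0)\times\mathbb{T})$ bounds for every $k$ directly, by an inductive bootstrapping that uses the decomposition of the preceding Lemma (which feeds the $H^{1/2}$ and $C^\infty$ bounds on the boundary pieces into the $H^1$ estimate to start the induction), together with Lemma~\ref{mixedderivativesham} to handle the $z$-dependence of $\nabla_x H$, following the scheme of \cite[Proposition~7.3]{AK22}. Both the $L^\infty$ bound and $C^\infty$ pre-compactness on $[-1,0]\times\mathbb{T}$ then follow at once from Sobolev embedding; combined with the interior estimate from Proposition~\ref{compactnessfloer} on $(-\infty,\sigma]\times\mathbb{T}$ (which you invoke correctly), this gives the full result without ever needing a pointwise trace bound or a bubbling analysis.
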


\begin{proof}
    We already have that $z\in L^\infty((-\infty,0],S^{2N+1})$ and $z\in C_{loc}^\infty((-\infty,0],S^{2N+1})$ since $z$ is a solution of the gradient equation 
    \[\dot{z}=\widetilde{f}_N(z).\]

    On the other hand, since we have uniform energy bounds on space \[\hat{\mathcal{M}}^{S^1,N}_\Theta(S_{(x_l,v)},S_{(y,w)};H,\widetilde{f}_N,J,g),\] Proposition \ref{compactnessfloer} guarantees that for every $\sigma<0$ we have that uniform $L^\infty$-bound on 
    \[\{u|_{(-\infty,\sigma]\times \mathbb{T}}\mid (u,z) \in \hat{\mathcal{M}}^{S^1,N}_\Theta(S_{(x_l,v)},S_{(y,w)};H,\widetilde{f}_N,J,g)\}.\] By standard arguments, we conclude that for $\sigma_1<\sigma_2<0$ we have that 
    \[\{u|_{[\sigma_1,\sigma_2]\times \mathbb{T}}\mid (u,z) \in \hat{\mathcal{M}}^{S^1,N}_\Theta(S_{(x_l,v)},S_{(y,w)};H,\widetilde{f}_N,J,g)\}\]
    is pre-compact in $C^\infty([\sigma_1,\sigma_2]\times \mathbb{T},\mathbb{R}^{2n})$.

    Therefore, it's sufficient to show that 
    \[\{u|_{[-1,0]}\mid (u,z) \in \hat{\mathcal{M}}^{S^1,N}_\Theta(S_{(x_l,v)},S_{(y,w)};H,\widetilde{f}_N,J,g)\}\] 
    is uniformly bounded in $L^\infty([-1,0]\times \mathbb{T},\mathbb{R}^{2n})$ and pre-compact in $C^\infty([-1,0]\times \mathbb{T},\mathbb{R}^{2n})$. We can show that by demonstrating uniform bounds of $u$ in $H^k((-1,0)\times \mathbb{T},\mathbb{R}^{2n})$ for every $k\in \mathbb{N}$. Given that we have uniform bounds for $z\in H^k((-1,0),\mathbb{R}^{2n})$, the result follows from the same arguments as in \cite[Proposition 7.3]{AK22} using the previous lemma and Lemma \ref{mixedderivativesham}.
\end{proof}

\begin{remark}\label{chainmaps1equi}
    The analogous proposition to Proposition \ref{chainmapnonpar} holds for the $S^1$-equivariant case.    
\end{remark}

\section{Moduli spaces of homotopy half-cylinders}

As shown in the proof of Proposition \ref{chainmapnonpar}, half-cylinder moduli spaces can be described analytically as pairs $(u, \gamma)$ satisfying specific conditions. The moduli spaces introduced here can be analyzed similarly, following the approach in \cite{Dju17}. The main difference in our setting is the need to address compactness results, as the ambient space is $\mathbb{R}^{2n}$. While we discuss compactness briefly, detailed proofs are provided in \cite{Dju17}.  These moduli spaces are essential for proving the functionality property of the chain isomorphism in \cite{AK22} and are analogous to those in the $S^1$-equivariant case. Hence, we will focus only on the non-parameterized version.

Let $(H_\alpha, J_\alpha)$ and $(H_\beta, J_\beta)$ be two Hamiltonians for which the Floer complex is well-defined, and suppose that $H_\alpha \leq H_\beta$. We define a parameterized family of non-increasing homotopies $(H^{\alpha\beta}_{T,s}, J^{\alpha\beta}_{T,s})$ such that

\[
(H^{\alpha\beta}_{T,s}, J^{\alpha\beta}_{T,s}) = 
\begin{cases}
    (H_\beta, J_\beta), & s \leq T - 1, \\
    (H_\alpha, J_\alpha), & s \geq T,
\end{cases}
\]

for $T \leq T_0 < -1$.

Let $l \in \mathbb{N}$ be large enough so that the $l$-saddle point reduction exists for $\Psi_{H^*_\alpha}$ and $\Psi_{H^*_\beta}$. For $x_l^\alpha \in \operatorname{Crit}(\psi_{H^*_\alpha})$ and $z^\beta \in P(H_\beta)$, we define the moduli space
\[
\mathcal{M}(x_l^\alpha, \psi_{H^*_\alpha}; z^\beta, H^{\alpha\beta}_{T,s})
\]
as the space of smooth maps $u: (-\infty, 0] \times \mathbb{T} \to \mathbb{R}^{2n}$ such that
\[
\partial_s u + J^{\alpha\beta}_{T,s} ( \partial_t u - X_{H^{\alpha\beta}_{T,s}}(u) ) = 0
\]
and it holds
\[
\lim\limits_{s \to -\infty} u(s, \cdot) = z^\beta, \quad \mathbb{P}_+(u(0, \cdot)) \in \Gamma_{l,+}\left(W^u\left(x_l^\alpha; -\nabla_{g_\alpha} \psi_\alpha\right)\right),
\]

where $T \leq T_0$. This space depends on $J$, which we will not write in the moduli space notation to simplify. We will write an element $u$ as a pair $(T, u)$ to emphasize the parameter associated with $u$. Arbitrary $(T, u)$ have a uniform energy bound given by
\[
\psi^l_{H_\alpha^*}(x_l^\alpha) - \Phi_{H_\beta}(z^\beta).
\]
This result follows from the fact that the family of homotopies is non-increasing. Therefore, we obtain the result by reasoning as in Proposition \ref{energyboundsS1half}. On the other hand, for $s \leq T - 1$, we have $H^{\alpha\beta}_{T,s} = H_\beta$, and for $s \geq T$, it holds that $H^{\alpha\beta}_{T,s} = H_\alpha$. Since $H_\alpha$ and $H_\beta$ are non-resonant at infinity and have uniform linear growth of their Hamiltonian vector fields, and given the uniform energy bound and the fact that the length of the interval on which $H^{\alpha\beta}_{T,s}$ is $s$-dependent is $1$ for every $T \leq T_0$, it follows that for arbitrary $(T, u)$ and $\sigma < 0$, we have a uniform $L^\infty$-bound on $u|_{(-\infty, \sigma] \times \mathbb{T}}$.

This can be shown by a slight modification of the proof in \cite[Proposition 3.1]{AK22}. Then, by the standard arguments, we conclude that the set
\[
\left\{ u|_{[\sigma_1, \sigma_2] \times \mathbb{T}} \mid u \in \mathcal{M}(x_l^\alpha, \psi^l_{H^*_\alpha}; z^\beta, H^{\alpha\beta}_{T,s}) \right\}
\]
is pre-compact in $C^\infty([\sigma_1, \sigma_2] \times \mathbb{T}, \mathbb{R}^{2n})$ for $\sigma_1 < \sigma_2 < 0$. Since $H^{\alpha\beta}_{T,s} = H_\alpha$ for every $T \leq T_0$ and $s \in [-1,0]$, we conclude from \cite[Proposition 9.3]{AK22} that there is a uniform $L^\infty$-bound and a uniform $H^k$-bound on
\[
\left\{ u|_{[-1, 0] \times \mathbb{T}} \mid u \in \mathcal{M}(x_l^\alpha, \psi_{H^*_\alpha}; z^\beta, H^{\alpha\beta}_{T,s}) \right\}.
\]
Thus, this space is pre-compact in $C^\infty([-1, 0] \times \mathbb{T}, \mathbb{R}^{2n})$. Combining these results, we conclude that the moduli space $\mathcal{M}(x_l^\alpha, \psi^l_{H^*_\alpha}; z^\beta, H^{\alpha\beta}_{T,s})$ is uniformly bounded in the $L^\infty$-norm and pre-compact in $C^\infty_{\text{loc}}((-\infty, 0] \times \mathbb{T}; \mathbb{R}^{2n})$. For a generic choice of $(H^{\alpha\beta}_{T,s}, J^{\alpha\beta}_{T,s})$, the moduli space $\mathcal{M}(x_l^\alpha, \psi_{H^*_\alpha}; z^\beta, H^{\alpha\beta}_{T,s})$ is a smooth, pre-compact manifold with 
\[
\dim  \mathcal{M}(x_l^\alpha, \psi_{H^*_\alpha}; z^\beta, H^{\alpha\beta}_{T,s}) = \operatorname{ind}(x_l^\alpha; \psi^l_{H_\alpha^*}) + n - \mu_{CZ}(z^\beta; H_\beta) + 1.
\]

\begin{proposition}\label{homotopyfloer}
    Let $(H^{\alpha\beta}_{T,s}, J^{\alpha\beta}_{T,s})$ be a generic family of non-increasing homotopies. For $x_l^\alpha \in \crit(\psi_{H^*_\alpha})$ and $z^\beta \in P(H_\beta)$ such that $\ind(x_l^\alpha;\psi^l_{H_\alpha^*})=\mu_{CZ}(z^\beta;H_\beta)-n$ the moduli space 
    \[\mathcal{M}(x_l^\alpha, \psi_{H^*_\alpha};  H^{\alpha\beta}_{T,s},z^\beta)\]
    is a one-dimensional smooth manifold such that  \begin{align}
        \nonumber \partial\mathcal{M}(x_l^\alpha, \psi_{H^*_\alpha};z^\beta, H^{\alpha\beta}_{T,s})=&\mathcal{M}(x_l^\alpha, \psi_{H^*_\alpha};z^\beta, H^{\alpha\beta}_{T_0,s})\\
        \nonumber& \cup \bigcup\limits_{y^\alpha}\mathcal{M}_\Theta(x_l^\alpha,y^\alpha; H_\alpha,J_\alpha, g_\alpha)\times \mathcal{M}_F(y^\alpha,z^\beta; H^{\alpha\beta})\\
        \nonumber&\cup \bigcup\limits_{y^\alpha_l}\mathcal{M}_M(x_l^\alpha,y_l^\alpha; \psi^l_{H_\alpha},g_\alpha)\times \mathcal{M}(y_l^\alpha, \psi^l_{H^*_\alpha};z^\beta, H^{\alpha\beta}_{T,s})\\
        \nonumber&\cup \bigcup\limits_{y^\beta} \mathcal{M}(x_l^\alpha, \psi_{H^*_\alpha};y^\beta, H^{\alpha\beta}_{T,s})\times \mathcal{M}_F(y^\beta,z^\beta;H_\beta,J_\beta)
 \end{align}

  where all the manifolds on the right side are $0$-dimensional compact manifolds. 

\end{proposition}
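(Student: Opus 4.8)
The plan is to identify $\mathcal{M}(x_l^\alpha,\psi_{H^*_\alpha};z^\beta,H^{\alpha\beta}_{T,s})$, for $T$ ranging in $(-\infty,T_0]$, as a one-dimensional manifold with boundary, and then read off the four boundary strata from the degeneration analysis at the two ends of the parameter interval ($T\to -\infty$ and $T=T_0$) together with the usual Floer/Morse breaking at the cylindrical ends. First I would set up the space as in the proof of Proposition~\ref{chainmapnonpar}: an element is a pair $(T,u)$ with $T\le T_0$ and $u:(-\infty,0]\times\mathbb{T}\to\mathbb{R}^{2n}$ solving the $T$-shifted homotopy equation, with the asymptotic condition $u(s,\cdot)\to z^\beta$ at $-\infty$ and the half-cylinder boundary condition $\mathbb{P}_+(u(0,\cdot))\in\Gamma_{l,+}(W^u(x_l^\alpha;-\nabla_{g_\alpha}\psi_\alpha))$. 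The dimension count stated just above the proposition gives that, under $\ind(x_l^\alpha;\psi^l_{H^*_\alpha})=\mu_{CZ}(z^\beta;H_\beta)-n$, this space is a smooth $1$-manifold for generic data; the extra $+1$ in the dimension formula is exactly the parameter $T$. Compactness in $C^\infty_{\mathrm{loc}}$ on $(-\infty,0]\times\mathbb{T}$ was already established in the paragraphs preceding the proposition (uniform energy bound $\psi^l_{H^*_\alpha}(x_l^\alpha)-\Phi_{H_\beta}(z^\beta)$ via the non-increasing homotopy, plus $L^\infty$-bounds via the modification of \cite[Proposition~3.1]{AK22} and the interior estimates near $[-1,0]\times\mathbb{T}$).

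The core of the argument is then the SFT-type compactness/gluing dichotomy. A sequence $(T_k,u_k)$ in the moduli space can degenerate in exactly the following mutually exclusive ways: (i) $T_k\to -\infty$, in which case the $s$-dependent window of the homotopy escapes to $-\infty$; rescaling by $s\mapsto s-T_k$ splits $u_k$ into a negative-gradient/Floer piece for the \emph{fixed} homotopy $H^{\alpha\beta}$ (a cylinder in $\mathcal{M}_F(y^\alpha,z^\beta;H^{\alpha\beta})$) pre-composed with a half-cylinder for $(H_\alpha,J_\alpha,g_\alpha)$ ending on the unstable manifold of $x_l^\alpha$, i.e.\ an element of $\mathcal{M}_\Theta(x_l^\alpha,y^\alpha;H_\alpha,J_\alpha,g_\alpha)$, for some intermediate $y^\alpha\in P(H_\alpha)$ — this is the second stratum on the right-hand side; (ii) $T_k\to T_0$, giving the stratum $\mathcal{M}(x_l^\alpha,\psi_{H^*_\alpha};z^\beta,H^{\alpha\beta}_{T_0,s})$ (a genuine boundary face of the parameter interval, no breaking); (iii) breaking at the left/cylindrical end $s\to -\infty$ of $u_k$ — this produces a broken Floer cylinder for $H_\beta$, contributing $\mathcal{M}(x_l^\alpha,\psi_{H^*_\alpha};y^\beta,H^{\alpha\beta}_{T,s})\times\mathcal{M}_F(y^\beta,z^\beta;H_\beta,J_\beta)$; (iv) breaking of the Morse-flow component $\gamma$ along a negative gradient trajectory of $\psi^l_{H_\alpha}$ at its $-\infty$ end, contributing $\mathcal{M}_M(x_l^\alpha,y_l^\alpha;\psi^l_{H_\alpha},g_\alpha)\times\mathcal{M}(y_l^\alpha,\psi_{H^*_\alpha};z^\beta,H^{\alpha\beta}_{T,s})$. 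In each case the index hypothesis forces the two factors to be $0$-dimensional; conversely, a standard pre-gluing/implicit-function-theorem argument (as for conormal-bundle moduli spaces in \cite{Dju17}) shows every such configuration is a limit of a one-parameter family in the big moduli space, and that the gluing maps are injective and parametrize a half-neighbourhood of the boundary, so these are exactly the boundary points.

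I expect the main obstacle to be the bookkeeping in case (i): making precise the $T\to-\infty$ degeneration, in particular checking that no energy is lost in the limit, that exactly one Floer cylinder for the \emph{fixed} (un-shifted) homotopy $H^{\alpha\beta}$ bubbles off together with one half-cylinder for $H_\alpha$ (and not some longer broken configuration for generic data, which is ruled out by the index count), and that the gluing theorem at this ``infinitely stretched'' end produces a genuine boundary chart rather than an interior point. The reparametrization $s\mapsto s-T$ that trivializes the homotopy near $s=0$ but pushes the transition region to $-\infty$ is the key device, and the uniform-in-$T$ energy and $L^\infty$ estimates recorded before the proposition are exactly what makes the limit well-behaved; the remaining work is the elliptic gluing analysis, which is parallel to \cite{AK22} and \cite{Dju17} and which I would only sketch, citing those sources. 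Everything else — smoothness and dimension of the top stratum, the structure of the Morse and Floer breakings — is routine given the transversality and compactness results already proved in Sections~\ref{sectiondimension} and the preceding section.
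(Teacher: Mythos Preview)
Your proposal is correct and follows essentially the same approach as the paper. The paper does not give a detailed proof of this proposition; it records the compactness and dimension arguments in the paragraphs preceding the statement and then relies on standard compactness/gluing arguments (citing \cite{Dju17}, as in the proof of Proposition~\ref{chainmapnonpar}), which is exactly the degeneration analysis you outline.
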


Manifold $\mathcal{M}(x_l^\alpha, \psi_{H^*_\alpha};z^\beta, H^{\alpha\beta}_{T_0,s})$ is a space of all smooth maps $u: (-\infty,0]\times \mathbb{T}\to \mathbb{R}^{2n}$ 
such that 
\[\partial_su+J(\partial_tu-X_{H^{\alpha\beta}_{T_0,s}}(u))=0\]
and it holds 
\[\lim\limits_{s\to -\infty}u(s,\cdot)=z^\beta, \ \ \ \mathbb{P}_+(u(0,\cdot)) \in \Gamma_{l,+}(W^u(x_l^\alpha;-\nabla_{g_\alpha}\psi^l_{H^*_\alpha}))\]
Now, we define a similar space for the homotopy of reduced dual functionals.

Let $(\psi^{\alpha\beta}_{T,s}, g^{\alpha\beta}_{T,s})$ be a family of non-increasing homotopies such that

\[
(\psi^{\alpha\beta}_{T,s}, g^{\alpha\beta}_{T,s}) = 
\begin{cases}
    (\psi^l_{H_\alpha^*}, g_\alpha), & \text{if } s \leq T - 1, \\
    (\psi^l_{H_\beta^*}, g_\beta), & \text{if } s \geq T
\end{cases}
\]

for $T \leq T_0 < 1$. Here, $g^{\alpha\beta}_{T,s}$ is a smooth family of metrics. For $x_l^\alpha \in \operatorname{Crit}(\psi_{H_\alpha^*})$, we have
\begin{gather*}
W^u(x_l^\alpha; (\psi^{\alpha\beta}_{T,s}, g^{\alpha\beta}_{T,s}))\\
=\left\{ y \in \mathbb{H}^l \mid \dot{\gamma}(s) = -\nabla_{g^{\alpha\beta}_{T,s}} \psi^{\alpha\beta}_{T,s}(\gamma(s)), \ \gamma(s_0) = y, \ s_0 \in \mathbb{R}, \ \lim_{s \to -\infty} \gamma(s) = x_l^\alpha \right\}.
\end{gather*}
For $x_l^\alpha \in \operatorname{Crit}(\psi^l_{H_\alpha^*})$ and $z^\beta \in P(H_\beta)$, we define the moduli space
\[
\mathcal{M}(x_l^\alpha, \psi^{\alpha\beta}_{T,s}; z^\beta, H_\beta)
\]
as the space of all smooth maps $u: (-\infty, 0] \times \mathbb{T} \to \mathbb{R}^{2n}$ such that
\[
\partial_s u + J_\beta (\partial_t u - X_{H_\beta}(u)) = 0
\]
and
\[
\lim_{s \to -\infty} u(s, \cdot) = z^\beta, \quad \mathbb{P_+}(u(0, \cdot)) \in \Gamma_{l,+} \left(W^u(x_l^\alpha; (\psi^{\alpha\beta}_{T,s}, g_{T,s}))\right),
\]
for $T \leq T_0$. Since the family of homotopies is non-increasing, we conclude that elements of
\[
\mathcal{M}(x_l^\alpha, \psi^{\alpha\beta}_{T,s}; z^\beta, H_\beta)
\]
have a uniform energy bound given by
\[
\psi^l_{H_\alpha^*}(x_l^\alpha) - \Phi_{H_\beta}(z^\beta).
\]
Since $H_\beta$ is non-resonant at infinity and its Hamiltonian vector field grows linearly, \cite[Proposition 9.1]{AK22} states that for $\sigma < 0$, we have a uniform $L^\infty$-bound on the space
\[
\left\{ u \big|_{(-\infty, \sigma] \times \mathbb{T}} \mid u \in \mathcal{M}(x_l^\alpha, \psi^{\alpha\beta}_{T,s}; z^\beta, H_\beta) \right\}
\]
and that the space
\[
\left\{ u \big|_{[\sigma_1, \sigma_2] \times \mathbb{T}} \mid u \in \mathcal{M}(x_l^\alpha, \psi^{\alpha\beta}_{T,s}; z^\beta, H_\beta) \right\}
\]
is pre-compact for $\sigma_1 < \sigma_2 < 0$. Since the family of homotopies $\psi^{\alpha\beta}_{T,s}$ is non-increasing, the claims of \cite[Lemma 9.2]{AK22} hold for elements of $\mathcal{M}(x_l^\alpha, \psi^{\alpha\beta}_{T,s}; z^\beta, H_\beta)$. Therefore, using similar arguments as in \cite[Proposition 9.3]{AK22}, we conclude that there is a uniform $L^\infty$-bound on
\[
\left\{ u \big|_{[-1, 0] \times \mathbb{T}} \mid u \in \mathcal{M}(x_l^\alpha, \psi^{\alpha\beta}_{T,s}; z^\beta, H_\beta) \right\}
\]
and that it is pre-compact in $C^\infty([-1, 0] \times \mathbb{T}, \mathbb{R}^{2n})$. Hence, all of this implies a uniform bound in the $L^\infty$-norm and pre-compactness in $C^\infty_{\text{loc}}((-\infty, 0] \times \mathbb{T}; \mathbb{R}^{2n})$ for $\mathcal{M}(x_l^\alpha, \psi^{\alpha\beta}_{T,s}; z^\beta, H_\beta)$.

\begin{proposition}\label{homotopymorse}
    Let $(\psi^{\alpha\beta}_{T,s},g^{\alpha\beta}_{T,s})$ be a generic family of non-increasing homotopies. For $x_l^\alpha \in \crit(\psi_{H^*_\alpha})$ and $z^\beta \in P(H_\beta)$ such that $\ind(x_l^\alpha;\psi^l_{H_\alpha^*})=\mu_{CZ}(z^\beta;H_\beta)-n$ the moduli space 
    \[\mathcal{M}(x_l^\alpha,\psi^{\alpha\beta}_{T,s};z^\beta, H_\beta)\] 
    is a one-dimensional smooth manifold such that  \begin{align}
        \nonumber \partial\mathcal{M}(x_l^\alpha,\psi^{\alpha\beta}_{T,s};z^\beta, H_\beta)=&\mathcal{M}(x_l^\alpha,\psi^{\alpha\beta}_{T_0,s};z^\beta, H_\beta)\\
        \nonumber& \cup \bigcup\limits_{y_l^\beta}\mathcal{M}_M(x_l^\alpha,y_l^\beta;\psi^{\alpha\beta})\times\mathcal{M}_\Theta(y_l^\beta,z^\beta;H_\beta,J_\beta,g_\beta)\\
        \nonumber&\cup \bigcup\limits_{y^\alpha_l}\mathcal{M}_M(x_l^\alpha,y_l^\alpha; \psi^l_{H^*_\alpha},g_\alpha)\times \mathcal{M}(y_l^\alpha,\psi^{\alpha\beta}_{T,s};z^\beta, H_\beta)\\
        \nonumber&\cup \bigcup\limits_{y^\beta} \mathcal{M}(x_l^\alpha,\psi^{\alpha\beta}_{T,s};y^\beta, H_\beta) \times \mathcal{M}_F(y^\beta,z^\beta;H_\beta,J_\beta)
 \end{align}
where all the manifolds on the right side are $0$-dimensional compact manifolds. 
\end{proposition}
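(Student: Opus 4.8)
The proof of Proposition~\ref{homotopymorse} runs entirely parallel to that of Proposition~\ref{homotopyfloer}, and also to the proof of the chain-map property (Proposition~\ref{chainmapnonpar}), so the plan is to follow that template, keeping careful track of the single genuinely new feature: the homotopy parameter $T$ and the broken configuration it produces at $T=T_0$. First I would set up the correct analytic model for $\mathcal{M}(x_l^\alpha,\psi^{\alpha\beta}_{T,s};z^\beta,H_\beta)$, just as in the proof of Proposition~\ref{chainmapnonpar}: an element is a pair $(T,(\gamma,u))$ with $T\le T_0$, $\gamma:(-\infty,0]\to\mathbb{H}_l$ a solution of the $T$-dependent negative-gradient equation $\dot\gamma(s)=-\nabla_{g^{\alpha\beta}_{T,s}}\psi^{\alpha\beta}_{T,s}(\gamma(s))$ asymptotic to $x_l^\alpha$, and $u:(-\infty,0]\times\mathbb{T}\to\mathbb{R}^{2n}$ a $J_\beta$-holomorphic half-cylinder for $H_\beta$ asymptotic to $z^\beta$, coupled by the boundary condition $u(0,\cdot)=\Gamma_{l,+}(\gamma(0))+\omega$ with $\omega\in\mathbb{H}_-\oplus\mathbb{H}_0$. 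Since $H_\beta$ is fixed on the Floer side, the $u$-component obeys a genuine (non-homotopy) Floer equation; only the Morse side carries the homotopy. The dimension count recorded just before the statement gives $\dim\mathcal{M}(x_l^\alpha,\psi^{\alpha\beta}_{T,s};z^\beta,H_\beta)=\ind(x_l^\alpha;\psi^l_{H_\alpha^*})+n-\mu_{CZ}(z^\beta;H_\beta)+1$, which equals $1$ under the index hypothesis $\ind(x_l^\alpha;\psi^l_{H_\alpha^*})=\mu_{CZ}(z^\beta;H_\beta)-n$; transversality for generic $(\psi^{\alpha\beta}_{T,s},g^{\alpha\beta}_{T,s})$ makes this a smooth $1$-manifold with boundary.

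Next I would invoke the compactness already established in the paragraph preceding the statement: the $L^\infty$-bound coming from the uniform energy estimate $\psi^l_{H_\alpha^*}(x_l^\alpha)-\Phi_{H_\beta}(z^\beta)$ (proved by reasoning as in Proposition~\ref{energyboundsS1half}, using that the homotopy family is non-increasing), together with $\widetilde{f}_N$-type non-resonance and linear growth for $H_\beta$, and \cite[Prop.~9.1–9.3, Lemma~9.2]{AK22}, gives pre-compactness in $C^\infty_{\mathrm{loc}}((-\infty,0]\times\mathbb{T};\mathbb{R}^{2n})$. The boundary of the $1$-manifold is then identified by the usual SFT/Morse-Floer breaking analysis: a sequence $(T_k,(\gamma_k,u_k))$ can degenerate in four ways. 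Either (i) $T_k\to T_0$, giving an element of the fixed-parameter space $\mathcal{M}(x_l^\alpha,\psi^{\alpha\beta}_{T_0,s};z^\beta,H_\beta)$; or, when $T_k$ stays bounded away from $-\infty$ (the case $T_k\to-\infty$ does not occur here because the width of the $s$-dependent region is bounded — this is the standard ``no escape to the stationary homotopy'' point and it is why only the $T=T_0$ stratum appears and not a $T=-\infty$ one), a gradient trajectory breaks off: (ii) at $-\infty$ on the Morse side, splitting off $\mathcal{M}_M(x_l^\alpha,y_l^\alpha;\psi^l_{H^*_\alpha},g_\alpha)$ followed by $\mathcal{M}(y_l^\alpha,\psi^{\alpha\beta}_{T,s};z^\beta,H_\beta)$; (iii) at $-\infty$ on the Floer side, splitting off $\mathcal{M}(x_l^\alpha,\psi^{\alpha\beta}_{T,s};y^\beta,H_\beta)$ followed by $\mathcal{M}_F(y^\beta,z^\beta;H_\beta,J_\beta)$; or (iv) a Floer cylinder slides to $+\infty$ so that the half-cylinder $u$ converges to $\Gamma_{l,+}$ of the endpoint of a $\psi^{\alpha\beta}$-trajectory entering a critical point $y_l^\beta$ of $\psi^l_{H_\beta^*}$, i.e.\ $\mathcal{M}_M(x_l^\alpha,y_l^\beta;\psi^{\alpha\beta})\times\mathcal{M}_\Theta(y_l^\beta,z^\beta;H_\beta,J_\beta,g_\beta)$. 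Dimension counting (each broken factor is a $0$-manifold, so by genericity finite) confirms these are exactly the codimension-$1$ strata, and gluing (of half-cylinders to cylinders along an asymptotic orbit, and of gradient-flow pieces — precisely the constructions in \cite{Dju17} and \cite{AK22}) shows each such configuration is the limit of a genuine end, so the list is complete.

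Concretely, the steps in order are: (1) state the analytic description of the moduli space as pairs $(T,(\gamma,u))$; (2) quote transversality and the dimension formula to get a smooth $1$-manifold; (3) quote the $L^\infty$/pre-compactness results already recorded before the statement; (4) run the breaking analysis, carefully arguing that $T_k\to-\infty$ is excluded so that the parameter degeneration contributes only the $T_0$-stratum; (5) do the dimension bookkeeping to see the four strata listed are the only possibilities and are each $0$-dimensional and compact; (6) invoke the standard gluing theorems to show each stratum is realized. The main obstacle is step (4): making rigorous the dichotomy between $T$ staying bounded and $T\to-\infty$, and in the latter (non-)case controlling where the derivative-in-$s$ of the homotopy is supported; this is handled exactly as in the compactness paragraph preceding the proposition — the $s$-dependent window of $\psi^{\alpha\beta}_{T,s}$ has fixed width $1$, so as $T\to-\infty$ the whole picture on $[-1,0]$ stabilizes to the $(\psi^l_{H_\beta^*},g_\beta)$ and $(\psi^l_{H_\alpha^*},g_\alpha)$ pieces plus a breaking, which is already accounted for by strata (ii) and (iv); hence no separate $T=-\infty$ stratum is needed. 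Everything else is a routine transcription of \cite[\S9]{AK22} and \cite{Dju17}, with $S^1$-equivariant bookkeeping suppressed since, as remarked, we only treat the non-parameterized version here.
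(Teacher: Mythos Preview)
Your proposal is correct and matches the paper's approach: the paper provides no explicit proof of this proposition, instead establishing the compactness ingredients in the paragraph immediately preceding it and deferring the remaining transversality/breaking/gluing analysis to the standard arguments in \cite{Dju17} and \cite{AK22}, exactly as you do.

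One expositional point deserves cleaning up. You first assert that $T_k\to-\infty$ ``does not occur'' because the $s$-dependent window has fixed width, and then later say that $T\to-\infty$ \emph{is} accounted for by strata (ii) and (iv). The second statement is the correct one, but only half of it: the limit $T\to-\infty$ is precisely what produces the second stratum in the proposition, $\mathcal{M}_M(x_l^\alpha,y_l^\beta;\psi^{\alpha\beta})\times\mathcal{M}_\Theta(y_l^\beta,z^\beta;H_\beta,J_\beta,g_\beta)$ (your (iv)); as $T\to-\infty$ the Morse trajectory $\gamma$ spends an unbounded amount of time in the $(\psi^l_{H_\beta^*},g_\beta)$ regime on $[T,0]$ and hence breaks through a critical point $y_l^\beta$ of $\psi^l_{H_\beta^*}$, so that $\gamma(0)$ limits onto $W^u(y_l^\beta;-\nabla_{g_\beta}\psi^l_{H_\beta^*})$ and $u$ becomes an element of $\mathcal{M}_\Theta$. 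Your stratum (ii), the $\alpha$-side Morse break $\mathcal{M}_M(x_l^\alpha,y_l^\alpha;\psi^l_{H_\alpha^*},g_\alpha)\times\mathcal{M}(y_l^\alpha,\ldots)$, arises from ordinary breaking of $\gamma$ at $-\infty$ with $T$ bounded, not from $T\to-\infty$. Also, the phrase ``a Floer cylinder slides to $+\infty$'' for (iv) is misleading: the half-cylinder $u$ solves a fixed equation and does not slide; it is the Morse homotopy window that recedes. With these clarifications your four-case breaking analysis is complete and correct.
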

Here, the manifold $\mathcal{M}(x_l^\alpha, \psi^{\alpha\beta}_{T_0, s}; z^\beta, H_\beta)$ is defined as the space of all smooth maps $u: (-\infty, 0] \times \mathbb{T} \to \mathbb{R}^{2n}$ such that
\[
\partial_s u + J_\beta (\partial_t u - X_{H_\beta}) = 0
\]
and
\[
\lim_{s \to -\infty} u(s, \cdot) = z^\beta, \quad \mathbb{P}_+(u(0, \cdot)) \in \Gamma_{l,+}(W^u(x_l; (\psi^{\alpha\beta}_{T_0, s}, g^{\alpha\beta}_{T_0, s}))).
\]
Let $(H^{\alpha\beta}_s, J^{\alpha\beta}_s)$ be a non-increasing homotopy from $H_\beta$ to $H_\alpha$ such that
\[
(H^{\alpha\beta}_s, J^{\alpha\beta}_s) = (H_\beta, J_\beta), \quad s \leq T_0 - 1,
\]
and
\[
(H^{\alpha\beta}_s, J^{\alpha\beta}_s) = (H_\alpha, J_\beta), \quad s \geq T_0.
\]

Let $(\psi^{\alpha\beta}_s, g^{\alpha\beta}_s)$ be a non-increasing homotopy such that
\[
(\psi^{\alpha\beta}_s, g^{\alpha\beta}_s) = (\psi_{H^*_\alpha}, g_\alpha), \quad s \leq T_0 - 1,
\]
and
\[
(\psi^{\alpha\beta}_s, g^{\alpha\beta}_s) = (\psi_{H^*_\beta}, g_\beta), \quad s \geq T_0.
\]

Here, we assume that $T_0 < 1$.

We define a family of non-increasing homotopies $H^{\alpha\beta}_{\delta, s}$ and $\psi^{\alpha\beta}_{\delta, s}$ where $\delta\in [0,1]$ such that
\[
(H^{\alpha\beta}_{\delta, s}, \psi^{\alpha\beta}_{\delta, s}) = (H^{\alpha\beta}_s, \psi^l_{H^*_\beta}), \quad \delta = 0,
\]
and
\[
(H^{\alpha\beta}_{\delta, s}, \psi^{\alpha\beta}_{\delta, s}) = (H_\beta, \psi^{\alpha\beta}_s), \quad \delta = 1.
\]

We assume there exists $T_{\min}$ negative enough such that for all $\delta \in [0, 1]$ and all $s \leq T_{\min}$, it holds that $H^{\alpha\beta}_{\delta, s} = H_{\beta}$ and $\psi^{\alpha\beta}_{\delta, s} = \psi^l_{H^*_\alpha}$.

We define the moduli space
\[
\mathcal{M}(x_l^\alpha, \psi^{\alpha\beta}_{\delta, s}; z^\beta, H_{\delta, s}^{\alpha\beta})
\]
to be the space of smooth maps $u: (-\infty, 0] \times \mathbb{T} \to \mathbb{R}^{2n}$ such that
\[
\partial_s u + J^{\alpha\beta}_{\delta,s}(\partial_t u - X_{H^{\alpha\beta}_{\delta, s}}(u)) = 0
\]
and
\[
\lim_{s \to -\infty} u(s, \cdot) = z^\beta, \quad \mathbb{P}_+(u(0, \cdot)) \in \Gamma_{l,+}(W^u(x_l^\alpha; (\psi_{s, t}^{\alpha\beta}, g_{s, t}^{\alpha\beta}))).
\]
From the same arguments as before, we have that
\[
\psi^l_{H_\alpha^*}(x_l^\alpha) - \Phi_{H_\beta}(z^\beta)
\]
is a uniform energy bound for $\mathcal{M}(x_l^\alpha, \psi^{\alpha\beta}_{\delta, s}; z^\beta, H_{\delta, s}^{\alpha\beta})$. Since for $s \leq T_{\min}$, we have $H_{\delta, s} = H_{\beta}$ independently of $\delta$, and $H_\beta$ is non-resonant at infinity with its Hamiltonian vector field having linear growth, we conclude, by a slight modification of \cite[Proposition 3.1]{AK22}, that due to the uniform energy bound, the space $\mathcal{M}(x_l^\alpha, \psi^{\alpha\beta}_{\delta, s}; z^\beta, H_{\delta, s}^{\alpha\beta})$ is uniformly bounded in the $L^\infty$-norm, except within an arbitrarily small neighborhood of $0$. 

On the other hand, arguing as for the second moduli space introduced in this section, we conclude that there is an $L^\infty$-bound on
\[
\{u|_{[-1,0] \times \mathbb{T}} \mid u \in \mathcal{M}(x_l^\alpha, \psi^{\alpha\beta}_{\delta, s}; z^\beta, H_{\delta, s}^{\alpha\beta})\}
\]
and that it is pre-compact in $C^\infty([-1,0] \times \mathbb{T}, \mathbb{R}^{2n})$.

Combining these two results, we again obtain the desired compactness result. Therefore, the following statement holds.

\begin{proposition}\label{homotopymixed}
    Let $(H^{\alpha\beta}_{\delta,s},J^{\alpha\beta}_{\delta,s})$  and $(\psi^{\alpha\beta}_{\delta,s},g^{\alpha\beta}_{\delta,s})$ be generic families of non-increasing homotopies. For $x_l^\alpha \in \crit(\psi_{H^*_\alpha})$ and $z^\beta \in P(H_\beta)$ such that $\ind(x_l^\alpha;\psi^l_{H_\alpha^*})=\mu_{CZ}(z^\beta;H_\beta)-n$ the moduli space 
    \[\mathcal{M}(x_l^\alpha,\psi^{\alpha\beta}_{\delta,s};z^\beta, H_{\delta,s}^{\alpha\beta})\] 
    is a one-dimensional smooth manifold such that  \begin{align}
        \nonumber \partial\mathcal{M}(x_l^\alpha,\psi^{\alpha\beta}_{\delta,s};z^\beta, H_{\delta,s}^{\alpha\beta})=&\mathcal{M}(x_l^\alpha,\psi^{\alpha\beta}_{s};z^\beta, H_\beta)\cup\mathcal{M}(x_l^\alpha,\psi^l_{H_\beta^*};z^\beta, H_{s}^{\alpha\beta})\\
        \nonumber&\cup\bigcup\limits_{y_l^\alpha}\mathcal{M}_M(\mathbb{P}_l(x_l^\alpha,y_l^\alpha; \psi_{H_\alpha^*};g_\alpha)\times \mathcal{M}(y_l^\alpha,\psi^{\alpha\beta}_{\delta,s};z^\beta, H_{\delta,s}^{\alpha\beta})\\
        \nonumber&\cup\bigcup\limits_{y^\beta} \mathcal{M}(x_l^\alpha,\psi^{\alpha\beta}_{\delta,s};y^\beta, H_{\delta,s}^{\alpha\beta}) \times \mathcal{M}_F(y^\beta,z^\beta;H_\beta,J_\beta)
         \end{align}
 where all the manifolds on the right side are $0$-dimensional compact manifolds. 

\end{proposition}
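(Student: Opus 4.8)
The plan is to realize the moduli space $\mathcal{M}(x_l^\alpha,\psi^{\alpha\beta}_{\delta,s};z^\beta, H_{\delta,s}^{\alpha\beta})$ as the zero set of a Fredholm section over a parameterized Banach manifold, exactly as in the setup preceding Proposition \ref{chainmapnonpar}, but now with the extra parameter $\delta\in[0,1]$. Concretely, an element is a triple $(\delta,\gamma,u)$ where $\gamma:(-\infty,0]\to\mathbb{H}_l$ is a negative gradient half-trajectory of the homotopy $(\psi^{\alpha\beta}_{\delta,s},g^{\alpha\beta}_{\delta,s})$ asymptotic to $x_l^\alpha$, and $u:(-\infty,0]\times\mathbb{T}\to\mathbb{R}^{2n}$ solves the $\delta$-dependent Floer equation with $\lim_{s\to-\infty}u(s,\cdot)=z^\beta$ and the coupled boundary condition $u(0,\cdot)=\Gamma_{l,+}(\gamma(0))+\omega$, $\omega\in\mathbb{H}_-\oplus\mathbb{H}_0$. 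First I would record that the linearized operator is Fredholm with index one higher than in the $\delta$-fixed case, since $\delta$ contributes a one-dimensional parameter: using Proposition \ref{cauchyriemannonp} for the $u$-part, the Morse-theoretic index count for the $\gamma$-part, and the dimension formula \eqref{dimensionhalfcylnonpar}, the expected dimension under $\ind(x_l^\alpha;\psi^l_{H_\alpha^*})=\mu_{CZ}(z^\beta;H_\beta)-n$ is $1+1-1+\dim\{\delta\}=1$. Wait—more carefully: at fixed $\delta$ the moduli space has dimension $\ind(x_l^\alpha)+n-\mu_{CZ}(z^\beta)+1=1$ when the index hypothesis holds, but here we impose $\ind(x_l^\alpha)=\mu_{CZ}(z^\beta)-n$, giving fixed-$\delta$ dimension $1$; adding the $\delta$-parameter would give dimension $2$, so in fact the correct statement is that the $\delta$-fixed pieces are $0$-dimensional except at the two endpoints, and it is the index hypothesis here combined with the generic choice that makes the total space one-dimensional. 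I would state and use the precise index bookkeeping from Sections \ref{sectiondimension} to pin this down, so that for a generic family the section is transverse and the total space is a smooth $1$-manifold with boundary.

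Next I would invoke compactness. The uniform energy bound $\psi^l_{H_\alpha^*}(x_l^\alpha)-\Phi_{H_\beta}(z^\beta)$ holds for the whole family because all homotopies in play are non-increasing (same argument as in Proposition \ref{energyboundsS1half} and the discussion before Proposition \ref{homotopyfloer}); the $L^\infty$-bound away from a neighborhood of $0$ follows from non-resonance at infinity and linear growth via the slight modification of \cite[Proposition 3.1]{AK22} already cited in the text; and the $C^\infty([-1,0]\times\mathbb{T})$-bound near the boundary follows as for the second moduli space in this section, using \cite[Proposition 9.3]{AK22} and \cite[Lemma 9.2]{AK22}. Hence $\overline{\mathcal{M}(x_l^\alpha,\psi^{\alpha\beta}_{\delta,s};z^\beta,H^{\alpha\beta}_{\delta,s})}$ is compact, and by the standard gluing/breaking analysis its boundary is the union of: the two endpoint strata $\delta=1$ (giving $\mathcal{M}(x_l^\alpha,\psi^{\alpha\beta}_{s};z^\beta,H_\beta)$) and $\delta=0$ (giving $\mathcal{M}(x_l^\alpha,\psi^l_{H_\beta^*};z^\beta,H^{\alpha\beta}_s)$); the breaking of the $\gamma$-part into a Morse flow line of $\psi_{H_\alpha^*}$ followed by a shorter half-trajectory (the $\bigcup_{y_l^\alpha}\mathcal{M}_M\times\mathcal{M}(\cdot)$ term); and the breaking off of a Floer cylinder for $H_\beta$ at the $-\infty$ end in the $u$-variable (the $\bigcup_{y^\beta}\mathcal{M}(\cdot)\times\mathcal{M}_F$ term). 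One should check that no breaking occurs at an intermediate $\delta\in(0,1)$ beyond these, and that there is no breaking of the $u$-part at the boundary end $s=0$ (ruled out by the coupled boundary condition and the regularity/transversality arguments of Proposition \ref{transversalitynonparconstant} applied along the family), nor an additional internal Morse breaking of the homotopy trajectory into a genuine gradient trajectory plus a homotopy trajectory — the non-increasing property forces the homotopy part to be the first segment, matching the stated decomposition.

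The main obstacle I anticipate is not any single estimate but the organization of the \emph{family} transversality together with the boundary analysis at the two endpoints $\delta=0,1$: one must choose the perturbation data $(H^{\alpha\beta}_{\delta,s},J^{\alpha\beta}_{\delta,s})$ and $(\psi^{\alpha\beta}_{\delta,s},g^{\alpha\beta}_{\delta,s})$ generically \emph{rel} the already-fixed data at $\delta=0$ and $\delta=1$ (which are themselves the data defining the two homotopy moduli spaces in Propositions \ref{homotopyfloer} and \ref{homotopymorse}), and then verify that near those endpoints the compactified family is a manifold with boundary whose boundary component is exactly the corresponding fixed-$\delta$ moduli space with the correct (trivial, from the index hypothesis) orientation/count. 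In the $\mathbb{Z}_2$-setting there are no orientation subtleties, so this reduces to a careful Floer-theoretic gluing statement of the type carried out for conormal boundary conditions in \cite{Dju17}; I would cite that reference for the gluing and cobordism arguments and only indicate the modifications needed because the ambient space is the non-compact $\mathbb{R}^{2n}$, namely the $\delta$-uniform energy and $L^\infty$ bounds established above. A secondary technical point is ensuring that the homotopies can be interpolated as in the display defining $H^{\alpha\beta}_{\delta,s}$ and $\psi^{\alpha\beta}_{\delta,s}$ while staying non-increasing and with $s$-dependence supported in a fixed compact $s$-interval uniformly in $\delta$; this is what makes all the compactness inputs from \cite{AK22} applicable verbatim, and I would spell it out at the start.
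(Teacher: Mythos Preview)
Your approach is correct and coincides with the paper's: the paper does not supply an explicit proof of this proposition but instead establishes the uniform energy and $L^\infty$/$C^\infty_{\mathrm{loc}}$ bounds in the paragraphs preceding the statement (exactly the inputs you list) and defers the remaining transversality and gluing to the standard cobordism argument carried out in \cite{Dju17}, as announced at the start of the section. Your identification of the key technical point---choosing the family data generically relative to the already-fixed endpoint data at $\delta=0,1$---is precisely the content of the word ``generic'' in the hypothesis, and the compactness ingredients you cite are the same ones the paper invokes.

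Two small points to clean up in your write-up. First, the index bookkeeping: for \emph{fixed} $\delta\in(0,1)$ the half-cylinder problem has no free parameter (there is no $T$ here, unlike in Propositions \ref{homotopyfloer} and \ref{homotopymorse}), so the fixed-$\delta$ expected dimension is $\ind(x_l^\alpha)+n-\mu_{CZ}(z^\beta)=0$ under the hypothesis, and adding the $\delta$-parameter gives $1$; your ``Wait'' paragraph lands on the right answer but the intermediate ``$+1$'' is spurious. Second, in the Morse-side breaking the order is reversed from what you wrote: since $\psi^{\alpha\beta}_{\delta,s}=\psi^l_{H^*_\alpha}$ for $s\le T_{\min}$, the broken-off piece at $s\to-\infty$ is a genuine $\psi^l_{H^*_\alpha}$-gradient trajectory from $x_l^\alpha$ to $y_l^\alpha$, followed by the homotopy half-trajectory from $y_l^\alpha$---so the homotopy segment is second, not first, matching the product order $\mathcal{M}_M(x_l^\alpha,y_l^\alpha)\times\mathcal{M}(y_l^\alpha,\dots)$ in the statement.
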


\section{Chain complex isomorphism}\label{sectionchain}

\begin{theorem}\label{theoremE1}
    Let $H$ be a Hamiltonian in $\mathcal{H}_\Theta$. \begin{enumerate}
        \item There is a chain complex isomorphism
    \[\Theta: \C\M_{*}(\psi^l_H) \to \C\F_{*+n}(H)\]
    from the Morse complex of the $l$-reduced dual action functional $\psi^l_H$ to the Floer complex of $H$. This isomorphism preserves the action filtrations.
        \item This isomorphism is natural. Let $H_\alpha, H_\beta \in \mathcal{H}_\Theta$ with $H_\alpha \leq H_\beta$, and let $l \in \mathbb{N}$ be large enough such that the $l$-saddle point reduction exists for both $\Psi_{H^*_\alpha}$ and $\Psi_{H^*_\beta}$. Let
        \[\Theta_\alpha: \C\M_{*}(\psi^l_{H^*_\alpha},g_\alpha) \to \C\F_{*+n}(H_\alpha, J_\alpha)\]
        and
        \[\Theta_\beta: \C\M_{*}(\psi^l_{H^*_\beta},g_\beta) \to \C\F_{*+n}(H_\beta, J_\beta)\]
        be the aforementioned chain isomorphisms, and let $(H^{\alpha\beta}_s, J^{\alpha\beta}_s)$ and $(\psi^{\alpha\beta}_s, g^{\alpha\beta}_s)$ be non-increasing homotopies such that the chain maps $i^F_{(H^{\alpha\beta}_s, J^{\alpha\beta}_s)}$ and $i^M_{(\psi^{\alpha\beta}_s, g^{\alpha\beta}_s)}$ are well-defined. Then $i^F_{(H^{\alpha\beta}_s, J^{\alpha\beta}_s)} \circ \Theta_\alpha$ and $\Theta_\beta \circ i^M_{(\psi^{\alpha\beta}_s, g^{\alpha\beta}_s)}$ are chain-homotopic.
    \end{enumerate} 
\end{theorem}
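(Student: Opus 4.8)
The plan is to construct the isomorphism $\Theta$ by counting the half-cylinder moduli spaces $\mathcal{M}_\Theta(x_l, y; H, J, g)$ introduced in the previous sections, and then to prove naturality by a standard parametrized-moduli-space argument using the homotopy spaces of \Cref{homotopyfloer}, \Cref{homotopymorse} and \Cref{homotopymixed}.

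\textbf{Step 1: Definition of $\Theta$.} For $x_l \in \crit(\psi^l_H)$ and $y \in P(H)$ with $\ind(x_l;\psi^l_H) = \mu_{CZ}(y;H) - n$, equation \eqref{dimensionhalfcylnonpar} gives $\dim \mathcal{M}_\Theta(x_l,y;H,J,g) = 0$. By \Cref{compactnessnonpar} this moduli space is compact, hence finite; set $n_\Theta(x_l,y)$ to be its parity and define
\[
\Theta(x_l) = \sum_{\mu_{CZ}(y) = \ind(x_l) + n} n_\Theta(x_l, y)\, y.
\]
That $\Theta$ is a chain map, i.e. $\partial^F \circ \Theta = \Theta \circ \partial^M$, follows from counting the boundary of the one-dimensional moduli spaces in \Cref{chainmapnonpar}: the two types of broken configurations appearing there correspond exactly to the two composites. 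That $\Theta$ is an \emph{isomorphism} follows from \Cref{energyboundsnonparhalf}: ordering generators by action, $\mathcal{M}_\Theta(x_l,y)$ is nonempty only if $\psi^l_H(x_l) \geq \Phi_H(y)$, with equality precisely when $y = x_l$ (i.e. $y = \mathbb{P}_l^{-1}(x_l)$ under the bijection of \Cref{nullityindexred}) and then the moduli space is a single point; thus $\Theta$ is "upper triangular with $1$'s on the diagonal" with respect to the action filtration, hence invertible over $\mathbb{Z}_2$. The same action estimate shows $\Theta$ preserves the action filtration. This establishes part (1), following \cite{AK22} verbatim up to the sign-convention bookkeeping already set up in \Cref{sectionfloer} and \Cref{sectionmorse}.

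\textbf{Step 2: Naturality.} To compare $i^F_{(H^{\alpha\beta}_s,J^{\alpha\beta}_s)} \circ \Theta_\alpha$ with $\Theta_\beta \circ i^M_{(\psi^{\alpha\beta}_s,g^{\alpha\beta}_s)}$, I would first count, for $\ind(x_l^\alpha) = \mu_{CZ}(z^\beta) - n$, the $0$-dimensional pieces of the homotopy moduli space $\mathcal{M}(x_l^\alpha,\psi^l_{H^*_\beta}; z^\beta, H^{\alpha\beta}_{T,s})$ to obtain a map $\Phi_1: \C M_*(\psi^l_{H^*_\alpha}) \to \C F_{*+n}(H_\beta)$, and similarly count $\mathcal{M}(x_l^\alpha, \psi^{\alpha\beta}_{T,s}; z^\beta, H_\beta)$ to obtain a map $\Phi_2$ between the same complexes. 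The boundary formula of \Cref{homotopyfloer} (when $\ind(x_l^\alpha) = \mu_{CZ}(z^\beta) - n + 1$) identifies the $T \to -\infty$ end of $\mathcal{M}(x_l^\alpha,\psi^l_{H^*_\beta}; z^\beta, H^{\alpha\beta}_{T,s})$ with the composite $i^F \circ \Theta_\alpha$ (a Morse-half-cylinder for $H_\alpha$ glued to a Floer continuation cylinder), while the $T = T_0$ end $\mathcal{M}(x_l^\alpha,\psi^l_{H^*_\beta}; z^\beta, H^{\alpha\beta}_{T_0,s})$ is exactly $\Phi_1$ evaluated via a fixed homotopy; together with the two "drop a gradient flow line / Floer cylinder" broken ends this yields $i^F \circ \Theta_\alpha - \Phi_1 = \partial^F K_1 + K_1 \partial^M$ for the chain homotopy $K_1$ defined by the $1$-dimensional $T$-family. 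Symmetrically, \Cref{homotopymorse} gives $\Theta_\beta \circ i^M - \Phi_2 = \partial^F K_2 + K_2 \partial^M$. Finally \Cref{homotopymixed} provides a chain homotopy between $\Phi_1$ and $\Phi_2$ through the $\delta$-family interpolating the two degenerations. Composing these three chain homotopies gives $i^F_{(H^{\alpha\beta}_s,J^{\alpha\beta}_s)} \circ \Theta_\alpha \simeq \Theta_\beta \circ i^M_{(\psi^{\alpha\beta}_s,g^{\alpha\beta}_s)}$, which is part (2).

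\textbf{Main obstacle.} The analytically delicate point is the compactness and gluing underlying the boundary formulas in \Cref{homotopyfloer}, \Cref{homotopymorse}, \Cref{homotopymixed}: one must rule out breaking "at the seam" $s = 0$ and, more subtly, control the behaviour as the homotopy parameter $T \to -\infty$ (where the $s$-dependent region runs off to $-\infty$) and as $\delta$ varies. The uniform energy bound $\psi^l_{H_\alpha^*}(x_l^\alpha) - \Phi_{H_\beta}(z^\beta)$ (valid for the whole family because all homotopies are non-increasing, cf.\ \Cref{energyboundsnonparhalf} and \Cref{energyboundsS1half}), combined with the fact that the $s$-dependent window has \emph{fixed} length $1$ for every $T$, is what makes the $L^\infty$-bounds uniform; the remaining elliptic bootstrap on $[-1,0]\times\mathbb{T}$ reduces to \cite[Proposition 9.3]{AK22} since all Hamiltonians agree with $H_\alpha$ (resp.\ $H_\beta$) there. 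These are exactly the conormal-bundle-type compactness and gluing arguments of \cite{Dju17}, so once the energy and $L^\infty$ estimates are in place the transversality and gluing are routine; I would carry out the estimates in detail and cite \cite{AK22,Dju17} for the gluing.
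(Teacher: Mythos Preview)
Your proof is correct and follows essentially the same approach as the paper's: define $\Theta$ by counting zero-dimensional half-cylinder moduli spaces, prove it is a filtered chain isomorphism via the upper-triangular action argument, and for naturality introduce the two intermediate chain maps (the paper calls them $\chi$ and $\Upsilon$, your $\Phi_1$ and $\Phi_2$) and use the three boundary formulas of \Cref{homotopyfloer}, \Cref{homotopymorse}, \Cref{homotopymixed} to build the chain homotopies. Two minor slips to fix: the boundary condition in the first homotopy space involves the unstable manifold for $\psi^l_{H^*_\alpha}$ (not $\psi^l_{H^*_\beta}$), and the one-dimensional boundary formula in \Cref{homotopyfloer} is for the index relation $\ind(x_l^\alpha)=\mu_{CZ}(z^\beta)-n$ (not $-n+1$), the extra dimension coming from the $T$-parameter.
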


\begin{proof}

The proof of part (1) of this theorem is already given in \cite{AK22}. For the sake of completeness, we will rewrite the proof following the arguments from \cite{AK22}.

\textbf{Claim} (1): From Sections \ref{sectionfloer} and \ref{sectionmorse}, we know that the chain complexes 
\[\bigl\{\C\F_*(H), \partial^F\bigr\}, \ \ \bigl\{\C\M_{*}(\psi^l_{H^*}), \partial^M\bigr\},\]
are generically well-defined and unique up to a chain isomorphism. Here, $l \in \mathbb{N}$ is large enough such that the $l$-saddle point reduction of $\Psi_{H^*}$ exists. For a generic choice of a uniformly bounded, almost complex structure $J$ and a metric $g \in \mathcal{G}(\mathbb{H}_l)$ that is uniformly equivalent to the standard one, we obtain a manifold structure on the moduli spaces 
\[\mathcal{M}_\Theta(x_l, y; H, J, g),\]
for all critical points $x_l \in \mathrm{crit}(\psi_{H^*})$ and all $y \in P(H)$. See Section \ref{sectiondimension} for details.

From identity \eqref{dimensionhalfcylnonpar}, we have:
\[\dim\mathcal{M}_\Theta(x_l, y; H, J, g) = \mathrm{ind}(x_l; \psi_{H^*}) + n - \mu_{CZ}(y; H).\]

The grading of $\C\F_*(H)$ is given by the Conley-Zehnder index, while the grading of $\C\M_{*}(\psilH)$ is provided by the Morse index. 

This implies, by Remark \ref{nullityindexred}, that the generators of $\C\F_{k+n}(H)$ and the generators of $\C\M_{k}(\psilH)$ are in bijective correspondence via the projection $\mathbb{P}_l$. Thus, we naturally define 
\[\Bigl\{\Theta_k: \C\M_{k}(\psilH) \to \C\F_{k+n}(H)\Bigr\}_{k \in \mathbb{Z}}\]
by setting
\[\Theta_k x_l = \sum\limits_{y} n_\Theta(x_l, y) y,\]
where the sum is taken over all $y \in P(H)$ such that $\mu_{CZ}(y; H) = \mathrm{ind}(x_l; \psi_{H^*}) + n$, and $n_\Theta(x_l, y)$ is the parity of the set $\mathcal{M}_\Theta(x_l, y; H, J, g)$. This is well-defined since Remark \ref{nullityindexred} implies that $\mathcal{M}_\Theta(x_l, y; H, J, g)$ is a $0$-dimensional manifold, which, by Proposition \ref{compactnessnonpar}, is compact and therefore finite. Proposition \ref{chainmapnonpar} implies that $\Theta$ is a chain map.

Proposition \ref{energyboundsnonparhalf} shows that $\mathcal{M}_\Theta(x_l, y; H, J, g)$ is empty if $\psilH(x_l) < \Phi_H(y)$. Therefore, for any $c \in \mathbb{R}$, $\Theta$ maps the subcomplex $\C\M^{<c}_{*}(\psilH)$ to the subcomplex $\C\F^{<c}_{*+n}(H)$, preserving the filtration.

Now, let $k \in \mathbb{Z}$ be arbitrary. We will show that $\Theta_k$ is an isomorphism. Let $x^1, \dots, x^p$ be all Hamiltonian orbits generating $\C\F^{<c}_{k+n}(H)$ such that 
\[\Phi_H(x^1) \leq \dots \leq \Phi_H(x^p).\]
The corresponding basis of $\C\M^{<c}_{k}(\psilH)$ is $(x_l^1, \dots, x_l^p)$ due to Remark \ref{nullityindexred}. In these bases, $\Theta^{<c}_k$ is an upper-triangular $p \times p$ matrix with entries 1 on the diagonal. Indeed, by Proposition \ref{energyboundsnonparhalf}, we know that the set $\mathcal{M}_\Theta(x^i_l, x^j; H, J, g)$ is empty for $i < j$ and contains exactly one element if $i = j$. Thus, $\Theta^{<c}_k$ is an isomorphism.

\textbf{Claim} (2): Let $\Theta_\alpha$, $\Theta_\beta$, $i^F_{(H^{\alpha\beta}_s, J^{\alpha\beta}_s)}$, and $i^M_{(\psi^{\alpha\beta}_s, g^{\alpha\beta}_s)}$ be the chain maps from the claim. The map $i^F_{(H^{\alpha\beta}_s, J^{\alpha\beta}_s)} \circ \Theta_\alpha$ counts the number of elements in 
\[\bigcup\limits_{y^\alpha} \mathcal{M}_\Theta(x_l^\alpha, y^\alpha; H_\alpha, J_\alpha, g_\alpha) \times \mathcal{M}_F(y^\alpha, z^\beta; H^{\alpha\beta}).\]  
If $H^{\alpha\beta}_{T_0,s} = H^{\alpha\beta}_{s}$ and we define the chain map 
\[\chi: \C\M_{*}(\psi^l_{H^*_\alpha}, g_\alpha) \to \C\F_{*+n}(H_\beta, J_\beta), \ \ \ \chi(x_l^\alpha) = \sum\limits_{z^\beta} n_\chi(x_l^\alpha, z^\beta) z^\beta,\]
where $n_\chi(x_l^\alpha, z^\beta)$ is the number of trajectories in $\mathcal{M}(x_l^\alpha, \psi_{H^*_\alpha}; z^\beta, H^{\alpha\beta}_{s})$, then by Proposition \ref{homotopyfloer}, we have that $i^F_{(H^{\alpha\beta}_s, J^{\alpha\beta}_s)} \circ \Theta_\alpha$ and $\chi$ are chain homotopic. Similarly, if $\psi^{\alpha\beta}_{T_0,s} = \psi^{\alpha\beta}_{s}$ and we define the chain map 
\[\Upsilon: \C\M_{*}(\psi^l_{H^*_\alpha}, g_\alpha) \to \C\F_{*+n}(H_\beta, J_\beta), \ \ \ \Upsilon(x_l^\alpha) = \sum\limits_{z^\beta} n_\Upsilon(x_l^\alpha, z^\beta) z^\beta,\]
where $n_\Upsilon(x_l^\alpha, z^\beta)$ is the number of trajectories in $\mathcal{M}(x_l^\alpha, \psi^{\alpha\beta}_{s}; z^\beta, H_\beta)$, then by Proposition \ref{homotopymorse}, we have that $\Theta_\beta \circ i^M_{(\psi^l_s, g_s)}$ and $\Upsilon$ are chain homotopic. Finally, by Proposition \ref{homotopymixed}, we conclude that $\chi$ and $\Upsilon$ are chain homotopic, completing the proof.
\end{proof}

\begin{theorem}\label{theoremE2}
   Let $H$ be an $S^1$-invariant Hamiltonian in $\mathcal{H}_\Theta^{S^1,N}(\widetilde{f}_N, g_N)$. 
    \begin{enumerate}
        \item There is a chain complex isomorphism
        \[
        \Theta^{S^1}: \C\M^{S^1,N}_{*}(\psi^l_{H^*}, \widetilde{f}_N) \to \C\F^{S^1,N}_{*+n}(H, \widetilde{f}_N),
        \]
        from the $S^1$-equivariant Morse complex of the pair $(\psi^l_{H^*}, \widetilde{f}_N)$ to the $S^1$-equivariant Floer complex of the pair $(H, \widetilde{f}_N)$. This isomorphism preserves the action filtrations.
        \item This isomorphism is natural. Let $(N_\alpha, H_\alpha) \leq (N_\beta, H_\beta)$, where $H_\alpha \in \mathcal{H}_\Theta^{S^1, N_\alpha}(\widetilde{f}_{N_\alpha}, g_{N_\alpha})$ and $H_\beta \in \mathcal{H}_\Theta^{S^1, N_\beta}(\widetilde{f}_{N_\beta}, g_{N_\beta})$. Let $l \in \mathbb{N}$ be large enough such that the $l$-saddle point reduction exists for both $\Psi_{H^*_\alpha}$ and $\Psi_{H^*_\beta}$. Let
        \[
        \Theta^{S^1}_\alpha: \C\M^{S^1,N_\alpha}_{*}(\psi^l_{H^*_\alpha}, \widetilde{f}_{N_\alpha}; g_\alpha) \to \C\F_{*+n}^{S^1, N_\alpha}(H_\alpha, \widetilde{f}_{N_\alpha}, J_\alpha)
        \]
        and
        \[
        \Theta^{S^1}_\beta: \C\M^{S^1,N_\beta}_{*}(\psi^l_{H^*_\beta}, \widetilde{f}_{N_\beta}; g_\beta) \to \C\F_{*+n}^{S^1, N_\beta}(H_\beta, \widetilde{f}_{N_\beta}, J_\beta)
        \]
        be the aforementioned chain isomorphisms and let $(H^{\alpha\beta}_s, J^{\alpha\beta}_s)$ and $(\psi^{\alpha\beta}_s, g^{\alpha\beta}_s)$ be non-increasing homotopies such that the chain maps $i^{F,S^1}_{(H^{\alpha\beta}_s, J^{\alpha\beta}_s)}$ and $i^{M,S^1}_{(\psi^{\alpha\beta}_s, g^{\alpha\beta}_s)}$ are well-defined. Then $i^{F,S^1}_{(H^{\alpha\beta}_s, J^{\alpha\beta}_s)} \circ \Theta^{S^1}_\alpha$ and $\Theta^{S^1}_\beta \circ i^{M,S^1}_{(\psi^{\alpha\beta}_s, g^{\alpha\beta}_s)}$ are chain homotopic.
    \end{enumerate}
\end{theorem}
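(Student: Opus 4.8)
The plan is to follow the proof of Theorem~\ref{theoremE1} line by line, substituting for each non-parameterized object its $S^1$-equivariant counterpart built in Sections~\ref{sectionmorse} and~\ref{sectiondimension} and in the compactness section. For claim~(1): the generators of $\C\F^{S^1,N}_{*+n}(H,\widetilde{f}_N)$ are the critical orbits $S_{(x,v)}\in P(H,\widetilde{f}_N)$, those of $\C\M^{S^1,N}_{*}(\psi^l_{H^*},\widetilde{f}_N)$ are the critical orbits $S_{(x_l,v)}\in\crit(\psi^l_{H^*},\widetilde{f}_N)$, and by Remark~\ref{nullityindexreds1equi} the map $\mathbb{P}_l\times\mathrm{id}_{S^{2N+1}}$ is a bijection between the two sets with $\ind(S_{(x_l,v)})=|S_{(x,v)}|-n$. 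One then sets
\[\Theta^{S^1}(S_{(x_l,v)})=\sum n^{S^1}_\Theta(S_{(x_l,v)},S_{(y,w)})\,S_{(y,w)},\]
the sum running over orbits with $|S_{(y,w)}|=\ind(S_{(x_l,v)})+n$, where $n^{S^1}_\Theta$ is the parity of the moduli space $\mathcal{M}^{S^1,N}_\Theta(S_{(x_l,v)},S_{(y,w)};H,\widetilde{f}_N,J,g)$, which by \eqref{dimensionhalfcyls1equi} is then $0$-dimensional and by the $S^1$-equivariant compactness results compact, hence finite. That $\Theta^{S^1}$ is a chain map is the $S^1$-equivariant analogue of Proposition~\ref{chainmapnonpar}, available by Remark~\ref{chainmaps1equi}.

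Filtration preservation and the isomorphism property are then immediate from the energy bound Proposition~\ref{energyboundsS1half}, exactly as in the proof of Theorem~\ref{theoremE1}: $\mathcal{M}^{S^1,N}_\Theta(S_{(x_l,v)},S_{(y,w)};H,\widetilde{f}_N,J,g)$ is empty when $\psi^l_{H^*}(x_l,v)<\Phi_H(y,w)$ and consists of a single orbit of constant half-cylinders when equality holds. Hence $\Theta^{S^1}$ maps $\C\M^{S^1,N,<c}_{*}$ into $\C\F^{S^1,N,<c}_{*+n}$, and ordering the generators of each filtration sublevel by action, $\Theta^{S^1}$ is represented by an upper-triangular matrix with ones on the diagonal, so it is a filtered isomorphism.

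For claim~(2) one introduces the $S^1$-equivariant analogues of the three homotopy half-cylinder moduli spaces underlying Propositions~\ref{homotopyfloer}, \ref{homotopymorse} and~\ref{homotopymixed}: the $T$-parameterized family interpolating the Floer continuation with $\Theta^{S^1}_\alpha$, the $T$-parameterized family interpolating the Morse continuation with $\Theta^{S^1}_\beta$, and the $\delta$-parameterized family connecting the two. Each is built from the appropriate coupled system — the parameterized Floer equation paired with the $\nabla\widetilde{f}_N$-flow, or the $-V_g$-flow paired with the $-\nabla\widetilde{f}_N$-flow — quotiented by the free $\mathbb{R}\times\mathbb{T}$-action. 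Compactness follows as in the compactness section from the uniform energy bounds (the analogue of Proposition~\ref{energyboundsS1half}) together with Proposition~\ref{compactnessfloer} and Lemma~\ref{mixedderivativesham}; transversality holds for generic data because the systems reduce to continuation equations, as used throughout for the Borel construction (cf.\ \cite{BO17}); the dimension counts come from \eqref{dimensionhalfcyls1equi}. With these in hand the $S^1$-equivariant versions of Propositions~\ref{homotopyfloer}, \ref{homotopymorse} and~\ref{homotopymixed} identify the boundaries of the relevant $1$-dimensional moduli spaces and yield, successively, that $i^{F,S^1}_{(H^{\alpha\beta}_s,J^{\alpha\beta}_s)}\circ\Theta^{S^1}_\alpha$ is chain homotopic to the map counting $\mathcal{M}^{S^1,N}(x_l^\alpha,\psi^l_{H^*_\alpha};z^\beta,H^{\alpha\beta}_s)$, that $\Theta^{S^1}_\beta\circ i^{M,S^1}_{(\psi^{\alpha\beta}_s,g^{\alpha\beta}_s)}$ is chain homotopic to the map counting $\mathcal{M}^{S^1,N}(x_l^\alpha,\psi^{\alpha\beta}_s;z^\beta,H_\beta)$, and that these two maps are themselves chain homotopic.

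One bookkeeping point absent from the non-parameterized case is that the continuation data now also involves the stabilization $i_{N_\beta,N_\alpha}\colon S^{2N_\alpha+1}\hookrightarrow S^{2N_\beta+1}$ and the Morse functions $\widetilde{f}_{N_\alpha}$, $i_{N_\beta,N_\alpha}^*\widetilde{f}_{N_\beta}$; one must choose the homotopies of the auxiliary functions compatibly so that the monotonicity statements of Lemma~\ref{filtrations1equfloer} persist along the homotopy moduli spaces, which is routine but must be recorded. The main obstacle is precisely this analytic package for the $S^1$-equivariant homotopy moduli spaces: one must simultaneously control the coupling with the finite-dimensional $\widetilde{f}_N$-gradient flow, pass to the free $\mathbb{R}\times\mathbb{T}$ quotient, and — in the continuation setting — accommodate the change of parameter sphere $N_\alpha\to N_\beta$, while keeping the Fredholm indices and compactness exactly as established in Section~\ref{sectiondimension} and the compactness section. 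Once this is in place, the combinatorics of Propositions~\ref{homotopyfloer}, \ref{homotopymorse} and~\ref{homotopymixed} carries over verbatim, and the theorem follows as in Theorem~\ref{theoremE1}.
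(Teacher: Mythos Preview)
Your proposal is correct and takes essentially the same approach as the paper, which simply states that the proof is ``completely analogous to the proof of the previous theorem.'' In fact you have spelled out in considerably more detail than the paper what this analogy entails, including the bookkeeping for the stabilization $i_{N_\beta,N_\alpha}$ that the paper leaves implicit.
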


\begin{proof}
    Proof is completely analogus to the proof of the Previous theorem.
\end{proof}

\section{In between Clarke's duality}\label{sectionautonomusclarke}

In this section, we will compare the singular ($S^1$-equivariant) homology of sublevel sets of Clarke's dual functional introduced in \cite{Cla79} with the singular ($S^1$-equivariant) homology of sublevel sets of the dual action functional of the form 
\[
\PsiH(x) = -\frac{1}{2} \int\limits_\mathbb{T} \langle J_0 x(t), \dot{x}(t) \rangle \, dt + \int\limits_\mathbb{T} H^*(-J_0 \dot{x}) \, dt
\]
for particular types of autonomous Hamiltonians.

\textbf{Clarke's dual functional associated with a convex set}

Let $C \subset \mathbb{R}$ be a bounded convex domain whose interior contains the origin. We define the function 
\[
\HC: \mathbb{R}^{2n} \to \mathbb{R}
\]
to be a positively 2-homogeneous function such that $\HC|_{\partial C}=1$. With 
\[
\HdC: \mathbb{R}^{2n} \to \mathbb{R}
\]
we denote its Fenchel conjugate. We introduce the action 
\[
\mathcal{A}: H^1_0(\mathbb{T}, \mathbb{R}^{2n}) \to \mathbb{R}, \quad \mathcal{A}(x) = \frac{1}{2} \int\limits_\mathbb{T} \langle J_0 x(t), \dot{x}(t) \rangle \, dt
\]
and the functional 
\[
\mathcal{H}_C: H^1_0(\mathbb{T}, \mathbb{R}^{2n}) \to \mathbb{R}, \quad \mathcal{H}_C(x) = \int\limits_\mathbb{T} \HC^*(-J_0 \dot{x}(t)) \, dt.
\]
Using these, Clarke's dual functional on the positive open cone $\mathcal{A}^{-1}(0, +\infty)$ is defined as:
\[
\widetilde{\Psi}_C: \mathcal{A}^{-1}(0, +\infty) \to \mathbb{R}, \quad \widetilde{\Psi}_C(x) \defeq \frac{\mathcal{H}_C(x)}{\mathcal{A}(x)}.
\]

The 0-homogeneity of $\widetilde{\Psi}_C$ naturally follows from the positive 2-homogeneity of $\mathcal{A}$ and $\mathcal{H}$. The regularity of $\widetilde{\Psi}_C$ matches that of $\mathcal{H}_C$. Thus, $\widetilde{\Psi}_C$ is not generally $C^1$, but it is locally Lipschitz continuous and possesses a notion of weak critical points (see \cite{Cla83, AO14}). Given that both $\mathcal{H}_C$ and $\mathcal{A}$ are $S^1$-invariant and $\widetilde{\Psi}_C$ is 0-homogeneous, it follows that $\widetilde{\Psi}_C$ is invariant under $\mathbb{C}_* = \mathbb{C} \backslash \{0\}$ action. Consequently, if $N$ is an $S^1$-invariant hypersurface of $\mathcal{A}^{-1}(0, +\infty)$ that is transversal to the radial direction and $\mathbb{R}_+ N = \mathcal{A}^{-1}(0, +\infty)$, then the inclusion
\[
\{x \in N \mid \widetilde{\Psi}_C(x) < L\} \to \{\widetilde{\Psi}_C(x) < L\}
\]
is an $S^1$-equivariant homotopy equivalence for every $L \in \mathbb{R}$. For this reason, the Clarke's dual functional in this form is typically considered as a restriction of $\widetilde{\Psi}_C$ to $\mathcal{H}^{-1}(1)$ or $\mathcal{A}^{-1}(1)$. Here, we define 
\[
\Lambda \defeq \mathcal{A}^{-1}(1)
\]
and
\[
\PsiC: \Lambda \to \mathbb{R}, \quad \PsiC \defeq \widetilde{\Psi}_C|_\Lambda=\mathcal{H}_C|_{\Lambda}.
\]
Therefore, it holds that
\[
\PsiC(x) = \int\limits_\mathbb{T} \HdC(-J_0 \dot{x}(t)) \, dt.
\]

The functional $\PsiC$ is invariant under the $S^1$-action on $\Lambda$, and therefore, we will sometimes refer to critical orbits, with the usual notation $S_x \defeq S^1 \cdot x$, where $x \in \mathrm{crit}(\PsiC)$.

\textbf{Strongly convex domain:} We say that $C \subset \mathbb{R}^{2n}$ is a strongly convex domain if it is the closure of a bounded open convex set with a smooth boundary that has positive sectional curvature everywhere. 

If the interior of $C$ contains the origin, the strong convexity of $C$ is equivalent to the condition that the second derivative of $\HC$ is positive definite at every point $x \neq 0$. Since $\HC$ is twice differentiable on $\mathbb{R}^{2n} \setminus \{0\}$ and its second derivative is positively 0-homogeneous, we conclude that for strongly convex domains,
\[
\underline{h}_C I \leq \nabla^2 \HC(x) \leq \overline{h}_C I, \quad x \neq 0,
\]
where $\underline{h}_C > 0$ and $\overline{h}_C > 0$ are positive constants. Thus, $\HC$ is quadratically convex in this setting, except for the lack of twice differentiability at the origin. Moreover, the same conclusion holds for $\HdC$, as it is the Fenchel conjugate of $\HC$.

Let $C$ be a strongly convex domain whose interior contains the origin. Then $(C, \lambda_0)$ is a Liouville domain, where $\lambda_0$ is the standard Liouville form. The Reeb vector field $R$ on $\partial C$ is characterized by the equations
\[
i_R \lambda_0 = 1, \quad i_R \omega_0 = 0.
\]
We define the spectrum $\sigma(\partial C)$ as the set of periods of closed Reeb orbits (i.e., the action of the Reeb orbits). For a more general definition of the spectrum, see the introduction or \cite{HZ94, AO14}.

Alternatively, the orbits can be understood as solutions of the equation
\[
y: \mathbb{R}/T\mathbb{Z} \to \partial C, \quad \dot{y}(t) = J_0 \nabla \HC(y(t)), \quad T > 0,
\]
since the Reeb vector field has the form \[R=J_0\nabla \HC.\]
Reparametrizing these orbits, we can represent the set of Reeb orbits as
\[
\mathcal{R}(\partial C)\defeq \{y: \mathbb{T} \to \partial C \mid \dot{y} = T J_0 \nabla \HC(y(t)), \quad T > 0\}.
\]
The spectrum of $\partial C$, denoted by $\sigma(\partial C)$, is given by
\[
\sigma(\partial C) = \{\mathcal{A}(y) \mid y \in \mathcal{R}(\partial C)\},
\]
since the action $\mathcal{A}$ is independent of parametrization up to a sign.

Since $\HdC$ is quadratically convex, $\PsiC \in C^{1,1}$ when $C$ is a strongly convex domain. The correspondence between Reeb orbits and the critical points of $\PsiC$ is described by the following lemma.

\begin{lemma}\label{bijecthom}
    A point $x \in \crit(\PsiC)$ if and only if
    \[
    \nabla \HdC(-J_0 \dot{x}(t)) = \PsiC(x) x(t) + \beta,
    \]
    where $\beta = \int_\mathbb{T} \nabla \HdC(-J_0 \dot{x}(t)) \, dt$. Moreover, there is a bijection between closed Reeb orbits and $\crit(\PsiC)$ given by the map
    \[
    \mathcal{P}: \mathcal{R}(\partial C) \to \crit(\PsiC), \quad \mathcal{P}(y) = \frac{1}{\sqrt{\mathcal{A}(y)}} \pi(y),
    \]
    where $\pi(y) = y - \int_\mathbb{T} y(t) \, dt$. The inverse map is given by
    \[
    \mathcal{P}^{-1}: \crit(\PsiC) \to \mathcal{R}(\partial C), \quad \mathcal{P}^{-1}(x) = \frac{1}{\sqrt{\PsiC(x)}} (\PsiC(x) x(t) + \beta).
    \]
    Additionally, it holds that $\mathcal{A}(y) = \PsiC(\mathcal{P}(y))$.
\end{lemma}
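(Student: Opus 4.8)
\emph{Step 1: the critical point equation.} The plan is to first extract the Euler--Lagrange equation of $\PsiC$ on the hypersurface $\Lambda = \mathcal{A}^{-1}(1)$, and then to convert its solutions into Reeb orbits on $\partial C$ by inverting the Fenchel gradient and rescaling by a power of the energy. For the first part: since $C$ is strongly convex, $\HdC$ is $C^{1,1}$ with $\nabla^2\HdC$ uniformly positive definite away from the origin, so $\PsiC = \mathcal{H}_C|_\Lambda$ is $C^1$ and $\Lambda$ is a smooth hypersurface of $H^1_0(\mathbb{T},\mathbb{R}^{2n})$ on which $d\mathcal{A}$ never vanishes. Hence $x \in \crit(\PsiC)$ iff $d\mathcal{H}_C(x) = \mu\, d\mathcal{A}(x)$ for some $\mu \in \mathbb{R}$. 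I would compute both differentials against a test loop $\xi \in H^1_0(\mathbb{T},\mathbb{R}^{2n})$, integrate by parts on $\mathbb{T}$, and reduce the identity to $\int_\mathbb{T} \langle J_0(\nabla\HdC(-J_0\dot x) - \mu x), \dot\xi\rangle\, dt = 0$ for all such $\xi$; since $\dot\xi$ ranges over all mean-zero $L^2$ loops, $J_0(\nabla\HdC(-J_0\dot x) - \mu x)$ must be constant, i.e.\ $\nabla\HdC(-J_0\dot x(t)) = \mu x(t) + \beta'$ for a constant $\beta'$. Integrating over $\mathbb{T}$ and using $\int_\mathbb{T} x = 0$ then identifies $\beta' = \int_\mathbb{T}\nabla\HdC(-J_0\dot x(t))\, dt = \beta$.

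\emph{Step 2: the multiplier equals $\PsiC(x)$.} The Fenchel conjugate of a positively $2$-homogeneous function is again positively $2$-homogeneous, so Euler's identity gives $\langle\nabla\HdC(v),v\rangle = 2\HdC(v)$. Evaluating this at $v = -J_0\dot x(t)$, integrating over $\mathbb{T}$, and inserting the equation from Step 1 (the constant term integrates to zero because $\int_\mathbb{T}\dot x = 0$) yields $2\PsiC(x) = \mu\int_\mathbb{T}\langle x, -J_0\dot x\rangle\, dt = 2\mu\,\mathcal{A}(x) = 2\mu$, using $x\in\Lambda$. Hence $\mu = \PsiC(x)$, which is the first assertion.

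\emph{Step 3: the bijection with Reeb orbits.} Given $x\in\crit(\PsiC)$, set $T = \PsiC(x)$ and $w = Tx + \beta = \nabla\HdC(-J_0\dot x)$. Inverting the Fenchel gradient (the autonomous case of Lemma \ref{parfenchelconjugatelemma}, together with $\HC^{**}=\HC$) gives $-J_0\dot x = \nabla\HC(w)$, i.e.\ $\dot w = T\, J_0\nabla\HC(w)$; along such $w$ one has $\frac{d}{dt}\HC(w)=0$, and Euler's identity for $\HC$ together with $\mathcal{A}(x)=1$ forces $\HC(w(t)) \equiv T$. By $2$-homogeneity, $y \defeq w/\sqrt{T}$ lies on $\partial C$ and solves $\dot y = T\, J_0\nabla\HC(y)$, so $y \in \mathcal{R}(\partial C)$ with $\mathcal{A}(y) = T$; this $y$ is precisely $\mathcal{P}^{-1}(x)$. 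Conversely, for $y\in\mathcal{R}(\partial C)$ of period $T$, Euler's identity and $\HC|_{\partial C}=1$ give $\mathcal{A}(y) = T$, and $\mathcal{A}(\pi(y)) = \mathcal{A}(y)$ since adding a constant does not change $\mathcal{A}$, so $\mathcal{P}(y) = \pi(y)/\sqrt{\mathcal{A}(y)} \in \Lambda$. For $x = \mathcal{P}(y)$ one computes $-J_0\dot x = \sqrt{T}\,\nabla\HC(y) = \nabla\HC(\sqrt{T}\,y)$ by $1$-homogeneity of $\nabla\HC$; applying $\nabla\HdC$ and rewriting $\sqrt{T}\,y = Tx + \sqrt{T}\!\int_\mathbb{T} y(t)\,dt$ shows $x$ satisfies the critical point equation with multiplier $T$, hence $x\in\crit(\PsiC)$ and $\PsiC(x) = T = \mathcal{A}(y)$. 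The same homogeneity bookkeeping verifies $\mathcal{P}\circ\mathcal{P}^{-1} = \mathrm{id}$ and $\mathcal{P}^{-1}\circ\mathcal{P} = \mathrm{id}$.

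\emph{Expected main obstacle.} There is no deep difficulty: the crux is the clean identification $\mu = \PsiC(x)$ in Step 2, which rests on coupling Euler's identity for $\HdC$ with the normalization $\mathcal{A}(x)=1$; once that and the Fenchel inversion are available, the rest is homogeneity rescaling. The only place demanding care is the analytic input legitimizing Step 1 --- that strong convexity of $C$ makes $\PsiC$ a genuine $C^1$ function on the smooth hypersurface $\Lambda$, so that the Lagrange multiplier rule applies --- and this is exactly the $C^{1,1}$ regularity recorded earlier in this section.
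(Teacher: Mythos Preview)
Your proof is correct and follows the standard route (Lagrange multiplier rule on $\Lambda$, Euler's identity for the $2$-homogeneous $\HdC$ to pin down the multiplier, then Fenchel inversion and homogeneity rescaling to pass between critical points and Reeb orbits). The paper itself does not prove this lemma at all: immediately after the statement it writes ``The proof of this lemma can be found in \cite{Cla79,HZ94,AO08}'' and moves on. What you have written is essentially the argument one finds in those references, so there is nothing to compare.
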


The proof of this lemma can be found in \cite{Cla79,HZ94,AO08}.

\textbf{Non-degenerate domain:} We say that $C$ is non-degenerate if every closed Reeb orbit $y: \mathbb{R}/T\mathbb{Z} \to \partial C$ is transversally non-degenerate. This means that the differential of the return map $d\varphi^T_{R}(y(0))$ of the flow of the Reeb vector field $R$, restricted to the contact distribution $\ker(\lambda_0)$, does not have $1$ as an eigenvalue.

\begin{remark}\label{tansvnondeg}
This assumption on $C$ is not particularly strong in the sense that every bounded convex domain can be Hausdorff-approximated by convex domains of this type. In the case where $C$ is non-degenerate, $\sigma(\partial C)$ is discrete. Furthermore, for every period $T \in \sigma(\partial C)$, there exists a finite number of $S^1$-families of closed Reeb orbits with period $T$.
\end{remark}

From now on, we will assume that $C \subset \mathbb{R}^{2n}$ is a non-degenerate, strongly convex domain whose interior contains the origin.

\textbf{Admissible$^*$ approximating functions:} Let $\varepsilon = \frac{1}{2} \min \sigma(\partial C)$. We define the set $\mathcal{F}^*_{\operatorname{lin}}(C)$ as the collection of smooth functions $\varphi: \mathbb{R}_{\geq 0} \to \mathbb{R}$ that satisfy the following properties:

\begin{enumerate}
    \item There exists $s_\varepsilon > 0$ such that $\varphi(s) = \delta s - \zeta_\varepsilon$ for $s \in [0, s_\varepsilon]$, where $0 < \zeta_\varepsilon < \varepsilon$ and $0 < \delta < \frac{\varepsilon}{4}$.
    \item There exists $s_\eta > s_\varepsilon$ such that $\varphi(s) = \eta s - \zeta_\eta$ for $s \in [s_\eta, +\infty)$, where $\eta \notin \sigma(\partial C)$ and $\eta > \min \sigma(\partial C)$.
    \item $\varphi''(s) > 0$ for $s \in (s_\varepsilon, s_\eta)$.
    \item For the unique $s_{\min} \in (s_\varepsilon, s_\eta)$, where $\varphi'(s_{\min}) = T_{\min} = \min \sigma(\partial C)^{<\eta} = \min \sigma(\partial C)$, it holds that
    \[
    \varphi'(s_{\min}) s_{\min} - \varphi(s_{\min}) > \varepsilon.
    \]
    \item For the unique $s_{\max} \in (s_\varepsilon, s_\eta)$, where $\varphi'(s_{\max}) = T_{\max} = \max \sigma(\partial C)^{<\eta}$, it holds that
    \[
    \varphi'(s_{\max}) s_{\max} - \varphi(s_{\max}) < \eta.
    \]
\end{enumerate}

\begin{remark}\label{critical values}
   Notice that, due to conditions (1), (2), and (3), $\varphi$ is a strictly increasing convex function and is strictly convex on $(s_\varepsilon, s_\eta)$. The map $\varphi': (s_\varepsilon, s_\eta) \to (\delta, \eta)$ is an increasing bijection. Since $\eta > \min \sigma(\partial C)$, there exist unique values $s_{\min}$ and $s_{\max}$ determined by conditions (4) and (5). Moreover, the function $\varphi'(s) s - \varphi(s)$ is non-decreasing, so
\begin{equation}\label{boundsphiepsiloneta}
    \varepsilon < \varphi'(s) s - \varphi(s) < \eta \quad \text{for } s \in [s_{\min}, s_{\max}].
\end{equation}
\end{remark}

Clearly, the set $\mathcal{F}^*_{\operatorname{lin}}(C)$ is non-empty for any $\eta \notin \sigma(\partial C)$, $\eta > \sigma(\partial C)$ we choose. We denote $\varphi_\eta \in \mathcal{F}^*_{\operatorname{lin}}(C)$ to emphasize that the slope of $\varphi$ is $\eta$ at infinity and write $\Heta$ when $\Heta = \varphi_\eta \circ \HC$ for some $\varphi_\eta \in \mathcal{F}^*_{\operatorname{lin}}(C)$.

First, we will outline some essential properties of the Hamiltonian $\Heta$ and its Fenchel conjugate $\Hdeta$, as well as the regularity properties of $\PhiHeta$ and $\PsiHeta$. Then we will state the main proposition of this chapter.

\begin{lemma}\label{ham}
    Let $\varphi_\eta \in \mathcal{F}^*_{\operatorname{lin}}(C)$ and $H_\eta \defeq \varphi_\eta \circ \HC$, with $H_\eta^*$ denoting its Fenchel conjugate. Then, the following holds:
    \begin{enumerate}
        \item $H_\eta$ and hence $H^*_\eta$ are quadratically convex, except for the lack of twice-differentiability at the origin.
        \item For all $x \in \mathbb{R}^{2n}$, it holds that
        \[
        \nabla H_\eta^*(x) = k \nabla \HdC(x),
        \]
        where $k$ is the unique solution of the equation $\varphi'_\eta(k^2 \HdC(x)) k = 1$. This implies that $k \in \left[\frac{1}{\eta}, \frac{1}{\delta}\right]$.
        \item $\PhiHeta$ and $\PsiHeta$ are continuously differentiable $S^1$-invariant functionals.
    \end{enumerate}
\end{lemma}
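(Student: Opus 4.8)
\textbf{Proof proposal for Lemma~\ref{ham}.}

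The plan is to derive everything from the structure $H_\eta = \varphi_\eta\circ\HC$ together with the two-sided bound $\underline{h}_C I\le \nabla^2\HC(x)\le \overline{h}_C I$ for $x\neq 0$ that was recorded above for strongly convex $C$, and the convexity/linearity profile of $\varphi_\eta$ encoded in properties (1)--(3) of $\mathcal{F}^*_{\operatorname{lin}}(C)$. For claim (1), I would compute $\nabla^2 H_\eta(x)$ by the chain rule for $x\neq 0$:
\[
\nabla^2 H_\eta(x) = \varphi_\eta'(\HC(x))\,\nabla^2\HC(x) + \varphi_\eta''(\HC(x))\,\nabla\HC(x)\otimes\nabla\HC(x).
\]
Since $\varphi_\eta' \ge \delta > 0$ everywhere (it is $\delta$ on $[0,s_\varepsilon]$, increasing on $(s_\varepsilon,s_\eta)$, and $\eta$ afterward) and $\varphi_\eta'' \ge 0$, the first term is bounded below by $\delta\underline{h}_C I$ and the second term is positive semidefinite, giving a uniform lower bound. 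For the upper bound, $\varphi_\eta' \le \eta$ bounds the first term by $\eta\overline{h}_C I$, and for the second term one uses $|\nabla\HC(x)|^2 \le C_1\HC(x)$ (a consequence of $2$-homogeneity and the bounds on $\nabla^2\HC$, via Euler's identity $\langle\nabla\HC(x),x\rangle = 2\HC(x)$) together with the fact that $\varphi_\eta''(s)\,s$ is bounded on $[0,\infty)$ — indeed $\varphi_\eta''$ is supported in the compact interval $[s_\varepsilon,s_\eta]$. This yields $\underline{h}I\le\nabla^2 H_\eta(x)\le\overline{h}I$ for $x\neq 0$ with constants depending only on $C$ and $\varphi_\eta$; the Fenchel conjugate $H_\eta^*$ then inherits the reciprocal two-sided bound $\overline{h}^{-1}I\le\nabla^2 H_\eta^*\le\underline{h}^{-1}I$ by the standard duality $\nabla^2 H_\eta^*(\nabla H_\eta(y)) = (\nabla^2 H_\eta(y))^{-1}$, away from the image of the origin.

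For claim (2), I would use the defining relation of the Fenchel conjugate: $\nabla H_\eta^*(x) = y$ where $y$ is the unique point with $\nabla H_\eta(y) = x$, i.e. $\varphi_\eta'(\HC(y))\nabla\HC(y) = x$. Writing $\nabla\HC(y) = \frac{x}{\varphi_\eta'(\HC(y))}$ and comparing with $\nabla\HdC(x)$, one checks using the $1$-homogeneity of $\nabla\HdC$ and the relation $\nabla\HdC\circ\nabla\HC = \mathrm{id}$ that $y = k\,\nabla\HdC(x)$ for a scalar $k>0$; substituting back into $\nabla H_\eta(y)=x$ and using $2$-homogeneity of $\HC$ to get $\HC(y) = k^2\HC(\nabla\HdC(x)) = k^2\HdC(x)$ (the last equality being the identity $\HC\circ\nabla\HdC = \HdC$ valid for $2$-homogeneous convex functions) gives precisely the scalar equation $\varphi_\eta'(k^2\HdC(x))\,k = 1$. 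Uniqueness of $k$ follows because $t\mapsto \varphi_\eta'(t^2\HdC(x))\,t$ is strictly increasing in $t>0$ (product of two nonnegative nondecreasing factors, one strictly increasing). Finally, since $\varphi_\eta'$ takes values in $[\delta,\eta]$, the equation $\varphi_\eta'(\cdots)k = 1$ forces $k\in[1/\eta,1/\delta]$.

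For claim (3), $S^1$-invariance of $\PhiHeta$ and $\PsiHeta$ is immediate since $H_\eta$ is autonomous (so the corresponding action functionals are invariant under time-shift reparametrization, exactly as verified in Proposition~\ref{correspondancecriticals1equi}). Continuous differentiability of $\PhiHeta$ on $\Honehalf$ follows from the bounded-second-derivative property of $H_\eta$ established in claim (1) together with the Sobolev-type estimates recalled in the remark after the definition of $\PhiH$. For $\PsiHeta$, the point is that although $H_\eta$ fails to be twice differentiable at the origin, $H_\eta^*$ is $C^1$ with locally Lipschitz gradient (using claim (2): $\nabla H_\eta^* = k\,\nabla\HdC$ with $k$ bounded and depending smoothly on $x$ away from $0$, and near $x=0$ one has $\nabla H_\eta^*(x) = \frac{1}{\delta}\nabla\HdC(x)$ by property (1) of $\mathcal{F}^*_{\operatorname{lin}}(C)$, which is genuinely $C^{1,1}$ there since $\HdC$ is), so the argument of Abbondandolo--Kang giving $C^1$-regularity of the dual action functional for quadratically convex Hamiltonians applies verbatim. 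The main obstacle I anticipate is the bookkeeping at the origin: one must argue carefully that the lack of $C^2$-regularity of $H_\eta$ at $0$ does not spoil the quadratic-convexity estimates used throughout, exploiting that property (1) makes $H_\eta$ exactly a multiple of $\HC$ (hence quadratically convex in the genuine sense, modulo the origin) on a neighborhood of $0$, and that the saddle-point reduction machinery of Section~\ref{sectionmorse} only ever needs the two-sided Hessian bound in the weak sense that holds here.
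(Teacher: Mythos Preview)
Your proposal is correct and follows essentially the same route as the paper. The only notable stylistic difference is in claim~(1): you compute $\nabla^2 H_\eta$ globally via the chain rule and bound the two terms separately (using that $\varphi_\eta''$ has compact support to control the rank-one piece), whereas the paper instead splits $\mathbb{R}^{2n}\setminus\{0\}$ into a small ball, the complement of a large ball, and the compact annulus in between---on the first two regions $H_\eta$ is an affine reparametrization of $\HC$ so the bound is immediate, and on the annulus compactness and continuity of the positive-definite Hessian give the bound without further computation. Both arguments are short and valid; the paper's avoids estimating $|\nabla\HC|^2$ against $\HC$. For claims~(2) and~(3) your argument and the paper's are the same up to phrasing (the paper verifies that $x\mapsto k\nabla\HdC(x)$ is a right inverse of $\nabla H_\eta$ rather than starting from $\nabla H_\eta(y)=x$, and for~(3) it invokes Dominated Convergence and Sobolev embeddings directly rather than citing Abbondandolo--Kang).
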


\begin{proof}

\textbf{Claim }(1): Since $\varphi_\eta \in \mathcal{F}^*_{\operatorname{lin}}(C)$, for small $r > 0$ and $x \in B(0; r) \setminus \{0\}$, it holds that $\nabla^2 H_\eta(x) = \delta \nabla^2 \HC(x)$. On the other hand, for large enough $R > 0$, for $x \in \overline{B(0, R)}^c$, it holds that $\nabla^2 H_\eta(x) = \eta \nabla^2 \HC(x)$. Thus, we have
\begin{equation}\label{quadcon}
    \underline{h} I \leq \nabla^2 H_\eta(x) \leq \overline{h} I, \quad x \in B(0; r) \setminus \{0\} \cup \overline{B(0, R)}^c
\end{equation}
for some positive constants $\underline{h}, \overline{h} > 0$. Since $H_\eta$ is twice continuously differentiable on $\mathbb{R}^{2n} \setminus \{0\}$ with $\nabla^2 H_\eta(x)$ being positive definite and $\overline{B(0, R)} \setminus B(0, r)$ is compact, \eqref{quadcon} implies that there exist positive numbers $\underline{h}_\eta > 0$ and $\overline{h}_\eta > 0$ such that
\[
\underline{h}_\eta I \leq \nabla^2 H_\eta(x) \leq \overline{h}_\eta I, \quad x \in \mathbb{R}^{2n} \setminus \{0\}.
\]
Therefore, $H_\eta$ is quadratically convex except for the lack of twice-differentiability at the origin. Since $\nabla^2 H^*_\eta(\nabla H_\eta(x)) = \nabla^2 H_\eta(x)^{-1}$, the same holds for $H^*_\eta$.

\textbf{Claim }(2): From the properties of $\varphi_\eta$, we have that for every $x \in \mathbb{R}^{2n}$, the function
\[ k \mapsto \varphi'_\eta(k^2 \HdC(x)) k \]
is strictly increasing, with value $0$ at $k = 0$ and $+\infty$ as $k \to +\infty$. Hence, there is a unique solution to the equation
\begin{equation}\label{rightinversesolution}
    \varphi'_\eta(k^2 \HdC(x)) k = 1.
\end{equation}
Therefore, the map  
\begin{equation}\label{rightinverse}
    x \mapsto k(x) \nabla \HdC(x), \quad x \in \mathbb{R}^{2n},
\end{equation}  
where $k(x)$ is the solution of \eqref{rightinversesolution}, is well-defined. We will show that the map \eqref{rightinverse} is the right inverse of $\nabla H_\eta$. Since $\nabla H_\eta$ is bijective and $\nabla H_\eta^* = \nabla H_\eta^{-1}$, this will imply that
\begin{equation}\label{rightinversefinal}
    \nabla H_\eta^*(x) = k(x) \nabla \HdC(x), \quad x \in \mathbb{R}^{2n}.
\end{equation}

By the definition of $H_\eta$, we have $\nabla H_\eta(x) = \varphi_\eta'(\HC(x)) \nabla \HC(x)$. Thus,
\begin{equation}\label{rightinversecomputation}
\begin{array}{lcl}
 & \nabla H_\eta(k \nabla \HdC(x)) = \varphi_\eta'(\HC(k \nabla \HdC(x))) \nabla \HC(k \nabla \HdC(x)) \\
 & = \varphi_\eta'(k^2 \HC(\nabla \HdC(x))) k x = \varphi_\eta'(k^2 \HdC(x)) k x = x.
\end{array}
\end{equation}

The second equality follows from the $2$-homogeneity of $\HC$, the $1$-homogeneity of $\nabla \HC$, and the fact that $\nabla \HC \circ \nabla \HdC = \text{id}$. The third equality is a consequence of the identity
\begin{equation}\label{rightinversethird}
    \HC(\nabla \HdC(x)) = \HdC(x).
\end{equation}
Indeed, we have
\[\HC(\nabla \HdC(x)) = \langle \nabla \HdC(x), x \rangle - \HdC(x)\]
from the Fenchel conjugate formula, and
\[\langle \nabla \HdC(x), x \rangle = 2 \HdC(x)\]
from the $2$-homogeneity of $\HdC$, which together implies the identity \eqref{rightinversethird}. The last equality in \eqref{rightinversecomputation} holds due to \eqref{rightinversesolution}. Hence, the map \eqref{rightinverse} is the right inverse, and by the previous discussion, we conclude that \eqref{rightinversefinal} holds. Additionally, since $\delta \leq \varphi_\eta' \leq \eta$, it follows that $k \in \left[\frac{1}{\eta}, \frac{1}{\delta}\right]$.

\textbf{Claim} (3): From claim (1), it follows that $\nabla H_\eta$ must be Lipschitz continuous. Therefore, $\nabla H_\eta$ has linear growth, and $H_\eta$ has quadratic growth. By the Dominated Convergence Theorem and Sobolev embeddings $H^{1/2}(\mathbb{T}, \mathbb{R}^{2n}) \to L^p(\mathbb{T}, \mathbb{R}^{2n})$ where $p \geq 1$, we conclude that $\PhiHeta$ is $C^1$. Similarly, $\nabla \Hdeta$ has linear growth. Again, by the Dominated Convergence Theorem, we conclude that $\PsiHeta$ is $C^1$. $S^1$-invariance follows from the fact that $\Heta$ and therefore $\Hdeta$ are independent of time.

\end{proof}

Let $H_{\eta} = \varphi_\eta \circ \HC$. We denote by $T_\eta \in \sigma(\partial C)$ the smallest element in the spectrum greater than $\eta$. If $H_{\eta} = \varphi_\eta \circ \HC \leq \varphi_\nu \circ \HC = H_{\nu}$, then $\eta \leq \nu$ and $T_\eta \leq T_\nu$. For $L_\eta \in (\eta, T_\eta)$ and $L_\nu \in (\nu, T_\nu)$ where $L_\eta \leq L_\nu$, we use the notation $i^H_{L_\nu, L_\eta}$ for the inclusion
\begin{equation}\label{incH}
    i^{H}_{L_\nu, L_\eta}: (\{\Psi_{H_\eta^*} < L_\eta\}, \{\Psi_{H_\eta^*} < \varepsilon\}) \to (\{\Psi_{H_{\nu}^*} < L_\nu\}, \{\Psi_{H_\nu^*} < \varepsilon\})
\end{equation}
and $i^C_{L_\nu, L_\eta}$ for the inclusion
\[
i^C_{L_\nu, L_\eta}: \{\PsiC < L_\eta\} \to \{\PsiC < L_\nu\}.
\]

If $H_\eta \leq H_\nu$, then $\Psi_{H_\nu^*} \leq \Psi_{H_\eta^*}$, which implies that
\[
(\{\Psi_{H_\eta^*} < L_\eta\}, \{\Psi_{H_\eta^*} < \varepsilon\}) \subseteq (\{\Psi_{H_{\nu}^*} < L_\eta\}, \{\Psi_{H_\nu^*} < \varepsilon\}) \subseteq (\{\Psi_{H_{\nu}^*} < L_\nu\}, \{\Psi_{H_\nu^*} < \varepsilon\}).
\]

This justifies \eqref{incH}.
 
\begin{proposition}\label{isomorphismclarkeduality}
    Let $C$ be a non-degenerate strongly convex domain whose interior contains the origin, and let $A$ be an arbitrary abelian group. Let $\varphi_\eta \in \mathcal{F}^*_{\operatorname{lin}}(C)$, with $T_\eta$ being a minimal element of $\sigma(\partial C)$ greater than $\eta$, and let $H_\eta = \varphi_\eta \circ \HC$. The following holds: 
    \begin{enumerate}
        \item[$(1.1)$] For every $L \in (\eta, T_\eta)$ there exists an isomorphism 
        \[
        D^{H^*_\eta}_L: \H_*(\{\PsiC < L\}; A) \to \H_{*+1}(\{\Psi_{H_\eta^*} < L\}, \{\Psi_{H_\eta^*} < \varepsilon\}; A).
        \]

        \item[$(1.2)$] This isomorphism is natural. Let $H_{\eta} \leq H_{\nu}$. Then for every $L_\eta \in (\eta, T_\eta)$ and $L_\nu \in (\nu, T_\nu)$ such that $L_\eta \leq L_\nu$, the following diagram commutes:
        \[
        \begin{tikzcd}
         \H_*(\{\PsiC < L_\nu\}; A) \arrow[r, "D^{H^*_\nu}_{L_\nu}"] & \H_{*+1}(\{\Psi_{H_{\nu}^*} < L_\nu\}, \{\Psi_{H_{\nu}^*} < \varepsilon\}; A) \\
         \H_*(\{\PsiC < L_\eta\}; A) \arrow[u, "(i^C_{L_\nu, L_\eta})_*"] \arrow[r, "D^{H^*_\eta}_{L_\eta}"] & \H_{*+1}(\{\Psi_{H_{\eta}^*} < L_\eta\}, \{\Psi_{H_{\eta}^*} < \varepsilon\}; A) \arrow[u, "(i^H_{L_\nu, L_\eta})_*"]
        \end{tikzcd}
        \]
    \end{enumerate}
\end{proposition}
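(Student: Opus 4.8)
The plan is to realize both groups in the statement as the homology of a common Morse-theoretic ``window'' indexed by the closed Reeb orbits of period $<\eta$, and to compare the two windows orbit by orbit. First I would set up the Morse theory. By Lemma~\ref{ham}, $\PsiHeta$ is a $C^1$, $S^1$-invariant functional on $\Honenull$; since $\Heta=\varphieta\circ\HC$ is quadratically convex away from the origin and non-resonant at infinity (its asymptotic Hamiltonian $\eta\HC-\zeta_\eta$ has no nonconstant $1$-periodic orbits because $\eta\notin\sigma(\partial C)$), it satisfies the Palais--Smale condition, as does the $C^{1,1}$, $S^1$-invariant functional $\PsiC$ on $\Lambda$. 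By Lemma~\ref{bijecthom} the critical set of $\PsiC$ is the union of the free critical circles $S_{\mathcal P(y)}$, $y\in\mathcal R(\partial C)$, with $\PsiC(S_{\mathcal P(y)})=\mathcal A(y)$, while by Lemma~\ref{ham}(2) and the design of $\varphieta$ the critical set of $\PsiHeta$ is the point $0$, with $\PsiHeta(0)=\Hdeta(0)=\zeta_\varepsilon\in(0,\varepsilon)$, together with the free critical circles $S_{\mathbb P(x)}$ of the nonconstant $1$-periodic orbits $x$ of $\Heta$; these orbits lie on the levels $\{\HC=s\}$ with $\varphieta'(s)\in\sigma(\partial C)^{<\eta}$, and by Remark~\ref{critical values} (conditions (3)--(5)) their actions $\varphieta'(s)s-\varphieta(s)$ lie strictly between $\varepsilon$ and $\eta$ and form a strictly increasing function of the corresponding period. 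Since $\min\sigma(\partial C)=2\varepsilon$ we have $\{\PsiC<\varepsilon\}=\varnothing$, so for $L\in(\eta,T_\eta)$ the left-hand group equals $\H_*(\{\PsiC<L\},\{\PsiC<\varepsilon\};A)$, and both pairs see exactly the Reeb orbits of period $<\eta$.

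Next I would reduce to a band-by-band statement. Writing $\sigma(\partial C)^{<\eta}=\{T_1<\dots<T_m\}$, I would choose regular values $\varepsilon=a_0<T_1<a_1<\dots<T_m<a_m<L$ of $\PsiC$ and $\varepsilon=b_0<b_1<\dots<b_m<L$ of $\PsiHeta$ so that the critical circles of $\PsiHeta$ with action in $(b_{i-1},b_i)$ are exactly those corresponding to period $T_i$ (possible by the monotonicity above), with no critical value of either functional in $[a_m,L)$ or $[b_m,L)$. The usual deformation lemmas show these filtrations compute $\H_*(\{\PsiC<L\})$ and $\H_*(\{\PsiHeta<L\},\{\PsiHeta<\varepsilon\})$, and an inductive application of the five lemma to the maps of the associated long exact sequences reduces the statement to producing, for each $i$, a natural isomorphism
\[
\Delta_i:\H_*(\{\PsiC<a_i\},\{\PsiC<a_{i-1}\};A)\xrightarrow{\ \cong\ }\H_{*+1}(\{\PsiHeta<b_i\},\{\PsiHeta<b_{i-1}\};A)
\]
compatible with the connecting homomorphisms of the two filtrations. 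Since all critical circles in a single band sit at the same level there are no negative gradient trajectories between distinct ones; hence, by excision and the handle (Morse--Bott) description near a nondegenerate critical circle, each band group splits as a direct sum, over its critical circles $S$, of the local contributions $\H_*(D\xi_S,D\xi_S\setminus 0;A)\cong\H_{*-\operatorname{ind}(S)}(S^1;\mathcal O_{\xi_S})$, where $\xi_S$ is the negative normal bundle of $S$ and $\mathcal O_{\xi_S}$ its orientation local system. (Near the nonconstant orbits of $\Heta$ one has $\dot x\ne0$, so $\Hdeta$ is smooth there and $\PsiHeta$ is genuinely $C^2$; alternatively one passes to the smooth saddle-point reductions $\psi^l_{H^*}$ of Section~\ref{sectionmorse}.)

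It then remains to match, with a degree shift of $1$, the local contributions of a pair of corresponding critical circles $S_{\mathcal P(y)}\subset\Lambda$ and $S_{\mathbb P(x)}\subset\Honenull$ (with $\mathcal A(y)=T_i$ and $x$ the $\Heta$-orbit on $\{\HC=s\}$, $\varphieta'(s)=T_i$). The key point is that the negative normal bundle $\xi_{S_{\mathbb P(x)}}$ of $\PsiHeta$ is isomorphic to $\xi_{S_{\mathcal P(y)}}$ plus a trivial real line: the point $\mathbb P(x)$ lies on the cone $\{\mathcal A>0\}$, and along the ray through it $\PsiHeta$ has a \emph{nondegenerate maximum} at $\mathbb P(x)$ --- this is precisely where the strict convexity $\varphieta''>0$ on $(s_\varepsilon,s_\eta)$ and conditions (4)--(5) enter, via the identity $\nabla\Hdeta=k\,\nabla\HdC$ of Lemma~\ref{ham}(2) --- so the radial direction is an extra negative direction, while in the directions transverse to the radial one the Hessian of $\PsiHeta$ agrees, up to a positive-definite deformation (using Lemmas~\ref{parfenchelconjugatelemma} and~\ref{ham}(2)), with the Hessian of the $0$-homogeneous functional $\widetilde{\Psi}_C=\mathcal H_C/\mathcal A$, whose restriction to $\Lambda$ is $\PsiC$ and whose radial Hessian vanishes at a critical point. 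Therefore $\operatorname{ind}(S_{\mathbb P(x)};\PsiHeta)=\operatorname{ind}(S_{\mathcal P(y)};\PsiC)+1$, the two negative bundles differ by a trivial summand (so $w_1$, hence the orientation system, agrees), and the local contributions match after $*\mapsto*+1$; this defines $\Delta_i$. For naturality --- both within the filtration and in $\eta$, i.e.\ part~(1.2) --- I would observe that every map in sight is built from negative gradient flows and inclusions of sublevel sets, which are functorial: given $\Heta\le H_\nu$ one has $\PsiHnu\le\PsiHeta$ and $\sigma(\partial C)^{<\eta}\subseteq\sigma(\partial C)^{<\nu}$, the regular values for $\eta$ may be chosen among those for $\nu$, and $i^H_{L_\nu,L_\eta}$ carries each band of $\PsiHeta$ into the band of $\PsiHnu$ attached to the same Reeb orbit, inducing the identity on the corresponding local summand; the analogous bookkeeping for $\PsiC$ via $i^C_{L_\nu,L_\eta}$ makes the square commute.

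\textbf{Main obstacle.} The delicate point is the Hessian/negative-bundle comparison of the previous paragraph: identifying, for the merely $C^1$ functional $\PsiHeta$ (and $C^{1,1}$ functional $\PsiC$), the negative normal bundle near a critical circle and establishing the splitting ``$\xi_{S_{\mathbb P(x)}}\cong\xi_{S_{\mathcal P(y)}}\oplus(\text{trivial line})$''. One must handle the decomposition $T_{\mathbb P(x)}\Honenull=\mathbb R\cdot\mathbb P(x)\oplus T\Lambda$, the limited regularity (best dealt with through the smooth reductions $\psi^l_{H^*}$ and generalized Hessians), and the precise role of the structural conditions (3)--(5) on $\varphieta$, which are exactly what forces the radial Hessian to be negative and keeps the nonconstant critical values inside $(\varepsilon,\eta)$. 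An alternative that avoids computing these infinite-dimensional Hessians directly is to use the known index formula for Clarke's dual action functional (relating $\operatorname{ind}(S_{\mathcal P(y)};\PsiC)$ to the transverse Conley--Zehnder index of $y$, cf.\ \cite{EH87,HZ94}), Remark~\ref{nullityindexred}, and a direct computation of $\mu_{CZ}(x;\Heta)$ for the radial Hamiltonian $\Heta=\varphieta\circ\HC$.
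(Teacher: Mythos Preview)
Your approach is genuinely different from the paper's, and while the local picture you describe is correct (in particular, the radial direction does give the extra negative eigenvalue, exactly as Lemma~\ref{Psiheta}(4.c) confirms), the global argument has a real gap.

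The paper does not work band-by-band. Instead it shows, by a Mayer--Vietoris argument (Lemma~\ref{inclusioncutoff}), that one may replace $(\{\PsiHeta<L\},\{\PsiHeta<\varepsilon\})$ by its intersection with the open cone $\mathbb{R}_+\{\PsiC<L\}$. On this cone the normalization $x\mapsto x/\sqrt{\mathcal{A}(x)}$ sends $\{\PsiHeta<L\}\cap\mathbb{R}_+\{\PsiC<L\}$ onto $\{\PsiC<L\}$ as a homotopy equivalence (Lemma~\ref{norml}), while the subspace $\{\PsiHeta<\varepsilon\}\cap\mathbb{R}_+\{\PsiC<L\}$ breaks into exactly two open pieces $U^-_{\eta,L}\sqcup U^+_{\eta,L}$, each of which normalizes homotopy-equivalently onto $\{\PsiC<L\}$ (Lemma~\ref{UminusUplus}). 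The long exact sequence of the pair then collapses to a short exact sequence $0\to \H_{*+1}(\{\PsiHeta<L\},\{\PsiHeta<\varepsilon\})\to \H_*(U^-)\oplus \H_*(U^+)\to \H_*(\{\PsiC<L\})\to 0$, and a small algebraic lemma (Lemma~\ref{algebraiclemma}) extracts the isomorphism and its naturality simultaneously. No Hessians are computed, and naturality in $\eta$ is immediate because all constructions are inclusions and radial retractions.

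The gap in your proposal is the ``compatible with the connecting homomorphisms'' clause, which you assert but do not prove. Matching the local Thom contributions $\H_{*-\operatorname{ind}(S)}(S^1;\mathcal O_{\xi_S})$ at corresponding critical circles gives isomorphisms $\Delta_i$ of the \emph{graded groups} $\H_*(\{\PsiC<a_i\},\{\PsiC<a_{i-1}\})\cong\H_{*+1}(\{\PsiHeta<b_i\},\{\PsiHeta<b_{i-1}\})$, but the five-lemma induction needs these $\Delta_i$ to intertwine the connecting maps of the two triples, and those connecting maps encode counts of gradient trajectories between \emph{different} critical circles of $\PsiC$, respectively of $\PsiHeta$. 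Nothing in your construction of $\Delta_i$ (which is purely local, via negative normal bundles) relates the two flows. To close this you would need a geometric comparison map between the sublevel sets---for instance, something like the radial/normalization maps the paper uses---and once you have that, the band-by-band decomposition becomes unnecessary. The same issue recurs in your naturality argument for (1.2): the statement that $i^H_{L_\nu,L_\eta}$ ``induces the identity on the corresponding local summand'' presupposes a canonical identification of local groups for $\PsiHeta$ and $\PsiHnu$ that commutes with your $\Delta_i$'s, which again requires more than the local bundle data.
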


\begin{remark}\label{isomorphismclarkedualityS1}
    The same statement holds if one replaces the singular homology with the $S^1$-equivariant homology. Moreover, the proof of the proposition mentioned above is identical for both the singular and the $S^1$-equivariant case. Notice that all sets mentioned in the above proposition are indeed $S^1$-invariant due to Claim (3) of Lemma \ref{ham} and $S^1$-invariance of $\PsiC$.
    \end{remark}

Note that this proposition excludes the case $\eta < \min \sigma(\partial C)$ due to the definition of $\mathcal{F}^*_{\operatorname{lin}}(C)$. While the proposition would still hold in this case, it would require separate treatment. Since 
\[
SH^{+<L}_*(C) = 0, \quad L < \min \sigma(\partial C),
\]
and
\[
H_*(\{\PsiC < L\}) = 0, \quad L < \min \sigma(\partial C),
\]
with the same applying to the $S^1$-equivariant case, there is already a trivial canonical isomorphism between these homologies for $L < \min \sigma(\partial C)$. Thus, introducing such Hamiltonians is unnecessary for proving theorems \hyperlink{TheoremC1}{C.1} and \hyperlink{TheoremC2}{C.2}. As noted earlier, the proofs are identical in both singular and $S^1$-equivariant categories. Hence, we use "homology" to refer to both singular and $S^1$-equivariant homology. All constructions will be $S^1$-invariant or $S^1$-equivariant as needed, and for simplicity, we use singular homology notation. Additionally, we omit specifying coefficients in the lemmas and proofs, as the results hold for any abelian group $A$. Finally, all sets in the following statements are non-empty. This will be evident from the context, and explicitly stating it would only lengthen the text unnecessarily.

\begin{remark}\label{boundedpalaissmale}
  The functional $\PsiC$ satisfies a version of the Palais-Smale condition. Namely, for every sequence $x_n \in \Lambda$ such that  
\[
\|d\PsiC(x_n)\|_{(H^1_0)^*} \to 0, \quad \PsiC(x_n) \leq L,
\]  
there exists a convergent subsequence. Here, $L \geq \min \PsiC$ is arbitrary.

Let $V$ be a 1-homogeneous extension of $\nabla \PsiC$ to the positive cone $\{\mathcal{A} > 0\}$, defined by  
\[
V(x) = \sqrt{\mathcal{A}(x)} \nabla \PsiC\left(\frac{x}{\sqrt{\mathcal{A}(x)}}\right), \quad x \in \{\mathcal{A} > 0\}.
\]  
Since $\mathbb{R}_+ \crit(\PsiC) = \crit(\widetilde{\Psi}_C)$, the vector field $V$ is a pseudo-gradient for $\widetilde{\Psi}_C$ that is tangent to the level sets of $\mathcal{A}$. Moreover, since $V$ is a 1-homogeneous extension of $\nabla \PsiC$, it satisfies the Palais-Smale condition on each level set $\Lambda_a = \{x \in H^1_0(\mathbb{T}, \mathbb{R}^{2n}) \mid \mathcal{A}(x) = a\}$ where $a>0$ since $\nabla \PsiC$ satisfies the Palais-Smale condition.

\end{remark}

\begin{remark}\label{stable}
Let $\phi^t: \{\mathcal{A} > 0\} \to \{\mathcal{A} > 0\}$ be the flow of $-V$. This flow is complete since $V$ is locally Lipschitz and has linear growth on each level set of $\mathcal{A}$. Moreover, $\phi^t$ preserves the level sets of $\mathcal{A}$ since $V$ is tangent to these level sets.

If we assume that $C$ is a non-degenerate strongly convex domain, then according to Lemma \ref{bijecthom} and Remark \ref{tansvnondeg}, the critical orbits $S_x = S^1 \cdot x$ for $x \in \crit(\PsiC)$ are isolated. Consequently, all critical orbits of $\widetilde{\Psi}_C$ on the fixed level set of $\mathcal{A}$ are also isolated. Moreover, since $\widetilde{\Psi}_C$ is bounded from below, and $V$ satisfies the Palais-Smale condition (see Remark \ref{boundedpalaissmale}) on each level set of $\mathcal{A}$, for each $x \in \{\mathcal{A} > 0\}$, there exists a unique critical orbit $S_{x^-} \subset \Lambda_{\mathcal{A}(x)}\cap \crit(\widetilde{\Psi
}_C)$ such that, for every $S^1$-invariant neighborhood $U$ of $S_{x^-}$, there exists $t_0 > 0$ such that for all $t \geq t_0$, $\phi^t(x) \in U$.

\end{remark}

\begin{remark}\label{decPsiHeta}
Notice that the function $\PsiHeta$ decreases along the flow of $\phi^t$ if $x \in \{\mathcal{A} > 0\}$ is not a stationary point of $\phi^t$. Indeed, we have
\[
d\PsiHeta(x)[-V(x)] = d\mathcal{A}(x)[V(x)] - \int_0^1 \langle \nabla H_\eta^*(-J_0 \dot{x}(t)), -J_0 \frac{d}{dt}V(x)(t) \rangle \, dt.
\]

The first term is zero because $V(x) \in T \Lambda_{\mathcal{A}(x)}$. Therefore, we have
\begin{align}
    \nonumber &d\PsiHeta(x)[-V(x)] = -\int_0^1 \Bigl\langle \nabla H_\eta^*(-J_0 \dot{x}(t)), -J_0 \frac{d}{dt}V(x)(t) \Bigr\rangle \, dt \\
    \nonumber &= -\int_0^1 k(t) \Bigl\langle \nabla \HdC(-J_0 \dot{x}(t)), -J_0 \frac{d}{dt}V(x)(t) \Bigr\rangle \, dt \leq -\frac{1}{\eta} \int_0^1 \Bigl\langle \nabla \HdC(-J_0 \dot{x}(t)), -J_0 \frac{d}{dt}V(x)(t) \Bigr\rangle \, dt.
\end{align}

The equality and inequality follow from claim (2) of the previous lemma. On the other hand, by the 1-homogeneity of $V$, and the fact that $V$ is the gradient of $\PsiC$ on $\Lambda$, we have

\begin{align*}
     &\int_0^1 \left\langle \nabla \HdC(-J_0 \dot{x}(t)), -J_0 \frac{d}{dt}V(x)(t) \right\rangle \, dt\\
     &=\mathcal{A}(x) \int_0^1 \left\langle \nabla \HdC\left(\frac{-J_0 \dot{x}(t)}{\sqrt{\mathcal{A}(x)}}\right), -J_0 \frac{d}{dt}V\left(\frac{x}{\sqrt{\mathcal{A}(x)}}\right)(t) \right\rangle \, dt \\
     &= \mathcal{A}(x) d\PsiC\left(\frac{x}{\sqrt{\mathcal{A}(x)}}\right)\left[\nabla\PsiC\left(\frac{x}{\sqrt{\mathcal{A}(x)}}\right)\right] = \mathcal{A}(x) \left\|\nabla\PsiC\left(\frac{x}{\sqrt{\mathcal{A}(x)}}\right)\right\|^2_{H^1_0} = \|V(x)\|^2_{H^1_0}.
\end{align*}
Therefore, we have
\[
d\PsiHeta(x)[-V(x)] = -\int_0^1 \langle \nabla H_\eta^*(-J_0 \dot{x}(t)), -J_0 \frac{d}{dt}V(x)(t) \rangle \, dt \leq -\frac{1}{\eta} \|V(x)\|^2_
{H^1}< 0
\]
when $x$ is not a stationary point of $\phi^t$.
\end{remark}

\begin{lemma}\label{Psiheta}
    Let $H_\eta \defeq \varphi_\eta \circ \HC$ where $\varphi_\eta \in F^*(C)$. The following statements hold:

    \begin{enumerate}
        \item Non-trivial critical orbits of $\Psi_{H_\eta}$ are in one-to-one correspondence with critical orbits of $\Psi_C$ that have values smaller than $\eta$. This relation is given by the bijection 
        \[
        \crit(\Psi_{H_\eta}) \setminus \{0\} \to \crit(\Psi_C)^{<\eta}, \quad x \mapsto \frac{x}{\sqrt{\mathcal{A}(x)}}.
        \]
        Additionally, for all $x \in \crit(\Psi_{H_\eta}) \setminus \{0\}$, it holds that 
        \[
        \varepsilon < \Psi_{H_\eta^*}(x) < \eta
        \]
        and $\Psi_{H_\eta}(0) < \varepsilon$.
        
        \item There exists $r > 0$ such that 
        \[
        d\Psi_{H_\eta}(x)[x] > 0, \quad x \in (0, r] \Lambda
        \]
        and 
        \[
        \Psi_{H_\eta}(x) < \varepsilon, \quad x \in (0, r] \{\Psi_C < T_\eta\}.
        \]
        
        \item $\Psi_{H_\eta}$ is radially increasing on $\mathbb{R}_+ \{\Psi_C > \eta\}$ and radially non-decreasing on $\{\mathcal{A} \leq 0\}$.
        
        \item For every $x \in \mathbb{R}_+ \{\Psi_C < T_\eta\}$, the following claims hold:
        \begin{enumerate}
            \item[$($4$.a)$] There exists $\lambda' \in \mathbb{R}_+$ such that $\Psi_{H_\eta^*}(\lambda' x) > \varepsilon$. In particular, if the action of $x$ is strictly larger than the action of all non-trivial critical points of $\PsiHeta$, then $\lambda' \in (0,1)$.
            \item[$($4$.b)$] The derivative of the function $\lambda \mapsto \Psi_{H_\eta}(\lambda x)$, where $\lambda \in \mathbb{R}_+$, can exhibit three types of behavior:
            \begin{itemize}
                \item It is positive for all $\lambda \in \mathbb{R}_+$.
                \item There exists $\lambda_{\max} \in \mathbb{R}_+$ such that it is positive on $(0, \lambda_{\max})$ and zero on $[\lambda_{\max}, +\infty)$. This can only occur if $x \in \mathbb{R}_+ \{\Psi_C = \eta\}$.
                \item There exists $\lambda_{\max} \in \mathbb{R}_+$ such that it is positive on $(0, \lambda_{\max})$ and negative on $(\lambda_{\max}, +\infty)$. Moreover, in this case, the derivative of the function $\lambda \mapsto \Psi_{H_\eta}(\lambda x)$ is decreasing on $(\lambda_{\max}, +\infty)$.
            \end{itemize}
            
            \item[$($4$.c)$] In the special case where $x \in \crit(\Psi_{H_\eta}) \setminus \{0\}$, the derivative of $\lambda \mapsto \Psi_{H_\eta}(\lambda x)$ is positive on $(0, 1)$ and negative on $(1, +\infty)$.
        \end{enumerate}
        
        \item For every non-trivial critical orbit $S_x$ of $\Psi_{H_\eta}$, there exists a $S^1$-invariant neighborhood $U_{S_x} \subset \Lambda_{\mathcal{A}(x)}$ of $S_x$ such that $\mathbb{R}_+ U_{S_x} \subset \{\Psi_{H_\eta} < \eta\}$ and for some $R_{S_x} > 0$, it holds that $[R_{S_x}, +\infty) U_{S_x} \subset \{\Psi_{H_\eta} < 0\}$. 
    
    \end{enumerate}
\end{lemma}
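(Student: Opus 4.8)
The strategy is to reduce every statement to elementary facts about the one-variable convex function $\varphi_\eta$, using the formula $\nabla H_\eta^*(x) = k(x)\nabla\HdC(x)$ from Lemma \ref{ham}(2) and the identity $\HC(\nabla\HdC(x)) = \HdC(x)$ from the proof of Lemma \ref{ham}. The key computational device is the scaling behavior: for $x\in\Lambda$ and $\lambda>0$, one has $\mathcal{A}(\lambda x) = \lambda^2$, $\PsiC$ is $0$-homogeneous so $\PsiC(\lambda x)=\PsiC(x)$, while $\PsiHeta(\lambda x) = \int_\mathbb{T} H_\eta^*(-\lambda J_0\dot x)\,dt$, and since $H_\eta^*$ is (essentially) a composition of $\varphi_\eta^*$-type data with $\HdC$, the function $\lambda\mapsto\PsiHeta(\lambda x)$ is governed by $\varphi_\eta$ evaluated along a ray. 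I would first establish the clean formula
\[
\frac{d}{d\lambda}\PsiHeta(\lambda x)\Big|_{\lambda=1}\ \text{and}\ d\PsiHeta(x)[x] = \int_\mathbb{T}\langle\nabla H_\eta^*(-J_0\dot x),-J_0\dot x\rangle\,dt - 2\PsiHeta(x),
\]
together with the pointwise identity $\langle\nabla H_\eta^*(y),y\rangle = 2H_\eta^*(y) + (\text{correction involving }\varphi_\eta'(\cdot)\cdot-\varphi_\eta(\cdot))$ coming from the lack of $2$-homogeneity of $H_\eta^*$; in fact $H_\eta^* $ satisfies $H_\eta^*(y) = \varphi_\eta^*(\HdC(y))$-type relation, and the Legendre data of $\varphi_\eta$ give $\langle\nabla H_\eta^*(y),y\rangle - 2H_\eta^*(y)$ in terms of $\varphi_\eta'(s)s-\varphi_\eta(s)$ at $s=\HdC(\nabla H_\eta^*(y))$. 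Once these are in place, claims (1)–(4) become statements about where $\varphi_\eta'(s)s-\varphi_\eta(s)$ lies relative to $\varepsilon$ and $\eta$ — precisely the content of conditions (4), (5) of $\mathcal{F}^*_{\operatorname{lin}}(C)$ and Remark \ref{critical values}, equation \eqref{boundsphiepsiloneta}.

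For claim (1): critical points of $\PsiHeta$ on $\Honenull$ correspond via Clarke's duality (Lemma \ref{bijecthom} applied to $H_\eta$, combined with Lemma \ref{ham}(2)) to Reeb orbits of $\partial C$ with action $\varphi_\eta'(\HC(\cdot))$ in the spectrum; the linear regions of $\varphi_\eta$ at $0$ and at $\infty$ (slopes $\delta<\tfrac14\varepsilon$ and $\eta$) force that only Reeb actions $T\in\sigma(\partial C)$ with $T<\eta$ survive, and the rescaling $x\mapsto x/\sqrt{\mathcal{A}(x)}$ lands in $\crit(\PsiC)^{<\eta}$. The value bound $\varepsilon<\PsiHeta(x)<\eta$ on nontrivial critical orbits is then exactly \eqref{boundsphiepsiloneta} evaluated at the relevant $s$, since at a critical point $\PsiHeta(x) = \varphi_\eta'(s)s-\varphi_\eta(s)$ for the appropriate $s\in[s_{\min},s_{\max}]$; and $\PsiHeta(0)=\int H_\eta^*(0) = -\min H_\eta^* = \zeta_\varepsilon<\varepsilon$ since $H_\eta^*(0) = -H_\eta(\text{argmin}) $... more precisely $H_\eta^*(0) = \max_y(-H_\eta(y)) = -\min H_\eta$, and near $0$ we have $H_\eta = \delta\HC - \zeta_\varepsilon$, giving $\min H_\eta = -\zeta_\varepsilon$ hence $\PsiHeta(0)=\zeta_\varepsilon<\varepsilon$. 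For claim (2), the radial derivative $d\PsiHeta(x)[x]$ at small radius is controlled by the linear region $\varphi_\eta(s)=\delta s-\zeta_\varepsilon$, where $\varphi_\eta'(s)s-\varphi_\eta(s)=\zeta_\varepsilon>0$, giving positivity; and on $\{\PsiC<T_\eta\}$ rescaled small, the value stays below $\varepsilon$ by continuity and the $\zeta_\varepsilon<\varepsilon$ bound. Claim (3) uses that on $\mathbb{R}_+\{\PsiC>\eta\}$ the radial direction stays in the region where $\varphi_\eta$ has not yet flattened (or the relevant Reeb action exceeds $\eta$), making $\lambda\mapsto\PsiHeta(\lambda x)$ strictly increasing; on $\{\mathcal{A}\le 0\}$ one uses $H_\eta^*\ge 0$ away from a bounded set plus convexity to get non-decrease. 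Claim (4) is the detailed shape analysis: writing $g_x(\lambda)=\PsiHeta(\lambda x)$, one computes $g_x'(\lambda) = \tfrac1\lambda\int_\mathbb{T}[\langle\nabla H_\eta^*(-\lambda J_0\dot x),-\lambda J_0\dot x\rangle - 2H_\eta^*(-\lambda J_0\dot x)]\,dt + \tfrac2\lambda g_x(\lambda)$ — no wait, cleaner: $g_x'(\lambda)=\int_\mathbb{T}\langle\nabla H_\eta^*(-\lambda J_0\dot x),-J_0\dot x\rangle\,dt$, and this integrand is monotone in $\lambda$ by convexity of $H_\eta^*$, with the three regimes (always positive / eventually zero / eventually negative and then decreasing) dictated by whether the limiting slope data of $\varphi_\eta$ (the value $\eta$ and the linear-at-infinity region) makes the Reeb-type action of $x$ sub-, equal-, or super-critical relative to $\eta$; the last bullet's monotonicity of $g_x'$ on $(\lambda_{\max},\infty)$ follows because past $\lambda_{\max}$ the argument $-\lambda J_0\dot x$ lies in the region where $\HdC$ is large and $\varphi_\eta^*$'s second-derivative behavior forces $g_x''<0$. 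Claim (4.c) is the special case where $x$ is already critical, so $g_x'(1)=0$ and the single-crossing property from (4.b) pins down the sign. Claim (5) is a straightforward open-condition/compactness argument: the nontrivial critical orbit $S_x$ has $\PsiHeta|_{S_x}<\eta$ by claim (1), so an $S^1$-invariant neighborhood $U_{S_x}\subset\Lambda_{\mathcal{A}(x)}$ with $\PsiHeta<\eta$ exists by continuity and $S^1$-invariance; and by claim (4.b)–(4.c), along each ray through a point of $U_{S_x}$ the function $g$ is eventually negative (the critical orbit is of the "eventually negative" type since its Reeb action is $<\eta$... actually $=$ some spectral value $<\eta$, and nearby rays are a small perturbation), with a uniform crossing radius $R_{S_x}$ obtainable by compactness of $\overline{U_{S_x}}$.

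The main obstacle I anticipate is claim (4), specifically getting the trichotomy in (4.b) together with the monotonicity of the radial derivative in the third regime, cleanly and with the correct identification of which regime corresponds to $x\in\mathbb{R}_+\{\PsiC=\eta\}$ versus $x\in\mathbb{R}_+\{\PsiC>\eta\}$ versus $x\in\mathbb{R}_+\{\PsiC<\eta\}$. This requires carefully tracking how the non-$2$-homogeneity of $H_\eta^*$ (equivalently the affine shift $-\zeta_\eta$ in the linear-at-infinity region $\varphi_\eta(s)=\eta s-\zeta_\eta$) enters: for $x$ with all mass of $\HdC(-J_0\dot x)$ eventually in the linear-at-infinity region, $H_\eta^*(-\lambda J_0\dot x)$ becomes $\tfrac1\eta\HdC(-\lambda J_0\dot x) + \zeta_\eta$-type (the Fenchel conjugate of an affine-shifted linear function), so $g_x(\lambda) = \tfrac{\lambda^2}{\eta}\PsiC(x)\,\mathcal{A}(x) + \zeta_\eta$ for large $\lambda$ when... hmm, $\int\HdC(-\lambda J_0\dot x) = \lambda^2\int\HdC(-J_0\dot x) = \lambda^2\mathcal{A}(x)\PsiC(x)$, and then $g_x'(\lambda) = \tfrac{2\lambda}{\eta}\mathcal{A}(x)\PsiC(x)\cdot(\text{something})$ — the sign hinges on comparing $\PsiC(x)$ with $\eta$ after accounting for the constant, and the "eventually zero" degenerate case is exactly the boundary $\PsiC(x)=\eta$. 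Pinning down this boundary behavior and the decreasing-derivative claim will require the most care; everything else is bookkeeping with $\varphi_\eta$ and standard convexity/Legendre identities, for which I would cite the relevant parts of Lemma \ref{ham} and Remark \ref{critical values} rather than redo the computations.
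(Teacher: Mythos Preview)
Your plan has the right broad shape, but there are two genuine gaps and one recurring computational error that would derail the argument as written.

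\textbf{Claim (2) is where your approach fails.} You write that ``at small radius, the radial derivative is controlled by the linear region $\varphi_\eta(s)=\delta s-\zeta_\varepsilon$''. This does not work: for $x=\lambda y$ with $y\in\Lambda$ and small $\lambda$, the integrand involves $\HdC(-\lambda J_0\dot y(t))=\lambda^2\HdC(-J_0\dot y(t))$, and since $\dot y$ is only in $L^2$ (not $L^\infty$), there is no uniform $r$ guaranteeing that this lies in the linear region for all $t$. The paper's argument is substantially more delicate: it invokes the inequality $\int_\mathbb{T}\sqrt{\HdC(-J_0\dot y)}\,dt\geq\sqrt{T_{\min}}$ for $y\in\Lambda$ (a nontrivial fact equivalent to $c_{EHZ}(C)=T_{\min}$, cited from \cite{AO08}), restricts to the set $B$ where $\sqrt{\HdC(-J_0\dot y(t))}\geq\tfrac14\sqrt{T_{\min}}$, and then bounds the solution $k(t,\lambda)$ of $\varphi_\eta'(\lambda^2k^2\HdC)\,k=1$ from below on $B$ by a contradiction argument. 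This square-root functional and the set $B$ are the missing ideas.

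\textbf{Claim (4.a) has no approach in your plan.} You discuss (4.b) at length but never say how to produce $\lambda'$ with $\PsiHeta(\lambda' x)>\varepsilon$. The paper's argument is dynamical: it uses the $1$-homogeneous pseudo-gradient $V$ of $\widetilde{\Psi}_C$ from Remark \ref{boundedpalaissmale}, the fact (Remark \ref{decPsiHeta}) that $\PsiHeta$ is non-increasing along the flow of $-V$, and the convergence of this flow to a critical orbit $S_{x^-}\subset\crit(\widetilde{\Psi}_C)^{<\eta}$. One rescales so that $\lambda' S_{x^-}\subset\crit(\PsiHeta)\setminus\{0\}$, where $\PsiHeta>\varepsilon$ by claim (1), and then pulls this back along the flow. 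Your convexity/Legendre framework does not supply this $\lambda'$.

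\textbf{Your radial-derivative formula is missing a term.} You write $g_x'(\lambda)=\int\langle\nabla H_\eta^*(-\lambda J_0\dot x),-J_0\dot x\rangle\,dt$, but $\PsiHeta(\lambda x)=-\lambda^2\mathcal{A}(x)+\int H_\eta^*(-\lambda J_0\dot x)\,dt$, so there is an additional $-2\lambda\mathcal{A}(x)$ term. This term is exactly what makes the sign analysis in (3), (4.b), (4.c) hinge on comparing $\PsiC(x)$ with $\eta$, so omitting it obscures the entire trichotomy. The paper writes $g_x'(\lambda)=\lambda\bigl(-2+2\int k(t,\lambda)\HdC(-J_0\dot x)\,dt\bigr)$ for $x\in\Lambda$, and analyzes the sign via the monotonicity of $k(t,\lambda)$ in $\lambda$ together with the measure of the set $B=\{t:\HdC(-J_0\dot x(t))>0,\ k(t,\lambda_0)<\eta\}$; the case $|B|=0$ is precisely what forces $\PsiC(x)=\eta$ in the second bullet of (4.b).

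Claims (1), (3), (4.c), and (5) in your plan are essentially along the paper's lines and would go through once the formula is corrected.
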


\begin{proof}
\

   \textbf{Claim} (1): Given that $H_\eta = \varphi_\eta \circ \HC$, we see that $0$ is a critical point and that non-trivial critical points of $\PhiHeta$ take the form 
\begin{equation}\label{hamiltonianorbit}
\dot{y}(t) = J_0 \varphi_\eta'(\HC(y(t))) \nabla \HC(y(t)),
\end{equation}
where $\HC(y(t))=s_y$ is constant and $\varphi_\eta'(s_y) \in \sigma(\partial C)$. Since $\varphi_\eta'$ is a non-decreasing function with $\varphi_\eta': (s_\varepsilon, s_\eta) \to (\delta, \eta)$ being a increasing bijection (see Remark \ref{critical values}), it follows that $\crit(\PhiHeta) \setminus \{0\}$ and $\mathcal{R}(\partial C)^{<\eta}$ are in bijective correspondence via 
\[
R: \crit(\PhiHeta) \setminus \{0\} \to \mathcal{R}(\partial C)^{<\eta}, \quad R(y) = \frac{1}{\sqrt{\int_\mathbb{T} \HC(y(t)) \, dt}} y.
\]

Using the $2$-homogeneity of $\mathcal{A}$, we get that $\sqrt{\mathcal{A}(R(y))} = \frac{1}{\sqrt{\int_\mathbb{T} \HC(y(t)) \, dt}} \sqrt{\mathcal{A}(y)}$. Therefore, by Lemma \ref{bijecthom}, the map 
\[
\mathcal{P} \circ R: \crit(\PhiHeta) \setminus \{0\} \to \crit(\PsiC)^{<\eta}, \quad (\mathcal{P} \circ R)(y) = \frac{1}{\sqrt{\mathcal{A}(y)}} \pi (y)
\]
is a bijection. Additionally, from \cite[Lemma 5.1.]{AK22}, we know that 
\[
\pi_{\crit} \defeq \pi|_{\crit(\PhiHeta)}: \crit(\PhiHeta) \to \crit(\PsiHeta)
\]
is a bijection with $\pi(0) = 0$. Hence, 
\[
\pi_{\crit,+} \defeq \pi|_{\crit(\PhiHeta) \setminus \{0\}}: \crit(\PhiHeta) \setminus \{0\} \to \crit(\PsiHeta) \setminus \{0\}
\]
is also a bijection. The composition map 
\[
\mathcal{P}_\eta \circ R \circ \pi_{\crit,+}^{-1}: \crit(\PsiHeta) \setminus \{0\} \to \crit(\PsiC)^{<\eta}, \quad (\mathcal{P}_\eta \circ R \circ \pi_{\crit,+}^{-1})(x) = \frac{1}{\sqrt{\mathcal{A}(x)}} x
\]
is therefore a bijection as well. This completes the proof of the first part of (1). 

Next, consider a non-zero critical point $y$ of $\Phi_H$ with $\HC(y(t)) = s_y$. From \eqref{hamiltonianorbit} and the expression 
\[
\PhiHeta(y) = \frac{1}{2} \int_\mathbb{T} \langle\dot{y}(t) , J_0 y(t) \rangle \, dt - \int_\mathbb{T} \varphi_\eta(\HC(y(t))) \, dt,
\]
we have 
\[
\Phi_H(y) = \varphi_{\eta}'(s_y) s_y - \varphi_\eta(s_y).
\]
Since $\varphi_\eta': (s_\varepsilon, s_\eta) \to (\delta, \eta)$ is an increasing bijection and $\varphi_\eta'(s_y) \in \sigma(\partial C)$, it follows that $s_y \in [s_{\min}, s_{\max}]$ (for the definitions of $s_{\min}$ and $s_{\max}$, see the definition of $\mathcal{F}^*_{\operatorname{lin}}(C)$). This implies, by \eqref{boundsphiepsiloneta}, that 
\begin{equation}\label{epsiloneta}
\varepsilon < \PhiHeta(y)=\varphi_{\eta}'(s_y) s_y - \varphi_\eta(s_y) < \eta.    
\end{equation}
According to \cite[Lemma 5.1.]{AK22}, we have $\PhiHeta(y) = \PsiHeta(\pi(y))$ for $y$ being a critical point of $\PhiHeta$. Combining this with the fact that $y \neq 0$ was arbitrary and that $\pi_{\crit,+}$ is a bijection, \eqref{epsiloneta} implies that for every $x \in \crit(\Psi_{H^*_\eta}) \setminus \{0\}$, it holds 
\[
\varepsilon < \PsiHeta(x) < \eta.
\]

Moreover, $\PsiHeta(0) = \PhiHeta(0) = -\varphi_\eta(0) = \zeta_\varepsilon < \varepsilon$. This completes the proof of (1).

\textbf{Claim }(2): To prove the first part of this claim, we will use the fact that the minimal value of the functional 
\[
\Lambda \to \mathbb{R}, \quad x \to \int_0^1 \sqrt{\HdC(-J_0 \dot{x}(t))} \, dt
\]
is $\sqrt{T_{\min}}$, where $T_{\min} = \min \sigma(\partial C)$. If we view $S^1$ as $\mathbb{T} = \mathbb{R} / \mathbb{Z}$ rather than $\mathbb{R} / 2\pi \mathbb{Z}$, then \cite[Proposition 2.1]{AO08} tells us that  
\[
\sqrt{T_{\min}} = c_{EHZ}(C)^{\frac{1}{2}} = \frac{1}{2} \min_{x \in \Lambda} \int_\mathbb{T} h_C(-J_0 \dot{x}(t)) \, dt
\]
where $h_C$ is the support function of $C$ defined by 
\[
h_C(x) = \sup \{\langle u, x \rangle \mid u \in K \}.
\]

On the other hand, $\HdC(x) = \frac{1}{4} h_C^2(x)$. Thus, 
\[
\sqrt{\HdC(x)} = \frac{1}{2} h_C(x).
\]
This implies that 
\begin{equation}\label{squareroot}
\int_0^1 \sqrt{\HdC(-J_0 \dot{x}(t))} \, dt \geq \sqrt{T_{\min}}, \quad x \in \Lambda.
\end{equation}
Let $s_0 > 0$ be such that $\varphieta'(s_0^2) < \frac{T_{\min}}{8}$. Such an $s_0$ always exists since $\varphi_\eta'(s) = \delta$ near $0$ and $\delta < \frac{\varepsilon}{4}$ where $\varepsilon = \frac{1}{2} \min \sigma(\partial C)=\frac{T_{\min}}{2}$. 

We claim that 
\begin{equation}\label{increasing}
\frac{d \PsiHeta}{d \lambda}(\lambda x) > 0, \quad x \in \Lambda, \quad 0 < \lambda < s_0 \frac{\sqrt{T_{\min}}}{2}
\end{equation}
holds. For any $t$ where $\dot{x}(t)$ is defined and any $\lambda > 0$, by the claim (2) of Lemma \ref{ham}, we have 
\[
\nabla H^*_\eta(-\lambda J_0 \dot{x}(t)) = k(t, \lambda) \nabla \HdC(-\lambda J_0 \dot{x}(t))
\]
where $k(t, \lambda)$ satisfies 

\begin{equation}\label{ktlambda}
\varphi_\eta'( k^2 \HdC(-\lambda J_0 \dot{x}(t))) k=\varphi_\eta'(\lambda^2 k^2 \HdC(-J_0 \dot{x}(t))) k = 1.    
\end{equation}

Let 
\begin{equation}\label{setBclaim2}
B \defeq \left\{ t \in \mathbb{T} \mid \sqrt{\HdC(-J_0 \dot{x}(t))} \geq \frac{\sqrt{T_{\min}}}{4} \right\}.
\end{equation}
 We have that 
\begin{equation}\label{kinequalitysetB}
k(t, \lambda) \geq \frac{2}{\sqrt{T_{\min}} \sqrt{\HdC(-J_0 \dot{x}(t))}}, \quad 0 < \lambda < s_0 \frac{\sqrt{T_{\min}}}{2}, \quad t \in B.    
\end{equation}
Suppose otherwise. Then 
\[
\varphi'_\eta(\lambda^2 k^2 \HdC(-J_0 \dot{x}(t))) k < \varphi'_\eta(s_0^2) \frac{2}{\sqrt{T_{\min}} \sqrt{\HdC(-J_0 \dot{x}(t))}}
\]
From the definition of $B$, we have 
\[
\frac{2}{\sqrt{T_{\min}} \sqrt{\HdC(-J_0 \dot{x}(t))}} \leq \frac{8}{T_{\min}}
\]
and from the choice of $s_0$, we know $\varphi'_\eta(s_0^2) < \frac{T_{\min}}{8}$. Thus, it follows that 
\[
\varphi'_\eta(\lambda^2 k^2 \HdC(-J_0 \dot{x}(t))) k < 1
\]
which contradicts \eqref{ktlambda}. Therefore, \eqref{kinequalitysetB} holds.

Since $x \in \Lambda$, we have 
\[
\int_\mathbb{T} \langle J_0 x(t), \dot{x}(t) \rangle \, dt = 2 \mathcal{A}(x) = 2.
\]
Hence, for $x\in \Lambda$ and $0 < \lambda < s_0 \frac{\sqrt{T_{\min}}}{2}$, we can write 
\begin{align*}
 & \frac{d \PsiHeta}{d \lambda}(\lambda x) = -\lambda \int_\mathbb{T} \langle J_0 x(t), \dot{x}(t) \rangle \, dt + \int_\mathbb{T} \langle \nabla H_\eta^*(-\lambda J_0 \dot{x}(t)), -J_0 \dot{x}(t) \rangle \, dt \\
 & \geq -2\lambda + \int_B \langle \nabla H_\eta^*(-\lambda J_0 \dot{x}(t)), -J_0 \dot{x}(t) \rangle \, dt = \lambda \left(-2 + \int_B k(t, \lambda) \langle \nabla \HdC(-J_0 \dot{x}(t)), -J_0 \dot{x}(t) \rangle \, dt \right).
\end{align*}

The inequality follows from the fact that 
\[
\langle \nabla H_\eta^*(x), x \rangle \geq 0
\]
for all $x \in \mathbb{R}^{2n}$ since $H_\eta$ is convex, and the last equality is due to claim (2) of Lemma \ref{ham}.

Using the $2$-homogeneity of $\HdC$ and \eqref{kinequalitysetB} we get 
\begin{align*}
 & \frac{d \PsiHeta}{d \lambda}(\lambda x) \geq \lambda \left(-2 + 2 \int_B k(t, \lambda) \HdC(-J_0 \dot{x}(t)) \, dt \right) \geq \lambda \left(-2 + \frac{4}{\sqrt{T_{\min}}} \int_B \sqrt{\HdC(-J_0 \dot{x}(t))} \, dt \right) \\
 & = \lambda \left(-2 + \frac{4}{\sqrt{T_{\min}}} \int_\mathbb{T} \sqrt{\HdC(-J_0 \dot{x}(t))} \, dt - \frac{4}{\sqrt{T_{\min}}} \int_{B^c} \sqrt{\HdC(-J_0 \dot{x}(t))} \, dt \right).    
\end{align*}

From the definition of $B$ (see \eqref{setBclaim2}) and the fact that $|B^c| \leq 1$, where $|\cdot|$ is the measure of the set and $B^c$ is the complement of $B$ in $\mathbb{T}$, we have 
\[
\int_{B^c} \sqrt{\HdC(-J_0 \dot{x}(t))} \, dt \leq \frac{\sqrt{T_{\min}}}{4}.
\]
Combining this with the previous calculations and \eqref{squareroot}, we obtain
\begin{align}
    \nonumber \frac{d \PsiHeta}{d \lambda}(\lambda x) &\geq \lambda \left(-2 + \frac{4}{\sqrt{T_{\min}}} \int_\mathbb{T} \sqrt{\HdC(-J_0 \dot{x}(t))} \, dt - \frac{4}{\sqrt{T_{\min}}} \int_{B^c} \sqrt{\HdC(-J_0 \dot{x}(t))} \, dt \right) \\
    \nonumber &\geq \lambda \left(-2 + 4 - 1 \right) = \lambda > 0.
\end{align}

Therefore, \eqref{increasing} holds, which implies that for every $0 < r < s_0 \frac{\sqrt{T_{\min}}}{2}$, we have $d \PsiHeta(x)[x] > 0$ when $x \in (0, r] \Lambda$.

On the other hand, for $x \in \{\PsiC < T_\eta\}$, it holds that
\begin{align}
     \nonumber & \frac{d \PsiHeta}{d \lambda}(\lambda x) \leq \int_\mathbb{T} \langle \nabla H^*_\eta(-\lambda J_0 \dot{x}(t)), -J_0 \dot{x}(t) \rangle \, dt = 2 \lambda \int_\mathbb{T} k(t, \lambda) \HdC(-J_0 \dot{x}(t)) \, dt \\
     \nonumber & \leq \frac{2\lambda}{\delta} \int_\mathbb{T} \HdC(-J_0 \dot{x}(t)) \, dt \leq \frac{2\lambda}{\delta} T_\eta.  
\end{align}

The first inequality holds because $\frac{d}{d\lambda}\mathcal{A}(\lambda x) > 0$. The second is due to claim (2) of Lemma \ref{ham} ($k(t, \lambda) \leq \frac{1}{\delta}$), and the third follows from $x \in \{\PsiC < T_\eta\}$. Therefore, we have
\[
\PsiHeta(r x) = \PsiHeta(0) + \int_0^r \frac{d \PsiHeta}{d s}(\lambda x) \, ds \leq \PsiHeta(0) + \frac{r^2}{\delta} T_\eta.
\]

Since $\PsiHeta(0) < \varepsilon$, this implies that if $r$ is sufficiently small, $\PsiHeta < \varepsilon$ on $(0, r] \{\PsiC < T_\eta\}$. We choose $0 < r < s_0 \frac{\sqrt{T_{\min}}}{2}$ small enough to satisfy this condition. This completes the proof of (2).

\textbf{Claim }(3): To prove the first part of this claim, it suffices to show that $\PsiHeta(\lambda x)$ increases with $\lambda > 0$ for $x \in \{\PsiC > \eta\}$.

\begin{align}
 \nonumber \frac{d\PsiHeta}{d \lambda}(\lambda x) &= -\lambda \int\limits_\mathbb{T} \langle J_0 x(t), \dot{x}(t) \rangle \, dt + \int \limits_\mathbb{T} \langle \nabla H_\eta^*(-\lambda J_0 \dot{x}(t)), -J_0 \dot{x}(t) \rangle \, dt \\
 \nonumber &= \lambda \left(-2 + \int \limits_\mathbb{T} k(t, \lambda) \langle \nabla \HdC(-J_0 \dot{x}(t)), -J_0 \dot{x}(t) \rangle \, dt \right) \\
 \nonumber &= \lambda \left(-2 + 2 \int \limits_\mathbb{T} k(t, \lambda) \HdC(-J_0 \dot{x}(t)) \, dt \right)
\end{align}

In the second equality, we used claim (2) of Lemma \ref{ham} and the fact that $x \in \{\PsiC > \eta\} \subset \Lambda$, which implies $\mathcal{A}(x) = 1$. For the third equality, we applied the 2-homogeneity property of $\HdC$. According to Lemma \ref{ham}, $k(t, \lambda) \geq \frac{1}{\eta}$. Thus,

\begin{align}
 \nonumber \frac{d\PsiHeta}{d \lambda}(\lambda x) &\geq \lambda \left(-2 + \frac{2}{\eta} \int \limits_\mathbb{T} \HdC(-J_0 \dot{x}(t)) \, dt \right) \\
 \nonumber &= \lambda \left(-2 + \frac{2}{\eta} \PsiC(x) \right) > \lambda \left(-2 + \frac{2}{\eta} \eta \right) = 0
\end{align}

In the last inequality, we utilized the fact that $x \in \{\PsiC > \eta\}$. Hence, $\PsiHeta$ is radially increasing on $\mathbb{R}_+ \{\PsiC > \eta\}$. On the other hand, if $x \in \{\mathcal{A} \leq 0\}$, then for any $\lambda \geq 0$, we have

\begin{align}
    \nonumber \frac{d\PsiHeta}{d \lambda}(\lambda x) &= -\lambda \int\limits_\mathbb{T} \langle J_0 x(t), \dot{x}(t) \rangle \, dt + \int \limits_\mathbb{T} \langle \nabla H_\eta^*(-\lambda J_0 \dot{x}(t)), -J_0 \dot{x}(t) \rangle \, dt \\
    \nonumber &\geq \int \limits_\mathbb{T} \langle \nabla H_\eta^*(-\lambda J_0 \dot{x}(t)), -J_0 \dot{x}(t) \rangle \, dt \geq 0
\end{align}

Here, the first inequality follows from $x \in \{\mathcal{A} \leq 0\}$, and the second inequality is a consequence of the convexity of $H_\eta^*$, which ensures that $\langle \nabla H_\eta^*(-\lambda J_0 \dot{x}(t)), -J_0 \dot{x}(t) \rangle \geq 0$ for all $t$. Thus, we have proved claim (3).

\textbf{Claim} (4): Let $x \in \mathbb{R}_+ \{\PsiC < T_\eta\}$.

\textbf{Claim} (4.a):  By Remark \ref{stable}, there exists a unique critical orbit $S_{x^-} \subset \Lambda_{\mathcal{A}(x)}$ such that for every $S^1$-invariant neighborhood $U \subset \Lambda_{\mathcal{A}(x)}$ of $S_{x^-}$, there is a $t_0 > 0$ such that for all $t \geq t_0$, $\phi^t(x) \in U$, where $\phi^t$ denotes the flow of $-V$, and $V$ is the $1$-homogeneous pseudo-gradient vector field for $\widetilde{\Psi}_C$ defined in Remark \ref{boundedpalaissmale}. Since $\widetilde{\Psi}_C$ decreases along $\phi^t$ and there are no critical values between $\eta$ and $T_\eta$, we conclude that
\[
S_{x^-} \subset \crit(\widetilde{\Psi}_C)^{< T_{\eta}} = \crit(\widetilde{\Psi}_C)^{< \eta} = \mathbb{R}_+ \crit(\PsiC)^{< \eta}.
\]

By claim (1) of this lemma, there exists a unique $\lambda' > 0$ such that $\lambda' S_{x^-} \subset \crit(\PsiHeta) \setminus \{0\}$. Since $\phi^t$ is $1$-homogeneous, it follows that for every $S^1$-invariant neighborhood $U \subset \Lambda_{\mathcal{A}(\lambda' x)}$ of $\lambda' S_{x^-}$, there exists a $t_0 > 0$ such that for all $t \geq t_0$, $\phi^t(\lambda' x) \in U$. Given that $\PsiHeta(\lambda' x^-) > \varepsilon$ and $\PsiHeta$ is $S^1$-invariant, we can find an $S^1$-invariant neighborhood $U'$ of $\lambda' S_{x^-}$ such that $\PsiHeta > \varepsilon$ on $U'$. For large $t > 0$, $\phi^t(\lambda' x) \in U'$. From Remark \ref{decPsiHeta}, we know that $\PsiHeta$ is non-increasing along the flow of $\phi^t$. Therefore,
\[
\PsiHeta(\lambda' x) \geq \PsiHeta(\phi^t(\lambda' x)) > \varepsilon.
\]

In particular if the second condition of the claim is satisfied, then it holds \[\mathcal{A}(x)>\mathcal{A}(\lambda'  x^-)=\mathcal{A}(\lambda' x)=(\lambda')^2 \mathcal{A}(x)\]

Since $\mathcal{A}(x)>0$, this implies that $\lambda '<1$.

\textbf{Claim} (4.b): From claim (2) of this lemma, we have that
\begin{equation}\label{nearor}
  \frac{d\PsiHeta}{d\lambda}(\lambda x) > 0  
\end{equation}
for small $\lambda > 0$. Suppose there exists some $\lambda_0 > 0$ such that
\[
\frac{d\PsiHeta}{d\lambda}(\lambda_0 x) \leq 0.
\]

Let $\lambda_{\max} = \inf \{\lambda > 0 \mid \frac{d\PsiHeta}{d\lambda}(\lambda x) \leq 0 \}$. Since the derivative must be positive near zero, $\lambda_{\max} > 0$. By the definition of $\lambda_{\max}$, it follows that
\[
\frac{d\PsiHeta}{d\lambda}(\lambda x) > 0, \quad \lambda \in (0, \lambda_{\max}),
\]
and
\[
\frac{d\PsiHeta}{d\lambda}(\lambda_{\max} x) = \lambda_{\max} \left(-\int\limits_\mathbb{T} \langle J_0 x(t), \dot{x}(t) \rangle \, dt + 2 \int\limits_\mathbb{T} K(t, \lambda_{\max}) \HdC(-J_0 \dot{x}(t)) \, dt \right) = 0.
\]

Since $\lambda_{\max} > 0$, it must be that
\[
g(\lambda_{\max}) = -\int\limits_\mathbb{T} \langle J_0 x(t), \dot{x}(t) \rangle \, dt + 2 \int\limits_\mathbb{T} K(t, \lambda_{\max}) \HdC(-J_0 \dot{x}(t)) \, dt = 0.
\]

The function
\[
g(\lambda) = -\int\limits_\mathbb{T} \langle J_0 x(t), \dot{x}(t) \rangle \, dt + 2 \int\limits_\mathbb{T} K(t, \lambda) \HdC(-J_0 \dot{x}(t)) \, dt
\]
is non-increasing because for $\lambda_1 < \lambda_2$, we have $k(\lambda_2, t) \leq k(\lambda_1, t)$, which is a consequence of Claim (2) of Lemma \ref{ham}.

Let
\[
B = \{t \in \mathbb{T} \mid \HdC(-J_0 \dot{x}(t)) > 0, \, k(t, \lambda_0) < \eta\}.
\]

Assume $|B| = 0$, where $|\cdot|$ is the measure of the set. Since $k(\lambda, t)$ is non-increasing with respect to $\lambda$, for $\lambda > \lambda_{\max}$, we have $k(t, \lambda) = \frac{1}{\eta}$ on $B^c$ if $\HdC(-J_0 \dot{x}(t)) > 0$. Therefore,
\[
\int\limits_{B^c} K(t, \lambda) \HdC(-J_0 \dot{x}(t)) \, dt = \int\limits_{B^c} K(t, \lambda_{\max}) \HdC(-J_0 \dot{x}(t)) \, dt, \quad \lambda \geq \lambda_{\max}.
\]

Since $|B| = 0$, it follows that $g(\lambda) = g(\lambda_{\max})$ for $\lambda \geq \lambda_{\max}$, which implies that
\[
\frac{d\PsiHeta}{d\lambda}(\lambda x) = 0, \quad \lambda \geq \lambda_{\max}.
\]

Additionally, we have
\begin{align}
\nonumber & \int\limits_{\mathbb{T}} K(t, \lambda_{\max}) \HdC(-J_0 \dot{x}(t)) \, dt = \int\limits_{B^c} K(t, \lambda_{\max}) \HdC(-J_0 \dot{x}(t)) \, dt \\
\nonumber & = \frac{1}{\eta} \int\limits_{B^c} \HdC(-J_0 \dot{x}(t)) \, dt = \frac{1}{\eta} \int\limits_{\mathbb{T}} \HdC(-J_0 \dot{x}(t)) \, dt.
\end{align}

On the other hand, since $g(\lambda_{\max}) = 0$, it follows from the previous calculation that
\[
\frac{2}{\eta} \int\limits_{\mathbb{T}} \HdC(-J_0 \dot{x}(t)) = 2\mathcal{A}(x).
\]

Thus, we have $\PsiC \left(\frac{x}{\sqrt{\mathcal{A}(x)}} \right) = \eta$.

Assume now that $|B| > 0$. For $\lambda > \lambda_{\max}$, we have $k(\lambda, t) < k(\lambda_{\max}, t)$ for all $t \in B$. Since $|B| > 0$, it follows that
\[
\int\limits_\mathbb{T} K(t, \lambda) \HdC(-J_0 \dot{x}(t)) \, dt < \int\limits_\mathbb{T} K(t, \lambda_{\max}) \HdC(-J_0 \dot{x}(t)) \, dt.
\]

Therefore, $g(\lambda) < g(\lambda_{\max}) = 0$ for $\lambda > \lambda_{\max}$, so
\[
\frac{d\PsiHeta}{d\lambda}(\lambda x) < 0, \quad \lambda \in (\lambda_{\max}, +\infty).
\]

Let $\lambda_1 < \lambda_2$ where $\lambda_1, \lambda_2 \in (\lambda_{\max}, +\infty)$. Then,
\[
\frac{d\PsiHeta}{d\lambda}(\lambda_2 x) = \lambda_2 g(\lambda_2) < \lambda_1 g(\lambda_2) \leq \lambda_1 g(\lambda_1) = \frac{d\PsiHeta}{d\lambda}(\lambda_1 x),
\]
where the first inequality follows from $g(\lambda_2)$ being negative, and the second from $g(\lambda)$ being non-increasing. Therefore, the derivative of $\PsiHeta(\lambda x)$ is decreasing on $(\lambda_{\max}, +\infty)$.

\textbf{Claim} (4.c): Let $x \in \crit(\PsiHeta) \setminus \{0\}$. Since $x$ is a critical point, it must be that
\[
\frac{d\PsiHeta}{d\lambda}(\lambda x) = 0
\]
for $\lambda_{\max} = 1$. Given that $x \in \mathbb{R}_+ \{\PsiC < \eta\}$, from (4.b) it follows that it must exhibit the third type of derivative behavior.

\textbf{Claim} (5): Let $S_x$ be an arbitrary non-trivial critical orbit of $\PsiHeta$. Consider an $S^1$-invariant neighborhood $U_{S_x} \subset \Lambda_{\mathcal{A}(x)} \cap \mathbb{R}_+\{\PsiC < T_\eta\}$ of $S_x$ such that $\PsiHeta < \eta$ on $[1 - \delta_x, 1 + \delta_x] U_{S_x}$, for some small $\delta_x > 0$. Such a neighborhood exists because $\PsiHeta(x) < \eta$. Additionally, by possibly shrinking $U_{S_x}$, we can ensure that
\[
\frac{d\PsiHeta}{d\lambda}((1 - \delta_x)y) > 0, \quad y \in U
\]
and
\[
\frac{d\PsiHeta}{d\lambda}((1 + \delta_x)y) < a, \quad y \in U,
\]
where $a < 0$ is some negative constant. This follows from the fact that
\[
\frac{d\PsiHeta}{d\lambda}((1 - \delta_x)x) > 0
\]
and
\[
\frac{d\PsiHeta}{d\lambda}((1 + \delta_x)x) < a
\]
for some negative $a < 0$, which follows from claim (4.c) of this Lemma. From (4.b), we know that $\PsiHeta$ increases radially on $(0, 1 - \delta_x] U_{S_x}$ and decreases radially on $[1 + \delta_x, +\infty) U_{S_x}$. Combining this with the fact that $\PsiHeta < \eta$ on $[1 - \delta_x, 1 + \delta_x] U_{S_x}$, we conclude that $\mathbb{R_+} U_{S_x} \subset \{\PsiHeta < \eta\}$. Additionally, from (4.c), we know that the derivative of $\PsiHeta(\lambda y)$ is decreasing for $y \in U_{S_x}$ and $\lambda \geq 1 + \delta_x$. Therefore, we have
\[
\frac{d\PsiHeta}{d\lambda}(\lambda y) < a, \quad y \in U_{S_x}, \ \lambda \geq 1 + \delta_x.
\]
Combining this with the fact that $\mathbb{R_+} U_{S_x} \subset \{\PsiHeta < \eta\}$, we conclude that there exists $R_{S_x} > 0$ such that $[R_{S_x}, +\infty) U_{S_x} \subset \{\PsiHeta < 0\}$, which completes the proof of this Lemma.

\end{proof}

\begin{lemma}\label{inclusioncutoff}
    Let $L \in (\eta, T_\eta)$. Then the inclusion map
    \[
    i^{\eta,C}_L: \left(\{\PsiHeta < L\} \cap \mathbb{R}_+ \{\PsiC < L\}, \{\PsiHeta < \varepsilon\} \cap \mathbb{R}_+ \{\PsiC < L\}\right) \to \left(\{\PsiHeta < L\}, \{\PsiHeta < \varepsilon\}\right)
    \]
    induces isomorphisms in singular and $S^1$-equivariant homology.
\end{lemma}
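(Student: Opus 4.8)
The plan is to show that the inclusion is a deformation retract up to the flow of $-V$, where $V$ is the $1$-homogeneous pseudo-gradient vector field for $\widetilde{\Psi}_C$ from Remark \ref{boundedpalaissmale}. Recall from Remark \ref{decPsiHeta} that $\PsiHeta$ strictly decreases along the flow $\phi^t$ of $-V$ at every non-stationary point, and from Remark \ref{stable} that $\phi^t$ preserves the level sets of $\mathcal{A}$ and that every point flows to a unique critical orbit $S_{x^-}$ of $\widetilde{\Psi}_C$. The key observation is that the target pair $(\{\PsiHeta < L\}, \{\PsiHeta < \varepsilon\})$ deformation retracts onto the subpair indexed by $\mathbb{R}_+\{\PsiC < L\}$ by following $\phi^t$: since $\widetilde{\Psi}_C$ decreases along $\phi^t$ and there are no critical values of $\widetilde{\Psi}_C$ in $(\eta, T_\eta) \supset \{L\}$, a point $x$ with $\PsiHeta(x) < L$ has its forward orbit converging to $\mathbb{R}_+\crit(\PsiC)^{<\eta} \subset \mathbb{R}_+\{\PsiC < L\}$, and once it enters a small neighborhood of a critical orbit it lies in $\mathbb{R}_+\{\PsiC < L\}$ (using claim (5) of Lemma \ref{Psiheta} to control the radial behavior of $\PsiHeta$ near such orbits). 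The first step is therefore to verify that $\{\PsiHeta < L\}$ flows in finite time into $\{\PsiHeta < L\} \cap \mathbb{R}_+\{\PsiC < L\}$, and similarly for the $\varepsilon$-sublevel set, so that the inclusion $i^{\eta,C}_L$ admits a homotopy inverse given by a cutoff of $\phi^t$.

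**Key steps in order.** First I would establish that for every $x \in \{\PsiHeta < L\}$ there is a time $t(x) \geq 0$ with $\phi^{t(x)}(x) \in \mathbb{R}_+\{\PsiC < L\}$, and that $t(x)$ can be chosen locally bounded and $S^1$-invariant: this uses that $\widetilde{\Psi}_C(\phi^t(x)) \to \widetilde{\Psi}_C(x^-) < \eta < L$ by Remark \ref{stable} together with the absence of critical values in $(\eta,T_\eta)$, so for $t$ large $\phi^t(x) \in \mathbb{R}_+\{\PsiC < L\}$; compactness of sublevel sets modulo $S^1$ (Palais–Smale, Remark \ref{boundedpalaissmale}) gives local uniformity. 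Second, I would check the same statement with $L$ replaced by $\varepsilon$ in the $\PsiHeta$-condition: a point with $\PsiHeta(x) < \varepsilon$ flows to $S_{x^-}$; if $S_{x^-}$ is the trivial critical point $0$ of $\PsiHeta$, then $x \in \mathbb{R}_+\{0\} \subset \mathbb{R}_+\{\PsiC < L\}$ trivially (constant loops have $\mathcal A \le 0$, handled by claim (3) of Lemma \ref{Psiheta}); if $S_{x^-}$ is non-trivial then $\PsiHeta(\lambda' x^-) > \varepsilon$ by claim (1) of Lemma \ref{Psiheta}, and since $\PsiHeta$ is non-increasing along $\phi^t$ and $\PsiHeta(x) < \varepsilon$, the orbit cannot actually converge to a non-trivial critical orbit's rescaling — one argues instead that $x$ flows into $\mathbb{R}_+\{\PsiC < L\}$ before $\PsiHeta$ could increase, and here claim (5) of Lemma \ref{Psiheta} controls behavior near critical orbits. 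Third, since $\PsiHeta$ is non-increasing along $\phi^t$ (Remark \ref{decPsiHeta}), the homotopy $(x,s) \mapsto \phi^{s\,t(x)}(x)$ preserves both $\{\PsiHeta < L\}$ and $\{\PsiHeta < \varepsilon\}$, and maps the whole pair into the subpair; combined with the inclusion in the reverse direction this shows $i^{\eta,C}_L$ is a homotopy equivalence of pairs, hence an isomorphism on singular homology. Finally, since $V$, $\phi^t$, and $t(x)$ are all $S^1$-invariant (Remark \ref{decPsiHeta}, and the cutoff can be averaged), the same deformation is $S^1$-equivariant, giving the isomorphism in $S^1$-equivariant homology as in Remark \ref{isomorphismclarkedualityS1}.

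**Main obstacle.** The delicate point is the behavior of $\PsiHeta$ and the flow $\phi^t$ near the non-trivial critical orbits, precisely where $\widetilde{\Psi}_C$ stalls. One needs that following $-V$ a point $x \in \{\PsiHeta < L\}$ does enter $\mathbb{R}_+\{\PsiC < L\}$ without $\PsiHeta$ ever climbing back above $L$ (respectively $\varepsilon$), and that this happens in a time that varies continuously and $S^1$-invariantly. The resolution relies on claims (4.b), (4.c), and (5) of Lemma \ref{Psiheta}: near a non-trivial critical orbit $S_x$ of $\PsiHeta$ there is an $S^1$-invariant neighborhood $U_{S_x}$ with $\mathbb{R}_+ U_{S_x} \subset \{\PsiHeta < \eta\} \subset \{\PsiHeta < L\}$, so once the flow reaches such a neighborhood the point already lies in $\mathbb{R}_+\{\PsiC < T_\eta\} \supset \mathbb{R}_+\{\PsiC < L\}$ region where the statement holds. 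I expect constructing the $S^1$-invariant cutoff time function $t(x)$ with the required properties — continuity, and the simultaneous control of $\PsiHeta$ along the truncated orbit on both the $L$- and $\varepsilon$-sublevels — to be the part requiring the most care, though it is ultimately a standard deformation argument once the lemmas above are in hand.
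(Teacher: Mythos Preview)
Your approach has a genuine gap: the deformation you propose cannot exist, for two independent reasons.

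First, the flow $\phi^t$ of $-V$ is defined only on the positive cone $\{\mathcal{A}>0\}$, whereas $\{\PsiHeta<L\}$ contains the origin (where $\PsiHeta(0)=\zeta_\varepsilon<\varepsilon$) and an open neighborhood of it inside $\{\mathcal{A}\leq 0\}$. These points have no forward orbit under $\phi^t$ and can never be pushed into $\mathbb{R}_+\{\PsiC<L\}\subset\{\mathcal{A}>0\}$. Your remark that ``constant loops have $\mathcal{A}\leq 0$, handled by claim (3) of Lemma \ref{Psiheta}'' does not address this: claim (3) only says $\PsiHeta$ is radially non-decreasing there, it does not produce a flow into the subpair.

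Second, even on $\{\mathcal{A}>0\}$ your first step fails. Take a critical point $x^-\in\crit(\widetilde{\Psi}_C)$ at level $T_\eta$ (such points exist since $T_\eta\in\sigma(\partial C)$). For small $\lambda>0$ the point $\lambda x^-$ satisfies $\PsiHeta(\lambda x^-)<\varepsilon<L$ by claim (3) of Lemma \ref{Psiheta} (radial increase on $\mathbb{R}_+\{\PsiC>\eta\}$), yet $\lambda x^-$ is a rest point of $V$ with $\widetilde{\Psi}_C(\lambda x^-)=T_\eta>L$, so $\phi^t(\lambda x^-)=\lambda x^-$ never enters $\mathbb{R}_+\{\PsiC<L\}$. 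Your assertion that $\widetilde{\Psi}_C(x^-)<\eta$ is therefore unfounded: the limiting critical orbit can sit at any level $\geq T_\eta$.

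Both issues reflect that the inclusion $i^{\eta,C}_L$ is \emph{not} a homotopy equivalence of pairs; in particular $\{\PsiHeta<\varepsilon\}\cap\mathbb{R}_+\{\PsiC<L\}$ has two components $U^-_{\eta,L}\sqcup U^+_{\eta,L}$ by Lemma \ref{UminusUplus}, while $\{\PsiHeta<\varepsilon\}$ connects $U^-_{\eta,L}$ to the origin. The lemma only claims an isomorphism on \emph{relative} homology. The paper obtains this by a Mayer--Vietoris argument: one covers $(\{\PsiHeta<L\},\{\PsiHeta<\varepsilon\})$ by the subpair together with the pair cut out by the complement of $[r,\infty)\{\PsiC\leq\eta\}$. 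This second pair contains all of $\{\mathcal{A}\leq 0\}$ and the small-$\lambda$ region; both its members are star-shaped at $0$ (by claims (2) and (3) of Lemma \ref{Psiheta}), so its relative homology vanishes, and the same argument kills the intersection pair. Mayer--Vietoris then yields the isomorphism without any flow at all.
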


\begin{proof}

This proof follows from the Mayer–Vietoris sequence for pairs. We introduce the pair  
\begin{equation} \label{otherpair}
\left(\{\PsiHeta < L\} \cap ([r, +\infty) \{\PsiC \leq \eta\})^c, \{\PsiHeta < \varepsilon\} \cap ([r, +\infty) \{\PsiC \leq \eta\})^c\right)    
\end{equation}
where $r > 0$ is the number from statement (2) of Lemma \ref{Psiheta}, and $([r, +\infty) \{\PsiC \leq \eta\})^c$ denotes the complement of a given set within $H^1_0(\mathbb{T}, \mathbb{R}^{2n})$. The sets  
\[
\{\PsiHeta < L\} \cap \mathbb{R}_+ \{\PsiC < L\}, \quad \{\PsiHeta < \varepsilon\} \cap \mathbb{R}_+ \{\PsiC < L\}, 
\]
\[
\{\PsiHeta < L\} \cap ([r, +\infty) \{\PsiC \leq \eta\})^c, \quad \text{and} \quad \{\PsiHeta < \varepsilon\} \cap ([r, +\infty) \{\PsiC \leq \eta\})^c
\]
are all open in $\Honenull$. Moreover, we have the identity  
\[
([r, +\infty) \{\PsiC \leq \eta\})^c = (0, r) \{\PsiC \leq \eta\} \cup \mathbb{R}_+ \{\PsiC > \eta\} \cup \{\mathcal{A} \leq 0\}.
\]
Thus, by claims (2) and (3) of Lemma \ref{Psiheta}, we conclude that $\PsiHeta$ is radially non-decreasing on  
\[
([r, +\infty) \{\PsiC \leq \eta\})^c.
\]

By claim (1) of Lemma \ref{Psiheta},  
\[
0 \in \{\PsiHeta < \varepsilon\} \cap \left([r, +\infty) \{\PsiC \leq \eta\}\right)^c \subset \{\PsiHeta < L\} \cap \left([r, +\infty) \{\PsiC \leq \eta\}\right)^c,
\]
and since $\PsiHeta$ is radially non-decreasing on $\left([r, +\infty) \{\PsiC \leq \eta\}\right)^c$, the sets \[\{\PsiHeta < \varepsilon\} \cap \left([r, +\infty) \{\PsiC \leq \eta\}\right)^c\text{ and }\{\PsiHeta < L\} \cap \left([r, +\infty) \{\PsiC \leq \eta\}\right)^c\] are star-shaped with respect to $0$. This implies that $\{0\}$ is a strong deformation retract of both sets, and we obtain  
\begin{equation}\label{pairhomology}
    \H_*\left(\{\PsiHeta < L\} \cap \left([r, +\infty) \{\PsiC \leq \eta\}\right)^c, \{\PsiHeta < \varepsilon\} \cap \left([r, +\infty) \{\PsiC \leq \eta\}\right)^c\right) = 0.
\end{equation}

The union of the pairs
\[
\left(\{\PsiHeta < L\} \cap \mathbb{R}_+ \{\PsiC < L\}, \{\PsiHeta < \varepsilon\} \cap \mathbb{R}_+ \{\PsiC < L\}\right)
\]
and
\[
\left(\{\PsiHeta < L\} \cap \left([r, +\infty) \{\PsiC \leq \eta\}\right)^c, \{\PsiHeta < \varepsilon\} \cap \left([r, +\infty) \{\PsiC \leq \eta\}\right)^c\right)
\]
is precisely
\begin{equation}\label{union}
    \left(\{\PsiHeta < L\}, \{\PsiHeta < \varepsilon\}\right),
\end{equation}
and their intersection is
\begin{equation}\label{intersection}
  \begin{array}{lcl}
     &\left(\{\PsiHeta < L\} \cap \mathbb{R}_+ \{\PsiC < L\} \cap \left([r, +\infty) \{\PsiC \leq \eta\}\right)^c, \hspace{3cm} \right. \\
      &  \left.\{\PsiHeta < \varepsilon\} \cap \mathbb{R}_+\{\PsiC < L\} \cap \left([r, +\infty) \{\PsiC \leq \eta\}\right)^c\right)
  \end{array}
\end{equation}

Since $\PsiHeta$ is radially non-decreasing on $\mathbb{R}_+ \{\PsiC < L\}\cap \left([r, +\infty) \{\PsiC \leq \eta\}\right)^c$, we conclude that if 
\[
x \in \{\PsiHeta < \varepsilon\} \cap \mathbb{R}_+ \{\PsiC < L\} \cap \left([r, +\infty) \{\PsiC \leq \eta\}\right)^c,
\]
then 
\[
\lambda x \in \{\PsiHeta < \varepsilon\} \cap \mathbb{R}_+ \{\PsiC < L\} \cap \left([r, +\infty) \{\PsiC \leq \eta\}\right)^c
\]
for all $\lambda \in (0, 1]$. The same property holds for $\{\PsiHeta < L\} \cap \mathbb{R}_+ \{\PsiC < L\} \cap \left([r, +\infty) \{\PsiC \leq \eta\}\right)^c$. Combining this with the fact that
\[
(0, \frac{r}{2}] \{\PsiC < L\} \subset \{\PsiHeta < \varepsilon\} \cap \mathbb{R}_+ \{\PsiC < L\} \cap \left([r, +\infty) \{\PsiC \leq \eta\}\right)^c
\]
\[
\subset \{\PsiHeta < L\} \cap \mathbb{R}_+ \{\PsiC < L\} \cap \left([r, +\infty) \{\PsiC \leq \eta\}\right)^c,
\]
which follows from claim (2) of Lemma \ref{Psiheta} ($\PsiHeta<\varepsilon$ on $(0, r] \{\PsiC < T_\eta\}$), we conclude that $(0, \frac{r}{2}] \{\PsiC < L\}$ is a strong deformation retract of both spaces, and the retraction is given by the radial projection onto $\frac{r}{2} \{ \PsiC < L\}$ of points outside $(0, \frac{r}{2}] \{\PsiC < L\}$. Since both sets contain the same strong deformation retract, it follows that
\begin{equation*}
    \begin{array}{lcl}
     &\H_*\left(\{\PsiHeta < L\} \cap \mathbb{R}_+ \{\PsiC < L\} \cap \left([r, +\infty) \{\PsiC \leq \eta\}\right)^c,  \hspace{3cm} \right.\\
      & \left.\{\PsiHeta < \varepsilon\} \cap \mathbb{R}_+ \{\PsiC < L\} \cap \left([r, +\infty) \{\PsiC \leq \eta\}\right)^c\right)=0.
  \end{array}
\end{equation*}

Now that we have shown the homology of the pair \eqref{otherpair} is trivial, the homology of the intersection \eqref{intersection} is also trivial, and the union is \eqref{union}, it follows that for the pairs  
\[
\left(\{\PsiHeta < L\} \cap \mathbb{R}_+ \{\PsiC < L\}, \{\PsiHeta < \varepsilon\} \cap \mathbb{R}_+ \{\PsiC < L\}\right),
\]
and  
\[
\left(\{\PsiHeta < L\} \cap \left([r, +\infty) \{\PsiC \leq \eta\}\right)^c, \{\PsiHeta < \varepsilon\} \cap \left([r, +\infty) \{\PsiC \leq \eta\}\right)^c\right),
\]
the induced Mayer-Vietoris sequence takes the form  
\begin{align}
    \nonumber & 0 \to \H_*\left(\{\PsiHeta < L\} \cap \mathbb{R}_+ \{\PsiC < L\}, \{\PsiHeta < \varepsilon\} \cap \mathbb{R}_+ \{\PsiC < L\}\right) \\
    \nonumber & \xrightarrow{(i^{\eta,C}_L)_*} \H_*\left(\{\PsiHeta < L\}, \{\PsiHeta < \varepsilon\}\right) \to 0.
\end{align}

\end{proof}

Now, we will examine the topology of $\{\PsiHeta<L\}\cap \mathbb{R}_+ \{\PsiC<L\}$ and the topology of $\{\PsiHeta<\varepsilon\}\cap \mathbb{R}_+ \{\PsiC<L\}$.

\begin{lemma}\label{norml}
  Let $L \in (\eta, T_\eta)$. The map 
  \[
  N_L^{\eta,C}: \{\PsiHeta<L\}\cap \mathbb{R}_+ \{\PsiC<L\} \to \{\PsiC<L\}
  \]
  defined by 
  \[
  N_L^{\eta,C}(x) = \frac{x}{\sqrt{\mathcal{A}(x)}}
  \]
  induces an isomorphism in singular and $S^1$-equivariant homology.
\end{lemma}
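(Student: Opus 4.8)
The plan is to realize $N^{\eta,C}_L$ as, up to homotopy equivalence, the projection along the flow $\phi^t$ of $-V$ onto a level set of $\mathcal{A}$, and to use the fact that $\mathbb{P}_+\{\Psi_C<L\}$ is (equivariantly) a deformation retract of $\{\Psi_{H^*_\eta}<L\}\cap\mathbb{R}_+\{\Psi_C<L\}$. First I would note that the flow $\phi^t$ of $-V$ is complete, $S^1$-equivariant, and preserves the level sets of $\mathcal{A}$ (Remark \ref{stable}), and that by Remark \ref{decPsiHeta} the functional $\Psi_{H^*_\eta}$ is strictly decreasing along non-stationary orbits of $\phi^t$. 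Consequently $\phi^t$ maps the set $\{\Psi_{H^*_\eta}<L\}\cap\mathbb{R}_+\{\Psi_C<L\}$ into itself for all $t\ge 0$: the condition $x\in\mathbb{R}_+\{\Psi_C<L\}$ is preserved because $\phi^t$ rescales and moves along $\widetilde{\Psi}_C$-decreasing directions (and there are no critical values of $\widetilde{\Psi}_C$ in $(\eta,T_\eta)$, so $\mathbb{R}_+\{\Psi_C<L\}=\mathbb{R}_+\{\Psi_C\le\eta\}\cup(\text{lower part})$ is $\phi^t$-invariant), while $\{\Psi_{H^*_\eta}<L\}$ is preserved because $\Psi_{H^*_\eta}$ decreases.

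Next I would fix, on the domain, the preferred representative on the slice $\Lambda_1=\mathcal{A}^{-1}(1)$: the radial projection $x\mapsto x/\sqrt{\mathcal{A}(x)}$ is a homotopy equivalence of $\{\Psi_{H^*_\eta}<L\}\cap\mathbb{R}_+\{\Psi_C<L\}$ onto its intersection with $\Lambda_1$, because $\Psi_{H^*_\eta}$ is radially monotone on the relevant cone (claims (2),(3),(4.b) of Lemma \ref{Psiheta}) and the intersection with $\Lambda_1$ is exactly $\{x\in\Lambda:\Psi_C(x)<L\}=\{\Psi_C<L\}$ after restricting; the deformation is $s\mapsto x/\big((1-s)+s\sqrt{\mathcal{A}(x)}\big)$-type rescaling, which stays inside both sublevel sets by radial monotonicity. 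Similarly the target $\{\Psi_C<L\}\subset\Lambda$ already lives on $\Lambda_1$. Under these identifications $N^{\eta,C}_L$ becomes the identity on $\{\Psi_C<L\}$, hence induces an isomorphism on singular and on $S^1$-equivariant homology; $S^1$-equivariance of all the maps and homotopies is automatic since $V$, $\phi^t$, $\mathcal{A}$, $\Psi_{H^*_\eta}$ and $\Psi_C$ are all $S^1$-invariant/equivariant.

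Alternatively — and this is probably the cleaner route to write — I would avoid the flow entirely and argue purely by radial deformation retraction: show directly that $\{x\in\Lambda:\Psi_C(x)<L\}$ is a strong deformation retract of $\{\Psi_{H^*_\eta}<L\}\cap\mathbb{R}_+\{\Psi_C<L\}$ with retraction $N^{\eta,C}_L$ itself. The retracting homotopy is the radial rescaling $(x,\sigma)\mapsto \big((1-\sigma)+\sigma\,\mathcal{A}(x)^{-1/2}\big)^{-1}\!\cdot x$ (suitably normalized so $\sigma=1$ lands on $\Lambda_1$); I must check it never leaves $\{\Psi_{H^*_\eta}<L\}\cap\mathbb{R}_+\{\Psi_C<L\}$. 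Membership in $\mathbb{R}_+\{\Psi_C<L\}$ is obvious since the homotopy is radial. The real point is that along each ray from $0$ through a point $x$ with $\Psi_{H^*_\eta}(x)<L$ and $\mathcal{A}(x)>0$, the value of $\Psi_{H^*_\eta}$ on the segment between $x$ and $x/\sqrt{\mathcal{A}(x)}$ stays $<L$: this follows from claim (4.b) of Lemma \ref{Psiheta}, which says $\lambda\mapsto\Psi_{H^*_\eta}(\lambda x)$ is first increasing then (possibly) decreasing, so on any subinterval its maximum is attained at an endpoint, and both endpoints have value $<L$ (for the endpoint $x/\sqrt{\mathcal{A}(x)}\in\{\Psi_C<L\}\cap\Lambda$ one uses claim (1) of Lemma \ref{Psiheta} together with the bound $\Psi_{H^*_\eta}<\eta<L$ on its non-trivial critical orbits and radial monotonicity below them, plus claim (4.a) to control points of large action). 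The main obstacle is exactly this verification — establishing that the radial segment joining an arbitrary point of the domain to its $\Lambda_1$-normalization never crosses the level $L$ of $\Psi_{H^*_\eta}$ — and handling uniformly the three cases of radial behavior in claim (4.b), in particular points with $\Psi_C$-value near $\eta$ where $\lambda_{\max}$ can be close to $1$; everything else (openness of the sets, $S^1$-invariance, continuity of the homotopy, and the passage to $S^1$-equivariant homology via the Borel construction, where a strong deformation retract remains one after $\times_{S^1}ES^1$) is routine.
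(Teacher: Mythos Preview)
Your proposal has a genuine gap: the radial deformation you describe does not, in general, stay inside $\{\Psi_{H^*_\eta}<L\}$. Two concrete failures:

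\begin{itemize}
\item Your endpoint argument is wrong. You say that since $\lambda\mapsto\Psi_{H^*_\eta}(\lambda x)$ is ``first increasing then decreasing'', on any subinterval its maximum is at an endpoint. That is false exactly when the subinterval contains the peak $\lambda_{\max}$; in that case the maximum is at $\lambda_{\max}$, and nothing in Lemma~\ref{Psiheta} bounds the peak value by $L$.
\item The target need not sit inside the domain. For $y\in\Lambda$ with $\eta<\Psi_C(y)<L$, claim~(3) of Lemma~\ref{Psiheta} says $\Psi_{H^*_\eta}$ is radially increasing on the whole ray $\mathbb{R}_+y$, and for large $\lambda$ one has $\Psi_{H^*_\eta}(\lambda y)\to+\infty$. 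There is no reason the value at $\lambda=1$ should be below $L$, so $\{\Psi_C<L\}\not\subset\{\Psi_{H^*_\eta}<L\}$ in general. Hence $N^{\eta,C}_L$ cannot be exhibited as a retraction onto $\{\Psi_C<L\}$.
\end{itemize}

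The paper's argument is not radial at all. It factors $N^{\eta,C}_L=N\circ i$ through the full cone $\mathbb{R}_+\{\Psi_C<L\}$, where $N$ is the obvious normalization (a homotopy equivalence) and $i$ is the inclusion. To show $i$ induces an isomorphism it proves that both $\{\Psi_{H^*_\eta}<L\}\cap\mathbb{R}_+\{\Psi_C<L\}$ and $\mathbb{R}_+\{\Psi_C<L\}$ strong-deformation-retract onto the common intermediate set $\{\Psi_{H^*_\eta}\le L-\delta\}\cap\mathbb{R}_+\{\Psi_C<L\}$. The retraction uses the flow of $-V$ (which preserves $\mathcal{A}$-level sets and strictly decreases $\Psi_{H^*_\eta}$, Remark~\ref{decPsiHeta}), reparametrized so that $\Psi_{H^*_\eta}$ drops at unit rate. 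The key technical point---which replaces your missing estimate---is that on $\Psi_{H^*_\eta}^{-1}([\eta,\infty))\cap\mathbb{R}_+\{\Psi_C<L\}$ the vector field $V$ is bounded away from zero: by claims~(2) and~(5) of Lemma~\ref{Psiheta} this region avoids $(0,r]\{\Psi_C<L\}$ and the neighborhoods $\mathbb{R}_+U_{S_x}$ of the critical rays, and then the Palais--Smale property of $V$ on each $\mathcal{A}$-level gives the lower bound. Your first paragraph correctly notes the relevant properties of $\phi^t$, but then abandons the flow in favor of radial rescaling; the fix is to \emph{use} that flow as the deformation.
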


\begin{proof}
    Note that this map is well-defined since 
    \[
    \{\PsiHeta<L\}\cap \mathbb{R}_+ \{\PsiC<L\} \subset \mathbb{R}_+ \{\PsiC<L\}.
    \]

    The map $N_L^{\eta,C}(x)$ filters through the space $\mathbb{R}_+ \{\PsiC<L\}$ in the following sense:

    \[
    \begin{tikzcd}[scale cd=0.9]
     & \mathbb{R}_+ \{\PsiC<L\} \arrow[dr,"N"] \\
    \{\PsiHeta<L\}\cap \mathbb{R}_+ \{\PsiC<L\} \arrow[ur,"i"] \arrow[rr,"N_L^{\eta,C}"] &&\{\PsiC<L\}
    \end{tikzcd}
    \]

    where $i$ is inclusion and $N$ is the retraction from $\mathbb{R}_+ \{\PsiC<L\}$ to $\{\PsiC<L\}$ given by 
    \[
    N(x) = \frac{x}{\sqrt{\mathcal{A}(x)}}.
    \]
    Since $\{\PsiC<L\}$ is a strong deformation retract of $\mathbb{R}_+ \{\PsiC<L\}$, it follows that $N$ induces an isomorphism in homology. Thus, it is sufficient to prove that the same claim holds for $i$.

    To prove this, we will show that for a small $\delta > 0$, the space $\{\PsiHeta \leq L - \delta\} \cap \mathbb{R}_+ \{\PsiC < L\}$ is a strong deformation retract of the spaces $\{\PsiHeta < L\} \cap \mathbb{R}_+ \{\PsiC < L\}$ and $\mathbb{R}_+ \{\PsiC < L\}$.

    Let $U_{S_x}$ be the neighborhoods from claim (5) of Lemma \ref{Psiheta}, and let $r>0$ be from claim (2) of the same lemma. From claims (2) and (5) of Lemma \ref{Psiheta}, it follows that $\PsiHeta<\eta$ on $(0,r]\{\PsiC<L\} \cup \bigcup_{S_x} \mathbb{R}_+ U_{S_x}$, where the union is taken over all non-trivial critical orbits of $\PsiHeta$. Therefore, we have  
\begin{equation}\label{intersectionflow}
\resizebox{.91\hsize}{!}{$\PsiHeta^{-1}([\eta,+\infty))\cap \mathbb{R}_+ \{\PsiC<L\} = \PsiHeta^{-1}([\eta,+\infty))\cap \left(\mathbb{R}_+ \{\PsiC<L\} \setminus \left((0,r]\{\PsiC<L\} \cup \bigcup_{S_x} \mathbb{R}_+ U_{S_x}\right)\right)$}
\end{equation}

    Let $V$ be the pseudo-gradient vector field of $\widetilde{\Psi}_C$ defined in Remark \ref{boundedpalaissmale}. This vector field satisfies the Palais-Smale condition on the level sets of $\mathcal{A}$, it is 1-homogeneous, and from claim (1) of Lemma \ref{Psiheta}, we have that 
\[
\mathbb{R}_+ \text{crit}(\PsiHeta) = \mathbb{R}_+ \text{crit}(\PsiC)^{<\eta} = \text{crit}(\widetilde{\Psi}_C)^{<T_\eta}.
\]
Hence, for $L \in (\eta, T_\eta)$, it holds that
\[
\|V(x)\|_{H^1} \geq a > 0, \quad x \in \mathbb{R}_+ \{\PsiC < L\} \setminus \left((0, r] \{\PsiC < L\} \cup \bigcup_{S_x} \mathbb{R}_+ U_{S_x}\right),
\]
for some positive constant $a > 0$. Therefore, from \eqref{intersectionflow}, we conclude that 
\[
\|V(x)\|_{H^1} \geq a, \quad x \in \PsiHeta^{-1}([\eta, +\infty)) \cap \mathbb{R}_+ \{\PsiC < L\},
\]
and Remark \ref{decPsiHeta} implies that 
\[
d\PsiHeta(x)[V(x)] = \int_0^1 \langle \nabla H_\eta^*(-J_0 \dot{x}(t)), -J_0 \frac{d}{dt} V(x)(t) \rangle \, dt \geq \frac{1}{\eta} \|V(x)\|_{H^1}^2 \geq \frac{1}{\eta} a^2,
\]
when $x \in \PsiHeta^{-1}([\eta,+\infty)) \cap \mathbb{R}_+ \{\PsiC<L\}$. 

We can now define the vector field 
\[
Y(x) = \frac{-V(x)}{d\PsiHeta(x)[V(x)]}
\]
on $\PsiHeta^{-1}((\eta,+\infty)) \cap \mathbb{R}_+ \{\PsiC<L\}$. Since $V$ and $d\PsiHeta$ are locally Lipschitz and locally bounded, and  
\[
d\PsiHeta(x)[V(x)] = \int_0^1 \langle \nabla H_\eta^*(-J_0 \dot{x}(t)), -J_0 \frac{d}{dt} V(x)(t) \rangle \, dt
\]
is bounded from below by a positive constant, one easily concludes that the vector field $Y$ is locally Lipschitz on $\PsiHeta^{-1}((\eta, +\infty)) \cap \mathbb{R}_+ \{\PsiC < L\}$. 

We therefore have a locally defined flow $\phi^t_Y$ of $Y$ on the open set $\PsiHeta^{-1}((\eta, +\infty)) \cap \mathbb{R}_+ \{\PsiC < L\}$, and this flow is continuous in its domain. Moreover, this flow is a reparameterization of the flow of $-V$, and it satisfies $\PsiHeta(\phi^t_Y(x)) = \PsiHeta(x) - t$. Thus, for a fixed $\delta > 0$ such that $L - 2\delta > \eta$, and for every $x \in \PsiHeta^{-1}((L - \frac{3}{2}\delta, +\infty)) \cap \mathbb{R}_+ \{\PsiC < L\}$, it must hold
\[
t_+(x) > \PsiHeta(x) - (L - 2\delta),
\]
where $t_+(x)$ is the maximal time for which $\phi^t_Y(x)$ is defined. 

We define the retraction 
    \[
    R: \mathbb{R}_+ \{\PsiC<L\} \to \{\PsiHeta \leq L-\delta\} \cap \mathbb{R}_+ \{\PsiC<L\}
    \]
    by 
    \[
    R(x) =
    \begin{cases}
    x, &  x \in \{\PsiHeta \leq L-\delta\} \cap \mathbb{R}_+ \{\PsiC<L\}, \\
    \phi^{(\PsiHeta(x)-(L-\delta))}_Y(x), &  x \in \mathbb{R}_+ \{\PsiC<L\} \setminus \{\PsiHeta < L-\delta\} \cap \mathbb{R}_+ \{\PsiC<L\}.
    \end{cases}
    \]

    Note that this is well-defined for the second case since $t_+(x) > \PsiHeta(x) - 2\delta$ and 
    \[
    \PsiHeta\left(\phi^{(\PsiHeta(x)-(L-\delta))}_Y(x)\right) = L - \delta,
    \]
    and on the overlap, $x \in \{\PsiHeta = L-\delta\} \cap \mathbb{R}_+ \{\PsiC<L\}$, so it holds that 
    \[
    \phi^{(\PsiHeta(x)-(L-\delta))}_Y(x) = \phi^0_Y(x) = x.
    \]
    Thus, $R$ is a well-defined function.

    In the separate cases, $R$ is continuous since the identity, $\phi_Y$, and $\PsiHeta$ are continuous. Since the sets in both cases are closed in $\mathbb{R}_+ \{\PsiC < L\}$, it follows that $R$ is continuous. Therefore, $\{\PsiHeta \leq L - \delta\} \cap \mathbb{R}_+ \{\PsiC < L\}$ is a retract of $\mathbb{R}_+ \{\PsiC < L\}$.

    We now define the homotopy 
    \[
    K: [0,1] \times \mathbb{R}_+ \{\PsiC<L\} \to   \mathbb{R}_+ \{\PsiC<L\}
    \]
    by 
    \[
    K(t,x) =
    \begin{cases}
    x, &  x \in \{\PsiHeta \leq L-\delta\} \cap \mathbb{R}_+ \{\PsiC<L\}, \\
    \phi^{t(\PsiHeta(x)-(L-\delta))}_Y(x), & x \in \mathbb{R}_+ \{\PsiC<L\} \setminus \{\PsiHeta < L-\delta\} \cap \mathbb{R}_+ \{\PsiC<L\}.
    \end{cases}
    \]

  Based on the same arguments, we conclude that $K$ is well-defined and continuous. Let us denote by $i_{L-\delta}: \{\PsiHeta \leq L-\delta\} \cap \mathbb{R}_+ \{\PsiC < L\} \to \mathbb{R}_+ \{\PsiC < L\}$ the inclusion. It is clear that the following holds:

\begin{itemize}
    \item $K_0 = \text{id},$
    \item $K_1 = i_{L-\delta} \circ R,$
    \item $K_t(x) = x, \quad x \in \{\PsiHeta \leq L - \delta\} \cap \mathbb{R}_+ \{\PsiC < L\}$.
\end{itemize}

Hence, $\{\PsiHeta \leq L - \delta\} \cap \mathbb{R}_+ \{\PsiC < L\}$ is a strong deformation retract of $\mathbb{R}_+ \{\PsiC < L\}$. 

Similarly, using the flow of $\phi_Y$, we show that $\{\PsiHeta \leq L - \delta\} \cap \mathbb{R}_+ \{\PsiC < L\}$ is a strong deformation retract of $\{\PsiHeta < L\} \cap \mathbb{R}_+ \{\PsiC < L\}$. Since both $\{\PsiHeta < L\} \cap \mathbb{R}_+ \{\PsiC < L\}$ and $\mathbb{R}_+ \{\PsiC < L\}$ contain the same strong deformation retract, it follows that the inclusion
\[
i: \{\PsiHeta < L\} \cap \mathbb{R}_+ \{\PsiC < L\} \to \mathbb{R}_+ \{\PsiC < L\}
\]
induces an isomorphism in homology, which concludes the proof.

\end{proof}

\begin{lemma}\label{UminusUplus}
    Assume that $L\in (\eta,T_\eta)$. Define  
\[
U^-_{\eta,L} = \{x \in \{\PsiHeta < \varepsilon\} \cap \mathbb{R}_+ \{\PsiC < L\} \mid \lambda x \in \{\PsiHeta < \varepsilon\} \cap \mathbb{R}_+ \{\PsiC < L\}, \text{ for all } \lambda \in (0,1]\}
\]
and  
\[
U^+_{\eta,L} = \{x \in \{\PsiHeta < \varepsilon\} \cap \mathbb{R}_+ \{\PsiC < L\} \mid \lambda x \in \{\PsiHeta < \varepsilon\} \cap \mathbb{R}_+ \{\PsiC < L\}, \text{ for all } \lambda \in [1,+\infty)\}.
\]
Now, let the map  
\[
N_\varepsilon^{\eta,C}: \{\PsiHeta < \varepsilon\} \cap \mathbb{R}_+ \{\PsiC < L\} \to \{\PsiC < L\}
\]
be defined by  
\[
N_\varepsilon^{\eta,C}(x) = \frac{x}{\sqrt{\mathcal{A}(x)}}.
\]

The following holds:

    \begin{enumerate}
        \item $U^-_{\eta,L}$ and $U^+_{\eta,L}$ are disjoint $S^1$-invariant components of $\{\PsiHeta < \varepsilon\} \cap \mathbb{R}_+ \{\PsiC < L\}$ such that
        \[
        \{\PsiHeta < \varepsilon\} \cap \mathbb{R}_+ \{\PsiC < L\} = U^-_{\eta,L} \cup U^+_{\eta,L}.
        \]
        \item The map $N_\varepsilon^{\eta,C}$, when restricted to $U^-_{\eta,L}$ or $U^+_{\eta,L}$, induces isomorphisms in singular and $S^1$-equivariant homology.
        \item If $H_\eta \leq H_\nu$, then for $L_\eta \leq L_\nu$ such that $L_\eta \in (\eta, T_\eta)$ and $L_\nu \in (\nu, T_\nu)$, the inclusion
        \[
        i^{H,\varepsilon}_{L_\nu,L_\eta}: \{\PsiHeta < \varepsilon\} \cap \mathbb{R}_+ \{\PsiC < L_\nu\} \to \{\PsiHnu < \varepsilon\} \cap \mathbb{R}_+ \{\PsiC < L_\eta\}
        \]
        maps $U^-_{\eta,L_\eta}$ to $U^-_{\nu,L_\nu}$ and $U^+_{\eta,L_\eta}$ to $U^+_{\nu,L_\eta}$.
    \end{enumerate}

\end{lemma}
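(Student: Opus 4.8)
The plan is to analyze the set $W:=\{\PsiHeta<\varepsilon\}\cap\mathbb{R}_+\{\PsiC<L\}$ ray by ray, using the radial structure of $\PsiHeta$ recorded in Lemma \ref{Psiheta}. Fix $x_0\in\Lambda$ with $\PsiC(x_0)<L<T_\eta$ and set $f_{x_0}(\lambda):=\PsiHeta(\lambda x_0)$ for $\lambda>0$. By Lemma \ref{Psiheta}(1), $f_{x_0}(0^+)=\PsiHeta(0)<\varepsilon$; by Lemma \ref{Psiheta}(4.b), $f_{x_0}$ is $C^1$, strictly increasing on $(0,\lambda_{\max})$, attains its global maximum at $\lambda_{\max}\in(0,+\infty]$, and is non-increasing afterwards; by Lemma \ref{Psiheta}(4.a) this maximum is $>\varepsilon$. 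Hence $\{\lambda>0: f_{x_0}(\lambda)<\varepsilon\}=(0,a(x_0))\cup(b(x_0),+\infty)$ with $0<a(x_0)<\lambda_{\max}<b(x_0)\le+\infty$. A short auxiliary computation — using that $\varphi_\eta$ is convex with slope $\eta$ at infinity, so $H_\eta\ge\eta\HC-\zeta_\eta$ and therefore $H_\eta^*\le\tfrac1\eta\HdC+\zeta_\eta$ — gives $f_{x_0}(\lambda)\le\tfrac{\lambda^2}{\eta}\bigl(\PsiC(x_0)-\eta\bigr)+O(1)$, so the "upper tail'' is nonempty (equivalently $b(x_0)<+\infty$, equivalently $f_{x_0}\to-\infty$) precisely when $\PsiC(x_0)<\eta$; if $\PsiC(x_0)\ge\eta$, Lemma \ref{Psiheta}(4.b) forces $f_{x_0}$ to be eventually $\ge\varepsilon$ and the tail is empty.

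For part $(1)$, consider the continuous $S^1$-invariant function $g(x):=\max_{\mu\in[0,1]}\PsiHeta(\mu x)$. If $x=\lambda_0x_0\in W$ then $\lambda_0\in(0,a(x_0))$ or $\lambda_0\in(b(x_0),+\infty)$. In the first case $g(x)=f_{x_0}(\lambda_0)<\varepsilon$ (since $\lambda_0<\lambda_{\max}$) and $\mu x\in W$ for all $\mu\in(0,1]$, so $x\in U^-_{\eta,L}$; in the second case $\lambda_{\max}<\lambda_0$, so $g(x)=f_{x_0}(\lambda_{\max})>\varepsilon$ and $\mu x\in W$ for all $\mu\ge1$, so $x\in U^+_{\eta,L}$. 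Conversely, the same ray picture shows $U^-_{\eta,L}\subseteq\{g<\varepsilon\}$ and $U^+_{\eta,L}\subseteq\{g>\varepsilon\}$. Hence $U^-_{\eta,L}=W\cap g^{-1}\bigl((-\infty,\varepsilon)\bigr)$ and $U^+_{\eta,L}=W\cap g^{-1}\bigl((\varepsilon,+\infty)\bigr)$ are disjoint, open, $S^1$-invariant, and cover $W$, which is assertion $(1)$.

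For part $(2)$, take $r>0$ from Lemma \ref{Psiheta}(2); then $\tfrac r2\{\PsiC<L\}\subseteq U^-_{\eta,L}$ and $a(x_0)>r$ for all $x_0$, so $\rho_s(x)=(1-s)x+s\tfrac r2\,\tfrac{x}{\sqrt{\mathcal A(x)}}$ is an $S^1$-equivariant strong deformation retraction of $U^-_{\eta,L}$ onto $\tfrac r2\{\PsiC<L\}$; since $N^{\eta,C}_\varepsilon$ restricts on $\tfrac r2\{\PsiC<L\}$ to a linear $S^1$-equivariant homeomorphism onto $\{\PsiC<L\}$ and equals that composed with $\rho_1$, it is an $S^1$-equivariant homotopy equivalence. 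For $U^+_{\eta,L}$ the ray picture gives $N^{\eta,C}_\varepsilon(U^+_{\eta,L})=\{\PsiC<\eta\}$; for $x_0$ in this set let $c(x_0)\in(\lambda_{\max},+\infty)$ be the unique zero of $f_{x_0}$ there (it exists as $f_{x_0}(\lambda_{\max})>0$ and $f_{x_0}\to-\infty$, and is a regular value since $f_{x_0}$ is strictly decreasing there), so by the implicit function theorem applied to the $C^1$ map $(\lambda,x_0)\mapsto\PsiHeta(\lambda x_0)$ the assignment $x_0\mapsto c(x_0)$ is $C^1$ and $S^1$-invariant. Then $\sigma(x_0):=c(x_0)x_0$ is an $S^1$-equivariant section of $N^{\eta,C}_\varepsilon|_{U^+_{\eta,L}}$ with image in $U^+_{\eta,L}$, and $(s,x)\mapsto(1-s)x+s\,c(x/\sqrt{\mathcal A(x)})\tfrac{x}{\sqrt{\mathcal A(x)}}$ is an $S^1$-equivariant homotopy in $U^+_{\eta,L}$ from the identity to $\sigma\circ N^{\eta,C}_\varepsilon$; hence $N^{\eta,C}_\varepsilon|_{U^+_{\eta,L}}:U^+_{\eta,L}\to\{\PsiC<\eta\}$ is an $S^1$-equivariant homotopy equivalence. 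Finally $\{\PsiC<\eta\}\hookrightarrow\{\PsiC<L\}$ is an $S^1$-equivariant homotopy equivalence, because $[\eta,L]$ contains no critical value of $\PsiC$ (as $\eta\notin\sigma(\partial C)$ and $L<T_\eta$) and $\PsiC$ satisfies the bounded Palais–Smale condition (Remark \ref{boundedpalaissmale}), so the normalized negative gradient flow of $\PsiC$ yields an $S^1$-equivariant deformation retraction; composing, $N^{\eta,C}_\varepsilon|_{U^+_{\eta,L}}$ induces isomorphisms in singular and $S^1$-equivariant homology.

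For part $(3)$, $H_\eta\le H_\nu$ gives $H_\eta^*\ge H_\nu^*$, hence $\PsiHnu\le\PsiHeta$ and $\{\PsiHeta<\varepsilon\}\subseteq\{\PsiHnu<\varepsilon\}$, while $L_\eta\le L_\nu$ gives $\mathbb{R}_+\{\PsiC<L_\eta\}\subseteq\mathbb{R}_+\{\PsiC<L_\nu\}$; the conditions defining membership in $U^-_{\eta,L_\eta}$ (resp.\ $U^+_{\eta,L_\eta}$) — that the point together with its backward (resp.\ forward) ray lies in $\{\PsiHeta<\varepsilon\}\cap\mathbb{R}_+\{\PsiC<L_\eta\}$ — then imply the corresponding conditions for $U^-_{\nu,L_\nu}$ (resp.\ $U^+_{\nu,L_\nu}$), which is the claim. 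I expect the main obstacle to be the $U^+$ half of part $(2)$: one must first pin down that $N^{\eta,C}_\varepsilon(U^+_{\eta,L})$ is exactly $\{\PsiC<\eta\}$, which requires the convexity inequality $H_\eta\ge\eta\HC-\zeta_\eta$ (not recorded in Lemma \ref{Psiheta}) to control $f_{x_0}$ at infinity, and then produce an $S^1$-equivariant section over the non-compact manifold $\{\PsiC<\eta\}$, for which the implicit function theorem at the regular value $0$ of $f_{x_0}$ is the crucial device.
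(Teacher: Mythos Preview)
Your arguments for parts~(1), (3), and the $U^-$ half of~(2) are correct and essentially coincide with the paper's: the paper separates $U^-$ from $U^+$ by the sign of $d\PsiHeta(x)[x]$ rather than by your function $g$, but the ray-by-ray analysis via Lemma~\ref{Psiheta}(4) is the same, and the radial retraction onto $\tfrac r2\{\PsiC<L\}$ is identical.

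For the $U^+$ half of~(2) you take a genuinely different route. The paper chooses $R_{\max}=\max_{S_x}\sqrt{\mathcal A(x)}\,R_{S_x}$ from Lemma~\ref{Psiheta}(5), shows $U^+_{\eta,L}\cap[R_{\max},\infty)\{\PsiC<L\}=\{\PsiHeta<\varepsilon\}\cap[R_{\max},\infty)\{\PsiC<L\}$, radially retracts $U^+$ onto this set, and then repeats the $-V$-flow argument of Lemma~\ref{norml} (now with the neighborhoods $V_{S_{\overline x}}$ coming from (5)) to compare with $[R_{\max},\infty)\{\PsiC<L\}$. Your approach---identifying the image $N^{\eta,C}_\varepsilon(U^+)$, producing an explicit section via the implicit function theorem at the regular value $0$, and then invoking the absence of critical values of $\PsiC$ on $[\eta,L]$---avoids Lemma~\ref{Psiheta}(5) and the second use of the flow entirely. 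It is shorter and more self-contained; the paper's argument, in exchange, treats $U^-$ and $U^+$ more symmetrically and reuses machinery already in place.

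There is one small gap in your justification. The equality $N^{\eta,C}_\varepsilon(U^+_{\eta,L})=\{\PsiC<\eta\}$ requires that for $\PsiC(x_0)=\eta$ the upper tail is empty, and you cite Lemma~\ref{Psiheta}(4.b) for this. But (4.b) only says the second behavior \emph{can} occur when $\PsiC(x_0)=\eta$; it does not rule out the third. What does work is the computation in the proof of Lemma~\ref{Psiheta}(3): since $k(t,\lambda)\ge 1/\eta$ and $\int\HdC(-J_0\dot x_0)=\eta$, one gets $f'_{x_0}(\lambda)\ge\lambda\bigl(-2+\tfrac{2}{\eta}\cdot\eta\bigr)=0$ for all $\lambda$, so $f_{x_0}$ is non-decreasing, and together with (4.a) it is eventually $>\varepsilon$. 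Once this is supplied your section construction goes through as written.
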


\begin{proof}
    \textbf{Claim }(1): Since $\PsiHeta$ and $\PsiC$ are $S^1$-invariant, it is clear that $U^-_{\eta,L}$ and $U^+_{\eta,L}$ are also $S^1$-invariant. The first thing we will show is that  
\begin{equation}\label{Uminusderivariverepresentation}
    U^-_{\eta,L} = \{x \in \{\PsiHeta < \varepsilon\} \cap \mathbb{R}_+ \{\PsiC < L\} \mid d\PsiHeta(x)[x] > 0\}   
\end{equation}
and  
\begin{equation}\label{Uplusderivativerepesentation}
    U^+_{\eta,L} = \{x \in \{\PsiHeta < \varepsilon\} \cap \mathbb{R}_+ \{\PsiC < L\} \mid d\PsiHeta(x)[x] < 0\}.   
\end{equation}

Indeed, if $d\PsiHeta(x)[x] > 0$, then, according to all three possibilities in claim (4.b) of Lemma \ref{Psiheta}, the function $\PsiHeta(\lambda x)$ must be increasing on $(0,1]$. Hence, for all $\lambda \in (0,1]$, we have $\PsiHeta(\lambda x) \leq \PsiHeta(x) < \varepsilon$, implying that $x \in U^-_{\eta,L}$. On the other hand, if $d\PsiHeta(x)[x] \leq 0$, then from statements (4.a) and (4.b) of Lemma \ref{Psiheta}, there exists some $\lambda \in (0,1]$ such that $\PsiHeta(\lambda x) > \varepsilon$. Hence, $x \notin U^-_{\eta,L}$. This proves \eqref{Uminusderivariverepresentation}.

Similarly, if $d\PsiHeta(x)[x] < 0$, then from claim (4.b) of Lemma \ref{Psiheta}, we conclude that $\PsiHeta(\lambda x)$ is decreasing for $\lambda \in [1,+\infty)$ and that $\PsiHeta(\lambda x) \leq \PsiHeta(x) < \varepsilon$. Conversely, if $d\PsiHeta(x)[x] \geq 0$, then from claims (4.a) and (4.b) of Lemma \ref{Psiheta}, there exists some $\lambda \in [1,+\infty)$ such that $\PsiHeta(\lambda x) > \varepsilon$. Thus, \eqref{Uplusderivativerepesentation} also holds.

Representations \eqref{Uminusderivariverepresentation} and \eqref{Uplusderivativerepesentation} imply that $U^-_{\eta,L}$ and $U^+_{\eta,L}$ are open in $\{\PsiHeta < \varepsilon\} \cap \mathbb{R}_+ \{\PsiC < L\}$ and disjoint. Therefore we only need to show that 
    \begin{equation}\label{disjointunionUminusUplus}
      \{\PsiHeta < \varepsilon\} \cap \mathbb{R}_+ \{\PsiC < L\} = U^-_{\eta,L} \cup U^+_{\eta,L}.
    \end{equation}

    Let $x \in \{\PsiHeta < \varepsilon\} \cap \mathbb{R}_+ \{\PsiC < L\} \subset \mathbb{R}_+ \{\PsiC < T_\eta\}$ be arbitrary. From claim (4.b) of Lemma \ref{Psiheta}, we know that if $d\PsiHeta(x)[x] = 0$, then the function $\PsiHeta(\lambda x)$ has a global maximum at $1$. From claim (4.a) of  Lemma \ref{Psiheta}, we conclude that $\PsiHeta(x) > \varepsilon$, which is a contradiction. Therefore, it must be $d\PsiHeta(x)[x] > 0$ or $d\PsiHeta(x)[x] < 0$, which implies, by \eqref{Uminusderivariverepresentation} and \eqref{Uplusderivativerepesentation} that  \eqref{disjointunionUminusUplus} holds. 

    \textbf{Claim (2):} We begin by showing that $N_\varepsilon^{\eta,C}$, when restricted to $U^-_{\eta,L}$, induces an isomorphism in homology. From Claim (3) of Lemma \ref{Psiheta} and the definition of $U^-_{\eta,L}$, we have  
\[
(0,r] \{\PsiC < L\} \subset U^-_{\eta,L}.
\]  
Thus, we define a retraction  
\[
R^-: U^-_{\eta,L} \to (0,r] \{\PsiC < L\}
\]  
by radially projecting points outside $(0,r] \{\PsiC < L\}$ onto the boundary $r \{\PsiC < L\}$. Since for any $x \in U^-_{\eta,L}$, the point $\lambda x$ also belongs to $U^-_{\eta,L}$ for all $\lambda \in (0,1]$, it follows that $(0,r] \{\PsiC < L\}$ is a strong deformation retract of $U^-_{\eta,L}$. Consequently, $R^-$ induces an isomorphism in homology.  

The map $N_\varepsilon^{\eta,C}$ factors through $(0,r] \{\PsiC < L\}$ as follows:  
\[
\begin{tikzcd}[scale cd=0.9]
    & (0,r] \{\PsiC < L\} \arrow[dr,"N^-"] \\
    U^-_{\eta,L} \arrow[ur,"R^-"] \arrow[rr,"N_\varepsilon^{\eta,C}"] && \{\PsiC < L\}
\end{tikzcd}
\]  
where $N^-(x) = \frac{x}{\sqrt{\mathcal{A}(x)}}$. Since $N^-$ also induces an isomorphism in homology, we conclude that $N_\varepsilon^{\eta,C}$, when restricted to $U^-_{\eta,L}$, also induces an isomorphism as a composition map.  

Next, let $R_{\max}$ be the maximum of $\sqrt{\mathcal{A}({\widetilde{x}})}R_{S_{\widetilde{x}}}$ over all non-trivial critical orbits $S_{\widetilde{x}}$ of $\PsiHeta$, where $R_{\widetilde{x}}> 0$ is the number associated to $S_{\widetilde{x}}$ in claim (5) of Lemma \ref{Psiheta}. Since there are finitely many critical orbits of $\PsiHeta$, we conclude that $R_{\max} > 0$ is finite.  

We now prove that  
\begin{equation}\label{componentequalityepsilon}
U^+_{\eta,L}\cap \left[R_{\max},+\infty\right)  \{\PsiC < L\}  =  \{\PsiHeta<\varepsilon\}\cap\left[R_{\max},+\infty\right) \{\PsiC < L\}.   
\end{equation}  

Due to the choice of $R_{\max}>0$ and claim (1) of Lemma \ref{Psiheta}, it follows that $\PsiHeta < 0$ on $[R_{\max},+\infty) \cap \crit(\PsiC)^{<T_\eta}$. Therefore, using claim (4.c) of Lemma \ref{Psiheta}, we conclude that the action of all elements in $[R_{\max},+\infty) \cap \{\PsiC < L\}$ must be strictly larger than the action of all critical points of $\PsiHeta$. Consequently, claim (4.a) of Lemma \ref{Psiheta} implies that for an arbitrary $x\in [R_{\max},+\infty) \cap \{\PsiC < L\}$, there exists $\lambda'\in (0,1)$ such that $\PsiHeta(\lambda' x)<\varepsilon$.

In particular, for $x\in \{\PsiHeta<\varepsilon\}\cap \left[R_{\max},+\infty\right)  \{\PsiC < L\} $, since $\PsiHeta(\lambda' x) > \varepsilon$ for $\lambda'\in (0,1)$ and $\PsiHeta(x) < \varepsilon$, claim (4.b) of Lemma \ref{Psiheta} implies that $\PsiHeta(\lambda x)$ is decreasing on $[1,+\infty)$. This shows that $x \in U^+_{\eta,L}$. Since the reverse inclusion is trivial, we conclude that \eqref{componentequalityepsilon} holds.  

By the defining property of $U^+_{\eta,L}$, it follows that $ U^+_{\eta,L}\cap[R_{\max},+\infty)\{\PsiC < L\}$ is a strong deformation retract of $U^+_{\eta,L}$. Thus, by \eqref{componentequalityepsilon}, the set $\{\PsiHeta<\varepsilon\} \cap \left[R_{\max},+\infty\right) \cap \{\PsiC < L\}$ is a strong deformation retract of $U^+_{\eta,L}$, where the retraction  
\[
R^+: U^+_{\eta,L} \to \{\PsiHeta<\varepsilon\} \cap \left[R_{\max},+\infty\right) \cap \{\PsiC < L\} 
\]  
is given by radial projection onto the boundary $\{\PsiHeta<\varepsilon\} \cap R_{\max} \{\PsiC < L\}$. This yields the following commuting diagram:  
\[
\begin{tikzcd}[scale cd=0.9]
    & \{\PsiHeta<\varepsilon\} \cap \left[R_{\max},+\infty\right) \{\PsiC < L\}  \arrow[dr,"N^+"] \\
    U^+_{\eta,L} \arrow[ur,"R^+"] \arrow[rr,"N_\varepsilon^{\eta,C}"] && \{\PsiC < L\}
\end{tikzcd}
\]  

Let us denote by $V_{S_{\overline{x}}}=\frac{1}{\sqrt{\mathcal{A}(\widetilde{x})}}U_{S_{\widetilde{x}}} \subset \{\PsiC<L\}$ a neighborhood of the critical orbit $S_{\overline{x}}\subset \crit(\PsiC)$. Since, by the choice of $R_{\max}$, we have $\PsiHeta < 0$ on $[R_{\max},+\infty) V_{S_{\overline{x}}}$ for every critical orbit $S_{\overline{x}}\subset \crit(\PsiC)^{<T_\eta}$, one can show, using the same methods as in the proof of Lemma \ref{norml}, that for sufficiently small $\delta>0$, the space $\{\PsiHeta\leq\varepsilon-\delta\}\cap \left[R_{\max},+\infty\right) \{\PsiC < L\}$ is a strong deformation retract of $\{\PsiHeta<\varepsilon\}\cap \left[R_{\max},+\infty\right) \{\PsiC < L\}$ and $\left[R_{\max},+\infty\right) \{\PsiC < L\}$. This implies that $N^+$ induces an isomorphism in homology. Since $R^+$ also induces an isomorphism in homology, it follows that $N_\varepsilon^{\eta,C}$, when restricted to $U^+_{\eta,L}$, induces an isomorphism in homology as a composition of these maps.

\textbf{Claim} (3): Let $x \in U^-_{\eta, L_\eta}$. Then for every $\lambda \in (0,1]$, we have $\lambda x \in U^-_{\eta, L_\eta}$. Therefore, $i^{H, \varepsilon}_{\nu, \eta}(\lambda x) = \lambda x \in \{\PsiHnu < \varepsilon\} \cap \mathbb{R}_+ \{\PsiC < L_\eta\}$, so we conclude that $i^{H, \varepsilon}_{L_\nu, L_\eta}(x) \in U^-_{\nu, L_\nu}$. Similarly, we can show that $i^{H, \varepsilon}_{\nu, \eta}$ maps $U^+_{\eta, L_\eta}$ to $U^+_{\nu, L_\eta}$.

\end{proof}

\begin{remark}\label{inclusionles}
    Notice that from the previous two lemmas, it follows that the inclusion 
    \[
    i_L: \{\PsiHeta < \varepsilon\} \cap \mathbb{R}_+ \{\PsiC < L\} \hookrightarrow \{\PsiHeta < L\} \cap \mathbb{R}_+ \{\PsiC < L\}
    \]
    induces a surjective map in singular and $S^1$-equivariant homology. Indeed, by the previous lemma, we have that $N_\varepsilon^{\eta, C}$ induces a surjective map in homology. On the other hand, from Lemma \ref{norml}, we know that $N_L^{\eta, C}$ induces an isomorphism in homology. Now, from the commutative diagram  
    \[
    \begin{tikzcd}[scale cd=0.9]
     & \{\PsiHeta < L\} \cap \mathbb{R}_+ \{\PsiC < L\} \arrow[dr, "N_L^{\eta, C}"] \\
    \{\PsiHeta < \varepsilon\} \cap \mathbb{R}_+ \{\PsiC < L\} \arrow[ur, "i_L"] \arrow[rr, "N_\varepsilon^{\eta, C}"] && \{\PsiC < L\}
    \end{tikzcd}
    \]
    the conclusion follows.
\end{remark}

Now we have all the topological data we need to prove the Proposition \ref{isomorphismclarkeduality}.  

\begin{lemma}\label{algebraiclemma} 
\begin{enumerate}
    \item Let 
    \[\begin{tikzcd}
      0 \arrow[r] & A \arrow[r, "i"] & C^-\oplus C^+ \arrow[r, "p"] & B \arrow[r] & 0 
    \end{tikzcd}\]
    be a short exact sequence in an abelian category such that $p|_{C^-}$ and $p|_{C^+}$ are isomorphisms. Let $\pi^-:C^-\oplus C^+ \to C^-$ denote the projection. Then 
    \[D:B \to A, \quad D = (p \circ \pi^- \circ i)^{-1},\]
    is an isomorphism.
    
    \item Let 
    \[\begin{tikzcd}
      0 \arrow[r] & A_2 \arrow[r, "i_2"] & C^-_2\oplus C^+_2 \arrow[r, "p_2"] & B_2 \arrow[r] & 0 \\
      0 \arrow[r] & A_1 \arrow[u, "i_A"] \arrow[r, "i_1"] & C^-_1\oplus C^+_1 \arrow[u, "i_C"] \arrow[r, "p_1"] & B_1 \arrow[u, "i_B"] \arrow[r] & 0
    \end{tikzcd}\]
    be a commutative diagram of short exact sequences in an abelian category such that these sequences satisfy the properties of (1), and let $i_C$ preserve splitting, i.e., $i_C(C_1^-) \subseteq C_2^-$ and $i_C(C_1^+) \subseteq C_2^+$. Then for the isomorphisms 
    \[D_1: B_1 \to A_1\] 
    and 
    \[D_2: B_2 \to A_2\] 
    defined as in (1), the following diagram
    \[\begin{tikzcd}
       B_2 \arrow[r, "D_2"] & A_2 \\
       B_1 \arrow[u, "i_B"] \arrow[r, "D_1"] & A_1 \arrow[u, "i_A"]
    \end{tikzcd}\]
    commutes.
\end{enumerate}
\end{lemma}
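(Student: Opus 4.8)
The plan is to identify $D^{-1}$ with the composite $p\circ\pi^-\circ i\colon A\to B$ and to show directly that this composite is an isomorphism; the naturality statement in part (2) will then follow from a one-line diagram chase. Throughout I will argue with (generalized) elements, which is legitimate in an arbitrary abelian category by the Freyd--Mitchell embedding theorem; alternatively one may phrase everything in terms of the universal properties of $\ker$ and $\operatorname{im}$ together with the idempotent decomposition $\mathrm{id}_{C^-\oplus C^+}=\pi^-+\pi^+$.

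For part (1), first I would check that $p\circ\pi^-\circ i$ is a monomorphism: if $a\in A$ with $i(a)=(c^-,c^+)$ satisfies $p(c^-,0)=p(\pi^- i(a))=0$, then since $p$ restricts to an isomorphism on the summand $C^-$ we get $c^-=0$; hence $i(a)=(0,c^+)$ lies in $\ker p$ (because $p\circ i=0$), and since $p$ restricts to an isomorphism on $C^+$ we get $c^+=0$, so $a=0$ by injectivity of $i$. Next, for surjectivity, given $b\in B$ choose $c^-\in C^-$ with $p(c^-,0)=b$ and $c^+\in C^+$ with $p(0,c^+)=-b$ (both possible since $p|_{C^-}$ and $p|_{C^+}$ are isomorphisms); then $p(c^-,c^+)=0$, so $(c^-,c^+)=i(a)$ for some $a\in A$, and $p(\pi^- i(a))=p(c^-,0)=b$. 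Thus $p\circ\pi^-\circ i$ is invertible, so $D$ is an isomorphism. A more structural way to see the same thing: writing $\phi^\pm:=p|_{C^\pm}$, the subobject $\ker p$ is the graph of $-(\phi^+)^{-1}\phi^-\colon C^-\to C^+$, so $\pi^-$ restricts to an isomorphism $\ker p\xrightarrow{\ \sim\ }C^-$, and $p\circ\pi^-\circ i$ is then the composite of the three isomorphisms $A\xrightarrow{i}\ker p\xrightarrow{\pi^-}C^-\xrightarrow{\phi^-}B$.

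For part (2), I set $\phi_j:=p_j\circ\pi_j^-\circ i_j=D_j^{-1}$ for $j=1,2$. Commutativity of the given diagram of short exact sequences gives $i_C\circ i_1=i_2\circ i_A$ and $p_2\circ i_C=i_B\circ p_1$, while the hypothesis $i_C(C_1^\pm)\subseteq C_2^\pm$ says exactly that $i_C$ is block diagonal for the two splittings, hence commutes with the projection idempotents: $i_C\circ\pi_1^-=\pi_2^-\circ i_C$. Chaining these identities,
\[
i_B\circ\phi_1=i_B\circ p_1\circ\pi_1^-\circ i_1=p_2\circ i_C\circ\pi_1^-\circ i_1=p_2\circ\pi_2^-\circ i_C\circ i_1=p_2\circ\pi_2^-\circ i_2\circ i_A=\phi_2\circ i_A,
\]
and postcomposing with $D_2$ and precomposing with $D_1$ gives $D_2\circ i_B=i_A\circ D_1$, which is the asserted commutativity. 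I do not anticipate any real obstacle; the two points deserving a word of care are the justification of the element chase in a general abelian category and the (elementary) remark that ``$i_C$ preserves the splitting'' is precisely the commutation $i_C\circ\pi_1^-=\pi_2^-\circ i_C$, which is the only consequence of that hypothesis actually used.
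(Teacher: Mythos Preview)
Your proof is correct and follows essentially the same route as the paper: both arguments reduce to showing that $p\circ\pi^-\circ i$ is an isomorphism by using that $i\colon A\to\ker p$ is an isomorphism and that $\pi^-$ restricts to an isomorphism $\ker p\to C^-$ (the paper writes down the explicit inverse $(p|_{C^-}^{-1}(b),-p|_{C^+}^{-1}(b))$, which is exactly your surjectivity construction), and part~(2) is the identical chain of equalities in both.
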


\begin{proof}
\textbf{Claim }(1): Since
\[\begin{tikzcd}
  0 \arrow[r] & A \arrow[r, "i"] & C^-\oplus C^+ \arrow[r, "p"] & B \arrow[r] & 0 
\end{tikzcd}\]
is a short exact sequence, we have that 
\[i: A \to \ker(p)\] is an isomorphism. Thus, it is sufficient to show that 
\[p \circ \pi^-|_{\ker(p)}: \ker(p) \to B\]
is an isomorphism. We will do this by finding an inverse map.

We have that \[\ker(p) = \{(c^-, c^+) \in C^-\oplus C^+ \mid p(c^-) = -p(c^+)\}.\]

Since $p|_{C^-}$ and $p|_{C^+}$ are isomorphisms, we have that 
\[(p \circ \pi^-|_{\ker(p)})^{-1}: B \to \ker(p)\]
is given by
\[(p \circ \pi^-|_{\ker(p)})^{-1}(b) = (p|_{C^-}^{-1}(b), -p|_{C^+}^{-1}(b)).\]

By the definition of $\ker(p)$, one can easily check that this is a well-defined homomorphism. Additionally, it is clear that this is indeed an inverse map of $p \circ \pi^-|_{\ker(p)}$. Therefore, we have that $p \circ \pi^- \circ i: A \to B$ is an isomorphism, which implies that $D = (p \circ \pi^- \circ i)^{-1}: B \to A$ is indeed an isomorphism. 

\textbf{Claim }(2): Let 
\[\begin{tikzcd}
  0 \arrow[r] & A_2 \arrow[r, "i_2"] & C^-_2\oplus C^+_2 \arrow[r, "p_2"] & B_2 \arrow[r] & 0 \\
  0 \arrow[r] & A_1 \arrow[u, "i_A"] \arrow[r, "i_1"] & C^-_1\oplus C^+_1 \arrow[u, "i_C"] \arrow[r, "p_1"] & B_1 \arrow[u, "i_B"] \arrow[r] & 0
\end{tikzcd}\]
be the commutative diagram from the claim. We have that 
\[D_1 = (p_1 \circ \pi_1^- \circ i_1)^{-1}\]
and 
\[D_2 = (p_2 \circ \pi_2^- \circ i_2)^{-1}.\]
are isomorphisms. From the commutativity of the diagram and the fact that $i_C$ preserves splitting, we have 
\[i_B \circ (D_1)^{-1} = i_B \circ p_1 \circ \pi_1^- \circ i_1 = p_2 \circ i_C \circ \pi_1^- \circ i_1 = p_2 \circ \pi_2^- \circ i_C \circ i_1 = p_2 \circ \pi_2^- \circ i_2 \circ i_A = (D_2)^{-1} \circ i_A,\]
which implies that the diagram
\[\begin{tikzcd}
   B_2 \arrow[r, "D_2"] & A_2 \\
   B_1 \arrow[u, "i_B"] \arrow[r, "D_1"] & A_1 \arrow[u, "i_A"]
\end{tikzcd}\]
commutes.
\end{proof}

\begin{proof}[Proof of Proposition~\ref{isomorphismclarkeduality}]
\textbf{Claim }(1): From Remark \ref{inclusionles}, we have that the inclusion 
\[i_L: \{\PsiHeta < \varepsilon\} \cap \mathbb{R}_+ \{\PsiC < L\} \hookrightarrow \{\PsiHeta < L\} \cap \mathbb{R}_+ \{\PsiC < L\}\]
induces a surjection in homology. Therefore, the map 
\[j_L: (\{\PsiHeta < L\} \cap \mathbb{R}_+ \{\PsiC < L\},\emptyset) \to (\{\PsiHeta < L\} \cap \mathbb{R}_+ \{\PsiC < L\}, \{\PsiHeta < \varepsilon\} \cap \mathbb{R}_+ \{\PsiC < L\})\]
is trivial in homology. This implies that the long exact sequence for the pair 
\[(\{\PsiHeta < L\} \cap \mathbb{R}_+ \{\PsiC < L\}, \{\PsiHeta < \varepsilon\} \cap \mathbb{R}_+ \{\PsiC < L\})\]
gives us a short exact sequence 
\begin{multline*}
 0 \longrightarrow H_{* + 1}(\{\PsiHeta < L\} \cap \mathbb{R}_+ \{\PsiC < L\}, \{\PsiHeta < \varepsilon\} \cap \mathbb{R}_+ \{\PsiC < L\}) \xlongrightarrow[]{\partial_L} \\ \xlongrightarrow[]{\partial_L} H_*(\{\PsiHeta < \varepsilon\} \cap \mathbb{R}_+ \{\PsiC < L\}) \xlongrightarrow[]{i_L} H_*(\{\PsiHeta < L\} \cap \mathbb{R}_+ \{\PsiC < L\}) \rightarrow 0
\end{multline*}

From Lemma \ref{inclusioncutoff}, we have that 
\[i^{\eta,C}_L: \left(\{\PsiHeta < L\} \cap \mathbb{R}_+ \{\PsiC < L\}, \{\PsiHeta < \varepsilon\} \cap \mathbb{R}_+ \{\PsiC < L\}\right) \to \left(\{\PsiHeta < L\}, \{\PsiHeta < \varepsilon\}\right)\]
induces an isomorphism in homology. From Lemma \ref{norml}, we have that 
\[N_L^{\eta,C}: \{\PsiHeta < L\} \cap \mathbb{R}_+ \{\PsiC < L\} \to \{\PsiC < L\}\]
induces an isomorphism in homology. Composing $(i_L)_*$ with $(N_L^{\eta,C})_*$ gives us $(N_\varepsilon^{\eta,C})_*$. Additionally, from Lemma \ref{UminusUplus}, we have that 
\[H_*(\{\PsiHeta < \varepsilon\} \cap \mathbb{R}_+ \{\PsiC < L\}) = \H_*(U^-_{\eta,L}) \oplus \H_*(U^+_{\eta,L}).\] 
Combining all of this, we get the short exact sequence 
\[\begin{tikzcd}[scale cd=0.9]
  0 \arrow[r] & 
  \H_*(\{\PsiHeta < L\}, \{\PsiHeta < \varepsilon\}) \arrow[r, "\widetilde{\partial}_L"] & \H_*(U^-_{\eta,L}) \oplus \H_*(U^+_{\eta,L})  \arrow[r, "(N_\varepsilon^{\eta,C})_*"] & \H_*(\{\PsiC < L\}) \arrow[r] & 0 
\end{tikzcd}
\]
where $\widetilde{\partial}_L = \partial_L \circ (i^{\eta,C}_L)_*^{-1}$. 
Since from claim (2) of Lemma \ref{UminusUplus}, we have that $(N_\varepsilon^{\eta,C})_*$ restricted to $\H_*(U^-_{\eta,L})$ and $\H_*(U^+_{\eta,L})$ is an isomorphism, the claim (1) of Lemma \ref{algebraiclemma} implies that we have an isomorphism 
\[D^{H^*_\eta}_L: \H_*(\{\PsiC < L\}) \to \H_{*+1}(\{\Psi_{H_\eta^*} < L\}, \{\Psi_{H_\eta^*} < \varepsilon\}), \ \ \  L \in (\eta, T_\eta),\]
defined as 
\[D^{H^*_\eta}_L = ((N_\varepsilon^{\eta,C})_* \circ \pi^-_L \circ \widetilde{\partial}_L)^{-1},\] 
where 
\[\pi^-_L: \H_*(U^-_{\eta,L}) \oplus \H_*(U^+_{\eta,L}) \to \H_*(U^-_{\eta,L})\]
is the projection.

\textbf{Claim }(2): Let $H_\eta \leq H_\nu$. Let $L_\eta \in (\eta, T_\eta)$ and $L_\nu \in (\nu, T_\nu)$ where $L_\eta \leq L_\nu$. We have that the diagram 
\[\begin{tikzcd}
   \{\PsiHnu < L_\nu\} \cap \mathbb{R}_+ \{\PsiC < L_\nu\}  \arrow[r, "N_{L_\nu}^{\eta,C}"] & \{\PsiC < L_\nu\}  \\
  \{\PsiHeta < L_\eta\} \cap \mathbb{R}_+ \{\PsiC < L_\eta\} \arrow[u, ""] \arrow[r, "N_{L_\eta}^{\eta,C}"] & \{\PsiC < L_\eta\} \arrow[u, ""]
\end{tikzcd}
\]
commutes, where the vertical maps are inclusions. Also, we have that the diagram 
\[\begin{tikzcd}[scale cd=0.9]
   (\{\PsiHnu < L_\nu\} \cap \mathbb{R}_+ \{\PsiC < L_\nu\}, \{\PsiHnu < \varepsilon\} \cap \mathbb{R}_+ \{\PsiC < L_\nu\})  \arrow[r, "i^{\eta,C}_{L_\nu}"] & (\{\PsiHnu < L_\nu\}, \{\PsiHnu < \varepsilon\})   \\
  (\{\PsiHeta < L_\eta\} \cap \mathbb{R}_+ \{\PsiC < L_\eta\}, \{\PsiHeta < \varepsilon\} \cap \mathbb{R}_+ \{\PsiC < L_\eta\})  \arrow[r, "i^{\eta,C}_{L_\eta}"] \arrow[u, ""] & (\{\PsiHeta < L_\eta\}, \{\PsiHeta < \varepsilon\}) \arrow[u, ""]
\end{tikzcd}
\]
commutes, where the vertical maps are inclusions. Due to the commutativity of these two diagrams, the fact that $i^{\eta,C}_{L_\eta}$ and $i^{\eta,C}_{L_\nu}$ induce isomorphisms in homology, and the functoriality property of long exact sequences for topological pairs, we have that the diagram

\[\begin{tikzcd}[scale cd=0.9]
  0 \arrow[r] & 
  \H_*(\{\PsiHeta < L_\nu\}, \{\PsiHnu < \varepsilon\}) \arrow[r, "\widetilde{\partial}_{L_\nu}"] & \H_*(U^-_{\nu,L_\nu}) \oplus \H_*(U^+_{\nu,L_\nu}) \arrow[r, "(N_\varepsilon^{\nu,C})_*"] & \H_*(\{\PsiC < L_\nu\}) \arrow[r] & 0 \\
  0 \arrow[r] & 
  \H_*(\{\PsiHeta < L_\eta\}, \{\PsiHeta < \varepsilon\}) \arrow[u, "(i^H_{L_\nu,L_\eta})_*"] \arrow[r, "\widetilde{\partial}_{L_\eta}"] & \H_*(U^-_{\eta,L_\eta}) \oplus \H_*(U^+_{\eta,L_\eta}) \arrow[u, "(i^{H,\varepsilon}_{L_\nu,L_\eta})_*"] \arrow[r, "(N_\varepsilon^{\eta,C})_*"] & \H_*(\{\PsiC < L_\eta\}) \arrow[u, "(i^C_{L_\nu,L_\eta})_*"] \arrow[r] & 0.
\end{tikzcd}
\]
commutes. Since from claim (3) of Lemma \ref{UminusUplus} we have that $(i^{H,\varepsilon}_{L_\nu,L_\eta})_*$ preserves splitting, it follows from claim (2) Lemma \ref{algebraiclemma} that the diagram 
\[ \begin{tikzcd}
 \H_*(\{\PsiC < L_\nu\}) \arrow{r}{D^{H^*_\nu}_{L_\nu}} & \H_{*+1}(\{\Psi_{H_{\nu}^*} < L_\nu\}, \{\Psi_{H_{\nu}^*} < \varepsilon\}) \\%
 \H_*(\{\PsiC < L_\eta\}) \arrow{r}{D^{H^*_\eta}_{L_\eta}}  \arrow[swap]{u}{(i^C_{L_\nu,L_\eta})_*} & \H_{*+1}(\{\Psi_{H_{\eta}^*} < L_\eta\}, \{\Psi_{H_{\eta}^*} < \varepsilon\}) \arrow[swap]{u}{(i^H_{L_\nu,L_\eta})_*}
\end{tikzcd}
\]
commutes.
\end{proof}

\section{Positive ($S^1$-equivariant) symplectic homology and
autonomous Hamiltonians}\label{sectionautonomussymplectic}

In this section, we compare the positive ($S^1$-equivariant) symplectic homology of the convex domain with the singular ($S^1$-equivariant) homology of sublevel sets of a dual functional of the form
\[\PsiH(x) = -\frac{1}{2}\int\limits_\mathbb{T}\langle J_0 x(t), \dot{x}(t) \rangle \, dt + \int\limits_\mathbb{T} H^*(-J_0 \dot{x}) \, dt,\]
for particular types of Hamiltonians.

Let $C \subset \mathbb{R}^{2n}$ be a non-degenerate, strongly convex domain whose interior contains the origin (see Section \ref{sectionautonomusclarke} for the definitions of these properties).

\textbf{Admissible approximating functions:}

A smooth function $\varphi: \mathbb{R}_{\geq 0} \to \mathbb{R}$ belongs to $\mathcal{F}_{\operatorname{lin}}(C)$ if it satisfies:

\begin{enumerate}
    \item For some $0 < s_\varepsilon < 1$, $\varphi(s) = \delta s - \zeta_\varepsilon$ for $s \in [0, s_\varepsilon]$, where $0 < \zeta_\varepsilon < \varepsilon$ and $0 < \delta < \frac{\varepsilon}{4}$.
    \item For some $s_\eta > 1$, $\varphi(s) = \eta s - \zeta_\eta$ for $s \in [s_\eta, +\infty)$, where $\eta \notin \sigma(\partial C)$ and $\eta > \min \sigma(\partial C)$.
    \item $\varphi''(s) > 0$ for $s \in (s_\varepsilon, s_\eta)$, and $\varphi(1) < 0$ and $\varphi'(1) < \min \sigma(\partial C)$.
    \item For $s_{\max} \in (s_\varepsilon, s_\eta)$ such that $\varphi'(s_{\max}) = T_{\max} = \max \sigma(\partial C)^{<\eta}$, it holds that
    \[\varphi'(s_{\max}) s_{\max} - \varphi(s_{\max}) < \eta\]
    and
    \[\varphi'(s_{\max})(s_{\max} - 1) - \varphi(s_{\max}) < \min\left\{\frac{1}{\eta}, \frac{1}{3} \text{gap}(\eta)\right\},\]
    where $\text{gap}(\eta)$ is the minimal distance between two elements in $\sigma(\partial C)^{<\eta}$.
\end{enumerate}

We will emphasize the slope $\eta$ of $\varphi$ by writing $\varphi_\eta \defeq \varphi$.

\begin{remark}\label{admisiblefunctionsproperties}
    We claim that for $\varphi \in \mathcal{F}_{\operatorname{lin}}(C)$, the inequality
    \begin{equation}\label{graterthanepsilon}
        \varphi'(s) s - \varphi(s) > \varphi'(s), \quad s > 1
    \end{equation}
    holds. Indeed, from (2) and (3), we have that $g(s) = \varphi'(s)(s - 1) - \varphi(s)$ is a non-decreasing function for $s > 1$. On the other hand, from (3), we have that $g(1) > 0$. Therefore, \eqref{graterthanepsilon} holds. From (3), $\varphi': (s_\varepsilon, s_\eta) \to (\delta, \eta)$ is an increasing bijection. Thus, $s_{\max}$ is well-defined and unique. Additionally, $\varphi'(1) < \min \sigma(\partial C)$, which implies that there exists a unique $s_{\min} > 1$ such that $\varphi'(s_{\min}) = T_{\min} = \min \sigma(\partial C)^{<\eta} = \min \sigma(\partial C)$. From \eqref{graterthanepsilon}, it follows that $\varphi'(s_{\min}) s_{\min} - \varphi(s_{\min}) > T_{\min} = 2 \varepsilon$. Using this conclusion, the fact that $\varphi'(s) s - \varphi(s)$ is increasing, and (4), we conclude that
    \begin{equation}\label{nontrivialorbitsactionbounds}
       2\varepsilon < \varphi'(s) s - \varphi(s) < \eta, \quad s \in [s_{\min}, s_{\max}]
    \end{equation}
    Therefore, $\mathcal{F}_{\operatorname{lin}}(C) \subset \mathcal{F}^*_{\operatorname{lin}}(C)$.
    
    Moreover, the increasing property of the function $g(s)$, its positivity for $s > 1$, and (4) imply that
    \begin{equation}\label{admissiblegap}
        |\varphi'(s)(s - 1) - \varphi(s)| < \min\left\{\frac{1}{\eta}, \frac{1}{3} \text{gap}(\eta)\right\}, \quad s \in [s_{\min}, s_{\max}]
    \end{equation}  
\end{remark} 

Now, we state the main Proposition of this section.

\begin{proposition}\label{isomorphismautomussymplectic}
  Let $C$ be a non-degenerate strongly convex domain whose interior contains the origin. Let $\varphi_\eta \in \mathcal{F}_{\operatorname{lin}}(C)$ and let $T_\eta$ be the minimal element of $\sigma(\partial C)$ greater than $\eta$, and $H_\eta = \varphi_\eta \circ \HC$. The following holds:   

  \begin{enumerate}
    \item[$(1.1)$] For every $ L \in (\eta, T_\eta)$, there exists an isomorphism
    \[D^{H_\eta}_L: \H_{*}(\{\Psi_{H_\eta} < L\}, \{\Psi_{H^*_\eta} < \varepsilon\}; \mathbb{Z}_2) \to \S\H^{+,<L}_{*+n}(C; \mathbb{Z}_2).\]
    \item[$(1.2)$] This isomorphism is natural. Let $H_{\eta} \leq H_{\nu}$. Then, for every $L_\eta \in (\eta, T_\eta)$ and $L_\nu \in (\nu, T_\nu)$ such that $L_\eta \leq L_\nu$, the following diagram commutes:
    \[ 
    \begin{tikzcd}
      \H_{*}(\{\Psi_{H_{\nu}} < L_\nu\}, \{\Psi_{H_{\nu}} < \varepsilon\}; \mathbb{Z}_2) \arrow{r}{D^{H_\nu}_{L_\nu}} & \S\H^{+, < L_\nu}_{*+n}(C; \mathbb{Z}_2) \\
      \H_{*}(\{\Psi_{H_{\eta}} < L_\eta\}, \{\Psi_{H_{\eta}} < \varepsilon\}; \mathbb{Z}_2) \arrow{r}{D^{H_\eta}_{L_\eta}} \arrow[swap]{u}{(i^{H}_{L_\nu, L_\eta})_*} & \S\H^{+, < L_\eta}_{*+n}(C; \mathbb{Z}_2) \arrow[swap]{u}{i^{+}_{L_\nu, L_\eta}}
    \end{tikzcd}.
    \]
  \end{enumerate}
\end{proposition}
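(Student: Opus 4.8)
The plan is to derive Proposition~\ref{isomorphismautomussymplectic} by composing the two isomorphisms established earlier in the paper: the chain-level identification $\Theta$ of Theorem~\ref{theoremE1} (in the form of the homology-level isomorphism $D^{H_\eta}_L$ coming from the half-cylinder moduli space $\mathcal{M}_\Theta$), which relates the Morse homology of the reduced dual functional $\psi^l_{H_\eta^*}$ to the Floer homology of $H_\eta$, together with the standard identification of the Morse homology of $\psi^l_{H_\eta^*}$ with the singular homology of sublevel sets of $\Psi_{H_\eta^*}$. The key additional ingredient that makes $\mathcal{F}_{\operatorname{lin}}(C)\subset\mathcal{F}^*_{\operatorname{lin}}(C)$ useful here is that, for $\varphi_\eta\in\mathcal{F}_{\operatorname{lin}}(C)$, the Hamiltonian $H_\eta=\varphi_\eta\circ\HC$ is (up to the mild smoothing at the origin handled as in \cite{AK22}) dually admissible, is quadratically convex by Claim~(1) of Lemma~\ref{ham}, and is $C^1$-small near the boundary in the precise sense quantified by \eqref{nontrivialorbitsactionbounds} and \eqref{admissiblegap}; these estimates guarantee that the $1$-periodic orbits of $H_\eta$ with action in $(\varepsilon,\eta)$ are exactly the Reeb orbits of $\partial C$ with period $<\eta$, carry the correct Conley--Zehnder index, and that the action filtration on the Floer side matches the value filtration on the Clarke side up to the shift $n$. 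The slope condition $\eta\notin\sigma(\partial C)$, $\eta>\min\sigma(\partial C)$ and the choice of $T_\eta$ as the next element of the spectrum ensure the filtered complexes are well defined for $L\in(\eta,T_\eta)$.

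Concretely, I would proceed in the following order. First, fix $l\in\mathbb{N}$ large enough for the $l$-saddle-point reduction of $\Psi_{H_\eta^*}$ to exist, and invoke Proposition~\ref{vectorfieldvg} (non-parameterized version, i.e. Proposition in Section~\ref{sectionmorse} together with Remark~\ref{psnonparameterized}) to get a Morse--Smale--Palais--Smale negative gradient flow for $\psi^l_{H_\eta^*}$ whose Morse homology, with the action filtration by $\psi^l_{H_\eta^*}=\Psi_{H_\eta^*}\circ\Gamma_l$, computes $\H_*(\{\Psi_{H_\eta^*}<b\},\{\Psi_{H_\eta^*}<a\})$ for regular values $a<b$; here one uses that $\Psi_{H_\eta^*}$ satisfies Palais--Smale and has finitely many critical values, so the sublevel-set filtration is exhausted by the Morse filtration. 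Second, take $a=\varepsilon$ and $b=L\in(\eta,T_\eta)$: by Claim~(1) of Lemma~\ref{Psiheta} (applied with $\varphi_\eta\in\mathcal{F}^*_{\operatorname{lin}}(C)$, which holds since $\mathcal{F}_{\operatorname{lin}}(C)\subset\mathcal{F}^*_{\operatorname{lin}}(C)$), the only critical values of $\Psi_{H_\eta^*}$ in $(\varepsilon,L)$ lie in $(\varepsilon,\eta)$ and correspond bijectively to critical orbits of $\PsiC$ of value $<\eta$, hence to closed Reeb orbits of period $<\eta$; moreover $\Psi_{H_\eta^*}(0)<\varepsilon$. Thus the relative Morse complex $\C\M_*(\psi^l_{H_\eta^*})^{(\varepsilon,L)}$ is generated exactly by these orbits. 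Third, apply Theorem~\ref{theoremE1}: the chain isomorphism $\Theta\colon \C\M_*(\psi^l_{H_\eta^*})\to \C\F_{*+n}(H_\eta)$ preserves the action filtration, so it restricts to an isomorphism $\C\M_*(\psi^l_{H_\eta^*})^{(\varepsilon,L)}\xrightarrow{\cong}\C\F_{*+n}^{(\varepsilon,L)}(H_\eta)$; since the $1$-periodic orbits of $H_\eta$ with action in $(\varepsilon,\eta)$ generate $\C\F^{+,<L}(H_\eta)$ (the constant orbit at the origin and orbits of action $\le\varepsilon$ or $\ge T_\eta$ being excluded exactly as in the standard computation of $\S\H^{+,<L}$ for such approximating Hamiltonians, cf.\ \cite{GH18}), passing to homology gives $D^{H_\eta}_L\colon \H_*(\{\Psi_{H_\eta^*}<L\},\{\Psi_{H_\eta^*}<\varepsilon\})\xrightarrow{\cong}\S\H^{+,<L}_{*+n}(C)$. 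Here the identification of $\S\H^{+,<L}_{*+n}(C)$ with $\H\F^{+,<L}_{*+n}(H_\eta)$ is the standard cofinality argument: Hamiltonians $H_\eta=\varphi_\eta\circ\HC$ with $\varphi_\eta\in\mathcal{F}_{\operatorname{lin}}(C)$ of slope $\eta$ form a cofinal family among admissible Hamiltonians for computing the $<L$-filtered positive symplectic homology, and the continuation maps among them are filtered isomorphisms for $L\in(\eta,T_\eta)$ fixed.

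For naturality (part $(1.2)$), suppose $H_\eta\le H_\nu$ with $\varphi_\eta,\varphi_\nu\in\mathcal{F}_{\operatorname{lin}}(C)$ and $L_\eta\le L_\nu$, $L_\eta\in(\eta,T_\eta)$, $L_\nu\in(\nu,T_\nu)$. Then $\Psi_{H_\nu^*}\le\Psi_{H_\eta^*}$, giving the inclusion $i^H_{L_\nu,L_\eta}$ of pairs on the Clarke side, and a non-increasing homotopy from $H_\nu$ to $H_\eta$ induces the filtered continuation map $i^+_{L_\nu,L_\eta}$ on the symplectic side. The square commutes because (a) the Morse-theoretic continuation maps for $\psi^l_{H_\eta^*}\ge \psi^l_{H_\nu^*}$ compute the maps induced by inclusion of sublevel sets (standard Morse-theoretic fact, using that the continuation moduli spaces are cut out by non-increasing homotopies and the filtration is respected), and (b) part~(2) of Theorem~\ref{theoremE1} says $i^F_{(H^{\eta\nu}_s,J^{\eta\nu}_s)}\circ\Theta_\eta$ and $\Theta_\nu\circ i^M_{(\psi^{\eta\nu}_s,g^{\eta\nu}_s)}$ are chain homotopic, hence induce the same map in homology, and both the Floer and Morse continuation maps preserve the action filtrations, so the chain homotopy restricts to the filtered subquotients. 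Combining (a) and (b) and passing to the filtered homologies yields the commuting diagram. The main obstacle — really the only non-routine point — is verifying cleanly that for $\varphi_\eta\in\mathcal{F}_{\operatorname{lin}}(C)$ the filtered Floer homology $\H\F^{+,<L}_*(H_\eta)$ genuinely computes $\S\H^{+,<L}_*(C)$, i.e.\ that no spurious generators (in particular the constant orbit at the origin, whose action is $-\varphi_\eta(0)=\zeta_\varepsilon<\varepsilon$, and orbits associated to the linear-at-infinity region) pollute the window $(\varepsilon,L)$; this is exactly what conditions (1)--(4) of $\mathcal{F}_{\operatorname{lin}}(C)$, together with \eqref{nontrivialorbitsactionbounds} and \eqref{admissiblegap}, are designed to control — condition (3) pushing $\varphi_\eta(1)<0$ and $\varphi_\eta'(1)<\min\sigma(\partial C)$ so that the relevant orbits sit on spheres $\HC=s$ with $s\in[s_{\min},s_{\max}]$, and condition (4) bounding the action discrepancy $\varphi_\eta'(s)(s-1)-\varphi_\eta(s)$ below $\frac13\operatorname{gap}(\eta)$ so that the Floer action of the orbit over $\HC=s$ differs from its period by less than half the spectral gap, hence the grading and filtration match those of the corresponding Reeb orbit exactly.
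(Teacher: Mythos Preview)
There is a genuine gap in your approach: you apply Theorem~\ref{theoremE1} directly to $H_\eta=\varphi_\eta\circ\HC$, but this Hamiltonian is \emph{autonomous}, so every non-trivial $1$-periodic orbit comes in an $S^1$-family and is therefore degenerate. Hence $H_\eta\notin\mathcal{H}_\Theta$ (dual admissibility requires non-degeneracy), and the chain isomorphism $\Theta$ is simply not defined for it. The aside ``up to the mild smoothing at the origin handled as in \cite{AK22}'' addresses the failure of $H_\eta$ to be $C^2$ at $0$, but misses the more serious Morse--Bott degeneracy. Likewise, the Morse complex $\C\M_*(\psi^l_{H_\eta^*})$ is not defined because $\psi^l_{H_\eta^*}$ is Morse--Bott, not Morse.

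The paper reaches Proposition~\ref{isomorphismautomussymplectic} through a chain of approximations that you have bypassed. First, Lemma~\ref{positivesymplecticiso} establishes the isomorphism for genuinely non-degenerate time-dependent Hamiltonians $H'_\eta\in\mathcal{H}^{\mathcal{F}}(C)$ (small perturbations of Morse--Bott Hamiltonians as in Lemma~\ref{MorseBottperturbation}); here Theorem~\ref{theoremE1} does apply, and your argument is essentially the content of that lemma. Second, Corollary~\ref{positivesymplecticautonomus} passes from $\mathcal{H}^{\mathcal{F}}(C)$ to the Morse--Bott family $\mathcal{H}^{\mathcal{F}}_{MB}(C)$ by showing, via an $\varepsilon'$-interleaving argument, that the inclusions between sublevel pairs induce isomorphisms. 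Finally, the actual proof of Proposition~\ref{isomorphismautomussymplectic} performs one more approximation step: it sandwiches $H_\eta$ between $H''_\eta\in\mathcal{H}^{\mathcal{F}}_{MB}(C)$ with $H''_\eta\le H_\eta$ and $H'''_\eta=\varphi'''_\eta\circ\HC$ with $H'_\eta\le H'''_\eta$, and uses both Corollary~\ref{positivesymplecticautonomus} and Proposition~\ref{isomorphismclarkeduality} to conclude that the relevant inclusions are isomorphisms, so that $D^{H_\eta}_L\defeq D^{H'_\eta}_L\circ(i^{H'^*_\eta,H^*_\eta}_L)_*$ is well defined and independent of $H'_\eta\in\mathcal{H}^{\mathcal{F}}_{MB}(C;H_\eta)$. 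Your proposal would need to insert these perturbation and interleaving steps to become a valid proof.
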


\begin{remark}\label{isomorphismautomussymplecticS1}
    The same claim holds if one replaces singular homology with $S^1$-equivariant homology and positive symplectic homology with positive $S^1$-equivariant symplectic homology. Unlike in the previous section, the proof of the $S^1$-equivariant case differs. We will emphasize these differences throughout the section.
\end{remark}

For definitions of ($S^1$-equivariant) symplectic homology and their positive versions, we refer the reader to \cite{Vit99, BO17, GH18}.

We define another family with $F_{\infty}(C)$ of smooth functions $\varphi: \mathbb{R} \to \mathbb{R}$ such that

\begin{enumerate}
    \item There exists $\varphi_\nu \in \mathcal{F}_{\operatorname{lin}}(C)$ such that $\varphi|_{[1, +\infty)} = \varphi_\nu|_{[1, +\infty)}$.
    \item $\varphi$ is increasing on $(0, +\infty)$ and $\varphi''(s) > 0$ for $s \in (0, s_\eta)$.
    \item $\varphi(s) = \varphi(0) > \varepsilon$ for $s \leq 0$.
\end{enumerate}

Notice that both families are non-empty for every $\eta \notin \sigma(\partial C)$.

Now, we define a family of admissible Morse-Bott Hamiltonians with respect to the family $\mathcal{F}_\infty(C)$ (denoted $\mathcal{H}_{MB}^\mathcal{F}(C)$) as the family of smooth Hamiltonians $H: \mathbb{R}^{2n} \to \mathbb{R}$ of the form
\[H = \varphi \circ H_C + f,\]
where $\varphi \in F_\infty(C)$ and $f: \mathbb{R}^{2n} \to \mathbb{R}$ is a $C^2$-small function such that it is supported inside $C$, and its second derivative is positive definite near the origin. Additionally, we require that $H$ is quadratically convex, $H|_{\overline{C}} < 0$, and $H(0) > -\varepsilon$.

\begin{remark}\label{cofinal}
    Notice that $\mathcal{H}_{MB}^{\mathcal{F}}(C)$ is a cofinal subset of $\mathcal{H}_{MB}(C)$ defined in \cite{GH18}. Moreover, $H \in \mathcal{H}^{\mathcal{F}}_{MB}(C)$ has a unique trivial critical orbit of $\Phi_H$ at $0$ and it holds that
    \[\Phi_H(0) < \varepsilon.\] Additionally, for non-trivial critical orbits $S_y$ of $\Phi_H$, from \eqref{nontrivialorbitsactionbounds}, we have that
    \[2 \varepsilon < \Phi_H(y) < \eta.\]
    Moreover, from \eqref{admissiblegap}, we have that
    \[|\Phi_H(y) - T_y| < \min\left\{\frac{1}{\eta}, \frac{1}{3} \text{gap}(\eta)\right\},\]
    where $T_y$ is the period of the corresponding closed Reeb orbit, which proves that $\mathcal{H}_{MB}^{\mathcal{F}}(C) \subseteq \mathcal{H}_{MB}(C)$.
\end{remark}

\large\textbf{Positive Symplectic homology}
\normalsize

As described in \cite{CFHW96} and \cite{BO09}, we can perturb $ H \in \mathcal{H}_{MB}(C) $ to a time-dependent Hamiltonian of the form

\[H'(t,x) = H(x) + \delta \sum_{S_y} \rho_{S_y}(x) f_{S_y}(t,x),\]

where the summation is taken over all non-trivial critical orbits. The functions $ \rho_{S_y} $ and $ f_{S_y} $ are defined locally near the given critical orbit, with $ \rho_{S_y} $ serving as a support function. Moreover, the following claim holds.

\begin{lemma}\label{MorseBottperturbation}
    For $H \in \mathcal{H}_{MB}(C)$ and a small enough $\delta > 0$, a time-dependent perturbation $H'$ of $H$ satisfies the following:

    \begin{enumerate}
        \item The constant orbits of $H'$ are the same as those of $H$.
        \item For every non-trivial critical orbit $S_y$ of $H$, there exists a pair of non-degenerate critical orbits of $H'$, denoted $(y_{\min}, y_{\max})$, which belong to $S_y$ and satisfy
        \[\mu_{CZ}(y_{\min}) = \mu_{CZ}(y), \quad \text{and} \quad \mu_{CZ}(y_{\max}) = \mu_{CZ}(y) + 1.\]
    \end{enumerate}

\end{lemma}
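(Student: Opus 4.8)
The plan is to invoke the standard Morse--Bott perturbation machinery for autonomous Hamiltonians with $S^1$-symmetric orbit families, adapting the constructions of \cite{CFHW96} and \cite{BO09} to our convex, quadratically convex setting. The first step is to analyze the critical set of $\Phi_H$. By construction $H=\varphi\circ\HC+f\in\mathcal{H}_{MB}(C)$: the perturbation $f$ is $C^2$-small, supported inside $C$, and positive definite near the origin, while $\varphi\in\mathcal{F}_\infty(C)$. One first checks that on $\overline C$ the Hamiltonian $H$ has a unique critical point at the origin (the $C^2$-smallness of $f$ together with $\varphi'(s)>0$ on $(0,+\infty)$ and the strong convexity of $\HC$ rules out other $1$-periodic orbits with action below $\min\sigma(\partial C)$; cf.\ Remark~\ref{cofinal}), and that outside $\overline C$ the function $f$ vanishes so $H=\varphi\circ\HC$, whose nontrivial $1$-periodic orbits correspond, exactly as in Claim~(1) of Lemma~\ref{Psiheta}, to Reeb orbits $y$ on $\partial C$ with period $\varphi'(\HC(y))=T_y\in\sigma(\partial C)^{<\eta}$, arranged in $S^1$-families $S_y$. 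This establishes that $\crit(\Phi_H)$ consists of the isolated point $0$ together with finitely many transversally non-degenerate circles $S_y$, with $\Phi_H(0)<\varepsilon$ and $2\varepsilon<\Phi_H(y)<\eta$ on the nontrivial families.

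The second step is the actual perturbation. Near each nontrivial critical circle $S_y$, choose an $S^1$-invariant tubular neighborhood and a support cutoff $\rho_{S_y}$; let $f_{S_y}(t,x)$ be modeled on a perfect Morse function on $S^1\cong S_y$ lifted by the Reeb flow, exactly as in \cite[\S 3]{BO09} (equivalently \cite{CFHW96}), so that $H'(t,x)=H(x)+\delta\sum_{S_y}\rho_{S_y}(x)f_{S_y}(t,x)$. Since $f_{S_y}$ is supported away from the origin and from the other families, for $\delta>0$ small the constant orbit at $0$ is untouched, which is claim (1). For claim (2) one argues that for $\delta$ small the only $1$-periodic orbits of $H'$ inside the support of $\rho_{S_y}$ that persist are the two critical points $y_{\min},y_{\max}$ of the auxiliary Morse function on $S_y$; this is the content of the implicit-function/Gronwall argument in \cite{BO09}, and it only uses that $S_y$ is a transversally non-degenerate Morse--Bott family, which we verified in step one. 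The index computation $\mu_{CZ}(y_{\min})=\mu_{CZ}(y)$ and $\mu_{CZ}(y_{\max})=\mu_{CZ}(y)+1$ follows from the general Morse--Bott index formula: the Conley--Zehnder index of a perturbed orbit equals the Morse--Bott index of the family shifted by the Morse index of the auxiliary function at that point, and for a circle family with a perfect Morse function the two critical points have Morse indices $0$ and $1$; see \cite[Lemma 3.3]{BO09}. Here one must also check that $H'$ remains quadratically convex (true since the perturbation is $C^2$-small) and non-resonant at infinity (true since $H'=\varphi\circ\HC$ for large $|x|$), so that $H'$ is an admissible Hamiltonian for the Floer complex.

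The main obstacle, and the only part requiring genuine care rather than citation, is ensuring uniformity: that a single $\delta>0$ works simultaneously for all the finitely many critical circles and that no new $1$-periodic orbits are created in the transition regions where two cutoffs $\rho_{S_y}$ overlap or where $\rho_{S_y}$ interpolates to $0$. This is handled by choosing the tubular neighborhoods pairwise disjoint (possible since the $S_y$ are isolated by Remark~\ref{tansvnondeg}) and by a compactness argument on the complement of their union, where $\|\dot x - X_{H'}(x)\|$ is bounded below uniformly in $\delta$ for $\delta$ small because $H$ itself has no periodic orbits there. I would also remark that the statement and proof carry over verbatim to the $S^1$-equivariant setting: the perturbation is chosen $S^1$-equivariantly (the auxiliary Morse function on $\mathbb{C}P^N$ being $\widetilde f_N$), and the critical orbits of the parameterized Hamiltonian split into min/max pairs in the same way, which is exactly what is needed later to compare with the $S^1$-equivariant Morse complex of $\psi^l_{H^*}$.
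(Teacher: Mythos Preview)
Your proposal is correct and aligns with the paper's approach: the paper itself does not give a proof but simply refers the reader to \cite{CFHW96} and \cite{BO09}, the same sources you invoke. Your sketch is in fact more detailed than what appears in the paper, and the additional care you take (uniformity in $\delta$, disjointness of the tubular neighborhoods, preservation of quadratic convexity and non-resonance) is appropriate but not spelled out there.
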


The proof of this lemma can be found in \cite{CFHW96} and \cite{BO09}.

Perturbations of this type appear in Morse theory for functions of Morse-Bott type (see \cite{BH09, BH13}). Instead of working with the Floer complex of perturbed Morse-Bott Hamiltonians, one can work with cascades and Morse-Bott moduli spaces (see \cite{BO09}). These two approaches are equivalent. Similar results can be obtained in the Morse case, as shown in \cite{BH13}.

A Hamiltonian $H' \in \mathcal{H}^{\mathcal{F}}(C)$ is an admissible perturbation of $H \in \mathcal{H}_{MB}^{\mathcal{F}}(C)$ if
\[
H'(t,x) = H(x) + \delta \sum_{S_y} \rho_{S_y}(x) f_{S_y}(t,x),
\]
and the following conditions hold:

\begin{itemize}
    \item Conditions (1) and (2) of Lemma \ref{MorseBottperturbation} are satisfied.
    \item The time-dependent perturbations are supported outside of $\overline{C}$.
    \item $H'$ is quadratically convex.
    \item For the pair $(y_{\min}, y_{\max})$ corresponding to $S_y$, it holds that
    \[|\Phi_{H'}(y_{\min}) - T_y| < \min\left\{\frac{1}{\eta}, \frac{1}{3}\text{gap}(\eta)\right\}, \quad \text{and} \quad |\Phi_{H'}(y_{\max}) - T_y| < \min\left\{\frac{1}{\eta}, \frac{1}{3}\text{gap}(\eta)\right\}.\]
    \item For every $y \in \crit(\Phi_H) \setminus \{0\}$, it holds that
    \[\varepsilon < \Phi_H(y) < \eta.\]
\end{itemize}

For $H \in \mathcal{H}^\mathcal{F}(C)$, we define the positive Floer complex as
\[\{\C\F_*^+(H), \partial^F\},\]

where
\[\C\F^+(H) \defeq \C\F(H) / \C\F^{<\varepsilon}(H).\]

This is well-defined since $\Phi_H$ is non-increasing along trajectories. The filtration on this complex by $\Phi_H$ is well-defined for the same reason. The homology of this complex is denoted by
\[\H\F^+_*(H, J)\defeq \H_*(\{\C\F_*^+(H), \partial^F\}).\]

This homology is independent of the almost complex structure.

If $(H_\alpha, J_\alpha)$ and $(H_\beta, J_\beta)$ are such that $H_\alpha, H_\beta \in \mathcal{H}^{\mathcal{F}}(C)$ and $H_\alpha \leq H_\beta$, then a generic non-increasing homotopy $(H^{\alpha\beta}_s, J^{\alpha\beta}_s)$ induces a chain map described in Section \ref{sectionfloer}. This chain map induces a continuation map
\[i^+_{H_\beta, H_\alpha}: \H\F^+_*(H_\alpha, J_\alpha) \to \H\F^+_*(H_\beta, J_\beta),\]
which gives us a directed system. Here, $\alpha$ and $\beta$ do not represent the slope.

The set $\mathcal{H}^{\mathcal{F}}(C)$ is cofinal in the set of admissible non-degenerate Hamiltonians (see \cite{GH18}). Therefore, we define the filtered positive symplectic homology as
\[SH^{+,<L}(C) = \varinjlim_{H \in \mathcal{H}^{\mathcal{F}}(C)} HF^{+,<L}(H).\]

\begin{remark}\label{comutationofdiagramfloermorse}
Since $\mathcal{H}^{\mathcal{F}}(C) \subset \mathcal{H}_\Theta$, for $H$ its corresponding $l$-reduced dual functional is $\psi^l_{H^*}$. We analogously define the positive Morse complex as the quotient complex
\[\C\M^+(\psi^l_{H^*}) \defeq \C\M(\psi^l_{H^*}) / \C\M^{<\varepsilon}(\psi^l_{H^*}).\]

The claim (1) of Theorem \ref{theoremE1} and the fact that corresponding critical points of functionals have the same action imply that the map induced from $\Theta$ in positive homology, denoted
\[\Theta^+_{\H}: \H\M_{*}^+(\psi^l_{H^*}, g) \to \H\F_{*+n}^+(H, J),\]
is an isomorphism that preserves action filtration.

Let $H_\alpha \leq H_\beta$ and $H_\alpha, H_\beta \in \mathcal{H}^\mathcal{F}(C)$. Assuming that the $l$-saddle point reduction exists for both dual functionals, we have from claim (2) of Theorem \ref{theoremE2} that the following diagram

\[
\begin{tikzcd}
\H\M_{*}^+(\psi^l_{H_\beta^*}, g_\beta) \arrow{r}{\Theta^+_{\beta, \H}} & \H\F_{*+n}^+(H_\beta, J_\beta) \\
\H\M_{*}^+(\psi^l_{H_\alpha^*}, g_\alpha) \arrow{r}{\Theta^+_{\alpha, \H}} \arrow[swap]{u}{i^{M,+}_{H_\beta^*, H_\alpha^*}} & \H\F_{*+n}^+(H_\alpha, J_\alpha) \arrow[swap]{u}{i^{F,+}_{H_\beta, H_\alpha}}
\end{tikzcd}
\]

commutes, where vertical maps are continuation maps.
    
\end{remark}

\begin{lemma}\label{positivesymplecticiso}
    Let $C$ be a non-degenerate, strongly convex domain whose interior contains the origin. Let $H_\eta \in \mathcal{H}^\mathcal{F}(C)$ be arbitrary, where $\eta$ is the slope at infinity and $T_\eta$ is the minimal element of $\sigma(\partial C)$ greater than $\eta$. The following holds:
    \begin{enumerate}
        \item For every $L \in (\eta, T_\eta)$, there exists an isomorphism
        \[D^{H_\eta}_L: \H_{*}(\{\Psi_{H^*_\eta} < L\}, \{\Psi_{H^*_\eta} < \varepsilon\}; \mathbb{Z}_2) \to SH^{+,<L}_{*+n}(C; \mathbb{Z}_2).\]
        \item This isomorphism is natural. Let $H_\eta \leq H_\nu$. For every $L_\eta \in (\eta, T_\eta)$ and $L_\nu \in (\nu, T_\nu)$ such that $L_\eta \leq L_\nu$, the following diagram

        \[
        \begin{tikzcd}
        \H_{*}(\{\Psi_{H_{\nu}} < L_\nu\}, \{\Psi_{H_{\nu}} < \varepsilon\}; \mathbb{Z}_2) \arrow{r}{D^{H_\nu}_{L_\nu}} & SH^{+,<L_\nu}_{*+n}(C; \mathbb{Z}_2) \\
        \H_{*}(\{\Psi_{H_{\eta}} < L_\eta\}, \{\Psi_{H_{\eta}} < \varepsilon\}; \mathbb{Z}_2) \arrow{r}{D^{H_\eta}_{L_\eta}} \arrow[swap]{u}{(i^{H}_{L_\nu, L_\eta})_*} & SH^{+,<L_\eta}_{*+n}(C; \mathbb{Z}_2) \arrow[swap]{u}{i^{+}_{L_\nu, L_\eta}}
        \end{tikzcd}
        \]

        commutes.
    \end{enumerate}
\end{lemma}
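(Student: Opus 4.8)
The strategy is to factor the desired isomorphism $D^{H_\eta}_L$ through the Morse-theoretic side, using the chain isomorphism $\Theta$ (together with its functoriality, Theorem \ref{theoremE1}) as the bridge between the dual action functional and Floer theory. The key observation is that, because $H_\eta \in \mathcal{H}^{\mathcal{F}}(C) \subset \mathcal{H}_\Theta$, Remark \ref{comutationofdiagramfloermorse} already supplies an isomorphism $\Theta^+_{\H}: \H\M^+_*(\psi^l_{H^*_\eta}, g) \to \H\F^+_{*+n}(H_\eta, J)$ which preserves the action filtration and is natural with respect to continuation maps. Passing to the direct limit over $\mathcal{H}^{\mathcal{F}}(C)$ and restricting to the $<L$ part of the filtration, this yields a natural isomorphism between the filtered positive Morse homology $\H\M^{+,<L}_*(\psi^l_{H^*_\eta},g)$ and $\S\H^{+,<L}_{*+n}(C;\mathbb{Z}_2)$; strictly speaking one first observes that for a \emph{single} cofinal $H_\eta$ the filtered positive Floer homology $\H\F^{+,<L}_{*+n}(H_\eta)$ already computes $\S\H^{+,<L}_{*+n}(C)$ for $L \in (\eta,T_\eta)$, since no new orbits enter below $T_\eta$ and the continuation maps in that window are isomorphisms (this is the standard argument in \cite{GH18}, using the action bounds recorded in Remark \ref{cofinal}).

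The second step is to identify the filtered positive Morse homology $\H\M^{+,<L}_*(\psi^l_{H^*_\eta})$ with the relative singular homology $\H_*(\{\PsiHeta < L\}, \{\PsiHeta < \varepsilon\})$. This is a routine but slightly delicate piece of finite-dimensional Morse theory on $\mathbb{H}_l$: the reduced functional $\psi^l_{H^*_\eta}$ is a Morse function satisfying Palais–Smale (Remark \ref{psnonparameterized}), so its Morse complex computes the singular homology of its sublevel sets, and the quotient complex $\C\M^+ = \C\M/\C\M^{<\varepsilon}$ computes the relative homology $\H_*(\{\psi^l_{H^*_\eta} < L\}, \{\psi^l_{H^*_\eta} < \varepsilon\})$ by the excision/long-exact-sequence formalism for Morse homology of pairs. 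One then needs that the saddle-point reduction does not change the homotopy type of sublevel sets, i.e. the inclusion of $\Gamma_l(\mathbb{H}_l) \cap \{\PsiHeta < c\}$ into $\{\PsiHeta < c\}$, and the projection $\mathbb{P}_l$, are homotopy equivalences for every regular value $c$; this follows from the strict convexity of $\PsiHeta$ in the $\mathbb{H}^l$-directions established in Section \ref{sectionmorse} (the fibers are convex, hence contractible, so $\Gamma_l(\mathbb{H}_l)$ is a deformation retract). Here one must take care that $\varepsilon$ and $L$ are regular values — $\varepsilon < \min\sigma(\partial C)$ is below all nontrivial critical values and $L \in (\eta,T_\eta)$ avoids the spectrum — and that the identification is compatible with the action filtration, which it is because $\Theta$ and $\mathbb{P}_l$ both preserve critical values.

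The third step is naturality. Given $H_\eta \leq H_\nu$ in $\mathcal{H}^{\mathcal{F}}(C)$ with $L_\eta \leq L_\nu$, one assembles the cube whose faces are: the commuting square from Remark \ref{comutationofdiagramfloermorse} relating $\Theta^+_{\eta,\H}, \Theta^+_{\nu,\H}$ and the continuation maps; the square relating the Morse continuation map $i^{M,+}$ to the inclusion-induced map $(i^H_{L_\nu,L_\eta})_*$ on relative singular homology of sublevel sets (which commutes because a non-increasing homotopy of functions induces, on the topological side, precisely the inclusion of sublevel sets up to the Morse-continuation homotopy); and the identification of the filtered positive Floer homology of $H_\nu$ with $\S\H^{+,<L_\nu}$ compatibly with $i^+_{L_\nu,L_\eta}$. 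Defining $D^{H_\eta}_L$ as the composite of all these isomorphisms, the commuting faces force the square in claim (2) to commute.

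\textbf{Main obstacle.} I expect the principal technical point to be the second step — carefully justifying that the saddle-point reduction is a homotopy equivalence on \emph{all} sublevel sets simultaneously and that this is compatible with the quotient by $\C\M^{<\varepsilon}$, i.e. matching the Morse homology of the pair $(\{\psi^l_{H^*_\eta}<L\},\{\psi^l_{H^*_\eta}<\varepsilon\})$ with $\H_*(\{\PsiHeta<L\},\{\PsiHeta<\varepsilon\})$ on $\Honenull$ rather than on $\mathbb{H}_l$. The convexity argument handles the fibers, but one must check the deformation retraction can be performed within each sublevel set (it can, since $\PsiHeta$ restricted to a fiber $\{x\}\times\mathbb{H}^l$ is strictly convex with minimizer $Y_l(x)$, so the straight-line homotopy to the graph is value-non-increasing). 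Everything else is bookkeeping with already-established functoriality (Theorem \ref{theoremE1}, Remark \ref{comutationofdiagramfloermorse}) and the standard identification of $\S\H^{+,<L}$ with the Floer homology of a cofinal Hamiltonian of slope $\eta \in (L,T_L)$... wait, of slope in the window — as recorded in Remark \ref{cofinal}.
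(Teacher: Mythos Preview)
Your plan is essentially the paper's own approach: factor $D^{H_\eta}_L$ as the composite
\[
\H_*(\{\PsiHeta<L\},\{\PsiHeta<\varepsilon\}) \xrightarrow{(\mathbb{P}_l^L)_*} \H_*(\{\psi^l_{H^*_\eta}<L\},\{\psi^l_{H^*_\eta}<\varepsilon\}) \xrightarrow{h^L} \H\M^{+,<L}_* \xrightarrow{\Theta^{+,<L}_\H} \H\F^{+,<L}_{*+n} \xrightarrow{i^{\S\H^+}_L} \S\H^{+,<L}_{*+n},
\]
and then verify naturality by stacking the commuting squares coming from Remark \ref{comutationofdiagramfloermorse}, the functoriality of cellular filtration, and the direct-limit definition of $\S\H^+$. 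All of this is correct.

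However, you have located the difficulty in the wrong place. The saddle-point reduction step --- that $\mathbb{P}_l^L$ is a homotopy equivalence of pairs --- is handled in one line exactly as you say: $\PsiHeta|_{\{x\}\times\mathbb{H}^l}$ is strictly convex with minimizer $Y_l(x)$, so the straight-line homotopy to the graph $\Gamma_l$ is value-non-increasing and $\Gamma_l^L$ is a homotopy inverse. This is \emph{not} the main obstacle.

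What you dismiss as ``bookkeeping'' contains the real technical point: the map $D^{H_\eta}_L$ you build depends a priori on the choice of reduction level $l$, and you must prove it does not. This matters because for the naturality square with $H_\eta \le H_\nu$ you need a common $l$ valid for both, and to make $D^{H_\eta}_L$ an invariant of $H_\eta$ alone you must compare different $l$'s. The paper's argument is not formal: given $l_1<l_2$ one chooses a metric $g_2$ on $\mathbb{H}_{l_2}$ for which $\mathbb{H}^{l_2,l_1}$ is orthogonal to the graph $\Gamma_{l_2,l_1}(\mathbb{H}_{l_1})$; since $\partial_y\psi^{l_2}_{H^*_\eta}$ vanishes along that graph, the $g_2$-gradient is tangent to it, and pulling back $g_2$ via $\Gamma_{l_2,l_1}$ gives $g_1$. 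This forces the unstable manifolds to satisfy $\Gamma_{l_2,l_1}(W^u(x_{l_1}))=W^u(x_{l_2})$, hence the half-cylinder moduli spaces $\mathcal{M}_\Theta$ literally coincide, and $\Theta_1=\Theta_2\circ\gamma_{l_2,l_1}$ on the nose (not just up to chain homotopy). Without this metric trick you would only get a comparison up to an unidentified automorphism, which is not enough to conclude well-definedness. You should flag this step rather than the fiberwise-convexity retraction.
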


\begin{proof}

Let $H_\eta \in \mathcal{H}^\mathcal{F}(C)$ be arbitrary, and let $J$ be generically chosen. Since the slope of $H_\eta$ is $\eta$, it follows that the inclusion  
\[
\H\F^+_*(H_\eta, J) \to \S\H^{+, < \eta}_{*}(C)
\]
is an isomorphism. Moreover, for every critical point $y \in \text{crit}(\Phi_{H'_\eta})$, we have  
\[
\Phi_{H_\eta}(y) < \eta.
\]  

Combining this with the fact that there are no elements of $\sigma(\partial C)$ inside $[\nu, T_\nu)$, we conclude that  
\[
i^{\S\H^+, H_\eta}_L: \H\F^{+, < L}_*(H_\eta) \to \S\H^{+, < L}_{*}(C), \quad L \in (\eta, T_\eta)
\]
is an isomorphism.

Let $\psi^l_{H_\eta}$ be the $l$-reduced dual functional. For a generic $J$ and $g \in \mathcal{G}(\mathbb{H}_l)$ uniformly equivalent to the standard metric, we have, by Remark \ref{comutationofdiagramfloermorse}, that
\[\theta^{+, < L}_{\H} : \H\M^{+<L}_{*}(\psi^l_{H^*_\eta}, g) \to \H\F_{* + n}^{+, < L}(H_\eta, J), \quad L \in (\eta, T_\eta)\]
is an isomorphism. Additionally, by cellular filtration of the manifold, we have a natural isomorphism
\[h^L: \H_{*}(\{\psi^l_{H^*_\eta} < L\}, \{\psi^l_{H^*_\eta} < \varepsilon\}) \to \H\M^{+<L}_{*}(\psi^l_{H^*_\eta},g), \quad L \in (\eta, T_\eta).\]

Thus, for $L \in (\eta, T_\eta)$, we have the map
\[D^{H_\eta, l}_L: \H_{*}(\{\psi^l_{H^*_\eta} < L\}, \{\psi^l_{H^*_\eta} < \varepsilon\}) \to SH^{+, < L}_{* + n}(C), \quad D^{H_\eta, l} = i^{\S\H^+, H_\eta}_L \circ \theta^{+, < L}_{\H} \circ h^L,\]
which is an isomorphism.

For $H_\eta \in \mathcal{H}^\mathcal{F}(C)$, the map $D^{H_\eta, l}_L$ is defined for sufficiently large $l \in \mathbb{N}$, since the $l$-reduced dual functionals always exists if $l$ is large enough.

This map does not depend on the choice of $J$ and $g$. Indeed, let $J_1$ and $J_2$ be two different almost complex structures, and let $g_1$ and $g_2$ be two different metrics on $\mathbb{H}_l$ uniformly equivalent to the standard one. Then, the following diagrams
\[
\begin{tikzcd}
\H\F_*^{+, < L}(H_\eta, J_2) \arrow{r}{i^{\S\H^+, (H_\eta, J_2)}_L} & [3em] \S\H^{+, < L}_{*}(C) \\
\H\F_*^{+, < L}(H_\eta, J_1) \arrow{r}{i^{\S\H^+, (H_\eta, J_1)}_L} \arrow[swap]{u}{i^{F, +}_L} & \S\H^{+<L}_{*}(C) \arrow[u, equal]
\end{tikzcd},
\]
\[
\begin{tikzcd}
\H\M_{*}^{+, < L}(\psi^l_{H_\eta^*}, g_2) \arrow{r}{\Theta^{+, < L}_{2, \H}} & \H\F_{* + n}^{+, < L}(H_\eta, J_2) \\
\H\M_{*}^{+, < L}(\psi^l_{H_\eta^*}, g_1) \arrow{r}{\Theta^{+, < L}_{1, \H}} \arrow[swap]{u}{i^{M, +}_L} & \H\F_{* + n}^{+, < L}(H_\eta, J_1) \arrow[swap]{u}{i^{F, +}_L}
\end{tikzcd},
\]
\[
\begin{tikzcd}
\H_{*}(\{\psi^l_{H^*_\eta} < L\}, \{\psi^l_{H^*_\eta} < \varepsilon\}) \arrow{r}{h^{L, g_2}} & \H\M_{*}^{+, < L}(\psi^l_{H_\eta^*}, g_2) \\
\H_{*}(\{\psi^l_{H^*_\eta} < L\}, \{\psi^l_{H^*_\eta} < \varepsilon\}) \arrow{r}{h^{L, g_1}} \arrow[u, equal] & \H\M_{*}^{+, < L}(\psi^l_{H_\eta^*}, g_1) \arrow[swap]{u}{i^{M, +}_L}
\end{tikzcd}
\]
commute. The second diagram is precisely the diagram from Remark \ref{comutationofdiagramfloermorse} because all critical points have action smaller than $\eta$ and $L \in (\eta, T_\eta)$. Combining these three diagrams, we see that $D^{H_\eta, l}$ does not depend on the choice of $J$ and $g$.

Let $H_\eta, H_\nu \in \mathcal{H}^\mathcal{F}(C)$ be such that $H_\eta \leq H_\nu$. Let $l$ be large enough such that $D^{H_\eta,l}$ and $D^{H_\nu,l}$ are defined. Then, for every $L_\eta \in (\eta, T_\eta)$ and $L_\nu \in (\nu, T_\nu)$, we have the following commutative diagram:

\begin{equation}\label{commutativediagramisolreduction}
\begin{tikzcd}
 \H_{*}(\{\psi^l_{H_{\nu}} < L_\nu\}, \{\psi^l_{H_{\nu}} < \varepsilon\}; \mathbb{Z}_2) \arrow{r}{D^{H_\nu,l}_{L_\nu}}  & SH^{+ < L_\nu}_{*+n}(C; \mathbb{Z}_2)  \\%
\H_{*}(\{\psi^l_{H_{\eta}} < L_\eta\}, \{\psi^l_{H_{\eta}} < \varepsilon\}; \mathbb{Z}_2) \arrow{r}{D^{H_\eta,l}_{L_\eta}} \arrow[swap]{u}{(i^{H,l}_{L_\nu, L_\eta})_*} & SH^{+ < L_\eta}_{*+n}(C; \mathbb{Z}_2) \arrow[swap]{u}{i^{+}_{L_\nu, L_\eta}}
\end{tikzcd}   
\end{equation}

commutes, where $i^{H,l}_{L_\nu, L_\eta}$ is the inclusion of the corresponding pairs. This commutation follows from the diagrams:

\[ 
\begin{tikzcd}
 \H\F_*^{+,<L_\nu}(H_\nu, J_\nu) \arrow{r}{i^{\S\H^+, H_\nu}_{L_\nu}}  & \S\H^{+,<L_\nu}_{*}(C)  \\%
\H\F_*^{+,<L_\eta}(H_\eta, J_\eta) \arrow{r}{i^{\S\H^+, H_\eta}_{L_\eta}} \arrow[swap]{u}{i^{F,+}_{L_\nu, L_\eta}} & \S\H^{+<L_\eta}_{*}(C) \arrow[swap]{u}{i^{+}_{L_\nu, L_\eta}}
\end{tikzcd}
\]

\[ 
\begin{tikzcd}
 \H\M_{*}^{+,<L_\nu}(\psi^l_{H_\nu^*}, g_\nu) \arrow{r}{\Theta^{+,<L_\nu}_{\nu, \H}}  & \H\F_{*+n}^{+,<L_\nu}(H_\nu, J_\nu)  \\%
\H\M_{*}^{+,<L_\eta}(\psi^l_{H_\eta^*}, g_\eta) \arrow{r}{\Theta^{+,<L_\eta}_{\eta, \H}} \arrow[swap]{u}{i^{M,+}_{L_\nu, L_\eta}} & \H\F_{*+n}^{+,<L_\eta}(H_\eta, J_\eta) \arrow[swap]{u}{i^{F,+}_{L_\nu, L_\eta}}
\end{tikzcd}
\]

\[ 
\begin{tikzcd}
 \H_{*}(\{\psi^l_{H_\nu^*} < L_\nu\}, \{\psi^l_{H_\nu^*} < \varepsilon\}) \arrow{r}{h^{L_\nu}}  & \H\M_{*}^{+,<L_\nu}(\psi^l_{H_\nu^*}, g_\nu)  \\%
\H_{*}(\{\psi^l_{H_\eta^*} < L_\eta\}, \{\psi^l_{H_\eta^*} < \varepsilon\}) \arrow{r}{h^{L_\eta}} \arrow[swap]{u}{(i^{H, l}_{L_\nu, L_\eta})_*} & \H\M_{*}^{+,<L_\eta}(\psi^l_{H_\eta^*}, g_\eta) \arrow[swap]{u}{i^{M,+}_{L_\nu, L_\eta}}
\end{tikzcd}
\]

The second diagram is, as in the previous case, a diagram from Remark \ref{comutationofdiagramfloermorse}.

Now, let $H_\eta$ be a fixed Hamiltonian from the set $\mathcal{H}^\mathcal{F}(C)$. We define 

\[D^{H_\eta}_L: \H_{*}(\{\Psi_{H^*_\eta} < L\}, \{\Psi_{H^*_\eta} < \varepsilon\}; \mathbb{Z}_2) \to SH^{+,<L}_{*+n}(C; \mathbb{Z}_2), \quad L \in (\eta, T_\eta)\] 

as 

\[D^{H_\eta}_L = D^{H_\eta, l}_L \circ (\mathbb{P}^L_l)_*,\]

where $l \in \mathbb{N}$ is any number for which the $l$-reduced dual functional associated with $H_\eta$ exists, and 

\[\mathbb{P}_l^L: (\{\Psi_{H^*_\eta} < L\}, \{\Psi_{H^*_\eta} < \varepsilon\}) \to (\{\psi^l_{H^*_\eta} < L\}, \{\psi^l_{H^*_\eta} < \varepsilon\}),\]

is the restriction of the projection $\mathbb{P}_l$. The map $\mathbb{P}_l^L$ is well-defined and is a homotopy equivalence. Indeed, we have that 

\[\psi^l_{H_\eta^*}(x) = \Psi_{H_\eta^*}(\Gamma_l(x)),\]

where

\[\Gamma_{l}: \mathbb{H}_l \to \Honenull, \quad \Gamma_l(x) = (x, Y_l(x)),\]

and $Y_l(x)$ is the unique global minimizer of $\PsiHeta$ on $\{x\} \times \mathbb{H}_l$. Therefore, $\mathbb{P}_l^L$ is well-defined. On the other hand, $\PsiHeta$ is convex on $\{x\} \times \mathbb{H}^l$. Combining this with the fact that $Y_l(x)$ is the unique global minimizer, we conclude that $\mathbb{P}_l^L$ is a homotopy inverse of 

\[\Gamma_l^L: (\{\psi^l_{H^*_\eta} < L\}, \{\psi^l_{H^*_\eta} < \varepsilon\}) \to (\{\Psi_{H^*_\eta} < L\}, \{\Psi_{H^*_\eta} < \varepsilon\}).\]

Therefore, $\mathbb{P}_l^L$ is a homotopy equivalence and induces an isomorphism in homology.

Now, we need to show that the map $D^{H_\eta}_L$ is well-defined, i.e., that it does not depend on $l$.

Let $l_1, l_2 \in \mathbb{N}$ be two different numbers for which the $l_1$ and $l_2$-reduced dual functionals associated with $H_\eta$ exist. We can assume that $l_1 < l_2$. We have a natural orthogonal splitting of $\mathbb{H}_{l_2}$ given by 

\[\mathbb{H}_{l_2} = \mathbb{H}_{l_1} \oplus \mathbb{H}^{l_2, l_1},\]

where 

\[\mathbb{H}_{l_1} = \{x \in \Hone \mid \hat{x}(k) = 0 \text{ if } k < 1 \text{ or } k > l_1\},\]

and

\[\mathbb{H}^{l_2, l_1} = \{x \in \Hone \mid \hat{x}(k) = 0 \text{ if } k < l_1+1 \text{ or } k > l_2\}.\]

We use $x$ as a coordinate for $\mathbb{H}_{l_1}$ and $y$ as a coordinate for $\mathbb{H}^{l_2, l_1}$. With 

\[\Gamma_{l_2, l_1}: \mathbb{H}_{l_1} \to \mathbb{H}_{l_2}, \quad \Gamma_{l_2, l_1} = \mathbb{P}_{l_2} \circ \Gamma_{l_1} = (x, \mathbb{P}_{l_2} \circ Y_{l_1}(x)),\]

which is the graph of $\mathbb{P}_{l_2} \circ Y_{l_1}$. We want to show that the following diagram

\begin{equation}\label{diagraml1l2}
\begin{tikzcd}
 \H_{*}(\{\psi^{l_2}_{H_{\eta}} < L\}, \{\psi^{l_2}_{H_{\eta}} < \varepsilon\}; \mathbb{Z}_2) \arrow{r}{D^{H_\eta, l_2}_{L}}  & SH^{+, < L}_{*+n}(C; \mathbb{Z}_2)  \\%
 \H_{*}(\{\psi^{l_1}_{H_{\eta}} < L\}, \{\psi^{l_1}_{H_{\eta}} < \varepsilon\}; \mathbb{Z}_2) \arrow{r}{D^{H_\eta, l_1}_{L}} \arrow[swap]{u}{(\Gamma^L_{l_2, l_1})_*} & SH^{+, < L}_{*+n}(C; \mathbb{Z}_2) \arrow[u, equal]
\end{tikzcd}
\end{equation}

commutes, where 

\[\Gamma^L_{l_2, l_1}: (\{\psi^{l_1}_{H^*_\eta} < L\}, \{\psi^{l_1}_{H^*_\eta} < \varepsilon\}) \to (\{\psi^{l_2}_{H^*_\eta} < L\}, \{\psi^{l_2}_{H^*_\eta} < \varepsilon\}),\]

is a homotopy equivalence, with homotopy inverse being the map 

\[\mathbb{P}^L_{l_1, l_2}: (\{\psi^{l_2}_{H^*_\eta} < L\}, \{\psi^{l_2}_{H^*_\eta} < \varepsilon\}) \to (\{\psi^{l_1}_{H^*_\eta} < L\}, \{\psi^{l_1}_{H^*_\eta} < \varepsilon\}),\]

which is the restriction of $\mathbb{P}_{l_1}$.

For a fixed $x \in \mathbb{H}_{l_1}$, the map 

\[\mathbb{H}^{l_2, l_1} \to \mathbb{R}, \quad y \mapsto \psi^{l_2}_{H_\eta^*}(x + y),\]

has a unique minimizer, which is precisely $\mathbb{P}_{l_2} \circ Y_{l_1}(x) \in \mathbb{H}^{l_2, l_1}$. Additionally, $\psi^{l_2}_{H_\eta^*}$ is convex on $\{x\} \times \mathbb{H}^{l_2, l_1}$. Therefore, as before, we have that $\Gamma^L_{l_2, l_1}$ and $\mathbb{P}^L_{l_1, l_2}$ are such that

\begin{equation}\label{homotopyequivalencel1l2}
    \mathbb{P}^L_{l_1, l_2} \circ \Gamma^L_{l_2, l_1} = \text{id}, \quad \Gamma^L_{l_2, l_1} \circ \mathbb{P}^L_{l_1, l_2} \simeq \text{id}
\end{equation}

holds. From the previous discussion, it follows that

\begin{equation}\label{derivativezerotangent}
    \frac{\partial \psi^{l_2}_{H_\eta^*}}{\partial y}(\Gamma_{l_2, l_1}(x)) = 0, \quad x \in \mathbb{H}_{l_1}
\end{equation}

On the other hand, $\mathbb{H}^{l_2, l_1}$ is transverse to $\Gamma_{l_2, l_1}(\mathbb{H}_{l_1})$ because it is a graph. Therefore, we choose $g_2 \in \mathcal{G}(\mathbb{H}_{l_2})$ uniformly equivalent to the standard metric such that $\mathbb{H}^{l_2, l_1}$ is normal to $\Gamma_{l_2, l_1}(\mathbb{H}_{l_1})$. For such a metric, it follows from \eqref{derivativezerotangent} that $\nabla_{g_2} \psi^{l_2}_{H_\eta^*}$ is tangent to $\Gamma_{l_2, l_1}(\mathbb{H}_{l_1})$. We take $g_1 \in \mathcal{G}(\mathbb{H}_{l_1})$ such that $g_1 = (\Gamma_{l_2, l_1})^* g_2$. This metric will be equivalent to the standard one because $\Gamma_{l_2, l_1}$ is bilipschitz. Let $g_1$, $g_2$, and an almost complex structure $J$ be chosen such that the above requirements are met and the maps

\[\Theta_1: \C\M_{*}(\psi^{l_1}_{H_\eta^*}, g_1) \to \C\F_{*+n}(H_\eta, J)\] 

and 

\[\Theta_2: \C\M_{*}(\psi^{l_2}_{H_\eta^*}, g_2) \to \C\F_{*+n}(H_\eta, J)\] 

are well-defined.

Let $x_{l_1}$ be a critical point of $\psi^{l_1}_{H_\eta^*}$. Then $\Gamma_{l_1, l_2}(x_{l_1}) = x_{l_2}$ is a critical point of $\psi^{l_2}_{H_\eta^*}$. Moreover, the map 

\begin{equation}\label{bijectionl1l2}
\Gamma_{l_1, l_2}|_{\crit(\psi^{l_1}_{H_\eta^*})}: \crit(\psi^{l_1}_{H_\eta^*}) \to \crit(\psi^{l_2}_{H_\eta^*})
\end{equation}

is a bijection.

Being that $g_1 = (\Gamma_{l_2,l_1})^*g_2$ and by properties of $g_2$, it follows that
\begin{equation}\label{l1l2unstableequality}
    \Gamma_{l_2,l_1}(W^u(x_{l_1}; -\nabla_{g_1}\psi^{l_1}_{H_\eta^*})) = W^u(x_{l_2}; -\nabla_{g_2}\psi^{l_2}_{H_\eta^*}).
\end{equation}

Indeed, since $g_1 = (\Gamma_{l_2,l_1})^*g_2$, we have that $\Gamma_{l_2,l_1}(W^u(x_{l_1}; -\nabla_{g_1}\psi^{l_1}_{H_\eta^*}))$ is the unstable manifold at $x_{l_2}$ associated with the pair $(\psi^{l_2}_{H_\eta^*}|_{\Gamma_{l_2,l_1}(\mathbb{H}_{l_1})}, g_2|_{\Gamma_{l_2,l_1}(\mathbb{H}_{l_1})})$. Since $\nabla_{g_2}\psi^{l_2}_{H_\eta^*}$ is tangent to $\Gamma_{l_2,l_1}(\mathbb{H}_{l_1})$ and the map \eqref{bijectionl1l2} is a bijection, we conclude that
\[\Gamma_{l_2,l_1}(W^u(x_{l_1}; -\nabla_{g_1}\psi^{l_1}_{H_\eta^*})) \subseteq W^u(x_{l_2}; -\nabla_{g_2}\psi^{l_2}_{H_\eta^*}).\]

Additionally, $\Gamma_{l_2,l_1}(W^u(x_{l_1}; -\nabla_{g_1}\psi^{l_1}_{H_\eta^*}))$ and $W^u(x_{l_2}; -\nabla_{g_2}\psi^{l_2}_{H_\eta^*})$ have the same dimension since
\[\ind(x_l; \psi^l_{H_\eta^*}) = \mu_{CZ}(x; H_\eta) - n\]
for every $l$ for which $l$-saddle point reduction exists. Therefore, $\Gamma_{l_2,l_1}(W^u(x_{l_1}; -\nabla_{g_1}\psi^{l_1}_{H_\eta^*}))$ is an open connected subset of $W^u(x_{l_2}; -\nabla_{g_2}\psi^{l_2}_{H_\eta^*})$, which is also open and connected. On the other hand, $\Gamma_{l_2,l_1}(W^u(x_{l_1}; -\nabla_{g_1}\psi^{l_1}_{H_\eta^*}))$ is part of the graph $\Gamma_{l_2,l_1}(\mathbb{H}_{l_1})$, which is closed. Therefore, the unstable manifold $\Gamma_{l_2,l_1}(W^u(x_{l_1}; -\nabla_{g_1}\psi^{l_1}_{H_\eta^*}))$ is also closed in $W^u(x_{l_2}; -\nabla_{g_2}\psi^{l_2}_{H_\eta^*})$, which implies that \eqref{l1l2unstableequality} holds.

Therefore, $\Gamma_{l_2,l_1}$ induces a chain isomorphism  
\[
\gamma_{l_2,l_1}: \C\M_{*}(\psi^{l_1}_{H_\eta^*}, g_1) \to \C\M_{*}(\psi^{l_2}_{H_\eta^*}, g_2)
\]
given by  
\[
\gamma_{l_2,l_1}(x_{l_1}) = x_{l_2}.
\]

Moreover, \eqref{l1l2unstableequality} gives us that
\[\Gamma_{l_1,+}(W^u(x_{l_1}; -\nabla_{g_1}\psi^{l_1}_{H_\eta^*})) = \Gamma_{l_2,+}(W^u(x_{l_2}; -\nabla_{g_2}\psi^{l_2}_{H_\eta^*})).\]

Therefore,
\[\mathcal{M}_\Theta(x_{l_1}, y; H_\eta, J, g_1) = \mathcal{M}_\Theta(x_{l_2}, y; H_\eta, J, g_2),\]
which implies that the following diagram
\[
\begin{tikzcd}
 \C\M_{*}(\psi^{l_2}_{H_\eta^*}, g_2) \arrow{r}{\Theta_2} & \C\F_{*+n}(H_\eta, J) \\
 \C\M_{*}(\psi^{l_1}_{H_\eta^*}, g_1) \arrow{r}{\Theta_{1}} \arrow[swap]{u}{\gamma_{l_2,l_1}} & \H\F_{*+n}(H_\eta, J) \arrow[u, equal]
\end{tikzcd},
\]
commutes. If we denote by
\[\gamma^{+,<L}_{\H}: \H\M^{+,<L}_{*}(\psi^{l_1}_{H_\eta^*}, g_1) \to \H\M^{+,<L}_{*}(\psi^{l_2}_{H_\eta^*}, g_2)\]
the map induced from $\gamma_{l_2,l_1}$ in positive homology, we have that the diagram
\[
\begin{tikzcd}
 \H\M^{+,<L}_{*}(\psi^{l_2}_{H_\eta^*}, g_2) \arrow{r}{\Theta^{+,<L}_{2,\H}} & \H\F^{+,<L}_{*+n}(H_\eta, J) \\
 \H\M^{+,<L}_{*}(\psi^{l_1}_{H_\eta^*}, g_1) \arrow{r}{\Theta^{+,<L}_{1,\H}} \arrow[swap]{u}{\gamma^{+,<L}_{\H}} & \H\F^{+,<L}_{*+n}(H_\eta, J) \arrow[u, equal]
\end{tikzcd},
\]
commutes. Combining this with the commuting diagrams
\[
\begin{tikzcd}
 H_{*}(\{\psi^{l_2}_{H^*_\eta} < L\}, \{\psi^{l_2}_{H^*_\eta} < \varepsilon\}) \arrow{r}{h^{L, g_2}} & \H\M_{*}^{+,<L}(\psi^{l_2}_{H^*}, g_2) \\
 \H_{*}(\{\psi^{l_1}_{H^*_\eta} < L\}, \{\psi^{l_1}_{H^*_\eta} < \varepsilon\}) \arrow{r}{h^{L, g_1}} \arrow[swap]{u}{\Gamma^L_{l_2, l_1}} & \H\M_{*}^{+,<L}(\psi^{l_1}_{H^*}, g_1) \arrow[swap]{u}{\gamma^{+,<L}_{\H}}
\end{tikzcd},
\]
\[
\begin{tikzcd}
 \H\F_*^{+,<L}(H_\eta, J) \arrow{r}{i^{\S\H^+, H_\eta}_L} & \S\H^{+<L}_{*}(C) \\
 \H\F_*^{+,<L}(H_\eta, J_1) \arrow{r}{i^{\S\H^+, H_\eta}_L} \arrow[u, equal] & \S\H^{+<L}_{*}(C) \arrow[u, equal]
\end{tikzcd},
\]
we get that diagram \eqref{diagraml1l2} commutes. From there and \eqref{homotopyequivalencel1l2}, we get that
\[ D^{H'_\eta, l_2}_L \circ (\mathbb{P}^L_{l_2})_* = D^{H'_\eta, l_2}_L \circ (id)_* \circ (\mathbb{P}^L_{l_2})_* = D^{H'_\eta, l_2}_L \circ (\Gamma^L_{l_2, l_1} \circ \mathbb{P}^L_{l_1, l_2})_* \circ (\mathbb{P}^L_{l_2})_*\]
\[= D^{H'_\eta, l_2}_L \circ (\Gamma^L_{l_2, l_1})_* \circ (\mathbb{P}^L_{l_1, l_2} \circ \mathbb{P}^L_{l_2})_* = D^{H'_\eta, l_1}_L \circ (\mathbb{P}^L_{l_1})_*.\]

Thus, for $L \in (\eta, T_\eta)$, the definition of $D^{H'_\eta}_L = D^{H'_\eta, l}_L \circ (\mathbb{P}^L_{l})_*$ is independent of $l$, i.e., it is well-defined. Moreover, it is an isomorphism since both maps are isomorphisms.

Let $H_\eta, H_\nu \in \mathcal{H}^{\mathcal{F}}(C)$ such that $H_\eta \leq H_\nu$. Let $l \in \mathbb{N}$ be large enough such that $l$-saddle point reduction exists for  dual functionals associated with $H_\eta$ and $H_\nu$.

For $L_\eta \in (\eta, T_\eta)$ and $L_\nu \in (\nu, T_\nu)$ such that $L_\eta \leq L_\nu$, we have that the following diagram
\begin{equation}\label{lreductioncommutation}
    \begin{tikzcd}
     (\{\psi^{l}_{H^*_\nu} < L_\nu\}, \{\psi^{l}_{H^*_\nu} < \varepsilon\}) \arrow{r}{\mathbb{P}_l^{L_\nu}} & (\{\psi^{l}_{H^*_\nu} < L_\nu\}, \{\psi^{l}_{H^*_\nu} < \varepsilon\}) \\
     (\{\psi^{l}_{H^*_\eta} < L_\eta\}, \{\psi^{l}_{H^*_\eta} < \varepsilon\}) \arrow{r}{\mathbb{P}_l^{L_\eta}} \arrow[swap]{u}{i^{H}_{L_\nu, L_\eta}} & (\{\psi^{l}_{H^*_\nu} < L_\nu\}, \{\psi^{l}_{H^*_\nu} < \varepsilon\}) \arrow[swap]{u}{i^{H, l}_{L_\nu, L_\eta}}
    \end{tikzcd},
\end{equation}
commutes.

We already know that diagram \eqref{commutativediagramisolreduction}, i.e., the diagram
\[
\begin{tikzcd}
 \H_{*}(\{\psi^l_{H_{\nu}} < L_\nu\}, \{\psi^l_{H_{\nu}} < \varepsilon\}; \mathbb{Z}_2) \arrow{r}{D^{H_\nu, l}_{L_\nu}} & SH^{+<L_\nu}_{*+n}(C; \mathbb{Z}_2) \\
 \H_{*}(\{\psi^l_{H_{\eta}} < L_\eta\}, \{\psi^l_{H_{\eta}} < \varepsilon\}; \mathbb{Z}_2) \arrow{r}{D^{H_\eta, l}_{L_\eta}} \arrow[swap]{u}{(i^{H, l}_{L_\nu, L_\eta})_*} & SH^{+,<L_\eta}_{*+n}(C; \mathbb{Z}_2) \arrow[swap]{u}{i^{+}_{L_\nu, L_\eta}}
\end{tikzcd}
\]
commutes. Since $D^{H'_\nu}_{L_\eta} = D^{H'_\eta, l}_{L_\eta} \circ (\mathbb{P}_l^{L_\eta})$ and $D^{H'_\nu}_{L_\nu} = D^{H'_\nu, l}_{L_\nu} \circ (\mathbb{P}_l^{L_\nu})$, combining the diagram induced in homology from diagram \eqref{lreductioncommutation} and the commutativity of the last diagram, we conclude that the diagram of the claim (2) of this lemma commutes.

\end{proof}

\begin{corollary}\label{positivesymplecticautonomus}
    Let $C$ be a non-degenerate, strongly convex domain whose interior contains the origin. Let $H_\eta \in \mathcal{H}^\mathcal{F}_{MB}(C)$ be arbitrary, where $\eta$ is the slope at infinity and $T_\eta$ is the smallest element of $\sigma(\partial C)$ greater than $\eta$. The following holds:
    \begin{enumerate}
        \item For every $L \in (\eta, T_\eta)$, there exists an isomorphism
        \[D^{H_\eta}_L: \H_{*}(\{\Psi_{H^*_\eta} < L\}, \{\Psi_{H^*_\eta} < \varepsilon\}; \mathbb{Z}_2) \to SH^{+,<L}_{*+n}(C; \mathbb{Z}_2).\]
        \item This isomorphism is natural. Let $H_\eta \leq H_\nu$. For every $L_\eta \in (\eta, T_\eta)$ and $L_\nu \in (\nu, T_\nu)$ such that $L_\eta \leq L_\nu$, the following diagram

        \[
        \begin{tikzcd}
        \H_{*}(\{\Psi_{H_{\nu}} < L_\nu\}, \{\Psi_{H_{\nu}} < \varepsilon\}; \mathbb{Z}_2) \arrow{r}{D^{H_\nu}_{L_\nu}} & SH^{+,<L_\nu}_{*+n}(C; \mathbb{Z}_2) \\
        \H_{*}(\{\Psi_{H_{\eta}} < L_\eta\}, \{\Psi_{H_{\eta}} < \varepsilon\}; \mathbb{Z}_2) \arrow{r}{D^{H_\eta}_{L_\eta}} \arrow[swap]{u}{(i^{H}_{L_\nu, L_\eta})_*} & SH^{+,<L_\eta}_{*+n}(C; \mathbb{Z}_2) \arrow[swap]{u}{i^{+}_{L_\nu, L_\eta}}
        \end{tikzcd}
        \]

        commutes.
    \end{enumerate}
\end{corollary}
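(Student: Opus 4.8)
The statement of Corollary~\ref{positivesymplecticautonomus} is, modulo one small reduction, identical to Lemma~\ref{positivesymplecticiso}: the only difference is that the Hamiltonian $H_\eta$ is now an element of $\mathcal{H}^{\mathcal{F}}_{MB}(C)$ (a Morse--Bott Hamiltonian of the form $\varphi\circ\HC+f$) rather than an already-perturbed Hamiltonian in $\mathcal{H}^{\mathcal{F}}(C)$. The plan is therefore to reduce the Morse--Bott case to the perturbed case by invoking the Morse--Bott perturbation machinery already set up in Lemma~\ref{MorseBottperturbation}, and then to quote Lemma~\ref{positivesymplecticiso} verbatim. Concretely, first I would fix $H_\eta\in\mathcal{H}^{\mathcal{F}}_{MB}(C)$ and choose, by Lemma~\ref{MorseBottperturbation} and the list of conditions defining an \emph{admissible perturbation}, a time-dependent $H'_\eta\in\mathcal{H}^{\mathcal{F}}(C)$ with $H'_\eta$ a small perturbation of $H_\eta$; by construction $H'_\eta$ has the same constant orbit at $0$, its non-trivial orbits come in pairs $(y_{\min},y_{\max})$ with the Conley--Zehnder indices shifted as in Lemma~\ref{MorseBottperturbation}(2), and the action bounds $\varepsilon<\Phi_{H'_\eta}(y)<\eta$ and $|\Phi_{H'_\eta}(y_{\bullet})-T_y|<\min\{\tfrac1\eta,\tfrac13\mathrm{gap}(\eta)\}$ hold.

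\textbf{Key steps.} (1) Since $H'_\eta\in\mathcal{H}^{\mathcal{F}}(C)$, Lemma~\ref{positivesymplecticiso} supplies the isomorphism $D^{H'_\eta}_L\colon \H_*(\{\Psi_{(H'_\eta)^*}<L\},\{\Psi_{(H'_\eta)^*}<\varepsilon\})\to \S\H^{+,<L}_{*+n}(C)$ for $L\in(\eta,T_\eta)$, together with its naturality in $L$. (2) To land on the statement phrased in terms of the Morse--Bott Hamiltonian $H_\eta$ itself, I must identify the pair $(\{\Psi_{H_\eta^*}<L\},\{\Psi_{H_\eta^*}<\varepsilon\})$, up to homology, with the corresponding pair for $H'_\eta$. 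Here I would argue that, because $f$ and the time-dependent perturbation terms are $C^2$-small, $\Psi_{H_\eta^*}$ and $\Psi_{(H'_\eta)^*}$ are $C^1$-close; since the only critical \emph{values} of $\widetilde\Psi$ in the relevant window are the (isolated, by non-degeneracy of $C$) values coming from $\mathcal{R}(\partial C)^{<\eta}$, and since $L,\varepsilon\notin\sigma(\partial C)$ with $L\in(\eta,T_\eta)$, a standard deformation/continuation argument (as in the proof of Lemma~\ref{norml}, using a pseudo-gradient flow that decreases the functional) shows the two pairs of sublevel sets are homotopy equivalent, compatibly with inclusions. Alternatively, and perhaps more cleanly, one observes that $H_\eta\leq H'_\eta$ and $H'_\eta\leq H_\eta+C^2\text{-error}$ can both be sandwiched, so monotonicity plus the already-established naturality of $D$ over the directed system forces the relevant continuation maps to be isomorphisms. (3) Compose to define $D^{H_\eta}_L$ and transport the naturality square of Lemma~\ref{positivesymplecticiso}(2) through these homotopy equivalences; commutativity is preserved because all the intermediate maps are inclusions or the Morse--Bott perturbation continuation maps, and these fit into commuting squares by functoriality of continuation maps and of long exact sequences of pairs.

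\textbf{Main obstacle.} The genuinely delicate point is Step~(2): making precise that passing from the Morse--Bott Hamiltonian $H_\eta$ to its admissible perturbation $H'_\eta$ does not change the homology of the relevant sublevel-set pairs of the dual functional, \emph{and} does so naturally with respect to the homotopies $H_\eta\leq H_\nu$. One has to be careful that the perturbation is small enough that no critical value crosses $\varepsilon$ or $L$, that the added time-dependence still yields a $C^{1,1}$ (indeed $C^1$) dual functional satisfying a Palais--Smale condition on the cone $\{\mathcal{A}>0\}$ so that the deformation argument of Lemma~\ref{norml} applies, and that when comparing $H_\eta\leq H_\nu$ one may choose the perturbations $H'_\eta\leq H'_\nu$ compatibly (this is where the uniform smallness in $\delta$ and the gap condition $\tfrac13\mathrm{gap}(\eta)$ are used to keep orbit actions separated). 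Everything else is bookkeeping: the algebraic input is Lemma~\ref{algebraiclemma}, already invoked inside Lemma~\ref{positivesymplecticiso}, and the $S^1$-equivariant version (Remark~\ref{isomorphismautomussymplecticS1}) goes through identically since all the perturbations and sublevel sets involved are $S^1$-invariant by Claim~(3) of Lemma~\ref{ham} and the $S^1$-invariance of $\PsiC$.
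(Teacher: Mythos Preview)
Your proposal is correct and follows essentially the same route as the paper. The paper implements precisely your option~(b): it fixes a perturbation $H'_\eta\in\mathcal{H}^{\mathcal{F}}(C)$ with $H_\eta\le H'_\eta$, defines $D^{H_\eta}_L = D^{H'_\eta}_L\circ (i^{H'^*_\eta,H^*_\eta}_L)_*$, and proves that the inclusion of sublevel-set pairs is a homology isomorphism by the sandwiching argument you sketch---first for $H'_\eta$ close to $H_\eta$ (so that $H_\eta\le H'_\eta\le H_\eta+\varepsilon'$, factoring through shifted sublevel pairs and using that no critical values lie in $[\varepsilon,2\varepsilon]$ or $[\eta,T_\eta]$), then for arbitrary $H'_\eta$ by interposing a small perturbation. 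Your alternative~(a) via a pseudo-gradient flow in the style of Lemma~\ref{norml} is not pursued in the paper; the sandwiching route is more direct here because it avoids having to verify Palais--Smale and construct a flow for the time-dependent dual functional $\Psi_{(H'_\eta)^*}$.
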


\begin{proof} \textbf{Claim} (1):
Let $H_\eta \in \mathcal{H}^{\mathcal{F}}_{MB}(C)$ be arbitrary, and let $\mathcal{H}^{\mathcal{F}}(C, H_\eta)$ denote the subset of perturbations $H_\eta'$ of $H_\eta$ such that $H'_\eta \in \mathcal{H}^{\mathcal{F}}(C)$ and
\begin{itemize}
    \item $H_\eta \leq H'_\eta$.
\end{itemize}

From this, it follows that $\Psi_{H'^*_\eta} \leq \Psi_{H^*_\eta}$, and therefore we have the inclusion
\[i^{H'^*_\eta, H^*_\eta}_L: (\{\Psi_{H^*_\eta} < L\}, \{\Psi_{H^*_\eta} < \varepsilon\}) \to (\{\Psi_{H'^*_\eta} < L\}, \{\Psi_{H'^*_\eta} < \varepsilon\}), \quad L \in (\eta, T_\eta).\]

This inclusion defines an isomorphism in homology. Therefore, we define
\[D^{H_\eta}_L: \H_{*}(\{\Psi_{H_\eta} < L\}, \{\Psi_{H^*_\eta} < \varepsilon\}; \mathbb{Z}_2) \to SH^{+,<L}_{*+n}(C; \mathbb{Z}_2), \quad L \in (\eta, T_\eta)\]

as
\[D^{H_\eta}_L = D^{H'_\eta} \circ (i^{H'^*_\eta, H^*_\eta}_L)_*, \quad H' \in \mathcal{H}^{\mathcal{F}}(C, H_\eta).\]

First, we need to show that $i^{H'^*_\eta, H^*_\eta}_L$ indeed induces an isomorphism in homology.

\textbf{Case 1:} $H'_\eta$ is close to $H_\eta$.

Let $H'_\eta \in \mathcal{H}^{\mathcal{F}}(C, H_\eta)$ be such that $H_\eta \geq H'_\eta - \varepsilon'$, where $\varepsilon' < \min\left\{\varepsilon - \Psi_{H^*_\eta}(0), \frac{T_\eta - \eta}{4}\right\}$. We will show that in this case $(i^{H'^*_\eta, H^*_\eta}_L)_*$ is an isomorphism for all $L \in (\eta, T_\eta)$. Notice that it is enough to show that the previous inclusion is an isomorphism for one $L_0 \in (\eta, T_\eta)$. Indeed, if $L \in (L_0, T_\eta)$, we have from the fact that both $\Psi_{H'^*_\eta}$ and $\Psi_{H^*_\eta}$ satisfy the PS condition and there are no critical values of either function in $[L_0, L]$ that inclusions
\[(\{\Psi_{H'^*_\eta} < L_0\}, \{\Psi_{H'^*_\eta} < \varepsilon\}) \to (\{\Psi_{H'^*_\eta} < L\}, \{\Psi_{H'^*_\eta} < \varepsilon\})\]
and
\[(\{\Psi_{H^*_\eta} < L_0\}, \{\Psi_{H^*_\eta} < \varepsilon\}) \to (\{\Psi_{H^*_\eta} < L\}, \{\Psi_{H^*_\eta} < \varepsilon\})\]
induce isomorphisms in homology. Now, from the commutativity of the diagram
\[
\begin{tikzcd}
(\{\Psi_{H'^*_\eta} < L_0\}, \{\Psi_{H'^*_\eta} < \varepsilon\}) \arrow{r}{} & (\{\Psi_{H'^*_\eta} < L\}, \{\Psi_{H'^*_\eta} < \varepsilon\}) \\
(\{\Psi_{H^*_\eta} < L_0\}, \{\Psi_{H^*_\eta} < \varepsilon\}) \arrow{r}{} \arrow[swap]{u}{i^{H'^*_\eta, H^*_\eta}_{L_0}} & (\{\Psi_{H^*_\eta} < L\}, \{\Psi_{H^*_\eta} < \varepsilon\}) \arrow[swap]{u}{i^{H'^*_\eta, H^*_\eta}_L}
\end{tikzcd}
\]
and the fact that $i^{H'^*_\eta, H^*_\eta}_{L_0}$ induces an isomorphism in homology, it follows that for all $L \in (L_0, T_\eta)$, the map $i^{H'^*_\eta, H^*_\eta}_L$ induces an isomorphism in homology. Similarly, we show that $i^{H^*_\eta, H'^*_\eta}_L$ induces an isomorphism in homology for all $L \in (\eta, L_0)$.

Let $L_0 \in (\eta, \frac{3T_\eta + \eta}{4})$. We will show that $i^{H'^*_\eta, H^*_\eta}_{L_0}$ is injective in homology.

From the relation $H_\eta \leq H'_\eta \leq H_\eta + \varepsilon'$, it follows that $\Psi_{H^*_\eta} - \varepsilon' \leq \Psi_{H'^*_\eta} \leq \Psi_{H^*_\eta}$. We have the following factorization
\[
\begin{tikzcd}[scale cd=0.85]
& (\{\Psi_{H'^*_\eta} < L_0\}, \{\Psi_{H'^*_\eta} < \varepsilon\}) \arrow[dr] \\
(\{\Psi_{H^*_\eta} < L_0\}, \{\Psi_{H^*_\eta} < \varepsilon\}) \arrow[ur, "i^{H'^*_\eta, H^*_\eta}_{L_0}"] \arrow[rr] && (\{\Psi_{H^*_\eta} < L_0 + \varepsilon'\}, \{\Psi_{H^*_\eta} < \varepsilon + \varepsilon'\})
\end{tikzcd}
\]
where all the maps are inclusions. We have that the inclusion
\[(\{\Psi_{H^*_\eta} < L_0\}, \{\Psi_{H^*_\eta} < \varepsilon\}) \to (\{\Psi_{H^*_\eta} < L_0 + \varepsilon'\}, \{\Psi_{H^*_\eta} < \varepsilon + \varepsilon'\})\]
induces an isomorphism in homology, which implies from the previous diagram that $i^{H'^*_\eta, H^*_\eta}_{L_0}$ is injective in homology.

Indeed, we have that the inclusion
\[\{\Psi_{H^*_\eta} < L_0\} \to \{\Psi_{H^*_\eta} < L_0 + \varepsilon'\}\]
induces an isomorphism in homology since $L_0 + \varepsilon' < T_\eta$,

and the inclusion
\[\{\Psi_{H^*_\eta} < \varepsilon\} \to \{\Psi_{H^*_\eta} < \varepsilon + \varepsilon'\}\]
also induces an isomorphism since $\Psi_{H^*_\eta}(0)<\varepsilon$, $\varepsilon + \varepsilon' < 2 \varepsilon$, and \[\Psi_{H^*_\eta}(x) > 2 \varepsilon, \quad x \in \crit(\Psi_{H^*_\eta}) \setminus \{0\}.\]

The last inequality follows from Remark \ref{cofinal}.

Therefore, by the five lemma, it follows that the inclusion
\[(\{\Psi_{H'^*_\eta} < L_0\}, \{\Psi_{H'^*_\eta} < \varepsilon\}) \to (\{\Psi_{H'^*_\eta} < L_0 + \varepsilon'\}, \{\Psi_{H'^*_\eta} < \varepsilon + \varepsilon'\})\]
indeed induces an isomorphism in homology.

Let $L_0 \in (\frac{3\eta + T_\eta}{4}, T_\eta)$. We will show that $i^{H'^*_\eta, H^*_\eta}_{L_0}$ is surjective in homology.

From the relation $H'_\eta - \varepsilon' \leq H_\eta \leq H'_\eta$, it follows that $\Psi_{H'^*_\eta} \leq \Psi_{H^*_\eta} \leq \Psi_{H'^*_\eta} + \varepsilon'$. We have the following factorization
\[
\begin{tikzcd}[scale cd=0.85]
& (\{\Psi_{H^*_\eta} < L_0\}, \{\Psi_{H^*_\eta} < \varepsilon\}) \arrow[dr, "i^{H'^*_\eta, H^*_\eta}_{L_0}"] \\
(\{\Psi_{H'^*_\eta} < L_0 - \varepsilon'\}, \{\Psi_{H'^*_\eta} < \varepsilon - \varepsilon'\}) \arrow[ur] \arrow[rr] && (\{\Psi_{H'^*_\eta} < L_0\}, \{\Psi_{H'^*_\eta} < \varepsilon\})
\end{tikzcd}.
\]

The inclusion
\[(\{\Psi_{H'^*_\eta} < L_0 - \varepsilon'\}, \{\Psi_{H'^*_\eta} < \varepsilon - \varepsilon'\}) \to (\{\Psi_{H'^*_\eta} < L_0\}, \{\Psi_{H'^*_\eta} < \varepsilon\})\]
induces an isomorphism in homology, which implies from the previous diagram that $i^{H'^*_\eta, H^*_\eta}_{L_0}$ is surjective in homology. Indeed, since $L_0 - \varepsilon' > \eta$ and $\varepsilon - \varepsilon' > \Psi_{H'^*_\eta}(0)$ (it holds that $\Psi_{H'^*_\eta}(0) = \Psi_{H^*_\eta}(0)$), we have that the inclusions
\[\{\Psi_{H'^*_\eta} < L_0 - \varepsilon'\} \to \{\Psi_{H'^*_\eta} < L_0\}, \quad \{\Psi_{H'^*_\eta} < \varepsilon - \varepsilon'\} \to \{\Psi_{H'^*_\eta} < \varepsilon\},\]
induce isomorphisms in homology, which implies that  
\[
(\{\Psi_{H'^*_\eta} < L_0 - \varepsilon'\}, \{\Psi_{H'^*_\eta} < \varepsilon - \varepsilon'\}) \to (\{\Psi_{H'^*_\eta} < L_0\}, \{\Psi_{H'^*_\eta} < \varepsilon\})
\]
induces an isomorphism in homology. Hence, the map \( i^{H'^*_\eta, H^*_\eta}_{L_0} \) is surjective in homology.

Combining the previous conclusions, we have that $i^{H'^*_\eta, H^*_\eta}_{L_0}$ induces an isomorphism in homology for $L_0 \in (\frac{3\eta + T_\eta}{4}, \frac{3T_\eta + \eta}{4})$. This implies, by the discussion at the beginning of the proof, that $i^{H'^*_\eta, H^*_\eta}_L$ induces an isomorphism in homology for every $L \in (\eta, T_\eta)$.

\textbf{Case 2:} Arbitrary $H'_\eta \in \mathcal{H}^{\mathcal{F},+}(C, H_\eta)$

Let $H_\eta'' \in \mathcal{H}^{\mathcal{F}}(C, H_\eta)$ be such that $H''_\eta \leq H'_\eta$ and
\[(i^{H_\eta''^*, H^*_\eta}_{L})_*, \quad L \in (\eta, T_\eta),\]
is an isomorphism.

Such an $H''_\eta$ always exists due to the definition of $\mathcal{H}^{\mathcal{F}}(C)$ and the previous case. Then we have a factorization
\[
\begin{tikzcd}[scale cd=0.9]
 &  (\{\Psi_{H''^*_\eta} < L\}, \{\Psi_{H''^*_\eta} < \varepsilon\}) \arrow[dr, "i^{H'^*_\eta, H''^*_\eta}_L"] \\
(\{\Psi_{H^*_\eta} < L\}, \{\Psi_{H^*_\eta} < \varepsilon\}) \arrow[ur, "i^{H_\eta^*, H''^*_\eta}_L"] \arrow[rr, "i^{H'^*_\eta, H^*_\eta}_L"] && (\{\Psi_{H'^*_\eta} < L\}, \{\Psi_{H'^*_\eta} < \varepsilon\})
\end{tikzcd}.
\]

On the other hand, from Lemma \ref{positivesymplecticiso}, we have that
\[(i^{H_\eta'^*, H''^*_\eta}_L)_*, \quad L \in (\eta, T_\eta)\]
is an isomorphism. From the last diagram, we have
\[(i^{H_\eta'^*, H^*_\eta}_L)_* = (i^{H_\eta'^*, H''^*_\eta}_L \circ i^{H''^*_\eta, H^*_\eta}_L)_* = (i^{H_\eta'^*, H''^*_\eta}_L)_* \circ (i^{H''^*_\eta, H^*_\eta}_L)_*\]
which implies that $i^{H_\eta'^*, H^*_\eta}_L$ induces isomorphism in homology for every $L \in (\eta, T_\eta)$.

Now, we need to show that $D^{H_\eta}_L$ is well-defined.

Let \[H_\eta', H_\eta'' \in \mathcal{H}^{\mathcal{F}}(C, H_\eta)\] be two different Hamiltonians such that $H_\eta' \leq H_\eta''$ holds. Let $L \in (\eta, T_\eta)$ be arbitrary. From the previous lemma, we have a diagram
\[
\begin{tikzcd}[scale cd=0.9]
 \H_{*}(\{\Psi_{H_{\eta}''^*} < L\}, \{\Psi_{H_{\eta}''^*} < \varepsilon\}; \mathbb{Z}_2) \arrow{r}{D^{H''_\eta}_L} & SH^{+,<L}_{*+n}(C; \mathbb{Z}_2) \\
 \H_{*}(\{\Psi_{H_{\eta}'^*} < L\}, \{\Psi_{H_{\eta}'^*} < \varepsilon\}; \mathbb{Z}_2) \arrow{r}{D^{H_\eta'}_L} \arrow[swap]{u}{(i^{H_\eta''^*, H_\eta'^*}_L)_*} & SH^{+,<L}_{*+n}(C; \mathbb{Z}_2) \arrow[u, equal]
\end{tikzcd}
\]
which commutes. Since it holds that
\[i^{H_\eta''^*, H_\eta^*}_L = i^{H_\eta''^*, H_\eta'^*}_L \circ i^{H_\eta'^*, H_\eta^*}_L,\]
we have that
\[D^{H''_\eta}_L \circ (i^{H''^*_\eta, H^*_\eta}_L)_* = D^{H''_\eta}_L \circ (i^{H_\eta''^*, H_\eta'^*}_L \circ i^{H_\eta'^*, H_\eta^*}_L)_*\]
\[= (D^{H''_\eta}_L \circ (i^{H_\eta''^*, H_\eta'^*}_L)_*) \circ (i^{H_\eta'^*, H_\eta^*}_L)_* = D^{H'_\eta}_L \circ (i^{H'^*_\eta, H^*_\eta}_L)_*.\]

Let now $H_\eta', H_\eta'' \in \mathcal{H}^{\mathcal{F},+}(C, H_\eta)$ be arbitrary. We can always find another perturbation $H_\eta''' \in \mathcal{H}^{\mathcal{F},+}(C, H_\eta)$ such that $H_\eta''' \leq H_\eta', H_\eta''$. From the previous discussion, we have that
\[D^{H''_\eta}_L \circ (i^{H''^*_\eta, H^*_\eta}_L)_* = D^{H'''_\eta}_L \circ (i^{H'''^*_\eta, H^*_\eta}_L)_* = D^{H'_\eta}_L \circ (i^{H'^*_\eta, H^*_\eta}_L)_*, \quad L \in (\eta, T_\eta).\]

This implies that
\[D^{H_\eta}_L = D^{H'_\eta}_L \circ (i^{H'^*_\eta, H_\eta}_L)_*, \quad H' \in \mathcal{H}^{\mathcal{F}}(C, H_\eta)\]
is independent of $H'_\eta$ and therefore, well-defined. Moreover, from the fact that $D^{H'_\eta}_L$ and $(i^{H'^*_\eta, H_\eta}_L)_*$ are isomorphisms, it follows that $D^{H_\eta}_L$ is an isomorphism.

\textbf{Claim }(2): Let now $H_\eta, H_\nu \in \mathcal{H}_{MB}^\mathcal{F}(C)$ such that $H_\eta \leq H_\nu$. We choose $H'_\eta \in \mathcal{H}^\mathcal{F}(C, H_\eta)$ and $H'_\nu \in \mathcal{H}^\mathcal{F}(C, H_\nu)$ such that $H'_\eta \leq H'_\nu$. We choose $L_\eta \in (\eta, T_\eta)$ and $L_\nu \in (\nu, T_\nu)$ such that $L_\eta \leq L_\nu$. Then we have the following commuting diagram
\[
\begin{tikzcd}[scale cd=0.9]
 (\{\Psi_{H^*_\nu} < L_\nu\}, \{\Psi_{H^*_\nu} < \varepsilon\}) \arrow{r}{i^{H'^*_\nu, H^*_\nu}_{L_\nu}} & (\{\Psi_{H'^*_\nu} < L_\nu\}, \{\Psi_{H'^*_\nu} < \varepsilon\}) \\
 (\{\Psi_{H^*_\eta} < L_\eta\}, \{\Psi_{H^*_\eta} < \varepsilon\}) \arrow{r}{i^{H'^*_\eta, H^*_\eta}_{L_\eta}} \arrow[swap]{u}{i^{H}_{L_\nu, L_\eta}} & (\{\Psi_{H'^*_\eta} < L_\eta\}, \{\Psi_{H'^*_\eta} < \varepsilon\}) \arrow[swap]{u}{i^{H'}_{L_\nu, L_\eta}}
\end{tikzcd}.
\]

This induces the commutative diagram
\[
\begin{tikzcd}[scale cd=0.9]
 \H_*(\{\Psi_{H^*_\nu} < L_\nu\}, \{\Psi_{H^*_\nu} < \varepsilon\}; \mathbb{Z}_2) \arrow{r}{(i^{H'^*_\nu, H^*_\nu}_{L_\nu})_*} & \H_*(\{\Psi_{H'^*_\nu} < L_\nu\}, \{\Psi_{H'^*_\nu} < \varepsilon\}; \mathbb{Z}_2) \\
 \H_*(\{\Psi_{H^*_\eta} < L_\eta\}, \{\Psi_{H^*_\eta} < \varepsilon\}; \mathbb{Z}_2) \arrow{r}{(i^{H'^*_\eta, H^*_\eta}_{L_\eta})_*} \arrow[swap]{u}{(i^H_{L_\nu, L_\eta})_*} & \H_*(\{\Psi_{H'^*_\eta} < L_\eta\}, \{\Psi_{H'^*_\eta} < \varepsilon\}; \mathbb{Z}_2) \arrow[swap]{u}{(i^{H'}_{L_\nu, L_\eta})_*}
\end{tikzcd}.
\]

Combining this diagram with the commutativity of the diagram from the previous lemma, i.e.,

\[
\begin{tikzcd}[scale cd=0.9]
 \H_{*}(\{\Psi_{H'_\nu} < L_\nu\}, \{\Psi_{H'_\nu} < \varepsilon\}; \mathbb{Z}_2) \arrow{r}{D^{H'_\nu}_{L_\nu}} & SH^{+, < L_\nu}_{*+n}(C; \mathbb{Z}_2) \\
 \H_{*}(\{\Psi_{H'_\eta} < L_\eta\}, \{\Psi_{H'_\eta} < \varepsilon\}; \mathbb{Z}_2) \arrow{r}{D^{H'_\eta}_{L_\eta}} \arrow[swap]{u}{(i^{H'}_{L_\nu, L_\eta})_*} & SH^{+, < L_\eta}_{*+n}(C; \mathbb{Z}_2) \arrow[swap]{u}{i^{+}_{L_\nu, L_\eta}}
\end{tikzcd}
\]

from the definitions of \( D^{H_\eta}_{L_\eta} \) and \( D^{H_\nu}_{L_\nu} \), the conclusion follows.

\end{proof}

\large\textbf{Positive $S^1$-equivariant symplectic homology}
\normalsize

Let $H_\eta \in \mathcal{H}_{MB}^{\mathcal{F}}(C)$. We say that $H'_\eta \in \mathcal{H}^{\mathcal{F},S^1,N}(C)$ is an $N$-admissible $S^1$-invariant perturbation of $H_\eta \in \mathcal{H}_{MB}^{\mathcal{F}}(C)$ if 
\[H'_\eta(t,x,z) = H_\eta(x) + \delta \left(\beta(z) \sum_{S_{y}} \rho_{S_y}(x) f_{S_y}(t - \tau_z, x) - \mu \rho(x) \widetilde{f}_N(z) \right).\]

Here, $\beta: S^{2N+1}\to \mathbb{R}$ is an $S^1$-invariant cutoff function equal to $1$ near $\crit(\widetilde{f}_N)$ and $0$ outside of $U$, which is an $S^1$-invariant neighborhood of $\crit(\widetilde{f}_N)$ for which we have a pre-chosen local section. The function 
\[\tau: U \to \mathbb{T},\]
is defined such that $\tau_z$ is the time at which $\tau_z^{-1}z$ belongs to the local section.

It holds that $\text{supp}(\rho_{S_y}) \cap \overline{C} = \emptyset$ and that 
\[\text{supp}(\rho) \subset V, \quad \rho|_B = 1,\]
where $B = \bigcup \text{supp}(\rho_{S_y})$ and $V$ is a bounded neighborhood of $B$ such that $V \cap \overline{C} = \emptyset.$

Moreover, we require the following conditions to hold: 

\begin{enumerate}
    \item $\mu > 0$ is sufficiently large such that $H'_\eta$ is non-increasing along the gradient flow of $\widetilde{f}_N$.
    \item $\delta > 0$ is sufficiently small such that
    \begin{itemize}
        \item $H'_\eta$ is uniformly quadratically convex.
        \item $H_\eta + \delta \sum_{S_{y}} \rho_{S_y} f_{S_y} \in \mathcal{H}^{\mathcal{F}}(C)$.
        \item The set $P(H'_\eta, \widetilde{f}_N) \setminus (\{0\} \times \crit(\widetilde{f}_N))$ satisfies
        \[\bigcup \left\{\theta \cdot (y_{\min}, v) \mid \theta \in \mathbb{T},\ v \in \crit(\widetilde{f}_N) \text{ and belongs to the local section}\right\}\]
        \[\cup \bigcup \left\{\theta \cdot (y_{\max}, v) \mid \theta \in \mathbb{T},\ v \in \crit(\widetilde{f}_N) \text{ and belongs to the local section}\right\},\]
        where $(y_{\min}, y_{\max})$ are critical pairs of $H_\eta + \delta \sum_{S_{y}} \rho_{S_y} f_{S_y}$.
        \item For all $(y, v) \in P(H'_\eta, \widetilde{f}_N) \setminus (\{0\} \times \crit(\widetilde{f}_N))$, we have 
        \[|\Phi_{H'_\eta}(y, v) - T_y| < \min\left\{\frac{1}{\eta}, \text{gap}(\eta)\right\},\]
        \item $\varepsilon < \Phi_{H'_\eta}(y, v) < \eta$, for $(y, v) \in P(H'_\eta, \widetilde{f}_N) \setminus (\{0\} \times \crit(\widetilde{f}_N))$.
    \end{itemize}
\end{enumerate}

As in the non-parameterized case, for all critical points $(0,v)\in \{0\} \times \crit(\widetilde{f}_N)$, it holds that
\[\Phi_{H'_\eta}(0, v) = \Phi_{H_\eta}(0) < \varepsilon,\]
since all perturbations are outside of $\overline{C}$.

For $H \in \mathcal{H}^{\mathcal{F},S^1,N}(C)\subset \mathcal{H}_\Theta^{S^1,N}(\widetilde{f}_N,g_N)$, we define the positive $S^1$-equivariant Floer complex denoted by
\[\{\C\F_*^{S^1, N, +}(H), \partial^{F, S^1}\},\]
as the quotient complex 
\[\C\F^{S^1, N, +}(H) \defeq \C\F^{S^1, N}(H) / \C\F^{S^1, N, < \varepsilon}(H).\]

This is well-defined since $\Phi_H$ is non-increasing along trajectories. The filtration on this complex by $\Phi_H$ is well-defined for the same reason. The homology of this complex is denoted by
\[\H\F_*^{S^1, N, +}(H, J).\]

This homology is independent of the choice of an almost complex structure.

Let $H_\alpha \in \mathcal{H}^{\mathcal{F},S^1,N_\alpha}(C)$ and $H_\beta \in \mathcal{H}^{\mathcal{F},S^1,N_\beta}(C)$ be such that $(N_\alpha, H_\alpha) \leq (N_\beta, H_\beta)$. 

Let $(H_\alpha, J_\alpha)$ and $(H_\beta, J_\beta)$ be such that the $S^1$-equivariant Floer complexes are well-defined.

Then a generic non-increasing homotopy $(H^{\alpha\beta}_s, J^{\alpha\beta}_s)$ induces a chain map unique up to chain homotopy. This chain map induces a continuation map 
\[i^{S^1, +}_{H_\beta, H_\alpha}: \H\F^{S^1, N_\alpha, +}_*(H_\alpha, J_\alpha) \to \H\F^{S^1, N_\beta, +}_*(H_\beta, J_\beta),\]
which gives us a directed system. Here, $\alpha$ and $\beta$ do not represent the slope.

The set $\mathcal{H}^{\mathcal{F}, S^1, N}(C)$ is cofinal in the family of $N$-admissible $S^1$-equivariant Hamiltonians (see \cite{GH18}). Thus, we have
\[\S\H^{S^1, N, +, <L}(C) = \varinjlim_{H \in \mathcal{H}^{\mathcal{F}, S^1, N}(C)} \H\F^{S^1, N, +, <L}(H),\]
and
\[\S\H^{S^1, +, <L}(C) = \varinjlim_{H \in \mathcal{H}^{\mathcal{F}, S^1, N}(C), N \in \mathbb{N}} \H\F^{S^1, +, <L}(H).\]

Since it is clear that 
$\mathcal{H}^{\mathcal{F}, S^1, N}(C) \subset \mathcal{H}^{S^1, N}_{\Theta}(\widetilde{f}_N, g_N)$, for the $l$-reduced dual functional $\psi^l_{H^*}$ associated with $H \in \mathcal{H}^{\mathcal{F}, S^1, N}(C)$, we can define positive $S^1$-equivariant Morse homology. Moreover, due to Theorem \ref{theoremE2}, an analogous claim to the one from Remark \ref{commutativediagramisolreduction} holds in the $S^1$-equivariant case. Thus, we can prove the next lemma in the same way as Lemma \ref{positivesymplecticiso}.

\begin{lemma}\label{positiveS1symplecticiso}
    Let $C$ be a non-degenerate, strongly convex domain whose interior contains the origin. Let $H_\eta\in \mathcal{H}^{\mathcal{F},S^1,N}(C)$ be arbitrary, where $\eta$ is the slope at infinity and $T_\eta$ is the minimal element of $\sigma(\partial C)$ greater than $\eta$. The following holds: 
    \begin{enumerate}
        \item For every $L\in (\eta,T_\eta)$, there exists an isomorphism
        \[
        D^{H_\eta,S^1,N}_L: \H^{S^1}_{*}(\{\Psi_{H^*_\eta}<L\},\{\Psi_{H^*_\eta}<\varepsilon\};\mathbb{Z}_2) \to SH^{S^1,N,+,<L}_{*+n}(C;\mathbb{Z}_2).
        \]
        \item This isomorphism is natural. Let $(N_1,H_{\eta})\leq (N_2,H_{\nu})$. For every $L_\eta \in (\eta, T_\eta)$ and $L_\nu \in (\nu, T_\nu)$ such that $L_\eta \leq L_\nu$, the following diagram 

        \[
        \begin{tikzcd}
        \H^{S^1}_{*}(\{\Psi_{H_{\nu}}<L_\nu\},\{\Psi_{H_{\nu}}<\varepsilon\};\mathbb{Z}_2) \arrow{r}{D^{H_\nu,S^1,N_2}_{L_\nu}}  & [3em] SH^{S^1,N_2,+,<L_\nu}_{*+n}(C;\mathbb{Z}_2)\\%
        \H^{S^1}_{*}(\{\Psi_{H_{\eta}}<L_\eta\},\{\Psi_{H_{\eta}}<\varepsilon\};\mathbb{Z}_2) \arrow{r}{D^{H_\eta,S^1,N_1}_{L_\eta}} \arrow[swap]{u}{(i^{H,N_2,N_1}_{L_\nu,L_\eta})_*} & SH^{S^1,N_1,+,<L_\eta}_{*+n}(C;\mathbb{Z}_2) \arrow[swap]{u}{i^{S^1,N_2,N_1,+}_{L_\nu,L_\eta}}
        \end{tikzcd}
        \]

        commutes.
    \end{enumerate}
\end{lemma}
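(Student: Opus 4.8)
The plan is to mimic, essentially verbatim, the proof of Lemma~\ref{positivesymplecticiso}, replacing singular homology with $S^1$-equivariant homology and the non-parameterized objects with their $S^1$-invariant/$S^1$-equivariant counterparts, while taking care of the extra direct-limit parameter $N$. First I would fix $H_\eta \in \mathcal{H}^{\mathcal{F},S^1,N}(C)$ and a generic $S^1$-invariant $J$. Since the slope of $H_\eta$ at infinity is $\eta$ and there are no elements of $\sigma(\partial C)$ in $[\eta,T_\eta)$, the filtered inclusion $\H\F^{S^1,N,+,<L}_*(H_\eta) \to \S\H^{S^1,N,+,<L}_*(C)$ is an isomorphism for $L \in (\eta,T_\eta)$ (using Remark~\ref{cofinal}, the $S^1$-equivariant analogue, which guarantees that all nontrivial critical orbits of $\Phi_{H_\eta}$ have action in $(\varepsilon,\eta)$). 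Next, for $l$ large enough the $l$-reduced dual functional $\psi^l_{H^*_\eta}$ exists; by Theorem~\ref{theoremE2}(1) and the $S^1$-equivariant analogue of Remark~\ref{comutationofdiagramfloermorse} (which holds because corresponding critical orbits have equal action), the map $\Theta^{S^1,+,<L}_{\H}: \H\M^{S^1,N,+,<L}_*(\psi^l_{H^*_\eta},\widetilde{f}_N) \to \H\F^{S^1,N,+,<L}_{*+n}(H_\eta)$ is a filtered isomorphism. Composing with the cellular-filtration isomorphism $h^L:\H^{S^1}_*(\{\psi^l_{H^*_\eta}<L\},\{\psi^l_{H^*_\eta}<\varepsilon\}) \to \H\M^{S^1,N,+,<L}_*(\psi^l_{H^*_\eta},\widetilde{f}_N)$ and with the homotopy-equivalence $\mathbb{P}^L_l$ from $\Psi_{H^*_\eta}$-sublevel pairs to $\psi^l_{H^*_\eta}$-sublevel pairs (whose homotopy inverse is $\Gamma^L_l$, by fiberwise convexity of $\Psi_{H^*_\eta}$ on $\mathbb{H}^l$), one obtains the desired isomorphism $D^{H_\eta,S^1,N}_L$.

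The two independence checks carry over word for word. For independence of $J$ and $g$ one uses the three commuting squares built from continuation maps in the $S^1$-equivariant Floer and Morse complexes, together with Theorem~\ref{theoremE2}(2); for independence of the reduction index $l_1<l_2$ one repeats the argument with the graph map $\Gamma_{l_2,l_1}=\mathbb{P}_{l_2}\circ\Gamma_{l_1}$, choosing the $S^1$-invariant family of metrics $g_2$ so that $\mathbb{H}^{l_2,l_1}$ is normal to $\Gamma_{l_2,l_1}(\mathbb{H}_{l_1})$, so that $\nabla_{g_2}\psi^{l_2}_{H^*_\eta}$ is tangent to this graph; then the half-cylinder moduli spaces $\mathcal{M}^{S^1,N}_\Theta$ agree (using Proposition~\ref{regularityhalfs1} and the dimension formula \eqref{dimensionhalfcyls1equi}), which gives the commuting square identifying the two reductions of $\Theta^{S^1}$. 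Here the relevant $S^1$-invariance of $\Gamma_{l,N}$ and $\psi^l_{H^*}$ comes from claim (4) of Proposition~\ref{sadlepointreductionS1}.

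Finally, for naturality (claim (2)) I would take $(N_1,H_\eta)\le(N_2,H_\nu)$, pick $l$ large enough for both reductions, and assemble the commuting diagram built from: the square relating the inclusions $i^{H,l}$ of $\psi^l$-sublevel pairs to the inclusions $i^H$ of $\Psi_{H^*}$-sublevel pairs (via $\mathbb{P}^L_l$), the continuation-map square for $i^{\S\H^{S^1},+}$, the continuation-map square for $\Theta^{S^1,+}$ coming from Theorem~\ref{theoremE2}(2), and the cellular-filtration square for $h^L$. All of these are verified exactly as in Lemma~\ref{positivesymplecticiso}. The main obstacle — though it is one already resolved earlier in the paper — is ensuring that the $S^1$-equivariant Morse–Floer continuation squares genuinely commute on the nose in homology (i.e.\ that Theorem~\ref{theoremE2}(2) applies to the perturbed $S^1$-invariant Hamiltonians in $\mathcal{H}^{\mathcal{F},S^1,N}(C)$, which lie in $\mathcal{H}^{S^1,N}_\Theta(\widetilde{f}_N,g_N)$); once that is granted, the proof is a purely formal transcription of the non-equivariant argument, so I would simply write: the proof is identical to that of Lemma~\ref{positivesymplecticiso}, using Theorem~\ref{theoremE2} in place of Theorem~\ref{theoremE1} and the $S^1$-equivariant versions of Remarks~\ref{cofinal} and~\ref{comutationofdiagramfloermorse}.
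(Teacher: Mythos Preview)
Your proposal is correct and matches the paper's approach exactly: the paper does not give a separate proof of this lemma but simply states, just before the statement, that since $\mathcal{H}^{\mathcal{F},S^1,N}(C)\subset\mathcal{H}^{S^1,N}_\Theta(\widetilde{f}_N,g_N)$ and Theorem~\ref{theoremE2} supplies the $S^1$-equivariant analogue of Remark~\ref{comutationofdiagramfloermorse}, the lemma is proved in the same way as Lemma~\ref{positivesymplecticiso}. Your write-up is in fact more detailed than what the paper provides.
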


\begin{corollary}\label{positiveS1symplecticautonomus}
    Let $C$ be a non-degenerate, strongly convex domain whose interior contains the origin. Let $H_\eta\in \mathcal{H}_{MB}^\mathcal{F}(C)$ be arbitrary, where $\eta$ is the slope at infinity and $T_\eta$ is the minimal element of $\sigma(\partial C)$ greater than $\eta$. The following holds:
    \begin{enumerate}
        \item[$(1.1)$] For every $L\in (\eta,T_\eta)$, there exists an isomorphism
        \[
        D^{H_\eta,S^1}_L: \H^{S^1}_{*}(\{\Psi_{H_\eta}<L\},\{\Psi_{H^*_\eta}<\varepsilon\};\mathbb{Z}_2) \to SH^{S^1,+,<L}_{*+n}(C;\mathbb{Z}_2).
        \]
        \item[$(1.2)$] This isomorphism is natural. Let $H_{\eta}\leq H_{\nu}$. Then for every $L_\eta\in (\eta, T_\eta)$ and $L_\nu \in (\nu, T_\nu)$ such that $L_\eta\leq L_\nu$, the following diagram 

        \[
        \begin{tikzcd}
        \H^{S^1}_{*}(\{\Psi_{H_{\nu}}<L_\nu\},\{\Psi_{H_{\nu}}<\varepsilon\};\mathbb{Z}_2) \arrow{r}{D^{H_\nu,S^1}_{L_\nu}}  &  SH^{S^1,+,<L_\nu}_{*+n}(C;\mathbb{Z}_2)\\%
        \H^{S^1}_{*}(\{\Psi_{H_{\eta}}<L_\eta\},\{\Psi_{H_{\eta}}<\varepsilon\};\mathbb{Z}_2) \arrow{r}{D^{H_\eta,S^1}_{L_\eta}} \arrow[swap]{u}{(i^{H}_{L_\nu,L_\eta})_*} & SH^{S^1,+,<L_\eta}_{*+n}(C;\mathbb{Z}_2) \arrow[swap]{u}{i^{S^1,+}_{L_\nu,L_\eta}}
        \end{tikzcd}
        \]

        commutes.
    \end{enumerate}
\end{corollary}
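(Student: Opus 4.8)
The plan is to deduce this corollary from Lemma~\ref{positiveS1symplecticiso} by the same limiting procedure used to pass from Lemma~\ref{positivesymplecticiso} to Corollary~\ref{positivesymplecticautonomus}, carried out in the $S^1$-equivariant category and supplemented by a direct limit over the parameter $N$. Given $H_\eta\in\mathcal{H}^\mathcal{F}_{MB}(C)$, for each sufficiently large $N$ I would fix an $N$-admissible $S^1$-invariant perturbation $H'_\eta\in\mathcal{H}^{\mathcal{F},S^1,N}(C)$ of $H_\eta$; Lemma~\ref{positiveS1symplecticiso} then provides an isomorphism from the $S^1$-equivariant homology of the sublevel-set pair of the parameterized dual functional of $H'_\eta$ to $SH^{S^1,N,+,<L}_{*+n}(C)$ for $L\in(\eta,T_\eta)$. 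Composing this with the structure map $SH^{S^1,N,+,<L}(C)\to\varinjlim_N SH^{S^1,N,+,<L}(C)=SH^{S^1,+,<L}(C)$ and with a comparison isomorphism between $\H^{S^1}_*(\{\PsiHeta<L\},\{\PsiHeta<\varepsilon\})$ --- sublevel sets of the \emph{autonomous} dual functional on $\Honenull$, with the loop-rotation $S^1$-action --- and the $S^1$-equivariant homology of the perturbed, parameterized sublevel-set pair, one obtains $D^{H_\eta,S^1}_L$. Independence of $N$ and of the chosen perturbation, and the naturality square of claim~$(1.2)$, would then follow from claim~$(2)$ of Lemma~\ref{positiveS1symplecticiso} and from compatibility of all maps with the structure maps of the direct limit, by a diagram chase identical in form to the one in the proof of Corollary~\ref{positivesymplecticautonomus}.

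The substantive step is the comparison isomorphism. The diagonal $S^1$-action on $\Honenull\times S^{2N+1}$ is free (it is already free on the second factor), so the $S^1$-equivariant homology of an invariant open subset is the ordinary homology of its quotient; moreover $\H^{S^1}_*$ of the autonomous sublevel sets on $\Honenull$ is, by the Borel model with $ES^1=\varinjlim_N S^{2N+1}$, the direct limit over $N$ of the homology of $(\{\PsiHeta<L\}\times S^{2N+1})/S^1$. Since an $N$-admissible $S^1$-invariant perturbation coincides with $H_\eta+\delta\sum_{S_y}\rho_{S_y}f_{S_y}$ near $\crit(\widetilde{f}_N)$, differs from $\varphi_\eta\circ\HC$ only by a $C^2$-small term and a bump supported outside $\overline{C}$, and is non-increasing along the $\widetilde{f}_N$-flow, the parameterized dual functional of $H'_\eta$ is a small perturbation of the pullback of $\PsiHeta$ under the projection $\Honenull\times S^{2N+1}\to\Honenull$, with no critical values in $(\eta,T_\eta)$ other than those coming from $\PsiHeta$; hence its sublevel-set pair is $S^1$-equivariantly homotopy equivalent to $(\{\PsiHeta<L\}\times S^{2N+1},\{\PsiHeta<\varepsilon\}\times S^{2N+1})$, compatibly with increasing $L$ and in $N$, via an $S^1$-invariant gradient-flow deformation. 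I would establish this by the same Case~1 / Case~2 scheme as in Corollary~\ref{positivesymplecticautonomus}: for $H'_\eta$ $C^0$-close to $H_\eta$ one sandwiches the relevant functionals between translates of one another, uses that deformation retraction along an $S^1$-invariant pseudo-gradient is an $S^1$-equivariant homotopy equivalence whenever the window contains no critical values, and applies the five lemma to the $S^1$-equivariant long exact sequences of pairs; the needed separation of critical values is furnished by $\Phi_{H'_\eta}(0,v)<\varepsilon$ and $\varepsilon<\Phi_{H'_\eta}(y,v)<\eta$ (from the definition of $N$-admissible perturbation) together with $\PhiHeta(0)<\varepsilon$ and $2\varepsilon<\PhiHeta(y)<\eta$ for nontrivial critical orbits (Remark~\ref{cofinal}); an arbitrary perturbation is reduced to the $C^0$-close case by factoring through an intermediate perturbation and invoking naturality.

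The main obstacle is exactly this passage between the autonomous functional on $\Honenull$ and the parameterized functional on $\Honenull\times S^{2N+1}$, together with arranging that the direct limit over $N$ is simultaneously compatible with the structure maps of $SH^{S^1,+,<L}(C)$, with inclusions in $L$, and with the continuation maps induced by $H_\eta\leq H_\nu$: in the non-equivariant setting the corresponding comparison is immediate because both functionals live on the single space $\Honenull$, whereas here one must keep genuine track of the Borel construction and of the $S^1$-invariant deformation used to absorb the perturbation. Once this is done, everything else is a formal diagram chase resting on Lemma~\ref{positiveS1symplecticiso}, and --- since no Floer-theoretic data beyond what that lemma already supplies is touched --- no further transversality or compactness arguments are required.
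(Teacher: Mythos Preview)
Your plan is essentially the paper's, and the argument goes through, but you missed the one point the paper singles out as genuinely different from the non-equivariant case. In Corollary~\ref{positivesymplecticautonomus} the perturbations satisfy $H_\eta\le H'_\eta$, so the inclusion of sublevel-set pairs goes from the autonomous pair into the perturbed pair and one defines $D^{H_\eta}_L=D^{H'_\eta}_L\circ(i^{H'^*_\eta,H^*_\eta}_L)_*$. In the $S^1$-equivariant setting this is \emph{not} available: the term $-\mu\rho(x)\widetilde{f}_N(z)$ in the $N$-admissible perturbation, needed to make $H'_\eta$ non-increasing along $\nabla\widetilde{f}_N$, forces $H'_\eta\le H_\eta$ (and one can only arrange $H'_\eta$ arbitrarily $C^0$-close to $H_\eta$, not $\ge H_\eta$). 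Consequently $\Psi_{H_\eta^*}\le\Psi_{(H'_\eta)^*}$, the inclusion runs from the \emph{perturbed} parameterized pair into $(\{\PsiHeta<L\}\times S^{2N+1},\{\PsiHeta<\varepsilon\}\times S^{2N+1})$, and the paper defines $D^{H_\eta,S^1,N}_L=D^{H'_\eta,S^1,N}_L\circ\bigl((i_L)_*\bigr)^{-1}$, composing with the \emph{inverse} of the inclusion-induced map. The sandwich / five-lemma argument you describe adapts without difficulty once the direction is reversed, and the direct limit over $N$ is taken exactly as you indicate; but your write-up should acknowledge this asymmetry rather than presenting the passage as a verbatim repetition of Corollary~\ref{positivesymplecticautonomus}.
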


\begin{proof}

Let $H_\eta \in \mathcal{H}_{MB}^\mathcal{F}(C)$ and $N \in \mathbb{N}$ be arbitrary. Unlike in the case of the family $\mathcal{H}^\mathcal{F}(C)$, in $\mathcal{H}^{\mathcal{F},S^1,N}(C)$ we cannot find $H'_\eta$ such that
\[ H_\eta \leq H'_\eta. \]
On the other hand, we can always find $H'_\eta$ such that it is arbitrarily close to $H_\eta$ and
\[ H'_\eta \leq H_\eta. \]

Hence, we denote by $\mathcal{H}^{\mathcal{F},S^1,N}(C, H_\eta)$ the subset of $\mathcal{H}^{\mathcal{F},S^1,N}(C)$ consisting of $N$-admissible $S^1$-invariant perturbations $H'_\eta$ of $H_\eta$ such that
\begin{itemize}
    \item $H'_\eta \leq H_\eta$.
\end{itemize}
From this, it follows that $\Psi_{H^*_\eta} \leq \Psi_{H'^*_\eta}$, and therefore for $ L \in (\eta, T_\eta)$ we have an inclusion
\[i^{H^*_\eta, H'^*_\eta, N}_L: (\{\Psi_{H'^*_\eta} < L\}, \{\Psi_{H'^*_\eta} < \varepsilon\}) \to (\{\Psi_{H^*_\eta} < L\} \times S^{2N+1}, \{\Psi_{H^*_\eta} < \varepsilon\} \times S^{2N+1}).\]

We define
\[D^{H_\eta, S^1, N}_L: \H_*^{S^1}(\{\Psi_{H^*_\eta} < L\} \times S^{2N+1}, \{\Psi_{H^*_\eta} < \varepsilon\} \times S^{2N+1}) \to SH^{S^1, N, +, <L}_{*+n}(C; \mathbb{Z}_2)\]
as
\[D^{H_\eta, S^1, N}_L = D^{H'_\eta, S^1, N}_L \circ (i^{H'^*_\eta, H^*_\eta, N}_L)^{-1}_*, \quad H'_\eta \in \mathcal{H}^{\mathcal{F},S^1,N}(C, H_\eta).\]

where $D^{H_\eta, S^1, N}$ is the isomorphism from the previous lemma. Using analogous arguments as in Corollary \ref{positivesymplecticautonomus}, we show that the definition of $D^{H_\eta, S^1, N}_L$ is a well-defined isomorphism. Furthermore, if $H_\eta \leq H_\nu$ and $N_1 \leq N_2$, we have that the following diagram
\begin{equation}\label{NcomutativeS1equivariant}
\begin{tikzcd}
      \H^{S^1}_{*}(\{\Psi_{H_{\nu}} < L_\nu\} \times S^{2N_2+1}, \{\Psi_{H_{\nu}} < \varepsilon\} \times S^{2N_2+1}; \mathbb{Z}_2) \arrow{r}{D^{H_\nu, S^1, N_2}_{L_\nu}} & [0.5em] SH^{S^1, N_2, +, <L_\nu}_{*+n}(C; \mathbb{Z}_2) \\
\H^{S^1}_{*}(\{\Psi_{H_{\eta}} < L_\eta\} \times S^{2N_1+1}, \{\Psi_{H_{\eta}} < \varepsilon\} \times S^{2N_1+1}; \mathbb{Z}_2) \arrow{r}{D^{H_\eta, S^1, N_1}_{L_\eta}} \arrow[swap]{u}{(i^{H, N_2, N_1}_{L_\nu, L_\eta})_*} & SH^{S^1, N_1, +, <L_\eta}_{*+n}(C; \mathbb{Z}_2) \arrow[swap]{u}{i^{S^1, N_2, N_1, +}_{L_\nu, L_\eta}}
\end{tikzcd}
\end{equation}
commutes for $L_\eta \in (\eta, T_\eta)$ and $L_\nu \in (\nu, T_\nu)$ such that $L_\eta \leq L_\nu$. Taking $H_\eta = H_\nu$ and $L_\eta = L_\nu$ in the commutative diagram \eqref{NcomutativeS1equivariant}, we prove claim (1) by taking a direct limit. On the other hand, by taking the direct limit of the same diagram for $H_\eta$ and $H_\nu$, we prove claim (2).

\end{proof}

\begin{proof}[Proof of Proposition ~ \ref{isomorphismautomussymplectic}]

For $H_\eta = \varphi_\eta \circ \HC$ where $\varphi_\eta \in \mathcal{F}_{\operatorname{lin}}(C)$, we define the set
\[\mathcal{H}^{\mathcal{F}}_{MB}(C; H_\eta)\]
as the set of all $H'_\eta \in \mathcal{H}^{\mathcal{F}}_{MB}(C)$ with the slope $\eta$ such that
\begin{itemize}
    \item $H_\eta \leq H'_\eta$.
\end{itemize}

Then we have the inclusion
\[i^{H'^*_\eta, H^*_\eta}_L: (\{\Psi_{H^*_\eta} < L\}, \{\Psi_{H^*_\eta} < \varepsilon\}) \to (\{\Psi_{H'^*_\eta} < L\}, \{\Psi_{H'^*_\eta} < \varepsilon\}), \quad L \in (\eta, T_\eta).\]

This inclusion induces an isomorphism in homology. Indeed, we can find $H''_\eta \in \mathcal{H}^{\mathcal{F}}_{MB}(C)$ such that $H''_\eta \leq H_\eta$ and $H'''_\eta = \varphi'''_\eta \circ H_C$, where $\varphi'''_\eta \in \mathcal{F}_{\operatorname{lin}}(C)$ is such that $H_\eta' \leq H_\eta'''$.

From this, we have the following factorizations:

\[
\begin{tikzcd}
 &  (\{\Psi_{H^*_\eta} < L\}, \{\Psi_{H^*_\eta} < \varepsilon\})  \arrow[dr,"i^{H_\eta'^*,H^*_\eta}_{L}"] \\
(\{\Psi_{H''^*_\eta} < L\}, \{\Psi_{H''^*_\eta} < \varepsilon\}) \arrow[ur, "i^{H_\eta^*,H''^*_\eta}_{L}"] \arrow[rr,"i^{H_\eta'^*,H''^*_\eta}_{L}"] && (\{\Psi_{H'^*_\eta} < L\}, \{\Psi_{H'^*_\eta} < \varepsilon\}),
\end{tikzcd}
\]

\[
\begin{tikzcd}
 &  (\{\Psi_{H'^*_\eta} < L\}, \{\Psi_{H'^*_\eta} < \varepsilon\})  \arrow[dr,"i^{H_\eta'''^*,H'^*_\eta}_{L}"] \\
(\{\Psi_{H^*_\eta} < L\}, \{\Psi_{H^*_\eta} < \varepsilon\}) \arrow[ur, "i^{H_\eta'^*,H^*_\eta}_{L}"] \arrow[rr,"i^{H_\eta'''^*,H^*_\eta}_{L}"] && (\{\Psi_{H'''^*_\eta} < L\}, \{\Psi_{H'''^*_\eta} < \varepsilon\})
\end{tikzcd}
\]

From Corollary \ref{positivesymplecticautonomus} and Proposition \ref{isomorphismclarkeduality}, we have that $i^{H_\eta'^*,H''^*_\eta}_{L}$ and $i^{H_\eta'''^*,H^*_\eta}_{L}$ induce isomorphisms in homology. Therefore, $i^{H'^*_\eta, H^*_\eta}_L$ induces an isomorphism in homology.

For every $L \in (\eta, T_\eta)$, we define
\[D^{H_\eta}_L: \H_{*}(\{\Psi_{H_\eta} < L\}, \{\Psi_{H^*_\eta} < \varepsilon\}; \mathbb{Z}_2) \to SH^{+, <L}_{*+n}(C; \mathbb{Z}_2)\]
as
\[D^{H_\eta}_L = D^{H'_\eta}_L \circ (i^{H'^*_\eta, H^*_\eta}_L)_*, \quad H'_\eta \in \mathcal{H}^{\mathcal{F}}_{MB}(C; H_\eta).\]

To show that this is indeed a well-defined natural isomorphism, we use similar arguments as in the proof of Corollary \ref{positivesymplecticautonomus}.

\end{proof}
We prove the $S^1$-equivariant case by using Corollary \ref{positiveS1symplecticautonomus} and Remark \ref{isomorphismclarkedualityS1} in the same way. This justifies Remark \ref{isomorphismautomussymplecticS1}.

\section{Positive ($S^1$-equivariant) symplectic homology and Clarke's duality}

In this section, we will prove theorems \hyperlink{TheoremC1}{C.1} and \hyperlink{TheoremC2}{C.2}. First, we extract important properties of the family $\mathcal{F}_{\operatorname{lin}}(C)$.

\begin{enumerate}
    \item For every $L \notin \sigma(\partial C)$ such that $L > \min \sigma(\partial C)$, there exists $\varphi_\eta\in \mathcal{F}_{\operatorname{lin}}(C)$ such that $L \in (\eta, T_\eta)$.
    \item For every $L \notin \sigma(\partial C)$ such that $L > \min \sigma(\partial C)$ and every two functions $\varphi_{\eta}, \varphi_{\nu} \in \mathcal{F}_{\operatorname{lin}}(C)$ such that $L \in (\eta, T_\eta)$ and $L \in (\nu, T_\nu)$, there exists $\varphi_\mu \in \mathcal{F}_{\operatorname{lin}}(C)$ such that $\varphi_\eta, \varphi_\nu \leq \varphi_\mu$ and $L \in (\mu, T_\mu)$.
    \item For every $\min \sigma(\partial C) < L_1 < L_2$ which do not belong to the spectrum of $\partial C$, there exist $\varphi_{\eta}, \varphi_{\nu} \in \mathcal{F}_{\operatorname{lin}}(C)$ such that $\varphi_{\eta} \leq \varphi_{\nu}$ with $L_1 \in (\eta, T_{\eta})$ and $L_2 \in (\nu, T_{\nu})$.
\end{enumerate}

\begin{proof}[Proof of Theorem ~ \hyperlink{TheoremC1}{C.1}]

We define  
\[D_L: \H_*(\{\Psi_C < L\}) \to S\H^{+,<L}_{*+n+1}(C), \quad L \in \mathbb{R} \setminus \sigma(\partial C),\]

in the following way:
\[D_L = D^{H_\eta}_L \circ D^{H^*_\eta}_L\]

where $L \notin \sigma(\partial C)$ and $L > \min \sigma(\partial C)$. Hamiltonian $H_\eta = \varphi_\eta \circ H_C$ is such that $\varphi_\eta \in \mathcal{F}_{\operatorname{lin}}(C)$ and $L \in (\eta, T_\eta)$. Maps $D^{H^*_\eta}_L$ and $D^{H_\eta}_L$ are isomorphisms from propositions \ref{isomorphismclarkeduality} and \ref{isomorphismautomussymplectic} respectively (Note: $D^{H^*_\eta}_L$ is well-defined since $\mathcal{F}_{\operatorname{lin}}(C) \subset \mathcal{F}^*_{\operatorname{lin}}(C)$ by Remark \ref{admisiblefunctionsproperties}).   
For $L < \min \sigma(\partial C)$, there exists a unique isomorphism 
\[D_L = 0,\] 
since both homologies are zero.

From property (1) of the family $\mathcal{F}_{\operatorname{lin}}(C)$, we have that we can always find such $H_\eta$ from the definition of $D_L$. 

On the other hand, if $H_\eta \leq H_\nu$ are such that $L \in (\eta, T_\eta)$ and $L \in (\nu, T_\nu)$, we have commuting diagrams:

\[ 
\begin{tikzcd}
  \H_*(\{\Psi_C < L\}) \arrow{r}{D^{H^*_\nu}_{L}} & \H_{*+1}(\{\Psi_{H_{\nu}^*} < L\}, \{\Psi_{H_{\nu}^*} < \varepsilon\}) \\%
  \H_*(\{\Psi_C < L\}) \arrow{r}{D^{H^*_\eta}_{L}} \arrow[u, equal] & \H_{*+1}(\{\Psi_{H_{\eta}^*} < L\}, \{\Psi_{H_{\eta}^*} < \varepsilon\}) \arrow[swap]{u}{(i^H_{L})_*}
\end{tikzcd}
\]

\[ 
\begin{tikzcd}
  \H_{*+1}(\{\Psi_{H_{\nu}} < L\}, \{\Psi_{H_{\nu}} < \varepsilon\}) \arrow{r}{D^{H_\nu}_{L}} & SH^{+,<L}_{*+n+1}(C)\\%
  \H_{*+1}(\{\Psi_{H_{\eta}} < L\}, \{\Psi_{H_{\eta}} < \varepsilon\}) \arrow{r}{D^{H_\eta}_{L}} \arrow[swap]{u}{(i^{H}_{L})_*} & SH^{+,<L}_{*+n+1}(C) \arrow[u, equal]
\end{tikzcd}
\]

Combining these two diagrams, we have 
\[D^{H_\nu}_L \circ D^{H^*_\nu}_L = D^{H_\nu}_L \circ (i^H_{L})_* \circ D^{H^*_\eta} = D^{H_\eta}_L \circ D^{H^*_\eta}_L.\]

Therefore, due to property (2) of the family $\mathcal{F}_{\operatorname{lin}}(C)$, we conclude that the definition of $D_L$ is independent of $H_\eta$, which means it is well-defined. Since $D^{H_\eta}_L$ and $D^{H^*_\eta}_L$ are isomorphisms, $D_L$ is an isomorphism.

Let $\min \sigma(\partial C) < L_1 < L_2$ be any numbers that do not belong to the spectrum. Let $H_\eta = \varphi_\eta \circ H_C$ and $H_\nu = \varphi_\nu \circ H_C$ be Hamiltonians such that $\varphi_\eta$ and $\varphi_\nu$ are from property (3) of the family $\mathcal{F}_{\operatorname{lin}}(C)$. Then from propositions \ref{isomorphismclarkeduality} and \ref{isomorphismautomussymplectic}, we have the following diagrams:

\[ 
\begin{tikzcd}
  \H_*(\{\Psi_C < L_2\}) \arrow{r}{D^{H^*_\nu}_{L_2}} & \H_{*+1}(\{\Psi_{H_{\nu}^*} < L_2\}, \{\Psi_{H_{\nu}^*} < \varepsilon\}) \\%
  \H_*(\{\Psi_C < L_1\}) \arrow{r}{D^{H^*_\eta}_{L_1}} \arrow[swap]{u}{(i^C_{L_2, L_1})_*} & \H_{*+1}(\{\Psi_{H_{\eta}^*} < L_1\}, \{\Psi_{H_{\eta}^*} < \varepsilon\}) \arrow[swap]{u}{(i^H_{L_2, L_1})_*}
\end{tikzcd}
\]

\[ 
\begin{tikzcd}
  \H_{*+1}(\{\Psi_{H_{\nu}} < L_2\}, \{\Psi_{H_{\nu}} < \varepsilon\}) \arrow{r}{D^{H_\nu}_{L_2}} & SH^{+,<L_2}_{*+n+1}(C)\\%
  \H_{*+1}(\{\Psi_{H_{\eta}} < L_1\}, \{\Psi_{H_{\eta}} < \varepsilon\}) \arrow{r}{D^{H_\eta}_{L_1}} \arrow[swap]{u}{(i^{H}_{L_2, L_1})_*} & SH^{+,<L_1}_{*+n+1}(C) \arrow[swap]{u}{i^{+}_{L_2, L_1}}
\end{tikzcd}
\]

From these two diagrams and the definition of $D_{L_1}$ and $D_{L_2}$, we have that the diagram 

\[ 
\begin{tikzcd}
  \H_*(\{\Psi_C < L_2\}) \arrow{r}{D_{L_2}} & S\H^{+,<L_2}_{*+n+1}(C) \\%
  \H_*(\{\Psi_C < L_1\}) \arrow[swap]{u}{(i^C_{L_2, L_1})_*} \arrow{r}{D_{L_1}} & S\H^{+,<L_1}_{*+n+1}(C) \arrow[swap]{u}{i^{+}_{L_2, L_1}}
\end{tikzcd}
\]

commutes. If $L_1 < \min \sigma(\partial C)$, commutativity of the previous diagram is trivial since $D_{L_1} = 0$.

\end{proof}

Since propositions \ref{isomorphismclarkeduality} and \ref{isomorphismautomussymplectic} hold in the $S^1$-equivariant case (see Remarks \ref{isomorphismclarkedualityS1} and \ref{isomorphismautomussymplecticS1}), the proof of Theorem \hyperlink{TheoremC2}{C.2} is the same.

\begin{remark}\label{limitisomorphism}
    The isomorphism from Theorem \hyperlink{TheoremC2}{C.2} induces an isomorphism at the limit: 
    \[D^{S^1}: H_*^{S^1}(\Lambda) \to \S\H^{S^1,+}_{*+n+1}(C).\] 
    Indeed, since we can understand $H_*^{S^1}(\Lambda)$ as the direct limit over $L$ of $H_*^{S^1}(\{\Psi_C < L\})$, this isomorphism is defined as 
    \[D^{S^1}(x) = i^{S^1,+}_L \circ D^{S^1}_L(x), \quad x \in H_*^{S^1}(\{\Psi_C < L\}),\]
    where 
    \[i_L^{S^1,+}: \S\H^{S^1,+,<L}_{*+n+1}(W; \mathbb{F}) \to \S\H^{S^1,+}_{*+n+1}(W; \mathbb{F}),\] 
    is the inclusion. This isomorphism is natural, i.e., it holds that $i_L^{S^1,+} \circ D^{S^1}_L = D^{S^1} \circ (i^C_L)_*$ where 
    \[i^C_L: \{\Psi_C < L\} \to \Lambda\]
    is the inclusion. The same conclusion holds for the isomorphism from Theorem \hyperlink{TheoremC1}{C.1}.
\end{remark}

\section{Gutt-Hutchings capacities and Ekeland-Hofer spectral invariants}

In this section, we will prove Theorem \hyperlink{TheoremA}{A}. In the paper \cite{EH87}, Ekeland and Hofer introduced spectral invariants using Clarke's dual functional and the Fadell-Rabinowitz index. Both the spectral invariants and the Fadell-Rabinowitz\footnote{In this definition, the Fadell-Rabinowitz index is shifted by 1, aligning with the definition provided in \cite[Section 5]{FR78}.} index were defined in the introduction. We recall the definition of the spectral invariants.

\textbf{Ekeland-Hofer spectral invariants:} 

Let $C_0 \subset \mathbb{R}^{2n}$ be a bounded convex domain whose interior contains the origin. In Section \ref{sectionautonomusclarke}, we introduced Clarke's dual functional 
\[
\Psi_{C_0}:\Lambda \to \mathbb{R}, \quad \Psi_{C_0}(x) = \int\limits_\mathbb{T} \H^*_{C_0}(-J_0 \dot{x}(t)) \, dt.
\]

Now let $C \subset \mathbb{R}^{2n}$ be a bounded convex domain. We define the $k$-th Ekeland–Hofer spectral invariant as \[
s_k(C; \mathbb{F}) = \inf \{L > 0 \mid \indfr(\{\Psi_{C_0} < L\}; \mathbb{F}) \geq k\}, \quad k \geq 1,
\]
where $C_0$ is any translation of $C$ whose interior contains the origin. This definition does not depend on translation, and therefore the sequence of numbers $(s_k(C; \mathbb{F}))_{k \in \mathbb{N}}$ is well-defined.  

This sequence enjoys the following properties.

\begin{itemize}
    \item \textbf{Systole}: $s_1(C) = \min \PsiC = \min \sigma(\partial C)$.
    \item \textbf{Monotonicity:} The sequence $(s_k(C))_{k \in \mathbb{N}}$ is increasing with respect to $k$.
    \item \textbf{Spectrality}: Every $s_k(C)$ is a critical value of $\PsiC$ and therefore an element of $\sigma(\partial C)$.
    \item \textbf{Hausdorff-continuity}: For any sequence of bounded convex domains $C_i$ that converges to a bounded convex domain $C$ in the Hausdorff distance topology, it holds that $s_k(C_i) \to s_k(C)$.
\end{itemize}

\begin{lemma}\label{spectralinvariants}
    Let $\mathbb{F}$ be a field. Then 
    \[s_k(C; \mathbb{F}) = \inf \{L > 0 \mid (i^C_L)_* \neq 0\},\] 
    where $(i^C_L)_*: \H_{2k-2}^{S^1}(\{\Psi_{C_0} < L;\mathbb{F}\}) \to \H_{2k-2}^{S^1}(\Lambda;\mathbb{F}).$
\end{lemma}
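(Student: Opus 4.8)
The plan is to identify the Fadell--Rabinowitz index of the sublevel set $\{\Psi_{C_0}<L\}$ with the rank (or rather the non-vanishing) of the map on $S^1$-equivariant homology induced by the inclusion into $\Lambda$, exploiting that $\Lambda$ is $S^1$-contractible. First I would recall the relationship between the Fadell--Rabinowitz index defined via the cohomological cup-length of the classifying map and a homological formulation. Since $\mathbb{F}$ is a field, $S^1$-equivariant cohomology is dual to $S^1$-equivariant homology, and the module structure over $\H^*(BS^1;\mathbb{F})\cong\mathbb{F}[e]$ is dual to a comodule structure; in particular, for a space $X$ with an $S^1$-action, the condition $\pi_2^*e^k\neq 0$ in $\H^{2k}_{S^1}(X;\mathbb{F})$ can be translated into a statement about the image of a map in homology. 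The key algebraic input is that for $\Lambda$, which is $S^1$-equivariantly contractible (as a convex, hence contractible, set with a free $S^1$-action after removing nothing---here one uses that $\Lambda$ deformation retracts $S^1$-equivariantly to a point, or more precisely $\H^{S^1}_*(\Lambda;\mathbb{F})\cong \H_*(BS^1;\mathbb{F})=\mathbb{F}$ in every even nonnegative degree), the equivariant homology is a free $\mathbb{F}[e]$-comodule on one generator, and the classifying map $\Lambda\to BS^1$ induces an isomorphism on equivariant (co)homology.

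The main step is then the following chain of equivalences, which I would establish degree by degree. By definition $\indfr(\{\Psi_{C_0}<L\};\mathbb{F})\geq k$ iff $\pi_2^*e^{k-1}\neq 0$ in $\H^{2(k-1)}_{S^1}(\{\Psi_{C_0}<L\};\mathbb{F})$. Using the factorization $\{\Psi_{C_0}<L\}\hookrightarrow \Lambda \to BS^1$, and the fact that $\Lambda\to BS^1$ induces an isomorphism in equivariant cohomology, the class $\pi_2^*e^{k-1}$ is the image under $(i^C_L)^*$ of a generator of $\H^{2(k-1)}_{S^1}(\Lambda;\mathbb{F})\cong\mathbb{F}$. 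Hence $\indfr(\{\Psi_{C_0}<L\};\mathbb{F})\geq k$ iff $(i^C_L)^*\colon \H^{2(k-1)}_{S^1}(\Lambda;\mathbb{F})\to \H^{2(k-1)}_{S^1}(\{\Psi_{C_0}<L\};\mathbb{F})$ is nonzero. Dualizing over the field $\mathbb{F}$ (using that all the homology groups in question are finite-dimensional in each degree, which follows from the Palais--Smale condition for $\Psi_{C_0}$ and the fact that for $L\notin\sigma(\partial C)$ the sublevel set has the homotopy type of a finite CW complex, cf.\ Remark~\ref{boundedpalaissmale} and the $l$-saddle point reduction), this is equivalent to $(i^C_L)_*\colon \H^{S^1}_{2(k-1)}(\{\Psi_{C_0}<L\};\mathbb{F})\to \H^{S^1}_{2(k-1)}(\Lambda;\mathbb{F})$ being nonzero. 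Since $2(k-1)=2k-2$, taking the infimum over such $L$ gives exactly $s_k(C;\mathbb{F})=\inf\{L>0\mid (i^C_L)_*\neq 0\}$.

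For the degenerate range $L<\min\sigma(\partial C)=s_1(C)$ I would note separately that both sides vanish: $\{\Psi_{C_0}<L\}=\varnothing$ there, so $\indfr=0$ and $(i^C_L)_*$ is the zero map out of the trivial group; and likewise $\{\Psi_{C_0}<L\}$ is empty (or $S^1$-equivariantly trivial) so there is nothing to check. I would also remark that the definition of $s_k$ is independent of the choice of translation $C_0$ of $C$ with $0$ in its interior, which is already recorded in the statement of Ekeland--Hofer's spectral invariants, so the homological reformulation inherits this independence automatically.

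The main obstacle I anticipate is the passage between the cohomological definition of $\indfr$ and the homological non-vanishing statement, specifically making rigorous the claim that $\Lambda\to BS^1$ is an equivariant (co)homology isomorphism and that the relevant groups are degreewise finite-dimensional over $\mathbb{F}$ so that the universal-coefficient/duality identification $\H^*_{S^1}(X;\mathbb{F})\cong \operatorname{Hom}_{\mathbb{F}}(\H_*^{S^1}(X;\mathbb{F}),\mathbb{F})$ is legitimate. The finiteness is the delicate point: one must invoke that for $L\notin\sigma(\partial C)$ the sublevel set $\{\Psi_{C_0}<L\}$ is, up to $S^1$-equivariant homotopy equivalence, a finite-dimensional object---this follows by combining the $l$-saddle point reduction of $\Psi_{C_0}$ (or directly $\widetilde\Psi_C$) with the Palais--Smale condition and the fact that $C$ is non-degenerate can be bypassed here by a limiting/approximation argument, or alternatively one quotes the corresponding finiteness from \cite{EH87} directly. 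Once finiteness and the contractibility of $\Lambda$ are in hand, the rest is a formal dualization argument, and I would present it compactly rather than belaboring the Borel-construction bookkeeping.
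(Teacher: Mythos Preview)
Your proposal is correct and follows essentially the same route as the paper: factor $\pi_2^L$ through the inclusion into $\Lambda$, use the $S^1$-contractibility of $\Lambda$ to identify $\pi_2^*e^{k-1}$ with the image of a generator under $(i^C_L)^*$, and then pass to homology via the universal coefficient theorem over the field $\mathbb{F}$. Two minor points: $\Lambda$ is a level hypersurface of the quadratic form $\mathcal{A}$, not a convex set (though it is indeed $S^1$-contractible, which is all that is used), and your finiteness worry is unnecessary, since over a field the universal coefficient theorem gives $\H^*(-;\mathbb{F})\cong\operatorname{Hom}_{\mathbb{F}}(\H_*(-;\mathbb{F}),\mathbb{F})$ unconditionally and the equivalence $f\neq 0 \Leftrightarrow f^*\neq 0$ for $\mathbb{F}$-linear maps needs no dimension hypothesis.
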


\begin{proof}
    We have that 
    \[s_k(C) = \inf \{L > 0 \mid \indfr(\{\Psi_C < L\}) \geq k\},\]
    where $\indfr(\{\Psi_C < L\}) = \sup_{k \geq 0} \{k+1 \mid (\pi^L_2)^* e^k = 0\}$ and
    \[\pi^L_2: \{\Psi_C < L\} \times_{S^1} ES^1 \to BS^1, \quad \pi^L_2([x, y]) = [y]\]
    is the quotient projection. Since we have the following commuting diagram
     \[
\begin{tikzcd}[scale cd=0.85]
 &  H^*_{S^1}(\Lambda) \arrow[dl,swap, "(i^C_L)^*"]   \\
H^*_{S^1}(\PsiC<L)  && \arrow[ll, "(\pi^L_2)^*"] \arrow[ul,swap,"(\pi_2)^*"] H^*_{S^1}(BS^1),
\end{tikzcd}
\]  
    and $(\pi_2)^*$ is a ring isomorphism (since $\Lambda$ is $S^1$-contractible), we conclude that $s_k(C) = \inf \{L > 0 \mid (i^C_L)^* \neq 0\}$, where $(i^C_L)^*: \H_{S^1}^{2k-2}(\Lambda) \to \H_{S^1}^{2k-2}(\{\Psi_C < L\}).$ Since $\mathbb{F}$ is a field, by the universal coefficients theorem, we conclude that the claim holds.
\end{proof}

\textbf{Gutt-Hutchings capacities:}

In \cite{GH18}, Gutt and Hutchings extracted a sequence of capacities from positive $S^1$-equivariant homology. Let $W$ be a nice star-shaped domain, meaning the closure of a bounded open star-shaped set with a smooth boundary such that $\partial W$ is transverse to the radial vector field. By \cite[Remark 4.8]{GH18}, in this case we have
\[
c_k^{GH}(W; \mathbb{F}) = \inf\{L > 0 \mid i_L^{S^1,+} \neq 0\},
\]
where $i_L^{S^1,+} : \S\H^{S^1,+,<L}_{n-1+2k}(W; \mathbb{F}) \to \S\H^{S^1,+}_{n-1+2k}(W; \mathbb{F}).$ Here, $\S\H^{S^1,+,<L}_{n-1+2k}(W ;\mathbb{F})$ denotes the filtered positive $S^1$-equivariant homology, while the group $\S\H^{S^1,+}_{n-1+2k}(W; \mathbb{F})$ is the full positive $S^1$-equivariant homology. The map $i_L^{S^1,+}$ refers to the inclusion map. These capacities are extended continuously to bounded star-shaped domains that are not nice.

We list some of the properties of these capacities.

\begin{itemize}
    \item \textbf{Monotonicity:} The sequence $(c^{GH}_k(W))_{k\in \mathbb{N}}$ is monotone with respect to $k$.
    \item \textbf{Spectrality:} Every $c^{GH}_k(W)$ is an element of $\sigma(\partial W)$.
    \item \textbf{Hausdorff-continuity:} For any sequence of bounded star-shaped domains $W_i$ that converges to a bounded star-shaped domain $W$ in the Hausdorff distance topology, it holds that $c^{GH}_k(W_i)\to c^{GH}_k(W)$.
\end{itemize}

Moreover, being that they are capacities these invariants are monotonic with respect to symplectic embeddings. See \cite{GH18} for the definition of these capacities for general Liouville domains.

\begin{proof}[Proof of Theorem ~ \hyperlink{TheoremA}{A}]
    Let $C \subset \mathbb{R}^{2n}$ be a non-degenerate, strongly convex domain. By translation, we can assume that its interior contains the origin. 
    
    By Theorem \hyperlink{TheoremC2}{C.2} and Remark \ref{limitisomorphism}, we have
    \begin{align*}
      c^{GH}_k(C)&=\inf\limits\{L > 0 \mid i_L^{S^1,+} \neq 0\}= \inf \{L > 0 \mid (D^{S^1})^{-1} \circ i_L^{S^1,+} \circ D_L^{S^1} \neq 0\}\\
      &= \inf \{L > 0 \mid (D^{S^1})^{-1} \circ D^{S^1} \circ (i_L^C)_* \neq 0\}= \inf\{L > 0 \mid (i^C_L)_* \neq 0\}=s_k(C).
    \end{align*}
    
    The last equality follows from Lemma \ref{spectralinvariants}. Since both the Gutt-Hutchings capacities and the spectral invariants are continuous with respect to the Hausdorff distance topology, and non-degenerate strongly convex domains are dense in bounded convex domains, the theorem follows.
\end{proof}

\end{document}